\documentclass[aop]{imsart}
\RequirePackage{amsthm,amsmath,amsfonts,amssymb}
\RequirePackage{array}
\RequirePackage{graphicx}
\RequirePackage{subcaption}
\RequirePackage{float}
\RequirePackage{placeins}
\RequirePackage{makecell}
\RequirePackage{dsfont}
\RequirePackage[numbers,sort&compress]{natbib}
\RequirePackage[colorlinks,citecolor=blue,urlcolor=blue,linkcolor=blue]{hyperref}
\RequirePackage{mathtools}
\RequirePackage[table]{xcolor}
\RequirePackage{booktabs}
\RequirePackage[overload]{empheq}
\usepackage{comment}

\startlocaldefs
\numberwithin{equation}{section}
\theoremstyle{plain}
\newtheorem{lemma}{Lemma}[section]
\newtheorem{theorem}[lemma]{Theorem}
\newtheorem{proposition}[lemma]{Proposition}
\newtheorem{corollary}[lemma]{Corollary}
\theoremstyle{definition}

\newtheorem{remark}[lemma]{Remark}

\mathtoolsset{showonlyrefs}
\allowdisplaybreaks
\newcommand{\cip}{\stackrel{\P}{\rightarrow}}
\newcommand{\cas}{\stackrel{\rm a.s.}{\rightarrow}}
\newcommand{\eid}{\stackrel{\rm d}{=}}
\newcommand{\cid}{\stackrel{\rm d}{\rightarrow}}
\newcommand{\slv}{\stackrel{\text{sl.v.}}{=}}
\newcommand{\limn}{\lim_{n \to \infty}}
\newcommand{\nto}{n\to\infty}

\newcommand{\E}{{\mathbb E}}
\newcommand{\N}{\mathbb{N}}
\newcommand{\R}{\mathbb{R}}

\newcommand{\tr}{\operatorname{tr}}
\newcommand{\diag}{\operatorname{diag}}
\newcommand{\Var}{\operatorname{Var}}
\newcommand{\norm}[1]{\|#1\|}
\newcommand{\dint}{\,\mathrm{d}}
\newcommand{\e}{\mathrm e}
\newcommand{\lhs}{left-hand side}
\newcommand{\rhs}{right-hand side}
\newcommand{\MP}{Mar\v cenko--Pastur }
\newcommand{\vep}{\varepsilon}

\newcommand{\la}{\lambda}
\newcommand{\tx}{\tilde{\mathbf{x}}}
\newcommand{\X}{\mathbf X}
\newcommand{\Y}{\mathbf Y}
\newcommand{\bfx}{\mathbf x}
\newcommand{\bfA}{\mathbf A}

\newcommand{\bfI}{\mathbf I}
\newcommand{\bfR}{\mathbf R}
\newcommand{\bfS}{\mathbf S}

\newcommand{\Tone}{\widetilde{T}_1}
\newcommand{\tbeta}{\widetilde{\beta}}
\newcommand{\etalim}{\eta_{\alpha,c_\delta}}
\newcommand{\nulim}{\nu_{\alpha,c_\delta}}

\newcommand{\barr}{\begin{array}}
\newcommand{\earr}{\end{array}}

\makeatletter
\let\save@mathaccent\mathaccent
\newcommand*\if@single[3]{\setbox0\hbox{${\mathaccent"0362{#1}}^H$}\setbox2\hbox{${\mathaccent"0362{\kern0pt#1}}^H$}\ifdim\ht0=\ht2 #3\else #2\fi}
\newcommand*\rel@kern[1]{\kern#1\dimexpr\macc@kerna}
\newcommand*\widebar[1]{\@ifnextchar^{{\wide@bar{#1}{0}}}{\wide@bar{#1}{1}}}
\newcommand*\wide@bar[2]{\if@single{#1}{\wide@bar@{#1}{#2}{1}}{\wide@bar@{#1}{#2}{2}}}
\newcommand*\wide@bar@[3]{\begingroup\def\mathaccent##1##2{\let\mathaccent\save@mathaccent\if#32 \let\macc@nucleus\first@char \fi\setbox\z@\hbox{$\macc@style{\macc@nucleus}_{}$}\setbox\tw@\hbox{$\macc@style{\macc@nucleus}{}_{}$}\dimen@\wd\tw@\advance\dimen@-\wd\z@\divide\dimen@ 3\@tempdima\wd\tw@\advance\@tempdima-\scriptspace\divide\@tempdima 10\advance\dimen@-\@tempdima\ifdim\dimen@>\z@ \dimen@0pt\fi\rel@kern{0.6}\kern-\dimen@\if#31\overline{\rel@kern{-0.6}\kern\dimen@\macc@nucleus\rel@kern{0.4}\kern\dimen@}\advance\dimen@0.4\dimexpr\macc@kerna\let\final@kern#2\ifdim\dimen@<\z@ \let\final@kern1\fi\if\final@kern1 \kern-\dimen@\fi\else\overline{\rel@kern{-0.6}\kern\dimen@#1}\fi}\macc@depth\@ne\let\math@bgroup\@empty \let\math@egroup\macc@set@skewchar\mathsurround\z@ \frozen@everymath{\mathgroup\macc@group\relax}\macc@set@skewchar\relax\let\mathaccentV\macc@nested@a\if#31\macc@nested@a\relax111{#1}\else\def\gobble@till@marker##1\endmarker{}\futurelet\first@char\gobble@till@marker#1\endmarker\ifcat\noexpand\first@char A\else\def\first@char{}\fi\macc@nested@a\relax111{\first@char}\fi\endgroup}
\makeatother
\renewcommand{\bar}{\widebar}
\renewcommand{\P }{{\mathbb P}}
\endlocaldefs

\begin{document}
\begin{frontmatter}
\title{Non-Gaussian fluctuations for traces of squared sample correlation matrices in high dimensions}
\runtitle{Phase transitions for sample correlation matrices}
\begin{aug}
\author[A]{\fnms{Johannes}~\snm{Heiny}\ead[label=e1]{heiny@kth.se}}
\author[A]{\fnms{Xuechun}~\snm{Hu}\ead[label=e2]{xuechunh@kth.se}}
\author[B]{\fnms{Felix}~\snm{Seo}\ead[label=e3]{felix.seo@handelsbanken.se}}
\address[A]{Department of Mathematics, KTH Royal Institute of Technology\printead[presep={,\ }]{e1,e2}}
\address[B]{Svenska Handelsbanken\printead[presep={,\ }]{e3}}
\end{aug}
\begin{abstract}
We provide limit theory for the trace of the squared sample correlation matrix $\mathbf R$, constructed from $n$ observations of a $p$-dimensional random vector with iid components. If the entries have finite fourth moment and $p$ and $n$ grow proportionally, it is known that $\tr(\mathbf R^2)$ satisfies a central limit theorem (CLT) and the centering and scaling sequences are universal in the sense that they do not depend on the entry distribution. Under symmetry and regular variation assumption with index $\alpha$ and any growth rate of the dimension, we prove that the universal CLT remains valid for $\alpha >3$. For $\alpha<3$, we identify a critical dimension growth at which the fluctuations of $\tr(\mathbf R^2)$ become non-Gaussian. Moreover, if the dimension $p$ grows faster and $\alpha\le 3$ we establish a non-universal CLT with norming sequences depending on the value of $\alpha$.  Our findings are illustrated in a simulation study.
\end{abstract}
\begin{keyword}[class=MSC]\kwd{Primary 60B20}\kwd{Secondary 60F05}\kwd{60F10}\kwd{60G10}\kwd{60G55}\kwd{60G70}\end{keyword}
\begin{keyword}\kwd{Sample correlation matrix}\kwd{high dimension}\kwd{linear spectral statistics}\kwd{non-Gaussian limit} \kwd{heavy tails}\end{keyword}
\end{frontmatter}

\section{Introduction}
Measuring the dependence between random variables has always been a fundamental task in statistics. Starting with the early works of  Pearson \cite{Pearson1920}, Kendall \cite{Kendall1938}, Hoeffding \cite{Hoeffding1948} and Blum \cite{blum1961}, several measures of dependence or association have been introduced and analyzed by numerous authors. An outstanding role is played by Pearson's correlation coefficient, a measure of the linear dependency of two random variables, about which  most students learn early on in their studies. Motivated by its importance for statistical inference and estimation, many works are devoted to its stochastic properties in different frameworks. For example, in time series analysis, the notion of correlation plays a vital role in multivariate statistical analysis for  parameter estimation, goodness-of-fit tests, change-point detection, etc.; see for example the classical monographs \cite{brockwell:davis:1991,priestley:1981}.

With the rapid advancements of data collection devices, many modern fields such as biological engineering, telecommunications and finance require the analysis of high-dimensional data sets where the dimension $p$ and the sample size $n$ are of comparable magnitude. As a result traditional results from multivariate analysis, which rely on the assumption that the dimension remains fixed and thus is negligible compared to the sample size, are typically not applicable in high-dimensional regimes. Driven by such challenges, random matrix theory - as outlined in the monographs \cite{bai:silverstein:2010, yao:zheng:bai:2015} - aims to provide a deeper understanding of differences that arise when $p$ is assumed to grow with $n$. A standard assumption is that the ratio $p/n$ approaches some positive constant. It is worth mentioning that a regime where $p=\sqrt{n}$ might lead to completely different asymptotic theory than (say) $p=n$. In practical applications, however, $p/n$ is always some positive number and it is therefore non-trivial to distinguish between various regimes.

\subsection{Model and assumptions}
Consider a $p$-dimensional population $\bfx=(X_1,\ldots,X_p)^{\top}\in\R^p$, where the components $X_i$ are independent and identically distributed (iid), non-degenerate random variables with mean zero. For a sample $\bfx_1,\ldots,\bfx_n$ from the population we construct the data matrix $\X=\X_n=(\bfx_1,\ldots,\bfx_n)=(X_{ij})_{1\le    i\le p; 1\le j  \le n}$, the sample covariance matrix $\bfS=\bfS_n =n^{-1} \X\X^{\top}$ and the sample correlation matrix $\bfR$,
\begin{equation}\label{eq:defRY}
    \bfR =\bfR_n =\{\diag(\bfS_n)\}^{-1/2}\, \bfS_n\{\diag(\bfS_n)\}^{-1/2}= \Y \Y^{\top}\,. 
\end{equation}
Here the standardized  matrix $\Y=\Y_n=(Y_{ij})_{1\le i\le p; 1\le j  \le n}$ for the sample correlation matrix has entries 
\begin{equation}\label{def:R}
    Y_{ij}=Y_{ij}^{(n)}=\frac{X_{ij}}{\sqrt{X_{i1}^2+\cdots+X_{in}^2}}\,,
\end{equation}
which depend on $n$. Throughout the paper, we often suppress the dependence on $n$ in our notation. Since $Y_{ij}$ is invariant with respect to a scaling of the $X_{ij}$'s, we will assume without loss of generality that $\E[X_{11}^2]=1$ whenever $\E[X_{11}^2]$ is finite. In this paper, we will often assume that $|X_{11}|$ has a regularly varying tail with index $\alpha>0$, that is 
\begin{equation}\label{eq:regvar}
    \P(|X_{11}|>x)=   x^{-\alpha}\, L(x)\,,\qquad x>0\,,
\end{equation}
for a function $L$ that is slowly varying at infinity. Thus, regularly varying distributions possess power-law tails and moments of $|X_{11}|$ of higher order than $\alpha$ are infinite. Typical examples include the Pareto distribution with parameter $\alpha$ and the $t$-distribution with $\alpha$ degrees of freedom. In addition, we assume that the distribution of $X_{11}$ is symmetric, that is, $X_{11} \eid -X_{11}$. We consider the high-dimensional regime
\begin{equation*}
    p=p_n \to \infty, \qquad \text{as} \quad \nto\,. 
\end{equation*} 

\subsection{Background} 
Since the pioneering works \cite{marchenko:pastur:1967, wigner:1955, wigner:1957}, 
the limiting eigenvalue distribution of various types of random matrices received a lot of interest in the literature: For example, \cite{silverstein1995analysis, fleermann:heiny:2023} on sample covariance matrices, \cite{bryc2006spectral, catalano:fleermann:2024, fleermann:kirsch:2021} on Hankel, Markov, Toeplitz and band matrices, \cite{bose2009limiting} on circulant type matrices, \cite{wang:yao:2016} on auto-covariance matrices, \cite{li:wang:yao:2022} on spatial-sign covariance matrices, \cite{li_et_al_2023} on distance covariance matrices, just to name a few. Many popular test statistics, such as the likelihood ratio statistic for testing independence of a normal population, can be expressed as a function of the eigenvalues of the sample covariance matrix $\bfS$ or the sample correlation matrix $\bfR$. For a function $f:\R\to \R$ and a random matrix ${\bf A}$ with $p$ real eigenvalues $\la_{1}( {\bf A} ) \ge \cdots \ge\la_{p}(  {\bf A})$, we call $\sum_{i=1}^p f(\lambda_i(\bfA))$ a linear spectral statistic of $\bfA$. In the proportional regime $p/n\to \gamma\in (0,\infty)$, the spectral properties of the sample covariance matrix $\bfS$ have been well studied in random matrix theory since \cite{marchenko:pastur:1967}, where it is shown that the empirical distribution of the eigenvalues $\lambda_i(\bfS)$ converges weakly to the \MP~law. Subsequently, several  ground-breaking results such as the convergence of the largest eigenvalue $\lambda_1(\bf S)$ and the smallest eigenvalue $\lambda_p(\bfS)$ to the edges of  the \MP law \citep{BaiYin88a,tikhomirov:2015}, asymptotic normality of linear spectral statistics of $\bfS$ \citep{BS04},  or its edge universality towards the Tracy-Widom law \citep{johnstone:2001,Peche2012,PillaiYin2014} were established. Apart from the convergence of $\lambda_p(\bfS)$ all those results require a finite fourth moment $\E[X_{11}^4]$. In case of infinite fourth moments, the theory for the eigenvalues and eigenvectors of $\bfS$ is quite different from the aforementioned \MP~theory. A detailed account on the developments in the heavy-tailed case can be found in \cite{davis:heiny:mikosch:xie:2016,heiny:mikosch:2017:iid,basrak:heiny:jung:2020,auffinger:arous:peche:2009}.

For the sample correlation matrix $\bfR=\{\diag(\bfS_n)\}^{-1/2} \bfS_n \{\diag(\bfS_n)\}^{-1/2}$, the situation gets more complicated because of the specific nonlinear dependence structure caused by the normalization $\{\diag(\bfS_n)\}^{-1/2}$,  which makes the analysis of this random matrix quite challenging. As a consequence, the study of the high-dimensional sample correlation matrix is more recent and somewhat limited. Under finite fourth moments, many results about sample correlation matrices can be reduced to the covariance case via a comparison of their spectra (see \cite{heiny:2022}). Assuming the proportional regime, Lemma~2 in \cite{bai:yin:1993} asserts that $\E[X_{11}^4]<\infty$ is equivalent to $\norm{\diag(\bfS_n)-\bfI} \cas 0\,, n \to \infty$, where $\|\cdot \|$ is the spectral norm and $\bfI$ the identity matrix; see also \cite[Theorem 1.2]{heiny:2022} for a similar result in the dependent case. Therefore, under finite fourth moment the normalization $\{\diag(\bfS_n)\}^{-1/2}$ in \eqref{eq:defRY} can be replaced with $\bfI$ and consequently $\max_i |\lambda_i(\bfR)-\lambda_i(\bfS_n)| \le \norm{\bfR-\bfS_n}$ converges to zero almost surely as $\nto$. Using this comparison, Jiang \cite{jiang:2004} (see also \cite{elkaroui:2009, heiny:mikosch:2017:corr}) showed that the \MP~law  is still valid for the sample correlation matrix $\bfR$. The limiting eigenvalue distribution of $\bfR$ beyond the common fourth moment condition exhibits a phase transition at $\alpha=2$. We refer to \cite{bai:zhou:2008, doernemann:heiny:2025,  heiny:yao:2020}, where the proportional regime is treated under mild moment assumptions.
\smallskip

It turns out that a  modification of the above comparison trick can be used to obtain CLTs for linear spectral statistics of $\bfR$ from CLTs for linear spectral statistics of $\bfS$ \cite{Gao2017, yin:zheng:zou:2023, yin:li:tian:zheng:2022}. To the best of our knowledge, the first result under infinite fourth moment concerns the function $f=\log$ for which $\sum_{i=1}^p \log(\lambda_i(\bfR)) = \log \det \bfR$, the $\log$-determinant of the sample correlation matrix \cite{heiny:parolya:2024, li:logdet:2026}. For $\alpha>3$, it was proved that $\log \det \bfR$ satisfies a universal CLT in the proportional regime. Furthermore, Li et al.~\cite{li:pan:xie:wang:2024} provided CLTs for linear spectral statistics of $\bfR$ in the proportional regime when $\alpha>3$, while Jiang and Pham \cite{jiang:pham:2025uniformity} proved a CLT for the Bingham test for $\alpha\in(0,2)$. In all of those works, the asymptotic distribution of the linear spectral statistics is Gaussian. By allowing for arbitrary growth rates of the dimension $p$, we obtain both Gaussian and non-Gaussian limits in the present paper.
\smallskip

This work was motivated by the functions $f_k(x):=x^k$, $k\ge 2$, for which 
\begin{equation*}
    \sum_{i=1}^p f_k(\lambda_i(\bfR))=\tr(\bfR^k)\,.
\end{equation*}
(Note that the case $k=1$ is degenerate since $\tr(\bfR)=p$ is non-random.) CLTs for linear spectral statistics of $\bfR$ for more general functions $f$ can be obtained by approximating $f$ through polynomials, that is, linear combinations of $f_k$'s. For technical reasons and for the sake of clarity, we restrict ourselves to the case $k=2$.\footnote{An extension to general $k$ is a topic for future research.}

\subsection{Contributions} 
The novel contributions of this paper are outlined below. 
\begin{itemize}
\item We provide CLTs for $\tr(\bfR^2)$ under general growth rates of $p$ relative to $n$ and investigate the influence of the tail index $\alpha$. If $p\asymp n^{\delta}$ for some $\delta>0$, we determine a region for $(\alpha, \delta)$ where $\tr(\bfR^2)$ satisfies a CLT. For any pair $(\alpha, \delta)$ outside the closure of this region, we prove that moment convergence fails.
\item On the two regions' boundary, we identify the limiting fluctuations of $\tr(\bfR^2)$. We show that the boundary limit is moment-determinate and non-Gaussian.
\item At $\alpha=3$ (corresponding to the boundary of finite and infinite third moment), we discover a transition in the variance of $\tr(\bfR^2)$. As a consequence, if $\alpha>3$, no restriction on the growth of $p$ is required for the validity of the CLT. 
\item To the best of our knowledge, this work is the first that establishes a non-Gaussian limit for a linear spectral statistic of $\bfR$ in the case of infinite third moment $\E|X_{11}|^3=\infty$.
\end{itemize}

\subsection{Outline of main results}
The main results of this paper are a universal CLT for $\tr(\bfR^2)$ when $\alpha>3$, a non-universal CLT for $\tr(\bfR^2)$ when $\alpha\leq 3$, and the limiting distribution at the phase transition boundary. Consider a sequence $\sigma_n^2$  satisfying $\sigma_n^2\sim \Var(\tr(\bfR^2))$, as $\nto$, and define the critical dimension growth by 
\begin{equation}\label{eq:pboundary:intro}
    p_{\text{crit}} :=
    \begin{cases}
        n^{1/2} &\text{if}\ \alpha\in (0,2)\,, \\
        n^{(\alpha-1)/2} \frac{1}{L(n^{1/2})}  &\text{if}\ \alpha\in (2,3)\,.
    \end{cases}    
\end{equation}
The main findings of this paper concern a curious phase transition for $(\tr(\bfR^2)-\E[\tr(\bfR^2)])/\sigma_n$, which can be summarized (in slightly simplified fashion) as follows. As $\nto$, it holds
\begin{subequations} \label{eq:summaryresults}
    \begin{align}[left ={\dfrac{\tr(\bfR^2)-\E[\tr(\bfR^2)]}{\sigma_n} \cid  \empheqlbrace}]
    & \frac{\sum_{i=1}^\infty \xi_i-c_{\delta}^2}{\sqrt{2}\, c_\delta\, c_2(\alpha)} && \text{if}\ \alpha\in (0,3)\backslash\{2\} \text{ and } \frac{p}{p_{\text{crit}}} \to c_{\delta}, \label{poissontype}\\
    & N(0,1) && \text{if}\ \alpha\in (0,3)\backslash\{2\} \text{ and } \frac{p}{p_{\text{crit}}} \to \infty\,, \label{nonuniversal}\\
    & N(0,1)&& \text{if}\ \alpha>3\,, \label{universal}
    \end{align}
\end{subequations}
where $c_{\delta}\in (0,\infty)$ and $(\xi_i)_{i\ge 1}$ are the atoms of a Poisson point process on $(0,2]$ with intensity measure $\nulim$ defined in \eqref{eq:nu-unified}, and the constant $c_2(\alpha)$ is given in \eqref{eq:cm-def}. We will later refer to the dimension growth in \eqref{poissontype} as the phase transition boundary. In this case, the limiting distribution is of {\em Poisson process type} and centered (Theorem~\ref{thm:mainresult}). To the best of our knowledge, such a distribution is completely new in the context of linear spectral statistics of sample correlation or covariance matrices. Equation \eqref{nonuniversal} shows that for $\alpha\in(0,3)$ the dimension $p$ needs to grow faster than $p_{\text{crit}}$ in order to obtain a Gaussian limit. While the expectation $\E[\tr(\bfR^2)]$ is the same for any symmetric distribution of $X_{11}$ (see \eqref{eq:mean}), the variance sequence $\sigma_n^2$ asymptotically depends on the specific value $\alpha$ if $\alpha\in (0,3)$. Due to this dependence on $\alpha$, we call the limit result in \eqref{nonuniversal} a {\em non-universal CLT}. In contrast, for $\alpha>3$, the sequence $\sigma_n^2$ asymptotically does not depend on the value $\alpha$ and therefore we call \eqref{universal} a {\em universal CLT}. The universal and non-universal CLTs are extensions of the Master's thesis \cite{seo:2024} by the third author which was defended in June 2024. Figure~\ref{fig:firstsimulation} illustrates the different limits $(\tr(\bfR^2)-\E[\tr(\bfR^2)])/\sigma_n$ in \eqref{eq:summaryresults} for $n=8000$ and various choices of $\alpha$ and $p$. In the first row, we see that for $\alpha=1.1$ a standard Gaussian density ($p=500$) and the density of the Poisson process type limit ($p=90\approx p_{\text{crit}}$) are excellent fits to the histogram. The second row shows that for $\alpha=3.5$ the Gaussian limit holds for both values of $p$.

The proofs of \eqref{poissontype}--\eqref{universal} rely on cumulants and martingale theory, respectively. A crucial first step for both approaches is the decomposition 
\begin{equation}\label{eq:dec:intro}
    \tr(\bfR^2)-\E[\tr(\bfR^2)]=T_1+T_2
\end{equation}
into two uncorrelated and centered terms. This decomposition is optimal in the sense of capturing the precise phase transition boundary.

\subsection{Structure of the paper}
This paper is structured as follows. In Section~\ref{sec:preliminaries}, we derive the decomposition \eqref{eq:dec:intro} of the trace of the squared sample correlation matrix and study properties of $T_1$ and $T_2$. Based on this decomposition, Theorem~\ref{lem:completefourth} provides precise conditions for the convergence of the fourth moment of the standardized $\tr(\bfR^2)$ to the fourth moment of a standard normal variable. This leads us to the critical dimension growth $p_{\text{crit}}$ which separates the Gaussian CLT regime from a non-Gaussian limit regime. In Section~\ref{sec:main}, we present universal and non-universal CLTs for $\tr(\bfR^2)$ (Theorem~\ref{thm:clt2}) under very general growth rates on the dimension $p$ and the tail index $\alpha$. On the phase transition boundary we calculate the full limiting moments of $T_1$. Consequently, we find a non-Gaussian limiting distribution at the phase transition boundary. We further prove that this distribution is of Poisson process type as in \eqref{poissontype}. The results are then illustrated by means of a small simulation study in Section~\ref{sec:simulation:discussion}. Section~\ref{sec:mainproof} contains the proofs of our main results, while Section~\ref{sec:proofsmoment} is devoted to the proofs of the results of Section~\ref{sec:preliminaries}. Finally, the appendix consists of facts for sums of regularly varying random variables.

\begin{figure}[!htbp]
\begin{subfigure}{.4\textwidth}
  \centering
  \includegraphics[scale = 0.4]{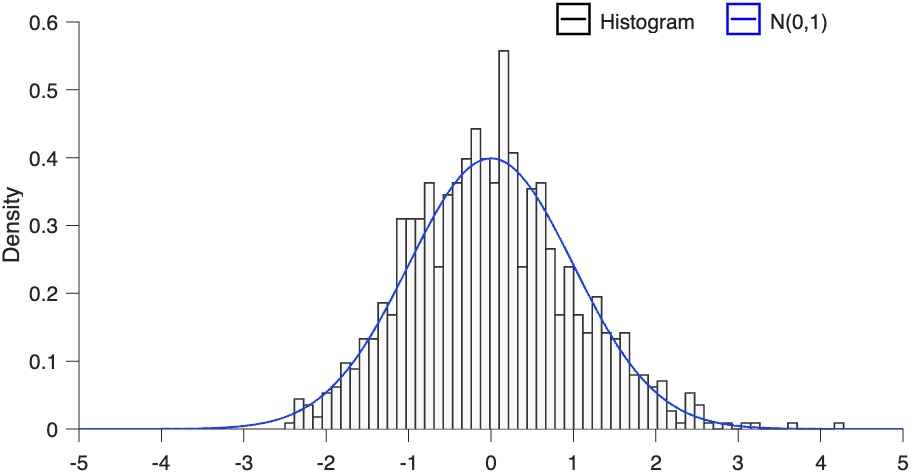}
  \caption{$\alpha = 1.1,\, p = 500 > p_{\text{crit}}$}
\end{subfigure}%
\begin{subfigure}{.4\textwidth}
  \centering
  \includegraphics[scale = 0.4]{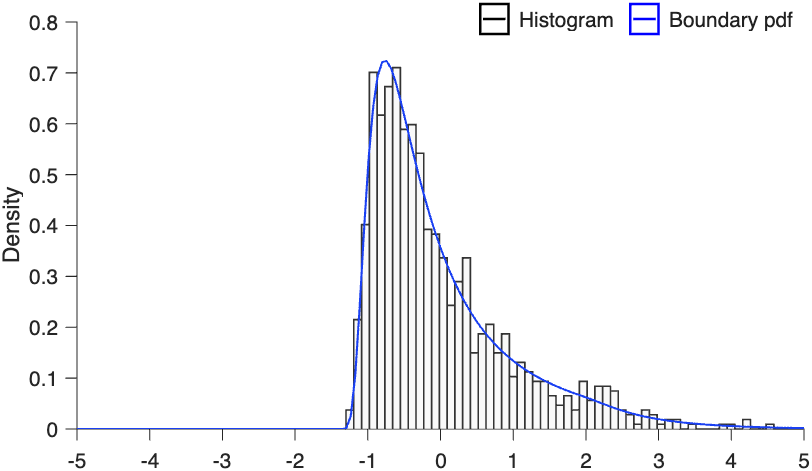}
  \caption{$\alpha = 1.1,\, p = 90\approx p_{\text{crit}}$}
\end{subfigure}
\begin{subfigure}{.4\textwidth}
  \centering
  \includegraphics[scale = 0.4]{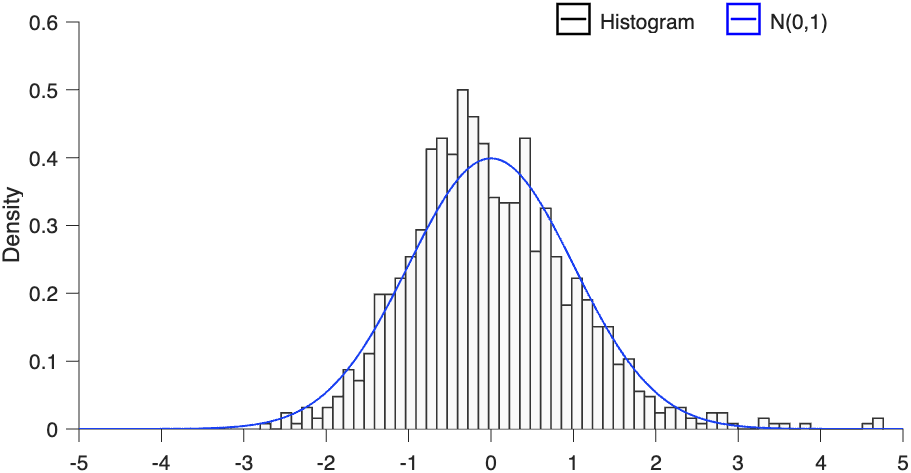}
  \caption{$\alpha = 3.5,\, p = 500 $}
\end{subfigure}%
\begin{subfigure}{.4\textwidth}
  \centering
  \includegraphics[scale = 0.4]{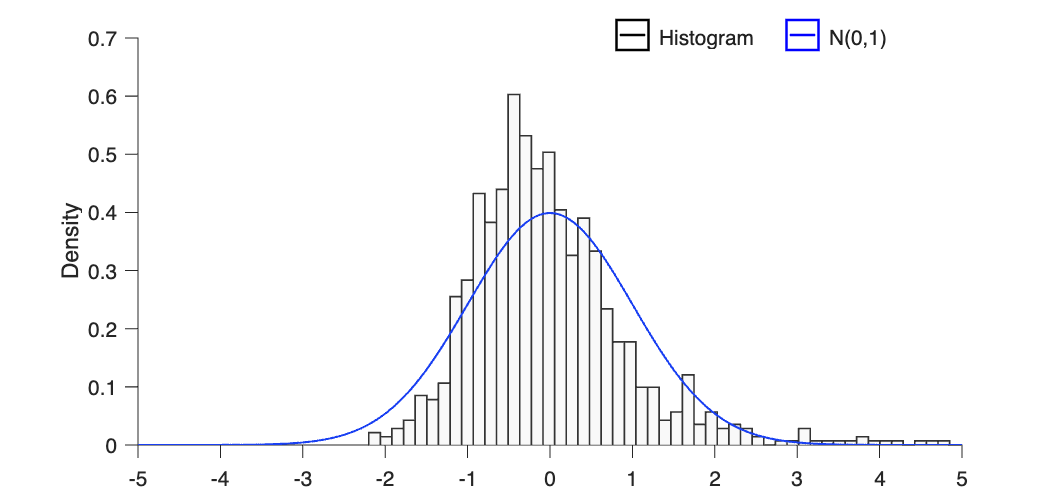}
  \caption{$\alpha = 3.5,\, p = 90$}
\end{subfigure}
\caption{Histograms of $(\tr(\bfR^2)-\mu_n)/\sigma_n$ for different values of $p$ and $\alpha$ with $n=8000$ and 1000 repetitions.}
\label{fig:firstsimulation}
\end{figure}

\subsection{Notation}
Convergence in distribution (resp.\ probability) is denoted by $\cid$ (resp.\ $\cip$), equality in distribution by $\eid$, and unless explicitly stated otherwise all limits are for $\nto$. For sequences $(a_n)_n$ and $(b_n)_n$ we write $a_n=O(b_n)$ if $a_n/b_n\leq C$ for some constant $C>0$ and every $n\in\N$, and $a_n=o(b_n)$ if $\lim_{n\to\infty} a_n/b_n=0$. Additionally, we use the notation $a_n\sim b_n$ if $\lim_{n\to\infty} a_n/b_n=1$, and $a_n = \omega(b_n)$ if $\lim_{n \to \infty} a_n / b_n = \infty$. We write $a_n \lesssim b_n$ if there exists a positive constant $C$ not depending on $n$ such that $a_n \le C\, b_n$ for sufficiently large $n$. The notation $a_n\slv b_n$ means that $a_n=b_n \ell(n)$ for some function $\ell$ that is slowly varying (at infinity). A function $\ell:(0,\infty)\to(0,\infty)$ is said to be slowly varying (at infinity) if $\lim_{x\to \infty} \ell(tx)/\ell(x)=1$ for any $t>0$.

\section{Preliminaries}\label{sec:preliminaries}
The main motivation of this work is to investigate the fluctuations of the trace of powers of the sample correlation matrix. That is, we aim to prove limit theorems for
\begin{equation}\label{eq:mk}
    \tr (\bfR^k) = \sum_{i_1,\ldots,i_k=1}^p  \sum_{t_1,\ldots,t_k=1}^n Y_{i_1t_1} Y_{i_2t_1} Y_{i_2t_2} \cdots Y_{i_kt_k} Y_{i_{k+1}t_k}, \qquad k\ge 2. 
\end{equation}
(Here the convention $i_{k+1}=i_1$ is used.) It is worth mentioning that $\tr(\bfR)=p$ since all diagonal elements of $\bfR$ are one. In this paper, we will focus on $\tr(\bfR^2)$, but our approach naturally extends to traces of higher powers of $\bfR$. 

Unless explicitly stated otherwise, the $X_{it}$ are iid and symmetric throughout this paper, which implies that the $Y_{it}$ are symmetric as well. The following properties of the matrix $\Y=(Y_{it})$ will be repeatedly used in this paper.
\begin{enumerate}
\item[(1)] By symmetry of the entry distribution we have for $s\le n$ that $\E [Y_{i1}^{m_1}\cdots Y_{is}^{m_s}]=0$ if at least one exponent $m_j\in\N$ is odd.
\item[(2)] $\Y$ has independent rows.
\item[(3)] By definition, $\sum_{t=1}^n Y^2_{it}=1$ for each row $i$.  
\end{enumerate}
The first step is to study $\tr(\bfR^2)$ for a wide range of distributions of $X_{11}$ and growth rates of the dimension $p$ relative to the sample size $n$. But before we delve deeper into this we need to introduce some important notation and results about the moments of $Y_{ij}$'s that will be crucial in our analysis.

\subsection{Mixed moments of the self-normalized entries}
For all positive integers $k_1,\ldots, k_r$, define 
\begin{equation*}
    \beta_{2k_1,\ldots, {2k_r}}:=\E[Y_{11}^{2k_1}Y_{12}^{2k_2} \cdots Y_{1r}^{2k_r}],    
\end{equation*}
where we recall the definition of $Y_{ij}$ from \eqref{def:R}. Since $\beta_{2k_1,\ldots, {2k_r}}=\beta_{2k_{\pi(1)},\ldots, 2k_{\pi(r)}}$ for any permutation $\pi$ on $\{1,\ldots,r\}$ we will write the indices in decreasing order, e.g., instead of $\beta_{2,4}$ we prefer writing $\beta_{4,2}$. In view of $\sum_{t=1}^n Y^2_{it}=1$, one has $\beta_2=n^{-1}$ which motivates the notation 
\begin{equation*}
    \tbeta_{2k_1,\ldots, {2k_r}}:=\E [ (Y_{11}^2-n^{-1})^{k_1} \cdots (Y_{1r}^2-n^{-1})^{k_r} ]\,
\end{equation*}
for the mixed centered moments. 

For positive integers $k_1,\ldots,k_r$, set $k=k_1+\cdots+k_r$ and $N_1=\#\{1\le i\le r: k_i=1\}$, and let $\Gamma(\cdot)$ denote the gamma function. Next, we introduce a set of positive $\alpha$-dependent constants which will be used throughout the paper:
\begin{equation*}
    c_{k_1,\ldots, k_r}(\alpha):=
    \begin{cases}
        \displaystyle
        \frac{\big(\alpha/2\big)^{r-1} \Gamma(r) \prod_{j=1}^r \Gamma(k_j-\alpha/2)}{ \big(\Gamma(1-\alpha/2)\big)^r\, \Gamma(k)}\,, & \alpha\in (0,2),\\
        \displaystyle
        \frac{(\alpha/2)^{r-N_1}\Gamma(N_1(1-\alpha/2)+ r\alpha/2) \, \prod_{i:k_i\ge 2} \Gamma(k_i-\alpha/2)}{\Gamma(k)}\,, & \alpha\in [2,4).
    \end{cases}
\end{equation*}
If $r=1$ and $k\ge 2$, this expression simplifies to
\begin{equation}\label{eq:cm-def}
    c_k(\alpha):=
    \begin{cases}
        \displaystyle
        \frac{\Gamma(k-\alpha/2)}{\Gamma(1-\alpha/2)\Gamma(k)}\,,& \alpha\in (0,2),\\
        \displaystyle
        \frac{\Gamma(1+\alpha/2)\Gamma(k-\alpha/2)}{\Gamma(k)}\,,& \alpha\in [2,4).
    \end{cases} 
\end{equation}
The following key lemmas reveal the asymptotic behavior of $\beta_{2k_1,\ldots, {2k_r}}$ and $\tbeta_{2k_1,\ldots, {2k_r}}$. 

\begin{lemma}\label{lem:allmoments}
Define the $Y_{ij}$'s as in \eqref{def:R} and let $L$ be a slowly varying function (at infinity).  
\begin{itemize}
\item[(a)] If $\alpha\in (0,2)$ and $\P(|X_{11}|>x)=x^{-\alpha} L(x)$ for $x>0$, then it holds
\begin{equation}\label{moment}
    \lim_{\nto} n^r \beta_{2k_1,\ldots, {2k_r}} = c_{k_1,\ldots, k_r}(\alpha).
\end{equation}
In particular, we have
\begin{equation}\label{highestmoment}
    \lim_{\nto} n \beta_{2k} = c_k(\alpha)\,,\qquad k\ge 2\,.
\end{equation}
\item[(b)] If $\alpha\in [2,4)$, $\E[X_{11}^2]=1$ and $\P(|X_{11}|>x)=x^{-\alpha} L(x)$ for $x>0$, then it holds
\begin{equation}\label{moment24}
    \lim_{\nto} \frac{n^{N_1(1-\alpha/2)+ r\alpha/2}}{L^{r-N_1}(n^{1/2})}  \beta_{2k_1,\ldots, {2k_r}} =   c_{k_1,\ldots, k_r}(\alpha)\,.
\end{equation}
In particular, we have
\begin{equation}\label{highestmoment24}
    \lim_{\nto}\frac{n^{\alpha/2}}{L(n^{1/2})}\beta_{2k} = c_k(\alpha)\,,\qquad k\ge 2\,.
\end{equation}
\item[(c)] Assume $\P(|X_{11}|>x)=x^{-\alpha} L(x)$. If $\{\alpha=2$ and $\E[X_{11}^2]=\infty\}$ or $\{\alpha=4, \E[X_{11}^2]=1$ and $\E[X_{11}^4]=\infty\}$, then \eqref{moment24} remains valid if we multiply its \lhs~ with some slowly varying function (that depends on $L$ and $k_1,\ldots,k_r$).
\item[(d)]If $\E[X_{11}^{2\max_i k_i}]<\infty$ and $\E[X_{11}^2]=1$, then it holds
\begin{equation*}
    \lim_{\nto} n^{k} \beta_{2k_1,\ldots, {2k_r}} = \prod_{i=1}^r \E[X_{11}^{2 k_i}]\,.
\end{equation*}
\end{itemize}
\end{lemma}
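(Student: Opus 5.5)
The plan is to use the Laplace-transform (Gamma-integral) representation of negative powers of the self-normalizing sum. With $S=\sum_{t=1}^n X_{1t}^2$ and $k=k_1+\cdots+k_r$ we have $S^{-k}=\Gamma(k)^{-1}\int_0^\infty s^{k-1}\e^{-sS}\dint s$. Since the $X_{1t}$ are iid, Fubini gives
\begin{equation*}
\beta_{2k_1,\ldots,2k_r}=\frac{1}{\Gamma(k)}\int_0^\infty s^{k-1}\Big(\prod_{j=1}^r \E\big[X^{2k_j}\e^{-sX^2}\big]\Big)\big(\E[\e^{-sX^2}]\big)^{n-r}\dint s ,
\end{equation*}
where $X\eid X_{11}$. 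Write $\varphi(s)=\E[\e^{-sX^2}]$ and $\psi_m(s)=\E[X^{2m}\e^{-sX^2}]=(-1)^m\varphi^{(m)}(s)$. The whole argument then reduces to small-$s$ asymptotics of $\varphi$ and $\psi_m$, followed by a rescaling $s=u/a_n$ and dominated convergence.

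\textbf{Step 1: Tauberian input.} Note that $X^2$ has tail $x^{-\alpha/2}L(x^{1/2})$.
\begin{itemize}
\item[(i)] For $\alpha\in(0,2)$, Karamata's Tauberian theorem gives $1-\varphi(s)\sim\Gamma(1-\alpha/2)\,s^{\alpha/2}L(s^{-1/2})$. For $m\ge1$ we get $\psi_m(s)\sim \frac{\alpha}{2}\Gamma(m-\alpha/2)\,s^{\alpha/2-m}L(s^{-1/2})$, obtained by integrating by parts against the tail.
\item[(ii)] For $\alpha\in[2,4)$ with $\E X^2=1$, we have $\psi_1(s)\to1$ and $1-\varphi(s)\sim s$. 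For $m\ge2$, still $\psi_m(s)\sim\frac{\alpha}{2}\Gamma(m-\alpha/2)s^{\alpha/2-m}L(s^{-1/2})$.
\end{itemize}
These are standard and would be placed in the appendix on regularly varying sums.

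\textbf{Step 2: rescaling.} For (a), choose $a_n$ with $n\,a_n^{-\alpha/2}L(a_n^{1/2})\Gamma(1-\alpha/2)=1$, so that $\varphi(u/a_n)^{n}\to \e^{-u^{\alpha/2}}$. Substituting $s=u/a_n$, the factors of $L$ and $a_n$ collapse: each $\psi_{k_j}$ contributes $a_n^{k_j-\alpha/2}L\sim a_n^{k_j}/(n\Gamma(1-\alpha/2))$. This yields
\begin{equation*}
n^r\beta\to \frac{(\alpha/2)^r\prod_j\Gamma(k_j-\alpha/2)}{\Gamma(1-\alpha/2)^r\Gamma(k)}\int_0^\infty u^{r\alpha/2-1}\e^{-u^{\alpha/2}}\dint u .
\end{equation*}
The last integral equals $(2/\alpha)\Gamma(r)$, which gives $c_{k_1,\ldots,k_r}(\alpha)$.

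For (b), take $a_n=n$, so that $\varphi(u/n)^n\to\e^{-u}$. Indices with $k_j=1$ contribute $\psi_1\to1$, while indices with $k_j\ge2$ contribute $\frac{\alpha}{2}\Gamma(k_j-\alpha/2)n^{k_j-\alpha/2}L(n^{1/2})$. The resulting power of $u$ is $u^{N_1+(r-N_1)\alpha/2-1}$ against $\e^{-u}$, which produces $\Gamma(N_1(1-\alpha/2)+r\alpha/2)$ after bookkeeping of exponents. Part (c) is the same computation with the de Haan/Karamata variants, where truncated second or fourth moments are slowly varying. Part (d) follows because $\psi_m(u/n)\to\E X^{2m}$ and $\int u^{k-1}\e^{-u}\dint u=\Gamma(k)$.

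\textbf{Step 3: main obstacle.} The main difficulty is justifying dominated convergence uniformly in $n$, both near $u=0$ and as $u\to\infty$. For large $u$, I would bound $\varphi(u/a_n)^{n-r}\le\exp(-(n-r)(1-\varphi(u/a_n)))$ and use Potter bounds to get $\exp(-c\,u^{\alpha/2-\epsilon})$ for $u\le a_n$. For $u>a_n$, I would instead use $\varphi(s)\le\varphi(1)<1$, which gives geometric decay that beats the polynomial factor $s^{k-1}$ and the crude bound $\psi_m\le C s^{-m}$. Near $u=0$, Potter bounds give $\psi_m(u/a_n)/\psi_m(1/a_n)\le Cu^{\alpha/2-m-\epsilon}$, and the total exponent $k-1+\sum_j(\alpha/2-k_j)-\epsilon>-1$ is integrable. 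I would handle the slowly varying ratios $L(a_n^{1/2}u^{-1/2})/L(a_n^{1/2})$ by the same Potter inequalities.
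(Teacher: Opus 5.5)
Your route is essentially the paper's. It uses the Gamma-integral representation \eqref{eq:formulagine} (from Albrecher--Teugels) and the small-$s$ asymptotics of $\psi_m=(-1)^m\varphi^{(m)}$, which the paper takes from Ladoucette's lemma and you derive via Karamata. It then lets $\varphi^{n-r}$ converge after rescaling and concludes by dominated convergence with Potter bounds. There are two differences. You carry out part (a) yourself with the scale $a_n$ instead of citing Albrecher--Teugels; your cancellation of $a_n$, the evaluation $\int_0^\infty u^{r\alpha/2-1}\e^{-u^{\alpha/2}}\dint u=(2/\alpha)\Gamma(r)$, and the exponent count in (b) are all correct. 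You also try to spell out the domination, which the paper defers to the literature.

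That domination sketch has one step that fails as written. For $s=u/a_n>1$, the crude bound $\psi_m(s)\le Cs^{-m}$ leaves $s^{k-1}\prod_j\psi_{k_j}(s)\le Cs^{-1}$. The bound $\varphi(s)^{n-r}\le\varphi(1)^{n-r}$ gives decay in $n$ only, so you are left with the divergent $\int_1^\infty s^{-1}\dint s$.

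You cannot extract decay in $s$ from $\varphi(s)^{n-r}$ in general either. The speed at which $\varphi(s)\to0$ is governed by $\P(X^2<1/s)$, which the hypotheses (conditions at infinity only) leave free. For example, $\P(X^2<\epsilon)=1/\log\log(1/\epsilon)$ near $0$ makes $\int_1^\infty s^{-1}\varphi(s)^m\dint s=\infty$ for every $m$.

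The remedy is simply not to discard the factors $X_j^{2k_j}$. Take $X_1,\ldots,X_r$ iid copies of $X$, set $T=X_1^2+\cdots+X_r^2$, and use $\int_0^\infty s^{k-1}\e^{-sT}\dint s=\Gamma(k)T^{-k}$. Tonelli then gives
\[
\int_1^\infty s^{k-1}\prod_{j=1}^r\psi_{k_j}(s)\dint s\le \Gamma(k)\,\E\Big[\prod_{j=1}^r X_j^{2k_j}\,T^{-k}\Big]\le\Gamma(k),
\]
because $\prod_j X_j^{2k_j}\le T^{k}$. Hence the region $s>1$ contributes at most $\varphi(1)^{n-r}$ to $\beta_{2k_1,\ldots,2k_r}$. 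Since $\varphi(1)<1$, this is negligible against any polynomial normalization in $n$. (Near $u=0$ the Potter loss is $r\epsilon$ rather than $\epsilon$, which is harmless.)
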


\begin{lemma}\label{lem:tbeta}
Let $\alpha\in (0,4)$ and assume that $\P(|X_{11}|>x)=x^{-\alpha} L(x)$ for $x>0$, where $L$ is a slowly varying function. If $\E[X_{11}^2]$ is finite, we additionally assume that $\E[X_{11}^2]=1$. Define the $Y_{ij}$'s as in \eqref{def:R} and consider integers $k_1,\ldots,k_r\ge 1$. Then it holds, as $\nto$, 
\begin{equation}\label{eq:tbeta}
    \tbeta_{2k_1,\ldots, {2k_r}} \left\{
        \begin{array}{ll}
            \sim \beta_{2k_1,\ldots, {2k_r}}\,, & \mbox{if } \min(k_1, \ldots,k_r)\ge 2\,, \\
            = O(\beta_{2k_1,\ldots, {2k_r}}) \,, & \mbox{if } \min(k_1, \ldots,k_r)=1\,.
        \end{array}\right.
\end{equation}
Moreover, for every $r\ge2$ we have
$\widetilde\beta_{2k_1,\ldots,2k_r}
=o\big(\widetilde\beta_{2(k_1+\cdots+k_r)}\big)$.
\end{lemma}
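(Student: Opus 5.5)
Expand each centered factor binomially. With $k=k_1+\cdots+k_r$,
\begin{equation*}
\tbeta_{2k_1,\ldots,2k_r}=\sum_{j_1=0}^{k_1}\cdots\sum_{j_r=0}^{k_r}\prod_{i=1}^r\binom{k_i}{j_i}(-n^{-1})^{k_i-j_i}\,\beta_{2j_1,\ldots,2j_r},
\end{equation*}
where a zero index $j_i=0$ simply drops that coordinate. The resulting lower-order mixed moment has fewer indices, so it is a $\beta$ with $r'\le r$ entries. Every term is therefore an explicit power of $n^{-1}$ times a mixed moment whose exact order is known from Lemma~\ref{lem:allmoments}. Parts (a)--(c) give the orders for $\alpha\in(0,4)$, and (c) only adds slowly varying corrections, which we carry along.

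The plan is to compare the order of each term with that of the leading term $j_i=k_i$ for all $i$, which is $\beta_{2k_1,\ldots,2k_r}$ itself. The unit of comparison is as follows. Lowering a single $j_i$ from $k_i\ge 2$ to some $j\ge 1$ multiplies the order by $n^{-(k_i-j)}$ and leaves the $\beta$-order unchanged. This holds for $\alpha<2$, where the order is $n^{-r'}$ regardless of the $j$'s. For $\alpha\in[2,4)$ the same is true as long as $j\ge2$, while $j=1$ changes the $\beta$-order by a factor $n^{\alpha/2-1}/L$, which is at most $n^{1}$. Dropping a coordinate ($j_i=0$) costs $n^{-k_i}$ but gains at most a factor of order $n$ (for $\alpha<2$), or $n^{\alpha/2}/L$, respectively $n^{1}$ if that coordinate had $k_i=1$.

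Write $\Delta$ for the total net power of $n$ relative to the leading term. If $\min k_i\ge2$, every nontrivial change gives $\Delta\le -1+\alpha/2-1$ or better. Since $\alpha<4$, this is a negative power, up to slowly varying factors. Hence all non-leading terms are $o(\beta_{2k_1,\ldots,2k_r})$, and we get $\tbeta\sim\beta$. If some $k_i=1$, the terms with $j_i=0$ can be of the same order, since $n^{-1}\cdot n=1$. This gives only the $O(\beta)$ bound. I will double-check this bookkeeping carefully in the $\alpha\in[2,4)$ case, where the exponent $N_1(1-\alpha/2)+r\alpha/2$ changes with $N_1$.

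For the final claim, the first part gives $\tbeta_{2k}\sim\beta_{2k}$ when $k\ge2$, with order $n^{-1}$ for $\alpha<2$ and $L(n^{1/2})n^{-\alpha/2}$ for $\alpha\in[2,4)$. Combined with $\tbeta_{2k_1,\ldots,2k_r}=O(\beta_{2k_1,\ldots,2k_r})$, it suffices to show $\beta_{2k_1,\ldots,2k_r}=o(\beta_{2k})$ for $r\ge2$.
\begin{itemize}
\item For $\alpha<2$, this reads $n^{-r}=o(n^{-1})$, which holds since $r\ge2$.
\item For $\alpha\in[2,4)$, compare $N_1(1-\alpha/2)+r\alpha/2$ with $\alpha/2$. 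The difference is $N_1(1-\alpha/2)+(r-1)\alpha/2\ge N_1+(r-1-N_1)\alpha/2$, which is at least $1>0$ when $N_1\ge1$. When $N_1=0$ it equals $(r-1)\alpha/2>0$.
\end{itemize}
In both cases the polynomial gap swamps the slowly varying factors.

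The boundary cases $\alpha=2$ with infinite variance and $\alpha=4$ are the main nuisance, since there Lemma~\ref{lem:allmoments}(c) only gives the orders up to unspecified slowly varying functions. I would handle them by noting that the strict polynomial gaps exhibited above survive multiplication by any slowly varying function, via Potter bounds. One more caveat: in the case $r=1$, $k=1$ the quantity $\tbeta_2=0$ trivially, so the final claim is only meaningful with $k\ge2$ in the comparison, which is automatic when $r\ge2$.
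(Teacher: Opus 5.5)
Your proposal follows the paper's proof essentially step by step. The paper expands $\prod_{i=1}^k(b_i-\tfrac1n)$ over subsets $S\subseteq\{1,\ldots,k\}$, which is your binomial expansion before grouping. It then compares each term $n^{-m}b(S^c)$ with $\beta_{2k_1,\ldots,2k_r}$ via Lemma~\ref{lem:allmoments}, and for the second claim shows $\Delta_\alpha(r,N_1)=-\frac{\alpha}{2}(r-N_1-1)-N_1<0$, which is exactly your exponent gap with the sign flipped.

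There are only minor slips, none of which affects the conclusions:
\begin{itemize}
\item In the first part the worst exponent is $\max(-1,\alpha/2-2)$, not $\alpha/2-2$. For $\alpha<2$ the value $-1$ is attained.
\item For $N_1=r=2$ the gap equals $2-\alpha/2$, which lies in $(0,1)$ when $\alpha\in(2,4)$. So ``at least $1$'' should read ``positive''.
\item Potter bounds only rescue strict polynomial gaps. At $\alpha=2$ with $\E[X_{11}^2]=\infty$, the zero-gap $O(\beta)$ terms ($k_i=1$, $j_i=0$) need an extra fact: the slowly varying correction in Lemma~\ref{lem:allmoments}(c) must depend on the indices only through $r-N_1$. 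The paper also leaves this implicit.
\end{itemize}
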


\subsection{Decomposition of $\tr(\bfR^2)$ into $T_1$ and $T_2$}
Now we are ready to calculate the mean and variance of $\tr(\bfR^2)$. From \eqref{eq:mk} it is simple to see that 
\begin{equation} \label{eq:derivmean} 
    \begin{aligned} 
    \tr(\bfR^2) 
    &= \sum_{i_1, i_2 = 1}^p \sum_{t_1, t_2 = 1}^n Y_{i_1 t_1} Y_{i_1 t_2} Y_{i_2 t_1} Y_{i_2 t_2} \\ 
    &= \sum_{i = 1}^p \sum_{t_1, t_2 = 1}^n Y_{i t_1}^2 Y_{i t_2}^2 + \sum_{\substack{i_1,i_2 = 1 \\ i_1 \neq i_2}}^p \sum_{t_1,t_2 = 1}^n Y_{i_1 t_1} Y_{i_1 t_2} Y_{i_2 t_1} Y_{i_2 t_2} \\ 
    &= p + \sum_{\substack{i_1,i_2 = 1 \\ i_1 \neq i_2}}^p \sum_{t = 1}^n Y_{i_1 t}^2 Y_{i_2 t}^2 + \sum_{\substack{i_1,i_2 = 1 \\ i_1 \neq i_2}}^p \sum_{\substack{t_1,t_2 = 1 \\ t_1 \neq t_2}}^n Y_{i_1 t_1} Y_{i_1 t_2} Y_{i_2 t_1} Y_{i_2 t_2}\,, 
    \end{aligned} 
\end{equation}
where the property $Y_{i1}^2 + \dots + Y_{in}^2 = 1$ was used for the last equality. Since $\E[Y_{it}^2]=1/n$, we deduce that
\begin{equation} \label{eq:mean}
    \mu_n:= \E[\tr(\bfR^2)] = p + \frac{p(p - 1)}{n}.
\end{equation}
An interesting observation is that the mean does not depend on the distribution of $X_{11}$. In view of the identity $\sum_{t = 1}^n \big(Y_{i_1 t}^2 Y_{i_2 t}^2 - \frac{1}{n^2}\big)
    =\sum_{t = 1}^n \big(Y_{i_1 t}^2 - \frac{1}{n} \big) \big(Y_{i_2 t}^2 - \frac{1}{n} \big)$,
we have derived  the nice decomposition 
\begin{equation}\label{eq:T_1+T_2_decomp}
    \tr(\bfR^2)-\E[\tr(\bfR^2)] = T_1 + T_2\,,
\end{equation}
where 
\begin{equation}\label{eq:def_T1_T2}
    \begin{aligned}
    T_1 &:=
    \sum_{\substack{i_1,i_2 = 1 \\ i_1 \neq i_2}}^p \sum_{t = 1}^n
    \Big(Y_{i_1 t}^2 - \frac{1}{n} \Big)
    \Big(Y_{i_2 t}^2 - \frac{1}{n} \Big),\\
    T_2 &:=
    \sum_{\substack{i_1,i_2 = 1 \\ i_1 \neq i_2}}^p
    \sum_{\substack{t_1,t_2 = 1 \\ t_1 \neq t_2}}^n
    Y_{i_1 t_1} Y_{i_1 t_2} Y_{i_2 t_1} Y_{i_2 t_2}.
    \end{aligned}
\end{equation}
are two sums of centered and uncorrelated random variables.
Using \eqref{eq:T_1+T_2_decomp}, the variance of $\tr(\bfR^2)$ is given by
\begin{equation}\label{eq:sdgf234}
    \Var(\tr(\bfR^2)) = \E[T_1^2] + \E[T_2^2],
\end{equation}
since $\E[T_1 T_2]=0$ as it only contains moments of odd powers of $Y_{it}$'s. The next lemma gives $\E[T_1^2]$ and $\E[T_2^2]$ in terms of $\beta_4=\E[Y_{11}^4]$. 
\begin{lemma}\label{lem:variance}
For any symmetric distribution of $X_{11}$ and $p=p_n\to \infty$,  it holds 
\begin{equation}
    \begin{aligned}
    \E[T_1^2] 
    &=
    \frac{2p(p-1)n^2}{n - 1} \Big(\beta_4 - \frac{1}{n^2}\Big)^2
    \sim 2p^2n \Big(\beta_4 - \frac{1}{n^2}\Big)^2\,,\\
    \E[T_2^2]
    &=   \frac{ 4p(p-1)n}{n-1}\Big(\frac{1}{n}-\beta_4\Big)^2 \sim 4 p^2 \Big(\frac{1}{n}-\beta_4\Big)^2\,, \qquad \nto\,.
    \end{aligned}
\end{equation}
\end{lemma}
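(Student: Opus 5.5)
The plan is to compute both second moments exactly by expanding the squares and using the three structural properties (1)–(3) of $\Y$. Independence of rows and symmetry kill most index configurations. The row constraint $\sum_t Y_{it}^2=1$ then expresses the surviving mixed moments through $\beta_4$ alone. The asymptotic equivalences follow at the end from $p\to\infty$, which gives $p(p-1)\sim p^2$, and from $n/(n-1)\to1$.

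\emph{The term $T_1$.} Write $Z_{it}:=Y_{it}^2-1/n$, so that $T_1=\sum_{i_1\ne i_2}\sum_t Z_{i_1t}Z_{i_2t}$ and
\[
\E[T_1^2]=\sum_{i_1\ne i_2}\sum_{i_3\ne i_4}\sum_{t,s}\E[Z_{i_1t}Z_{i_2t}Z_{i_3s}Z_{i_4s}].
\]
\begin{itemize}
\item Rows are independent and $\E Z_{it}=0$. If some row index among $i_1,i_2$ does not reappear in $\{i_3,i_4\}$, the expectation factors off a mean-zero term and vanishes. Hence only $\{i_3,i_4\}=\{i_1,i_2\}$ survives, and there are two such orderings.
\item Each surviving term equals $\E[Z_{1t}Z_{1s}]^2$. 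Therefore $\E[T_1^2]=2p(p-1)\sum_{t,s}\bigl(\E[Z_{1t}Z_{1s}]\bigr)^2$.
\item Set $v:=\E[Z_{11}^2]=\beta_4-n^{-2}$ and let $c:=\E[Z_{11}Z_{12}]$. By property (3), $\sum_s Z_{1s}=0$. Multiplying by $Z_{11}$ and taking expectations gives $v+(n-1)c=0$, so $c=-v/(n-1)$.
\item Consequently
\[
\sum_{t,s}\bigl(\E[Z_{1t}Z_{1s}]\bigr)^2=nv^2+n(n-1)\frac{v^2}{(n-1)^2}=\frac{n^2}{n-1}\,v^2,
\]
which is the claimed formula for $\E[T_1^2]$.
\end{itemize}

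\emph{The term $T_2$.} Expand
\[
\E[T_2^2]=\sum\E\bigl[Y_{i_1t_1}Y_{i_1t_2}Y_{i_2t_1}Y_{i_2t_2}\,Y_{i_3s_1}Y_{i_3s_2}Y_{i_4s_1}Y_{i_4s_2}\bigr],
\]
with $t_1\ne t_2$ and $s_1\ne s_2$.
\begin{itemize}
\item Suppose $i_1\notin\{i_3,i_4\}$. Then row $i_1$ contributes the factor $\E[Y_{1t_1}Y_{1t_2}]$ with $t_1\ne t_2$, which vanishes by property (1). The same holds for $i_2$, so again $\{i_3,i_4\}=\{i_1,i_2\}$.
\item Each row then contributes $\E[Y_{1t_1}Y_{1t_2}Y_{1s_1}Y_{1s_2}]$. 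By symmetry this is zero unless every column appears an even number of times, which forces $\{s_1,s_2\}=\{t_1,t_2\}$. In that case its value is $\beta_{2,2}$.
\item Counting two row pairings and two column pairings gives
\[
\E[T_2^2]=4p(p-1)n(n-1)\beta_{2,2}^2.
\]
\item To eliminate $\beta_{2,2}$, use property (3): $n^{-1}=\E\bigl[Y_{11}^2\sum_tY_{1t}^2\bigr]=\beta_4+(n-1)\beta_{2,2}$. Hence $\beta_{2,2}=(1/n-\beta_4)/(n-1)$.
\item Substituting yields $\frac{4p(p-1)n}{n-1}(1/n-\beta_4)^2$, as claimed.
\end{itemize}

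The only delicate step is the bookkeeping of which index configurations survive, in particular making sure no coincidences such as $i_1=i_3$ with $i_2\ne i_4$ are overlooked. Each such configuration is ruled out by a single unmatched row contributing a zero-mean factor. No moment or tail assumptions are needed anywhere, consistent with the statement holding for any symmetric law.
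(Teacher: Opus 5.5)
Your proposal is correct and follows essentially the same route as the paper. Both arguments expand the squares and use row independence (plus symmetry for $T_2$) to force $\{i_3,i_4\}=\{i_1,i_2\}$ and matched columns, which gives the diagonal/off-diagonal split for $T_1$ and $\E[T_2^2]=4p(p-1)n(n-1)\beta_{2,2}^2$; the paper then eliminates $\beta_{2,2}$ via the row-constraint identity $\beta_4+(n-1)\beta_{2,2}=1/n$ (Lemma~\ref{lem:betas}), and your direct derivation of $\E[Z_{11}Z_{12}]=-v/(n-1)$ from $\sum_s Z_{1s}=0$ is just the centered form of that same identity.
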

From \eqref{eq:sdgf234} and Lemma~\ref{lem:variance} we deduce that 
\begin{equation}\label{eq:var} 
    \Var(\tr(\bfR^2))= 2np(p-1)\bigg(\frac{n}{n-1} \bigg(\beta_4 - \frac{1}{n^2}\bigg)^2 + \frac{2}{n-1}\bigg(\frac{1}{n}-\beta_4\bigg)^2\bigg).   
\end{equation}
\begin{remark}\label{rem:finitefourth}
From a theoretical point of view, it is more interesting to discuss and interpret the findings of this work for distributions with infinite fourth moments. To this end, we typically impose the regular variation assumption \eqref{eq:regvar} with index $\alpha\in(0,4)$. We would like to mention that the case of finite fourth moment is much simpler from a technical point of view since it only requires part (d) of Lemma~\ref{lem:allmoments}, whereas the regular variation setup with $\alpha\in(0,4)$ contains fascinating transitions since the orders of the $\beta$'s depend on $\alpha$ as showcased in parts (a) and (b) of Lemma~\ref{lem:allmoments}.
\end{remark}
Roughly speaking, the variance in \eqref{eq:var} is essentially determined by $\E[T_1^2]$ if $\E|X_{11}|^3= \infty$ and by $\E[T_2^2]$ otherwise. This means that the variance undergoes a transition at $\alpha=3$. To make this point more precise, we will start by analyzing the formulas in Lemma~\ref{lem:variance} for $\alpha \in (2,4)$. In this case, Lemma~\ref{lem:allmoments} asserts that $\beta_4\sim n^{-\alpha/2} L(n^{1/2}) c_2(\alpha)$, as $\nto$.
In combination with Lemma~\ref{lem:variance}, we deduce that,  as $\nto$, 
\begin{equation}\label{eq:dsf245d} 
    \Var(\tr(\bfR^2)) \sim 
    \begin{cases} 
        \E[T_1^2] \sim 2p^2 n \beta_4^2 &\quad \text{if}\ \alpha < 3, \\ 
        \E[T_2^2] \sim 4p^2 n^{-2} &\quad \text{if}\ \alpha > 3. 
    \end{cases} 
\end{equation}

The next lemma provides more detailed information about the asymptotic behavior of $\Var(\tr(\bfR^2))$. 
It shows that
\begin{equation}\label{eq:defsigma}
    \sigma_n^2:=2p^2 n \big(\beta_4^2+2 n^{-3}\big)
\end{equation} 
 is asymptotically equivalent to $\Var(\tr(\bfR^2))$, and thus efficiently captures the effect of $p,n$ and the distribution of $X_{11}$. 
\begin{lemma}\label{lem:asymp_variance}
For any symmetric distribution of $X_{11}$ and $p=p_n\to \infty$,  it holds 
\begin{equation*}
    \Var(\tr(\bfR^2)) \sim \sigma_n^2\,, \qquad \nto\,.
\end{equation*} 
Moreover, $\sigma_n^2$ satisfies 
\begin{equation*}
    \sigma_n^2 \sim 
    \begin{cases}
        4p^2 n^{-2} &\text{if}\ \E[X_{11}^4]<\infty \text{ or } \alpha \in (3, 4),\\
        2p^2n^{-2}(L^2(n^{1/2})c_2^2(\alpha) + 2) &\text{if}\ \alpha = 3, \\
        2p^2n^{1-\alpha} L^2(n^{1/2}) c_2^2(\alpha) &\text{if}\ \alpha \in [2, 3)\ \text{and}\ \E[X_{11}^2] = 1, \\
        2p^2 n^{-1} (1 - \alpha / 2)^2 &\text{if}\ \alpha \in (0, 2),
    \end{cases}
\end{equation*}  
where $c_2(\alpha) =\Gamma(1+ \alpha/2) \,\Gamma(2 - \alpha/2)$.
\end{lemma}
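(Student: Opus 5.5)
Both assertions follow from the exact formula \eqref{eq:var} of Lemma~\ref{lem:variance} together with the asymptotics of $\beta_4$ in Lemma~\ref{lem:allmoments}. We write $\Var(\tr(\bfR^2))=2np(p-1)\big(A_n+B_n\big)$ with $A_n=\frac{n}{n-1}(\beta_4-n^{-2})^2$ and $B_n=\frac{2}{n-1}(n^{-1}-\beta_4)^2$. Since $p\to\infty$, we have $p(p-1)\sim p^2$, so the first claim reduces to showing $A_n+B_n\sim \beta_4^2+2n^{-3}$. The only distribution-free input needed is the elementary bound $n^{-2}\le \beta_4\le n^{-1}$. The lower bound follows from Cauchy--Schwarz applied to $\sum_t Y_{1t}^2=1$, which gives $1\le n\sum_t Y_{1t}^4$. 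The upper bound follows from $Y_{1t}^4\le Y_{1t}^2$.

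For the equivalence we expand $A_n=(1+O(n^{-1}))(\beta_4^2-2\beta_4n^{-2}+n^{-4})$ and $B_n=2n^{-3}(1+O(n^{-1}))(1-n\beta_4)^2$. It is convenient to split into cases. If $n\beta_4\to 0$, then $B_n\sim 2n^{-3}$. Moreover $A_n-\beta_4^2=O(\beta_4 n^{-2}+n^{-1}\beta_4^2+n^{-4})$, and each of these terms is $o(\beta_4^2+n^{-3})$ because $\beta_4n^{-2}\le \tfrac12(\beta_4^2 n^{-1}+n^{-3})$. If instead $n\beta_4\not\to 0$ along a subsequence, then $\beta_4\gtrsim n^{-1}$ there. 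In that case $\beta_4^2$ dominates both $n^{-3}$ and $B_n=O(n^{-3})$, and also $A_n\sim\beta_4^2$. To avoid case splitting one can bound the errors directly: $|A_n+B_n-\beta_4^2-2n^{-3}|\lesssim n^{-1}\beta_4^2+\beta_4 n^{-2}+n^{-4}$. The cross term is then handled by AM--GM, $\beta_4n^{-2}\le \varepsilon\beta_4^2+\varepsilon^{-1}n^{-4}$, with $\varepsilon\to0$ slowly. This gives $o(\beta_4^2+n^{-3})$ uniformly. This is the only slightly delicate step: the cross term $\beta_4 n^{-2}$ must be shown negligible in every regime, including the borderline $\beta_4\asymp n^{-3/2}$ where $\beta_4^2$ and $n^{-3}$ are of the same order.

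For the case list we insert Lemma~\ref{lem:allmoments} into $\sigma_n^2=2p^2n(\beta_4^2+2n^{-3})$.
\begin{itemize}
\item If $\E[X_{11}^4]<\infty$, part (d) gives $\beta_4\sim \E[X_{11}^4]n^{-2}$. Then $n\beta_4^2=O(n^{-3})=o(n^{-2})$, so $\sigma_n^2\sim 4p^2n^{-2}$.
\item If $\alpha\in(2,4)$, part (b) with $k=2$ gives $\beta_4\sim c_2(\alpha)L(n^{1/2})n^{-\alpha/2}$, hence $n\beta_4^2\sim c_2^2(\alpha)L^2(n^{1/2})n^{1-\alpha}$.
 \begin{itemize}
 \item For $\alpha>3$ this is $o(n^{-2})$ by Potter bounds, since $L^2(n^{1/2})n^{3-\alpha}\to0$.
 \item For $\alpha<3$ it dominates $n^{-2}$ for the same reason.
 \item For $\alpha=3$ the two terms are of the same order and we keep both, giving $2p^2n^{-2}(L^2c_2^2+2)$.
 \end{itemize}
\item At $\alpha=2$ with $\E[X_{11}^2]=1$ we use the same formula (b). Here $n^{-1}L^2(n^{1/2})$ dominates $n^{-2}$ trivially.
\item For $\alpha\in(0,2)$, part (a) gives $n\beta_4\to c_2(\alpha)=\Gamma(2-\alpha/2)/(\Gamma(1-\alpha/2)\Gamma(2))=1-\alpha/2$. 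Then $\sigma_n^2\sim 2p^2n\cdot n^{-2}(1-\alpha/2)^2$.
\end{itemize}
Finally, in the regime $\alpha\in[2,4)$ the constant $c_2(\alpha)$ in \eqref{eq:cm-def} simplifies to $\Gamma(1+\alpha/2)\Gamma(2-\alpha/2)/\Gamma(2)=\Gamma(1+\alpha/2)\Gamma(2-\alpha/2)$, as stated.
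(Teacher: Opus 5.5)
Your argument is correct. For the explicit case list it coincides with the paper's proof: both insert the $\beta_4$-asymptotics of Lemma~\ref{lem:allmoments} into $2p^2n(\beta_4^2+2n^{-3})$ and compare $n\beta_4^2$ with $2n^{-2}$.

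The genuine difference is in the first claim. The paper obtains $\Var(\tr(\bfR^2))\sim\sigma_n^2$ only inside each case. Once the order of $\beta_4$ is known, it shows that $(\beta_4-n^{-2})^2$ and $\frac{2}{n}(\beta_4-n^{-1})^2$ are equivalent to, or negligible against, the corresponding pieces of $\beta_4^2+2n^{-3}$. It then remarks that the equivalence is "easily deduced from the above considerations in all cases." That argument formally covers only entries with finite fourth moment or regularly varying tails, although the lemma is stated for any symmetric distribution. Your route uses the distribution-free bounds $n^{-2}\le\beta_4\le n^{-1}$ together with a uniform error estimate. This proves the equivalence in the generality actually stated, at the modest cost of handling the cross term $\beta_4n^{-2}$ carefully.

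There is one small slip. In the branch $n\beta_4\to0$, the inequality $\beta_4n^{-2}\le\frac12(\beta_4^2n^{-1}+n^{-3})$ only yields $O(\beta_4^2+n^{-3})$, because the $\frac12 n^{-3}$ piece is not small. The correct one-line reason there is $\beta_4n^{-2}=(n\beta_4)\,n^{-3}=o(n^{-3})$. Your uniform version, $\beta_4n^{-2}\le\varepsilon\beta_4^2+\varepsilon^{-1}n^{-4}$, is fine for any $\varepsilon\to0$ with $n\varepsilon\to\infty$, for example $\varepsilon=n^{-1/2}$.
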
 
In view of \eqref{eq:dsf245d}, it follows from Markov's inequality that
\begin{equation}\label{remark:var_asymp}
  \frac{\tr(\bfR^2) -\E[\tr(\bfR^2)]}{\sigma_n} = 
    \begin{cases}
        T_1/\sigma_n +o_{\P}(1) &\quad \text{if}\ \alpha < 3,  \\
        T_2/\sigma_n +o_{\P}(1) &\quad \text{if}\ \alpha > 3.
    \end{cases}
\end{equation} 

\subsection{Fourth moment analysis across the whole regime}
Throughout this subsection, we assume that the distribution of $X_{11}$ is symmetric and regularly varying with index $\alpha\in(0,4)$, unless explicitly stated otherwise.
In order to get a first idea for  which combinations of $\alpha,p$ and $n$ we might have a CLT for $(\tr(\bfR^2)-\E[\tr(\bfR^2)])/\sigma_n \cid N(0,1)$, we compute the fourth moment of these random variables and check when they converge to $3$, the fourth moment of a standard normal variable. 

In view of \eqref{remark:var_asymp}, it suffices to study $T_1$ for $\alpha\le 3$ and $T_2$ for $\alpha\ge 3$. Careful combinatorial considerations yield the following important technical result.
\begin{lemma}\label{lem:fourth_mom_asymp}
For $\alpha \in (0, 4)$ it holds 
\begin{equation*}
    \E[T_1^4] \sim 12 \beta_4^4 n^2 p^4 + 8 \beta_8^2 n p^2\,,\qquad \nto\,.
\end{equation*}
For  $\alpha \in [3, 4)$  it holds 
\begin{equation*}
    \E[T_2^4]\sim 48 \,n^{-4} p^4\,,\qquad \nto\,.
\end{equation*}
\end{lemma}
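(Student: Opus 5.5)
Both asymptotics come from expanding the fourth power into a sum over index configurations, using row independence and symmetry to discard most of them, and then ranking the survivors with Lemmas~\ref{lem:allmoments} and~\ref{lem:tbeta}.

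\textbf{The term $T_1$.} Write $Z_{it}:=Y_{it}^2-n^{-1}$, so that $T_1=\sum_{i\ne j}\sum_t Z_{it}Z_{jt}$. Then $\E[T_1^4]$ is a sum over four ordered pairs $(i_a,j_a)$ with $i_a\ne j_a$ and four columns $t_a$. Rows are independent and $\E[Z_{it}]=0$. Hence every row index appearing must appear at least twice among the eight row slots. Within one row the factors are $\tbeta$-type mixed moments $\E[Z_{i t_1}^{k_1}\cdots Z_{i t_r}^{k_r}]$, and their size is controlled by Lemma~\ref{lem:tbeta}.

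I would classify configurations by the multigraph on row indices with edges $\{i_a,j_a\}$ labelled by $t_a$.
\begin{itemize}
\item \textbf{Two disjoint doubled edges} (e.g. $\{i_1,j_1\}=\{i_2,j_2\}$, $t_1=t_2$, and similarly for the other pair). This gives $3$ pairings, $2\cdot 2$ orientations, $\sim p^4$ choices of rows and $\sim n^2$ columns, each weighted by $\tbeta_4^4\sim\beta_4^4$. Total: $12\,p^4n^2\beta_4^4$.
\item \textbf{One edge quadrupled}, with $t_1=\dots=t_4$ and $2^3$ orientations. This gives $8\,p^2 n\,\tbeta_8^2\sim 8\,p^2n\,\beta_8^2$.
\item \textbf{All other configurations}, e.g. doubled edges with distinct columns, a $4$-cycle, or two doubled edges sharing a vertex. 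Each carries either fewer free row or column indices, or mixed moments $\tbeta_{2k_1,\ldots,2k_r}$ with $r\ge2$, which are $o(\tbeta_{2k})$ by Lemma~\ref{lem:tbeta}, or $O(\beta)$ with explicit orders from Lemma~\ref{lem:allmoments}.
\end{itemize}

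The main obstacle is showing that every leftover configuration is $o(p^4n^2\beta_4^4+p^2n\beta_8^2)$ uniformly over $\alpha\in(0,4)$. Three examples show what has to be checked.
\begin{itemize}
\item \emph{Two doubled edges on the same pair of rows with distinct columns $t_1\ne t_2$.} The per-row factor is $\tbeta_{4,4}$, and the term is $p^2n^2\tbeta_{4,4}^2$. It should be compared with $p^2n\beta_8^2$, using $n\tbeta_{4,4}=o(\beta_8)$. For $\alpha<2$ this follows from $n^2\beta_{4,4}\to c_{2,2}$ versus $n\beta_8\to c_4$.
\item \emph{Star configurations sharing one row.} The shared row involves $\tbeta_{6,2}$ or $\tbeta_8$, which gives terms like $p^3n\,\tbeta_8\tbeta_4^2$. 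Compare these via $\sqrt{AB}$-type bounds with $A=p^4n^2\beta_4^4$ and $B=p^2n\beta_8^2$. That is, check $p^3n\beta_8\beta_4^2\le\sqrt{AB}\,n^{-1/2}$, which is $o(A+B)$.
\item \emph{The $4$-cycle.} This gives $p^4 n\,\tbeta_4^4$, which is smaller than $A$ by a factor $n^{-1}$.
\end{itemize}
I would organize this as a finite list of graph shapes. For each shape I would write the exponent of $p$, of $n$, and the product of $\tbeta$'s. Then I would apply Lemma~\ref{lem:allmoments} separately for $\alpha\in(0,2)$ and $[2,4)$, with (c) covering the boundary slowly varying cases. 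The key fact in each case is that any $\tbeta$ with $r\ge2$, or any coincidence of columns forced without compensation, loses at least a power $n^{-\epsilon}$ or a factor $o(1)$.

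\textbf{The term $T_2$ for $\alpha\in[3,4)$.} Here $T_2=\sum_{i\ne j}\sum_{s\ne t}Y_{is}Y_{it}Y_{js}Y_{jt}$. By symmetry, each $Y_{it}$ must appear to an even power within its row. For $\alpha\ge3$ all $\beta_{2k_1,\ldots}$ beyond the second moments are small relative to products of $n^{-1}$: indeed $n^2\beta_{2,2}\to1$, while $\beta_4=O(n^{-3/2+\epsilon})=o(n^{-1})$. So the dominant configurations are Wick-like pairings of the four summands. They form two disjoint pairs, each pair equal up to orientation to the same $\{i,j\}\times\{s,t\}$ rectangle. This gives $3$ pairings, $4$ orientations each of rows and columns, hence $3\cdot16=48$, with $p^4n^4$ indices and weight $\beta_{2,2}^4\sim n^{-8}$. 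Total: $48\,p^4n^{-4}$.

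Overlapping rectangles produce either fewer indices, such as $p^2n^2\beta_{4,4}^2$, or higher $\beta$'s. The same bookkeeping shows these are $o(p^4n^{-4})$. Both parts should also use $\sum_tY_{it}^2=1$, via the exact-identity trick of Lemma~\ref{lem:variance}, to handle the correlation $\beta_{2,2}\ne n^{-2}$ precisely.
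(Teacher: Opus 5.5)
Your proposal follows the paper's proof. You expand the fourth moments over index configurations. For $T_1$ you keep the two disjoint doubled edges ($12\,\beta_4^4n^2p^4$) and the quadrupled edge ($8\,\beta_8^2np^2$); for $T_2$ you keep the three pairings of identical rectangles ($48\,n^{-4}p^4$). Everything else is discarded via Lemmas~\ref{lem:allmoments} and~\ref{lem:tbeta} plus the sharp order $\tbeta_{2,2}\sim -n^{-1}\beta_4$ from $\sum_t Y_{it}^2=1$. You rightly flag that identity, and it is indispensable for $\alpha>3$: the generic bound $\tbeta_{2,2}=O(n^{-2})$ does not make $\tbeta_4^2\tbeta_{2,2}^2n^3p^4$ negligible against $\beta_4^4n^2p^4$. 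Comparing the $p^3$-terms with $\sqrt{AB}$ is a cleaner way than the paper's substitution $p\asymp n^{\delta}$ to handle arbitrary growth of $p$.

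Two corrections.

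\emph{The $\tbeta_{4,4}$ example.} The quantity to control is $p^2n^2\tbeta_{4,4}^2/(p^2n\beta_8^2)=n\tbeta_{4,4}^2/\beta_8^2$, so you need $\sqrt{n}\,\tbeta_{4,4}=o(\beta_8)$. The condition you wrote, $n\tbeta_{4,4}=o(\beta_8)$, is false for $\alpha<2$: by your own asymptotics $n\beta_{4,4}/\beta_8\to c_{2,2}(\alpha)/c_4(\alpha)>0$. The correct ratio is $O(n^{-1})$ there and $\slv n^{1-\alpha}$ for $\alpha\in[2,4)$, so the conclusion survives.

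\emph{The $T_2$ claim at $\alpha=3$.} Your claim $p^2n^2\beta_{4,4}^2=o(p^4n^{-4})$ fails at $\alpha=3$ when $L\to\infty$ and $p$ grows slowly. There $\beta_{4,4}\sim c_{2,2}(3)\,n^{-3}L^2(n^{1/2})$, so this contribution relative to $48\,n^{-4}p^4$ is of order $L(n^{1/2})^4/p^2$. Since all summands of $\E[T_2^4]$ are nonnegative, the stated asymptotics at $\alpha=3$ genuinely require $L(n^{1/2})^2=o(p)$. The paper's table-based argument has the same blind spot, so this is a defect of the statement at $\alpha=3$ rather than of your method. Still, you should not assert it for arbitrary $p\to\infty$ there.
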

In order to capture the interplay between $p,n$ and $\alpha$, we introduce the function $\delta^*$,
\begin{equation}
    \delta^*(\alpha):=
    \begin{cases}
        1/2  &\quad\text{if}\ \alpha \in (0,2), \\
        (\alpha-1) / 2  &\quad\text{if}\ \alpha \in [2,3], \label{eq:delta_star} \\
        (5 - \alpha)/2 &\quad \text{if}\ \alpha \in (3, 4). \\ 
    \end{cases}
\end{equation}
A condition of the form $p = \omega(n^\delta)$ for some $\delta > \delta^*(\alpha)$ will turn out to play an important role in convergence of the fourth moment and in the CLT for $\tr(\bfR^2)$.
The next theorem sheds light on the convergence of the fourth moment.
\begin{theorem}\label{lem:completefourth}
    For $\alpha \in (0, 3) \cup (3, 4)$ and assuming that $p = \omega(n^\delta)$ for some $\delta > \delta^*(\alpha)$, we have
\begin{equation}\label{eq:fourth_mom_equal_3}
   \lim_{\nto} \frac{\E\big[(\tr(\bfR^2)-\E[\tr(\bfR^2)])^4\big]}{\Var(\tr(\bfR^2))^2} = 3\,.
\end{equation} 
 Moreover, if  $p = o(n^\delta)$ for some $\delta < \delta^*(\alpha)$, then it holds 
\begin{equation*}
    \lim_{\nto} \frac{\E\big[(\tr(\bfR^2)-\E[\tr(\bfR^2)])^4\big]}{\Var(\tr(\bfR^2))^2}=\infty\,.
\end{equation*}
\end{theorem}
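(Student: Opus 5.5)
We expand the fourth power of $T_1+T_2$ and use the symmetry of the entries. Writing $(T_1+T_2)^4=T_1^4+4T_1^3T_2+6T_1^2T_2^2+4T_1T_2^3+T_2^4$, the terms $\E[T_1^3T_2]$ and $\E[T_1T_2^3]$ vanish. Indeed, $T_2$ is a sum of products $Y_{i_1t_1}Y_{i_1t_2}Y_{i_2t_1}Y_{i_2t_2}$ with $t_1\neq t_2$, in which each of the four entries appears to an odd power. $T_1$ contains only even powers, so in $T_1^3T_2$ the odd powers survive and property (1) kills the expectation. For $T_1T_2^3$ the argument is less immediate, since three copies of $T_2$ could pair the odd entries. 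Here I would argue by a sign-flip: replacing row $i$ by $-$ row $i$ in a single column $t$ preserves the law, leaves $T_1$ invariant, and changes the sign of every $T_2$-summand involving $(i,t)$. A parity count over the vertices of the multigraph of indices then shows that each surviving summand has a vanishing expectation. Alternatively, it may be simpler to bound this term via Cauchy--Schwarz and Hölder, $|\E[T_1T_2^3]|\le \E[T_1^4]^{1/4}\E[T_2^4]^{3/4}$, which suffices whenever one of $T_1,T_2$ dominates. The same device handles the cross term: $\E[T_1^2T_2^2]\le (\E T_1^4\,\E T_2^4)^{1/2}$.

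\textbf{Case $\alpha<3$.} By Lemma~\ref{lem:asymp_variance}, $\sigma_n^2\sim\E[T_1^2]\sim 2p^2n\beta_4^2$, and Lemma~\ref{lem:fourth_mom_asymp} gives
\[
\E[T_1^4]/\sigma_n^4\to 3+\lim \frac{2\beta_8^2}{np^2\beta_4^4}.
\]
Inserting Lemma~\ref{lem:allmoments}, for $\alpha\in(0,2)$ we have $\beta_8\asymp\beta_4\asymp n^{-1}$, so the extra term is $\asymp n/p^2$. This tends to $0$ if $p=\omega(n^\delta)$ with $\delta>1/2$, and to $\infty$ if $p=o(n^\delta)$ with $\delta<1/2$. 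For $\alpha\in[2,3)$ we have $\beta_8\asymp\beta_4\asymp n^{-\alpha/2}L(n^{1/2})$, and the ratio is $\asymp n^{\alpha-1}/(p^2L^2(n^{1/2}))$. The slowly varying factor is absorbed by the strict inequality on $\delta$, giving threshold $(\alpha-1)/2$ (at $\alpha=2$, part (c) adds only a slowly varying factor). It remains to show that the $T_2$ contributions are negligible relative to $\sigma_n^4$. For the upper bound on $\E[T_2^4]$ I would redo the combinatorics of Lemma~\ref{lem:fourth_mom_asymp} as an order bound valid for all $\alpha$: $\E[T_2^4]\lesssim p^4n^{-4}+(\text{lower-order terms in }p)$. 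Comparing with $\sigma_n^4\asymp p^4n^2\beta_4^4\gg p^4n^{-4}$ when $\alpha<3$, Hölder then kills all terms involving $T_2$. In the divergent regime, positivity gives $\E[(T_1+T_2)^4]\ge$ a term that already diverges, provided the cross terms are shown to be nonnegative or dominated. Indeed, $6\E[T_1^2T_2^2]\ge0$, and the odd cross terms vanish or are dominated via Hölder.

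\textbf{Case $\alpha\in(3,4)$.} Here $\sigma_n^2\sim 4p^2n^{-2}\sim\E[T_2^2]$ and $\E[T_2^4]/\sigma_n^4\to 48/16=3$. The $T_1$ terms must be checked against $\sigma_n^4\asymp p^4n^{-4}$. We have $\E[T_1^4]\asymp p^4n^2\beta_4^4+np^2\beta_8^2$. The first term is $o(p^4n^{-4})$ since $n^{3}\beta_4^2\to0$ for $\alpha>3$. For the second, $\beta_8\asymp n^{-\alpha/2}L$, so the ratio is $n^{5-\alpha}L^2/p^2$, which gives exactly the threshold $\delta^*=(5-\alpha)/2$ in both directions. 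The cross terms are then controlled by Hölder when $\E T_1^4=o(\sigma_n^4)$. In the divergent direction, the lower bound $\E[(T_1+T_2)^4]\ge \E T_1^4+\E T_2^4+6\E[T_1^2T_2^2]-|4\E T_1^3T_2|-|4\E T_1T_2^3|$ (with the odd terms zero by symmetry) yields divergence.

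\textbf{Main obstacle.} The delicate step is the vanishing of $\E[T_1T_2^3]$, or some control of it without assuming $T_1$ is negligible, together with an $\alpha$-uniform upper bound for $\E[T_2^4]$ when $\alpha<3$, since Lemma~\ref{lem:fourth_mom_asymp} only covers $\alpha\ge3$. I would first attempt the sign-flip parity argument for the former. For the latter, I would bound each index-graph class by products of $\beta$'s via Lemmas~\ref{lem:allmoments} and \ref{lem:tbeta}.
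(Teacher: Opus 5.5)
Your overall route matches the paper's proof: expand $(T_1+T_2)^4$, kill $\E[T_1^3T_2]$ by parity, handle the mixed terms by Cauchy--Schwarz and H\"older, and read off the thresholds from Lemma~\ref{lem:fourth_mom_asymp}. The $\alpha$-uniform bound on $\E[T_2^4]$ that you plan to rederive is exactly what Table~\ref{table7} and Lemma~\ref{lem:E(T_i^4)/var^2_0} provide for $\alpha\in(0,3)$.

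One step is wrong, however. $\E[T_1T_2^3]$ does \emph{not} vanish, so the sign-flip parity argument you intend to try first cannot succeed. The remark ``with the odd terms zero by symmetry'' in your $\alpha\in(3,4)$ divergence step is also false. Take three $T_2$-summands on the same rows $\{i,j\}$ with column pairs $\{t_1,t_2\},\{t_1,t_3\},\{t_2,t_3\}$. They multiply to $\prod_{u\in\{i,j\}}Y_{ut_1}^2Y_{ut_2}^2Y_{ut_3}^2$, which is invariant under every single-entry sign flip. Combined with the $T_1$-summand $\bar Y_{it}\bar Y_{jt}$, this gives the contribution $a_t^2$, where $a_t=\E[\bar Y_{1t}Y_{1t_1}^2Y_{1t_2}^2Y_{1t_3}^2]$. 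For example, $a_{t_1}=\beta_{4,2,2}-n^{-1}\beta_{2,2,2}\neq 0$ for large $n$ by Lemma~\ref{lem:allmoments}.

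The fix is the fallback you already name, and it is what the paper does: bound $|\E[T_1T_2^3]|\le \E[T_1^4]^{1/4}\E[T_2^4]^{3/4}$ in every regime. Write $A_n=\E[T_1^4]/\sigma_n^4$ and $B_n=\E[T_2^4]/\sigma_n^4$.

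In the convergent cases, one of $A_n,B_n$ tends to $0$ and the other to $3$, so both mixed terms vanish after normalization.

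In the divergent cases, $B_n$ stays bounded ($B_n\to0$ for $\alpha<3$, $B_n\to3$ for $\alpha\in(3,4)$) while $A_n\to\infty$. Hence $|\E[T_1T_2^3]|/\sigma_n^4\le A_n^{1/4}B_n^{3/4}=o(A_n)$. Together with $\E[T_1^3T_2]=0$ and $\E[T_1^2T_2^2]\ge 0$, this yields the divergence.

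With this correction your argument is complete, and its divergent half is spelled out more explicitly than the paper's ``one can check''.
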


\begin{figure}
    \centering
    \includegraphics[width=0.7\linewidth]{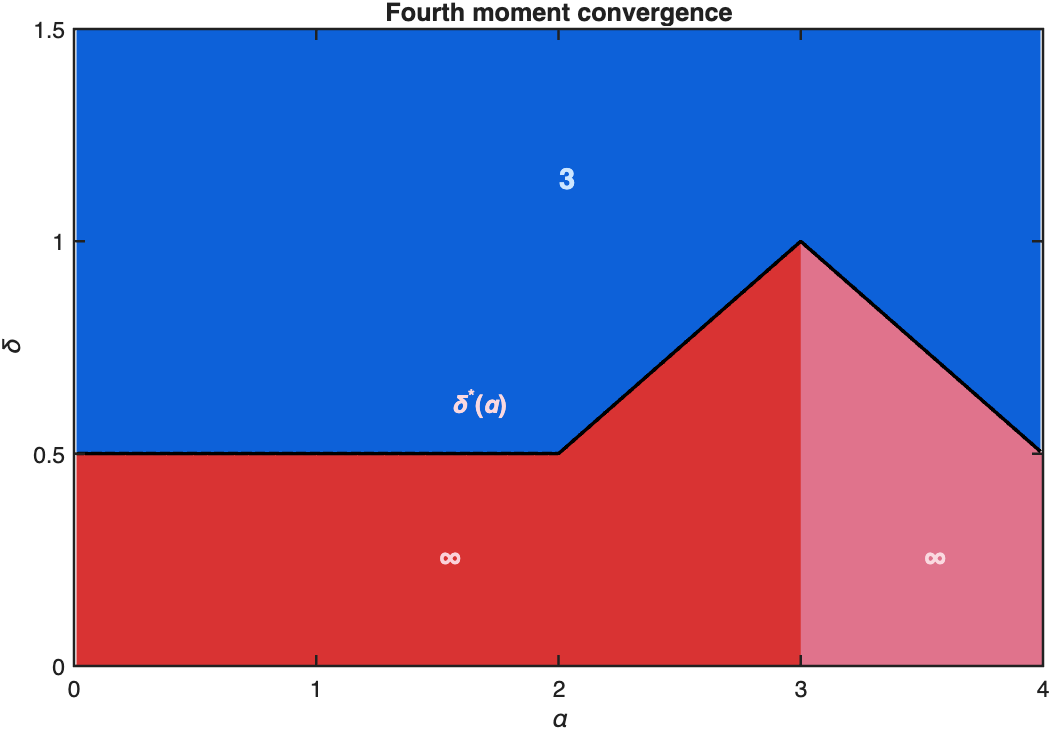}
    \caption{Phase diagram for the convergence of the fourth moment of $(\tr(\bfR^2)-\E[\tr(\bfR^2)])/\sigma_n$ in the $(\alpha,\delta)$-plane.   The fourth moment converges to $3$ in the blue region, whereas it diverges to $\infty$ in the red and pink regions. The boundary between those regions is given by the function $\delta^*$ defined in \eqref{eq:delta_star}.}
    \label{fig:4moment}
\end{figure}

Figure~\ref{fig:4moment} illustrates the results of Theorem~\ref{lem:completefourth}, which reveals a transition in the convergence of moments at $p=n^{\delta^*(\alpha)}$. The limit in the case $p=n^{\delta^*(\alpha)}$ depends in a delicate way on certain slowly varying functions\footnote{In our proofs, we often use the Potter bounds which guarantee that $\limn \ell(n)/n^{\vep}\to 0$ and $\limn \ell(n) n^{\vep}\to \infty$ for any slowly varying function $\ell$ and $\vep>0$ (see for instance \cite{bingham:goldie:teugels:1987}).} that can be expressed in terms of the function $L$ in $\P(|X_{11}|>x)=x^{-\alpha} L(x)$. It should be pointed out that the requirement $\delta>\delta^*(\alpha)$ for \eqref{eq:fourth_mom_equal_3} is specifically tailored to the convergence of the fourth moment. In general, for higher moments a slightly different condition will be required.  The next result shows that we can obtain a larger region for $p$ by only considering the leading term (in the sense of convergence in distribution) of the decomposition $T_1+T_2$. Recall that $T_1/\sigma_n =o_{\P}(1)$ if $\alpha>3$, and $T_2/\sigma_n =o_{\P}(1)$ if $\alpha<3$. Interestingly, in the case $\alpha \in (3,4)$, the crucial condition $\delta > \delta^*(\alpha)$ can be dropped if one focuses on the leading term only, as the following result shows.
\begin{proposition}\label{lem:E[T_i^4]/E[T_i^2]^2}
If $\alpha \in (0,3]$, then  
\begin{equation*}
    \lim_{\nto} \frac{\E[T_1^4]}{\E[T_1^2]^2} =
    \begin{cases}
        3 &\text{if}\ p = \omega(n^\delta) \text{ for some } \delta > \delta^*(\alpha)\,, \\
       \infty  &\text{if}\ p = o(n^\delta) \text{ for some } \delta < \delta^*(\alpha)\,,
    \end{cases}    
\end{equation*}
with $\delta^*(\alpha)$ defined in \eqref{eq:delta_star}. If $\alpha \in [3, 4)$, we have
\begin{equation}\label{eq:momt2}
    \lim_{\nto} \frac{\E[T_2^4]}{\E[T_2^2]^2} =3\,.
\end{equation}
\end{proposition}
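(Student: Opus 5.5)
The plan is to combine Lemma~\ref{lem:fourth_mom_asymp} with Lemma~\ref{lem:variance} and then compare the orders of the resulting terms using Lemma~\ref{lem:allmoments}.

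\textbf{The statistic $T_1$.} By Lemma~\ref{lem:variance} and Lemma~\ref{lem:allmoments} we have $\beta_4 n^2\to\infty$ whenever $\alpha<4$. Hence $\beta_4-n^{-2}\sim\beta_4$, and therefore $\E[T_1^2]\sim 2p^2n\beta_4^2$. Lemma~\ref{lem:fourth_mom_asymp} then gives
\begin{equation*}
\frac{\E[T_1^4]}{\E[T_1^2]^2}\sim 3+\frac{2\beta_8^2}{p^2 n\beta_4^4}.
\end{equation*}
So everything reduces to deciding whether $Q_n:=\beta_8^2/(p^2n\beta_4^4)$ tends to $0$ or to $\infty$.

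For $\alpha\in(0,2)$, Lemma~\ref{lem:allmoments}(a) gives $\beta_8\sim c_4/n$ and $\beta_4\sim c_2/n$. Thus $Q_n\asymp n/p^2$, which tends to $0$ if $p=\omega(n^\delta)$ with $\delta>1/2$ and to $\infty$ if $p=o(n^\delta)$ with $\delta<1/2$.

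For $\alpha\in(2,3]$, Lemma~\ref{lem:allmoments}(b) gives $\beta_{2k}\sim c_k n^{-\alpha/2}L(n^{1/2})$ for $k=2,4$. Thus $Q_n\asymp n^{\alpha-1}/(p^2L^2(n^{1/2}))$, and the Potter bounds absorb the slowly varying factor. The threshold is therefore $\delta^*=(\alpha-1)/2$.

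The case $\alpha=2$ needs extra care. If $\E[X_{11}^2]=1$, part (b) applies directly. If $\E[X_{11}^2]=\infty$, part (c) gives the same powers up to slowly varying factors, and again the Potter bounds absorb them. Since both $\beta_4$ and $\beta_8$ are of order $n^{-1}$ up to slowly varying factors, the threshold remains $1/2=\delta^*(2)$.

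Strict inequality $\delta\ne\delta^*$ is used exactly to swallow these slowly varying functions; this is the only delicate point for $T_1$.

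\textbf{The statistic $T_2$.} Here $\alpha\in[3,4)$. By Lemma~\ref{lem:allmoments}(b), $\beta_4=O(n^{-3/2}\ell(n))=o(n^{-1})$, so Lemma~\ref{lem:variance} gives $\E[T_2^2]\sim 4p^2n^{-2}$. Lemma~\ref{lem:fourth_mom_asymp} then yields
\begin{equation*}
\frac{\E[T_2^4]}{\E[T_2^2]^2}\sim \frac{48p^4n^{-4}}{16p^4n^{-4}}=3,
\end{equation*}
with no condition on $p$.

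\textbf{Main obstacle.} The real difficulty is hidden in Lemma~\ref{lem:fourth_mom_asymp}, which we take as given. Within this proof, the main thing to check is that the $T_1$ ratio at the endpoint $\alpha=3$ uses the $\alpha\in[2,3]$ branch of $\delta^*$, namely $\delta^*(3)=1$, rather than the $(3,4)$ branch. This holds because the computation above for $\alpha\in(2,3]$ uses only part (b), which covers $\alpha=3$.
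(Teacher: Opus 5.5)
Your proposal is correct and follows the same route as the paper. For $T_1$, you combine Lemma~\ref{lem:fourth_mom_asymp} with Lemma~\ref{lem:variance} to reduce the ratio to $3+2\beta_8^2/(\beta_4^4p^2n)$, then read off the orders of $\beta_4,\beta_8$ from Lemma~\ref{lem:allmoments} and use the strict inequality $\delta\neq\delta^*(\alpha)$ to absorb slowly varying factors; for $T_2$, you divide $48n^{-4}p^4$ by $\E[T_2^2]^2\sim 16p^4n^{-4}$. Your write-up is, if anything, slightly more explicit than the paper's, for instance in checking $\beta_4-n^{-2}\sim\beta_4$ and in treating $\alpha=2$ with $\E[X_{11}^2]=\infty$ via part (c).
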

Following the lines of the proof of Proposition~\ref{lem:E[T_i^4]/E[T_i^2]^2}, one can show that \eqref{eq:momt2} remains valid if the regular variation assumption is replaced by $\E[X_{11}^4]<\infty$. The difference between the conditions on $\delta$ in Theorem~\ref{lem:completefourth} and Proposition~\ref{lem:E[T_i^4]/E[T_i^2]^2}, respectively, is due to the fact that the $o_{\P}(1)$-term $T_1/\sigma_n$ might have a diverging fourth moment for $\alpha \in (3,4)$ which the following lemma shows.
\begin{lemma}\label{lem:E(T_i^4)/var^2_0}
For $\alpha \in (3, 4)$ we have
\begin{equation*}
    \lim_{\nto} \frac{\E[T_1^4] }{\Var(\tr(\bfR^2))^2}=
    \begin{cases}
        0 &\text{if}\ p = \omega(n^\delta) \text{ for some } \delta > \delta^*(\alpha)\,, \\
       \infty  &\text{if}\ p = o(n^\delta) \text{ for some } \delta < \delta^*(\alpha)\,.
    \end{cases}
\end{equation*}
For $\alpha \in (0, 3)$ it holds
\begin{equation*}
    \lim_{\nto} \frac{\E[T_2^4] }{\Var(\tr(\bfR^2))^2}=0.   
\end{equation*}
\end{lemma}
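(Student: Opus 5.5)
This lemma follows by direct order comparison. We combine the fourth-moment asymptotics of Lemma~\ref{lem:fourth_mom_asymp} with the variance asymptotics of Lemma~\ref{lem:asymp_variance}, and use the moment asymptotics of Lemma~\ref{lem:allmoments} to express $\beta_4$ and $\beta_8$ as powers of $n$ times slowly varying factors. Potter bounds then absorb all slowly varying functions, because the hypotheses $p=\omega(n^\delta)$ with $\delta>\delta^*$, or $p=o(n^\delta)$ with $\delta<\delta^*$, leave a polynomial margin $n^{\pm\varepsilon}$.

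\emph{Case $\alpha\in(3,4)$.} By Lemma~\ref{lem:asymp_variance}, $\Var(\tr(\bfR^2))^2\sim 16p^4n^{-4}$. By Lemma~\ref{lem:fourth_mom_asymp}, $\E[T_1^4]\sim 12\beta_4^4n^2p^4+8\beta_8^2np^2$. By Lemma~\ref{lem:allmoments}(b), $\beta_4\slv n^{-\alpha/2}$ and $\beta_8\slv n^{-\alpha/2}$. The ratio therefore splits into two terms.
\begin{itemize}
\item The first term is $\beta_4^4n^6\slv n^{6-2\alpha}$. Since $\alpha>3$, this tends to $0$ regardless of $p$.
\item The second term is $\beta_8^2n^5p^{-2}\slv n^{5-\alpha}p^{-2}$.
\end{itemize}
If $p=\omega(n^\delta)$ with $\delta>(5-\alpha)/2=\delta^*(\alpha)$, the second term is at most $n^{5-\alpha-2\delta}\ell(n)\to0$, so the ratio tends to $0$. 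If $p=o(n^\delta)$ with $\delta<\delta^*(\alpha)$, the second term is at least $n^{5-\alpha-2\delta}\ell(n)\to\infty$, and since both terms are nonnegative the ratio tends to $\infty$.

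\emph{Case $\alpha\in(0,3)$.} Here we need an upper bound on $\E[T_2^4]$. Lemma~\ref{lem:fourth_mom_asymp} gives $\E[T_2^4]$ only for $\alpha\ge3$, so this is the main obstacle. I would first try to show $\E[T_2^4]=O(p^4n^{-4}+\text{lower order})$ by redoing the combinatorial expansion behind that lemma. In $\E[T_2^4]$, only index configurations in which every $Y_{it}$ appears to an even power contribute. The dominant pairing configurations give $p^4n^4\beta_{2,2}^4\lesssim p^4n^{-4}$, using $\beta_{2,2}\le \beta_2^{\,}$-type bounds, namely $\beta_{2,2}\le n^{-2}$ via $\sum_t Y_{1t}^2=1$. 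The remaining configurations involve higher mixed moments $\beta_{4,\dots}$, $\beta_{4,4}$, $\beta_8$, each controlled by Lemma~\ref{lem:allmoments} and Lemma~\ref{lem:tbeta}. These should be bounded by $p^2n^2\beta_{4,4}$-type terms, with $\beta_{4,4}\lesssim$ the corresponding rates, again of smaller polynomial order.

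Granting $\E[T_2^4]\lesssim p^4n^{-4}+p^2n^{2}\beta_{4,4}+\dots$, we compare with $\Var(\tr(\bfR^2))^2\gtrsim p^4n^2\beta_4^4$.
\begin{itemize}
\item For $\alpha\in(2,3)$: $n^2\beta_4^4\slv n^{2-2\alpha}\gg n^{-4}$.
\item For $\alpha\in(0,2)$: $\beta_4\asymp n^{-1}$, so $n^2\beta_4^4\asymp n^{-2}\gg n^{-4}$.
\item For $\alpha=2$ the same comparison holds via part (c).
\end{itemize}
The leading term of the ratio is thus $p^4n^{-4}/(p^4n^2\beta_4^4)\to0$. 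The lower-order terms carry fewer powers of $p$ and are dominated similarly, since $p\to\infty$ and the $\beta$'s with larger exponents satisfy $\beta_{4,4}\lesssim\beta_4^2$-type bounds via Lemma~\ref{lem:allmoments}.

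A simpler alternative avoids the full expansion. By hypercontractivity-free reasoning, conditional on the row norms, $T_2$ is a degree-four chaos in the signs of the $X_{it}$, since symmetry makes $Y_{it}=\varepsilon_{it}|Y_{it}|$ with independent Rademacher signs. Bonami's inequality then gives $\E[T_2^4\mid |Y|]\le 3^{8}\,\E[T_2^2\mid |Y|]^2$. Finally, $\E[(\E[T_2^2\mid|Y|])^2]$ reduces to moments like $\E[(\sum Y_{i_1t_1}^2Y_{i_1t_2}^2Y_{i_2t_1}^2Y_{i_2t_2}^2)^2]$. These are bounded by $p^4n^{-4}$ plus terms with fewer free indices, using independence of rows and $\sum_tY_{it}^2=1$. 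I would pursue this route first, as it sidesteps most of the combinatorics.
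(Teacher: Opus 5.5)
For $\alpha\in(3,4)$ your argument is the paper's. You divide $\E[T_1^4]\sim 12\beta_4^4n^2p^4+8\beta_8^2np^2$ by $\Var(\tr(\bfR^2))^2\sim 16p^4n^{-4}$ and compare $n^{6-2\alpha}$ with $n^{5-\alpha}p^{-2}$. You also write out the divergent case, which the paper leaves implicit.

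For $\alpha\in(0,3)$, your first route is again the paper's. The paper reads off from the full expansion of $\E[T_2^4]$ (Table~\ref{table7}, compiled in the proof of Lemma~\ref{lem:fourth_mom_asymp} for all $\alpha\in(0,4)$) that the leading terms are $\beta_{2,2}^4n^4p^4$ and $\beta_{4,4}^2n^2p^2$. It then compares them with $p^4n^2\beta_4^4\lesssim \Var(\tr(\bfR^2))^2$.

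Your preferred Bonami route is genuinely different and works.
\begin{itemize}
\item With $Y_{it}=\varepsilon_{it}|Y_{it}|$, conditionally on the moduli $T_2$ is a homogeneous degree-four Rademacher chaos. Hence $\E[T_2^4]\le 3^8\cdot 256\,\E[Q^2]$, where $Q=\sum_{i_1<i_2}\sum_{t_1<t_2}Y_{i_1t_1}^2Y_{i_1t_2}^2Y_{i_2t_1}^2Y_{i_2t_2}^2$.
\item Since $Q$ is a sum of nonnegative terms, $\E[Q^2]$ splits only by row overlap.
\item Four distinct rows give at most $p^4n^4\beta_{2,2}^4/16\le p^4n^{-4}/16$.
\item Three rows give at most $p^3\beta_{2,2}^2/4$; sum out the time indices of the shared row using $\sum_tY_{it}^2=1$.
\item Two rows give $\binom{p}{2}\E[Q_{12}^2]\lesssim p^2\big(n^2\beta_{4,4}^2+n^3\beta_{4,2,2}^2+n^4\beta_{2,2,2,2}^2\big)$.
\end{itemize}
This spares you enumerating the vanishing patterns of eight time indices, and upper bounds come automatically. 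The price is the constant $3^8$. That is harmless here because the target limit is $0$. It does mean the method cannot replace the paper's expansion where sharp constants are needed, e.g.\ $\E[T_2^4]\sim 48p^4n^{-4}$ for $\alpha\ge3$.

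Two corrections to your sketch.

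First, the two-row term is $p^2n^2\beta_{4,4}^2$, not $p^2n^2\beta_{4,4}$ as you wrote. When all factors use the same row pair and the same time pair, each of the two rows contributes $\E[Y_{1t}^4Y_{1t'}^4]$. With the unsquared version and $\beta_{4,4}\lesssim\beta_4^2$, you would only get a ratio of order $(p\beta_4)^{-2}$ relative to $p^4n^2\beta_4^4$. That ratio does not vanish in general.

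Second, contrary to your remark in the Bonami route, the two-row terms cannot be controlled by $\sum_tY_{it}^2=1$ alone.
\begin{itemize}
\item The natural crude bound is $\E[Q_{12}^2]\le\frac14\E[(\sum_tY_{1t}^2Y_{2t}^2)^2]\le\frac14(n\beta_4^2+n^{-2})$.
\item This yields a ratio of order $(p^2n\beta_4^2)^{-1}$, which tends to zero only when $p$ grows faster than (roughly) $n^{\delta^*(\alpha)}$. The second statement of the lemma assumes no such growth.
\end{itemize}
You genuinely need Lemma~\ref{lem:allmoments} here:
\begin{itemize}
\item $\beta_{4,4}\lesssim\beta_4^2$ makes the first two-row term $O(p^{-2})$ relative to $p^4n^2\beta_4^4$;
\item $n\beta_{4,2,2}^2=o(\beta_4^4)$ for $\alpha<3$ handles the second;
\item $\beta_{2,2,2,2}\lesssim n^{-4}$ together with $n^{-4}=o(n^2\beta_4^4)$ handles the third.
\end{itemize}
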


\subsection{Characterization of the phase transition boundary}
Combining Proposition~\ref{lem:E[T_i^4]/E[T_i^2]^2} and Lemma~\ref{lem:E(T_i^4)/var^2_0} yields that the pink region in Figure~\ref{fig:4moment} would turn blue if only the leading order term $T_2/\sigma_n$ was considered. This means that the phase transition boundary for the fourth moment is essentially described by the function $\delta^*(\alpha)$ for $\alpha\in(0,3)$. Recalling \eqref{remark:var_asymp}, it suffices to consider 
\begin{equation*}
    \Tone:=T_1/\sqrt{\E[T_1^2]}\,.
\end{equation*}
By Proposition~\ref{lem:E[T_i^4]/E[T_i^2]^2}, we know that for $\alpha\in(0,3)$ the quantity ${\E[\Tone^4]}$ either converges to $3$ or diverges to infinity if suitable conditions for the growth of $p$ are satisfied. It turns out that the setting where $p$ is neither $\omega(n^\delta)$ for some $\delta > \delta^*(\alpha)$ nor $o(n^\delta)$ for some $\delta < \delta^*(\alpha)$ is more subtle and the possible limits of ${\E[\Tone^4]}$ are not only $3$ or $\infty$, but also any constant greater than $3$. An example where all three limits can occur is $p=n^{\delta^*(\alpha)}\ell(n)$ with different slowly varying functions $\ell$. 

We therefore define the {\it phase transition boundary} as the critical regime, where
\begin{equation}\label{eq:boundary-def}
    \lim_{n\to\infty}\E[\Tone^4] = 3+C_\alpha \qquad\text{for some } C_\alpha\in(0,\infty).
\end{equation}
The main novelty of this paper is the finding that in this regime the fluctuations of $\tr(\bfR^2)$ become of Poisson process type as we will see in Section~\ref{sec:poissonlimit}.
An equivalent characterization of the phase transition boundary is through a precise growth condition on $p$ which is more intuitive (but slightly less compact) than \eqref{eq:boundary-def}.
The following lemma makes this precise. For technical reasons we will not consider the case $\alpha=2$.
\begin{lemma}\label{lem:boundary-scaling-equivalence}
Let $\alpha\in(0,3)\backslash \{2\}$. Then the boundary condition \eqref{eq:boundary-def} holds if and only if the following scaling assumptions are satisfied.
\begin{enumerate}
\item[$(i)$] If $\alpha\in(0,2)$, then \eqref{eq:boundary-def} is equivalent to the existence of a constant $c_\delta\in(0,\infty)$ such that
\begin{equation*}
    \lim_{n\to\infty}\frac{p}{n^{\delta^*(\alpha)}}
    = \lim_{n\to\infty}\frac{p}{n^{1/2}}
    = c_\delta.
\end{equation*}
The constant $c_{\delta}$ is linked to $C_{\alpha}$ through
\begin{equation*}
    C_\alpha
    = \frac{2}{c_\delta^2}\,\frac{c_4(\alpha)^2}{c_2(\alpha)^4}
    = \frac{2}{c_\delta^2} \bigg[\frac{\bigl(3-\frac{\alpha}{2}\bigr)\bigl(2-\frac{\alpha}{2}\bigr)}{6\bigl(1-\frac{\alpha}{2}\bigr)}\bigg]^2.
\end{equation*}
\item[$(ii)$] If $\alpha\in(2,3)$, then \eqref{eq:boundary-def} is equivalent to the existence of a constant $c_\delta\in(0,\infty)$ such that
\begin{equation*}
    \lim_{n\to\infty}\frac{p\,L(n^{1/2})}{n^{\delta^*(\alpha)}}
    =\lim_{n\to\infty}\frac{p\,L(n^{1/2})}{n^{(\alpha-1)/2}}
    =c_\delta.
\end{equation*}
The constant $c_{\delta}$ is linked to $C_{\alpha}$ through
\begin{equation*}
    C_\alpha
    =\frac{2}{c_\delta^2}\,\frac{c_4(\alpha)^2}{c_2(\alpha)^4}
    =\frac{2}{c_\delta^2}\bigg[\frac{\bigl(3-\frac{\alpha}{2}\bigr)\bigl(2-\frac{\alpha}{2}\bigr)}
    {6 \Gamma(1+\frac{\alpha}{2})\Gamma(2-\frac{\alpha}{2})}\bigg]^2.
\end{equation*}
\end{enumerate}
\end{lemma}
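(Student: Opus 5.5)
The argument is a direct computation from Lemma~\ref{lem:fourth_mom_asymp} and Lemma~\ref{lem:variance}, with the $\beta$'s evaluated through Lemma~\ref{lem:allmoments}. Since $\E[T_1^2]\sim 2p^2 n(\beta_4-n^{-2})^2$, we first check that $\beta_4-n^{-2}\sim\beta_4$ for $\alpha\in(0,3)\setminus\{2\}$. For $\alpha<2$ we have $n\beta_4\to c_2(\alpha)>0$. For $\alpha\in(2,3)$ we have $\beta_4\sim c_2(\alpha)L(n^{1/2})n^{-\alpha/2}$, and $n^{-\alpha/2}L(n^{1/2})$ dominates $n^{-2}$ by Potter bounds. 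Hence $\E[T_1^2]\sim 2p^2n\beta_4^2$. Dividing the fourth-moment asymptotics of Lemma~\ref{lem:fourth_mom_asymp} by its square gives
\begin{equation*}
\E[\Tone^4]= 3+\frac{2\beta_8^2}{p^2 n\beta_4^4}+o(1)\,,
\end{equation*}
where the relative errors of both asymptotic equivalences are absorbed into $o(1)$. This works because each of the two terms $12\beta_4^4n^2p^4$ and $8\beta_8^2np^2$ is bounded relative to the sum. To be careful, we write $\E[T_1^4]=(12\beta_4^4n^2p^4+8\beta_8^2np^2)(1+o(1))$, divide by $4p^4n^2\beta_4^4(1+o(1))$, and obtain $(3+D_n)(1+o(1))$ with $D_n:=2\beta_8^2/(p^2n\beta_4^4)$. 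It follows that $\E[\Tone^4]\to 3+C_\alpha$ with $C_\alpha\in(0,\infty)$ if and only if $D_n\to C_\alpha$. This holds because $(3+D_n)(1+o(1))\to 3+C$ with $C$ finite forces $D_n$ to be bounded, and then $D_n\to C$.

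Next we compute $D_n$. For $\alpha\in(0,2)$, \eqref{highestmoment} gives $n\beta_8\to c_4(\alpha)$ and $n\beta_4\to c_2(\alpha)$. Thus
\begin{equation*}
D_n=\frac{2 (n\beta_8)^2}{(n\beta_4)^4}\cdot\frac{n}{p^2}\sim \frac{2c_4(\alpha)^2}{c_2(\alpha)^4}\,\frac{n}{p^2}\,.
\end{equation*}
So $D_n\to C_\alpha\in(0,\infty)$ if and only if $p^2/n\to c_\delta^2:=2c_4^2/(c_2^4C_\alpha)\in(0,\infty)$, which is the same as $p/n^{1/2}\to c_\delta$ with the stated relation. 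For $\alpha\in(2,3)$, \eqref{highestmoment24} gives $\beta_{2k}\sim c_k(\alpha)L(n^{1/2})n^{-\alpha/2}$ for $k=2,4$. Therefore
\begin{equation*}
D_n\sim\frac{2c_4^2}{c_2^4}\,\frac{n^{\alpha-1}}{p^2L^2(n^{1/2})}\,,
\end{equation*}
and $D_n\to C_\alpha$ if and only if $pL(n^{1/2})/n^{(\alpha-1)/2}\to c_\delta$. In both directions of each equivalence we take positive square roots, since all quantities are positive.

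The remaining step is to evaluate the explicit constants using \eqref{eq:cm-def} and $\Gamma(x+1)=x\Gamma(x)$. We have $\Gamma(4-\alpha/2)=(3-\alpha/2)(2-\alpha/2)(1-\alpha/2)\Gamma(1-\alpha/2)$ and $\Gamma(2-\alpha/2)=(1-\alpha/2)\Gamma(1-\alpha/2)$. For $\alpha<2$ this gives $c_2=1-\alpha/2$ and $c_4=(3-\alpha/2)(2-\alpha/2)(1-\alpha/2)/6$. Hence $c_4/c_2^2=(3-\alpha/2)(2-\alpha/2)/(6(1-\alpha/2))$. For $\alpha\in(2,3)$, $c_4=\Gamma(1+\alpha/2)(3-\alpha/2)(2-\alpha/2)\Gamma(2-\alpha/2)/6$ and $c_2=\Gamma(1+\alpha/2)\Gamma(2-\alpha/2)$. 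Hence $c_4/c_2^2=(3-\alpha/2)(2-\alpha/2)/(6\Gamma(1+\alpha/2)\Gamma(2-\alpha/2))$. Squaring these ratios gives the stated formulas.

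There is no serious obstacle. The only delicate points are the negligibility of $n^{-2}$ against $\beta_4$, and the bookkeeping that converts the asymptotic equivalences into the exact "if and only if" for the limit $3+C_\alpha$. Both are handled above.
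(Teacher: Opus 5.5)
Your proposal is correct and follows the paper's proof: both divide the fourth-moment asymptotics of Lemma~\ref{lem:fourth_mom_asymp} by the squared second moment from Lemma~\ref{lem:variance} to get $3+2\beta_8^2/(\beta_4^4p^2n)$, then insert the asymptotics of $\beta_4$ and $\beta_8$ from Lemma~\ref{lem:allmoments} in each regime. You additionally spell out two points the paper leaves implicit: why $n^{-2}$ is negligible against $\beta_4$, and the $(3+D_n)(1+o(1))$ bookkeeping that justifies the ``if and only if''.
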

We conclude from Lemma~\ref{lem:boundary-scaling-equivalence} that the phase transition boundary \eqref{eq:boundary-def} is equivalent to 
\begin{equation*}
    p\sim
    \begin{cases}
        c_\delta \,n^{\delta^*(\alpha)} &\text{if}\ \alpha\in (0,2)\,, \\
       c_\delta \,n^{\delta^*(\alpha)} \frac{1}{L(n^{1/2})}  &\text{if}\ \alpha\in (2,3)\,,
    \end{cases}    
\qquad \nto\,,
\end{equation*}
for some positive constant $c_\delta$. In particular, this reveals that for $\alpha\in (0,2)$ the phase transition boundary is not affected by the slowly varying function $L$, whereas $L$ is relevant for $\alpha\in (2,3)$. 

\section{Main results}\label{sec:main}\setcounter{equation}{0}
Now we are ready to present the main results of this paper: a universal CLT for $\tr(\bfR^2)$ when $\alpha>3$, a non-universal CLT for $\tr(\bfR^2)$ when $\alpha\leq 3$, and the limiting distribution at the phase transition boundary. Again, we assume that the distribution of $X_{11}$ is symmetric and regularly varying with index $\alpha\in(0,4)$. From \eqref{eq:mean} and \eqref{eq:defsigma} recall that
\begin{equation*}
    \mu_n=  p + \frac{p(p - 1)}{n} \quad  \text{ and } \quad \sigma_n^2=2p^2 n \big(\beta_4^2+2 n^{-3}\big)\,.
\end{equation*} 

\subsection{Universal and non-universal CLTs}
\begin{theorem}\label{thm:clt2}
Let $\alpha \in (0,4)$ and $p=p_n\to\infty$. If $\alpha \in(0,3]$, additionally assume $p = \omega(n^\delta)$ for some $\delta > \delta^*(\alpha)$ with $\delta^*(\alpha)$ defined in \eqref{eq:delta_star}. Then, as $n\to\infty$, $\tr(\bfR^2)$ satisfies 
\begin{equation} \label{clt}
    \frac{\tr(\bfR^2)-\mu_n}{\sigma_n} \cid N(0,1)\,.   
\end{equation}
\end{theorem}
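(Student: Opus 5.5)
We start from the decomposition \eqref{eq:T_1+T_2_decomp} and treat the cases according to which term dominates. For $\alpha\in(3,4)$, \eqref{remark:var_asymp} gives $(\tr(\bfR^2)-\mu_n)/\sigma_n=T_2/\sigma_n+o_{\P}(1)$ and $\sigma_n^2\sim\E[T_2^2]$. It therefore suffices to prove $T_2/\sqrt{\E[T_2^2]}\cid N(0,1)$. For $\alpha\in(0,3)$ the roles swap: it suffices to show $\Tone\cid N(0,1)$ under $p=\omega(n^\delta)$ with $\delta>\delta^*(\alpha)$. The case $\alpha=3$ needs both terms. Here $T_1$ and $T_2$ have comparable variances by Lemma~\ref{lem:asymp_variance}, so I would prove a joint martingale CLT for $T_1+T_2$ directly. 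This is natural, since both pieces share the same row filtration.

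The main tool is the martingale CLT with respect to the row filtration $\mathcal F_k=\sigma(\bfx_{(1)},\ldots,\bfx_{(k)})$, where $\bfx_{(i)}$ denotes the $i$th row of $\Y$. The rows are independent by property (2). Write $T_1=\sum_{k=2}^p D_k^{(1)}$ with
\[
D_k^{(1)}=2\sum_{t=1}^n\Big(Y_{kt}^2-\frac1n\Big)\sum_{i<k}\Big(Y_{it}^2-\frac1n\Big),
\]
and define $D_k^{(2)}$ analogously for $T_2$, with weights $Y_{kt_1}Y_{kt_2}\sum_{i<k}Y_{it_1}Y_{it_2}$. Because $\E[(Y_{kt}^2-1/n)\mid\mathcal F_{k-1}]=0$ and the entries are symmetric, these are martingale differences. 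Two conditions then have to be checked for $D_k=D_k^{(1)}+D_k^{(2)}$ (or the single relevant piece).

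(i) The conditional variance must concentrate: $\sum_k\E[D_k^2\mid\mathcal F_{k-1}]/\sigma_n^2\cip1$. Write $A_t^{(k)}=\sum_{i<k}(Y_{it}^2-1/n)$. Then $\E[(D^{(1)}_k)^2\mid\mathcal F_{k-1}]=4\sum_{t,s}\tbeta_{(t,s)}A_t^{(k)}A_s^{(k)}$, where $\tbeta_{(t,s)}$ equals $\tbeta_4$ if $t=s$ and $\tbeta_{2,2}$ otherwise. Since $\sum_t A_t^{(k)}=0$, this reduces to $4(\tbeta_4-\tbeta_{2,2})\sum_t (A_t^{(k)})^2$. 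Proving concentration therefore amounts to a variance bound on $\sum_k\sum_t (A_t^{(k)})^2$, which needs only mixed moments up to order eight, namely $\beta_8$, $\beta_{4,4}$, $\beta_{4,2,2}$ and the like. These are controlled by Lemmas~\ref{lem:allmoments} and \ref{lem:tbeta}. The cross terms between $D^{(1)}$ and $D^{(2)}$ vanish by symmetry. The $T_2$ part is handled the same way and resembles the finite fourth moment computation.

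(ii) A Lyapunov condition must hold: $\sum_k\E[D_k^4]/\sigma_n^4\to0$. This is the computation behind Lemma~\ref{lem:fourth_mom_asymp}. For $D^{(1)}$, the dominant contributions have the shape $p^2n\beta_8^2$ plus $p^3n^2\beta_4^2\beta_{4}^2$-type terms. Dividing by $\sigma_n^4\asymp p^4n^2\beta_4^4$ leaves $n^{-1}\beta_8^2/(p^2\beta_4^4)$, which is exactly the ratio that Proposition~\ref{lem:E[T_i^4]/E[T_i^2]^2} shows tends to $0$ iff $\delta>\delta^*(\alpha)$, plus $O(1/p)$. For $D^{(2)}$ with $\alpha\ge3$ the ratio is $O(1/p)$ by Lemma~\ref{lem:fourth_mom_asymp}, so there is no growth restriction.

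I expect the main obstacle to be step (i) for $T_1$ when $\alpha<3$. There the variance of the conditional variance involves heavy-tailed moments such as $\tbeta_8$ and $\tbeta_{4,4}$. One must show these are negligible relative to $(p^2n\beta_4^2)^2$ exactly under $\delta>\delta^*(\alpha)$, and the orders from Lemma~\ref{lem:allmoments}(a),(b) have to be tracked carefully, including slowly varying factors handled by Potter bounds. My first attempt would bound $\Var(\sum_k\sum_t (A_t^{(k)})^2)$ by expanding into index patterns. For each pattern I would use Lemma~\ref{lem:tbeta} to replace $\tbeta$ by $\beta$, and then compare exponents of $n$ and $p$ against $\delta^*(\alpha)$. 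I expect the worst pattern to be the same $\beta_8$-term that governs the fourth moment.
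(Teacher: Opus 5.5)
Your proposal is correct and follows essentially the paper's route. You use the same row-filtration martingale differences (your $D_k^{(1)},D_k^{(2)}$ are the paper's $M_{k,1},M_{k,2}$ from Lemma~\ref{lem:martingale_differences}), a fourth-moment Lyapunov bound as in Lemma~\ref{lem:martingale_E[M_j,k^4]_to_0}, and an $L^2$ concentration of the quadratic variation as in Lemma~\ref{lem:M_jk^2_cip_1}. The one difference is that you use the conditional variance: there $\sum_t A_t^{(k)}=0$ gives a closed form, and the $D^{(1)}$--$D^{(2)}$ cross term vanishes identically, whereas the paper works with $\sum_j M_{j,i}^2$ and must bound $\sum_j M_{j,1}M_{j,2}$ at $\alpha=3$.

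One correction to your bookkeeping. In step (i) the worst term is $\tbeta_4^2\tbeta_8\,np^3$, so concentration needs only $p\gg n^{\alpha/2-1}$ up to slowly varying factors, and no $\beta_8^2$ term appears there. In step (ii) the remainder is $\tbeta_8\tbeta_4^2np^3/\sigma_n^4\slv n^{\alpha/2-1}/p$ for $\alpha\in[2,3]$, not $O(1/p)$. Both terms still vanish, and $\delta>\delta^*(\alpha)$ is genuinely needed only for the $\beta_8^2np^2$ term in the Lyapunov step.
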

Theorem~\ref{thm:clt2} reveals the dependence on $\alpha$ in the variance but also that we need a minimal growth rate for $p$ in the case $\alpha\in (0,3)$. This is solely because of $T_1$ giving rise to high order terms in the martingale differences in Lemmas~\ref{lem:martingale_E[M_j,k^4]_to_0} and~\ref{lem:M_jk^2_cip_1} that are essential in the proof of Theorem~\ref{thm:clt2} which can be found in Section~\ref{sec:mainproof}. What is interesting is that we acquire weaker conditions for convergence in distribution to the standard normal distribution than for convergence of the centralized fourth moment divided by the squared variance; see Theorem~\ref{lem:completefourth} for details.  The extra condition on $p$ when $\alpha \in [3, 4)$ that is present in Theorem~\ref{lem:completefourth} but not in Theorem~\ref{thm:clt2} is due to the fact that we do not need to consider $T_1$ when $\alpha > 3$ in the martingale differences, while $T_1$ needs to be considered in the fourth moment calculations. More specifically, we need convergence to zero for $\E[\Tone^4]$ when $\alpha \in [3, 4)$ as stated in Lemma~\ref{lem:E(T_i^4)/var^2_0} for \eqref{eq:fourth_mom_equal_3} to hold.

Since $\sigma_n^2\sim4p^2n^{-2}$ for $\alpha\in(3,4)$, we obtain the following corollary of Theorem~\ref{thm:clt2}. 
\begin{corollary}\label{thm:clt1}
Let $\alpha\in(3,4)$ and $p=p_n\to\infty$, then as $n\to\infty$ we have 
\begin{equation} 
    \frac{\tr(\bfR^2)-\mu_n}{2pn^{-1}} \cid N(0,1)\,.   
\end{equation}
\end{corollary}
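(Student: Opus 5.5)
This corollary follows directly from Theorem~\ref{thm:clt2} combined with Slutsky's lemma. For $\alpha\in(3,4)$ the theorem places no growth restriction on $p$, since the extra condition $p=\omega(n^\delta)$ is only imposed when $\alpha\in(0,3]$. Hence for any $p=p_n\to\infty$ we have $(\tr(\bfR^2)-\mu_n)/\sigma_n\cid N(0,1)$.

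It therefore suffices to show that $\sigma_n/(2pn^{-1})\to 1$. We then write
\begin{equation*}
\frac{\tr(\bfR^2)-\mu_n}{2pn^{-1}}=\frac{\sigma_n}{2pn^{-1}}\cdot\frac{\tr(\bfR^2)-\mu_n}{\sigma_n}
\end{equation*}
and apply Slutsky's lemma to conclude.

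The asymptotic $\sigma_n^2\sim 4p^2n^{-2}$ for $\alpha\in(3,4)$ is stated in Lemma~\ref{lem:asymp_variance}, so the ratio converges to one. We can also check this directly from the definition $\sigma_n^2=2p^2n(\beta_4^2+2n^{-3})$. By Lemma~\ref{lem:allmoments}(b), $\beta_4\sim c_2(\alpha)n^{-\alpha/2}L(n^{1/2})$, so
\begin{equation*}
n^3\beta_4^2\sim c_2(\alpha)^2\, n^{3-\alpha}L^2(n^{1/2})\to 0,
\end{equation*}
because $\alpha>3$ and the Potter bounds control the slowly varying factor. Consequently $\sigma_n^2=4p^2n^{-2}(1+o(1))$.

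There is no genuine obstacle here. The only point to verify is that the slowly varying factor $L^2(n^{1/2})$ cannot offset the polynomial decay $n^{3-\alpha}$, and this holds because $\alpha>3$ is strict.
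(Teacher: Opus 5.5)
Your proposal is correct and is the paper's argument: the paper obtains the corollary directly from Theorem~\ref{thm:clt2}, which imposes no growth condition on $p$ when $\alpha\in(3,4)$, together with $\sigma_n^2\sim 4p^2n^{-2}$ from Lemma~\ref{lem:asymp_variance}. Your Slutsky step and the check $n^3\beta_4^2\to 0$ via the Potter bounds just make explicit what the paper leaves implicit.
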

This means that for $\alpha>3$ the normalizing sequences in the CLT are universal and $p$ may tend to infinity at arbitrary speeds. The situation is quite different for $\alpha \le 3$, when both the variance $\sigma_n^2 \slv p^2n^{1-\max\{\alpha,2\}}$ (Lemma~\ref{lem:asymp_variance}) and the minimal growth of $p$  depend on $\alpha$. Due to this dependence on $\alpha$, we say that the CLT in \eqref{clt} is non-universal for $\alpha\le 3$.

Furthermore, a careful inspection of the proof of Theorem~\ref{thm:clt2} in the case $\alpha\in (3,4)$ yields the following result.
\begin{theorem}\label{thm:clt3}
Let $X_{11}$ follow a symmetric distribution (not necessarily slowly varying) with finite fourth moment and assume $p=p_n\to\infty$. Then it holds
\begin{equation*}
    \frac{\tr(\bfR^2)-\mu_n}{2pn^{-1}} \cid N(0,1)\,, \qquad \nto\,.  
\end{equation*}
\end{theorem}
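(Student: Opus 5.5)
We follow the proof of Theorem~\ref{thm:clt2} for $\alpha\in(3,4)$. The decomposition \eqref{eq:T_1+T_2_decomp} holds for any symmetric distribution. Lemma~\ref{lem:asymp_variance} gives $\sigma_n^2\sim 4p^2n^{-2}$ whenever $\E[X_{11}^4]<\infty$. Lemma~\ref{lem:allmoments}(d) gives $n^2\beta_4\to\E[X_{11}^4]$, so $\beta_4=O(n^{-2})$. Lemma~\ref{lem:variance} then yields
\begin{equation*}
    \E[T_1^2]\sim 2p^2n(\beta_4-n^{-2})^2=O(p^2n^{-3})=o(p^2n^{-2}),
\end{equation*}
so $T_1/(2pn^{-1})\cip 0$ by Markov's inequality. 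It therefore suffices to show $T_2/\sqrt{\E[T_2^2]}\cid N(0,1)$, since $\E[T_2^2]\sim 4p^2n^{-2}$.

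For $T_2$ we use the martingale CLT with respect to the row filtration $\mathcal F_k=\sigma(\mathbf y_1,\ldots,\mathbf y_k)$, where $\mathbf y_i$ is the $i$-th row of $\Y$; rows are independent. Write
\begin{equation*}
    T_2=\sum_{k=2}^p M_k,\qquad M_k=2\sum_{i<k}\sum_{t_1\neq t_2}Y_{it_1}Y_{it_2}Y_{kt_1}Y_{kt_2}.
\end{equation*}
By symmetry, $\E[M_k\mid\mathcal F_{k-1}]=0$. We must verify (i) the Lyapunov condition $\sum_k\E[M_k^4]/\E[T_2^2]^2\to0$ and (ii) $\sum_k\E[M_k^2\mid\mathcal F_{k-1}]/\E[T_2^2]\cip1$. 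These are the analogues of Lemmas~\ref{lem:martingale_E[M_j,k^4]_to_0} and~\ref{lem:M_jk^2_cip_1} restricted to $T_2$. In those lemmas the only $\alpha$-dependent input is the order of mixed moments $\beta_{2k_1,\ldots,2k_r}$. We replace each such estimate by Lemma~\ref{lem:allmoments}(d), which gives $\beta_{2k_1,\ldots,2k_r}=O(n^{-k})$ for all products with $\max k_i\le 2$, the same order as in the Gaussian case.

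For (ii), set $A_{t_1t_2}=\sum_{i<k}Y_{it_1}Y_{it_2}$ for $t_1\neq t_2$. Then
\begin{equation*}
    \E[M_k^2\mid\mathcal F_{k-1}]=4\sum_{t_1\ne t_2,\,s_1\ne s_2}A_{t_1t_2}A_{s_1s_2}\E[Y_{kt_1}Y_{kt_2}Y_{ks_1}Y_{ks_2}].
\end{equation*}
By symmetry only $\{s_1,s_2\}=\{t_1,t_2\}$ survives, giving $8\beta_{2,2}\sum_{t_1\neq t_2}A_{t_1t_2}^2$. We would then show $\sum_k\sum_{t_1\ne t_2}A_{t_1t_2}^2$ concentrates around its mean by a variance computation. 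The diagonal part $\sum_i(\sum_{t_1\ne t_2}Y_{it_1}^2Y_{it_2}^2)$ is deterministic up to fluctuations of $\sum_t Y_{it}^4$, which are controlled by $\beta_8$. Here is the one delicate point: $\beta_8$ need not be $O(n^{-4})$ under only finite fourth moments. Lemma~\ref{lem:allmoments}(d) does not apply, but one has $\beta_8\le\beta_4\max$-type bounds, and the crude bound $\beta_8\le\beta_4=O(n^{-2})$ is enough, since it enters multiplied by $p/p^2$. The off-diagonal parts involve only products with exponents at most $2$ per entry, and they are handled as in the $\alpha\in(3,4)$ proof.

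For (i), expanding $\E[M_k^4]$ produces moments of rows $i<k$ and of row $k$ with each entry to power at most $4$. These are $\beta_{4,4,\ldots}$-type terms of order $O(n^{-k})$ by Lemma~\ref{lem:allmoments}(d), together with terms containing a single $\beta_8$ or $\beta_{6,2}$. These latter terms are the main obstacle, because (d) requires $\E[X_{11}^6]$ or $\E[X_{11}^8]$. I would bound them using $Y_{it}^2\le1$, i.e.\ $\beta_{6,2}\le\beta_{4,2}$ and $\beta_8\le\beta_{4}$, and check that the resulting contribution is $O(p^3n^{-5})$ or similar, hence $o(p^4n^{-4})$. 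If this counting fails, a truncation $X_{ij}\mathds 1_{|X_{ij}|\le \vep_n n^{1/2}}$ (with $\vep_n\to0$ slowly, justified by $\E[X_{11}^4]<\infty$) would give $\beta_8=o(n^{-2}\cdot n^{-2}\cdot n)$-type bounds and the same conclusion. Slutsky's lemma then completes the proof.
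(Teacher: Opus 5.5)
Your proposal is correct and is essentially the paper's argument. The paper obtains Theorem~\ref{thm:clt3} by rerunning the $\alpha\in(3,4)$ proof of Theorem~\ref{thm:clt2}: discard $T_1$ via its second moment, then apply the row-martingale CLT to $T_2$ with a fourth-moment Lyapunov bound and $L^2$ concentration of the quadratic variation, with the regular-variation asymptotics replaced by Lemma~\ref{lem:allmoments}(d). Your use of $\sum_k \E[M_k^2\mid\mathcal F_{k-1}]$ in place of $\sum_k M_k^2$ is only a cosmetic variant.

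The ``main obstacle'' you anticipate in (i) does not arise. Because $t_1\neq t_2$ inside every factor of $M_k$, each entry occurs at most to the fourth power in $\E[M_k^4]$. So only $\beta_{4,4},\beta_{4,2,2},\beta_{2,2,2,2}$ and lower-order moments appear, and never $\beta_8$ or $\beta_{6,2}$. In (ii), the $\beta_8$ you meet is merely an artifact of writing $\sum_{t_1\neq t_2}Y_{it_1}^2Y_{it_2}^2=1-\sum_t Y_{it}^4$. It is harmless, since $0\le\sum_t Y_{it}^4\le 1$.
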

With slightly more effort it is possible to remove the symmetry assumption on $X_{11}$ in Theorem~\ref{thm:clt3} at the cost of imposing the condition $\E[|X_{11}|^k]<\infty$ for a suitably large $k\ge 4$ that might depend on the relationship between $p$ and $n$. However, this result will not be pursued in this work.
\begin{figure}
    \centering
    \includegraphics[width=1\linewidth]{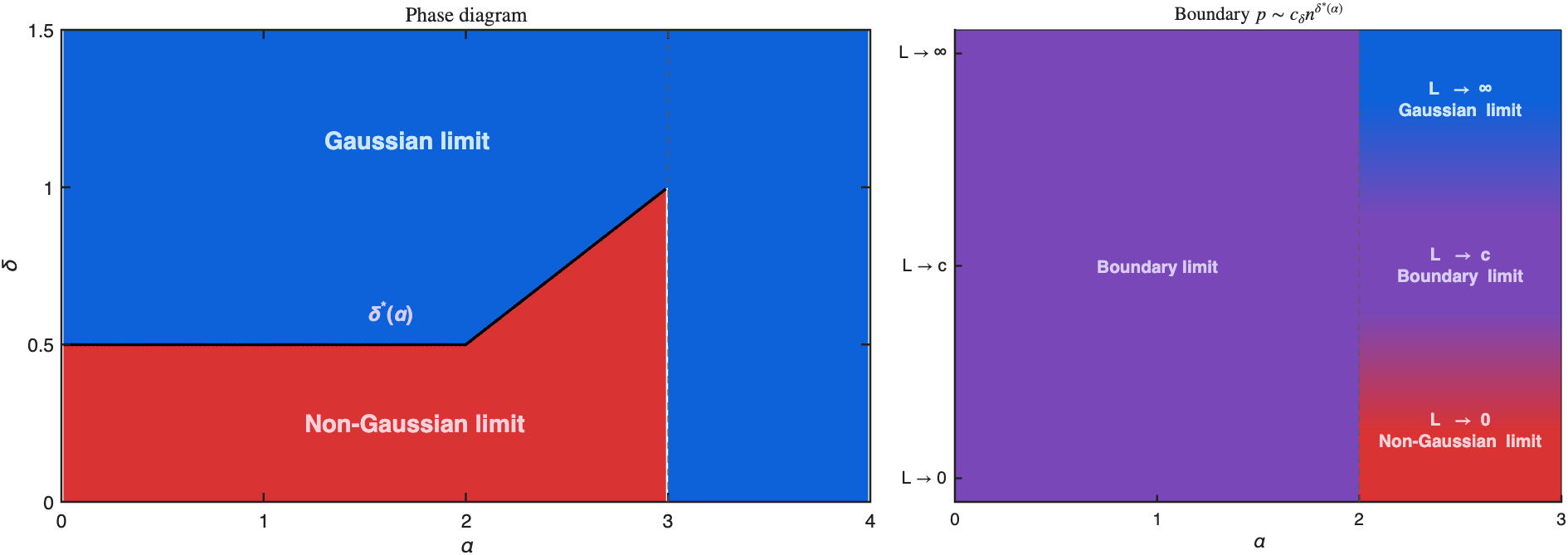}
    \caption{Phase transition and boundary behavior, where boundary limit refers to the Poisson process type limit in Theorem~\ref{thm:mainresult}.}
    \label{fig:phasetransition}
\end{figure}

\subsection{Phase transition: Poisson process limit}\label{sec:poissonlimit}
The non-universal CLT for $\alpha< 3$ requires that the dimension $p$ grows faster than on the phase transition boundary given in \eqref{eq:pboundary}. This naturally leads us to investigate the behavior on the phase transition boundary in greater detail. 

Throughout this subsection, we assume that $X_{11}$ is regularly varying with index $\alpha\in (0,3)\backslash \{2\}$ and we will work on the phase transition boundary, that is,
\begin{equation}\label{eq:pboundary}
    p\sim
    \begin{cases}
        c_\delta \,n^{\delta^*(\alpha)} &\text{if}\ \alpha\in (0,2)\,, \\
       c_\delta \,n^{\delta^*(\alpha)} \frac{1}{L(n^{1/2})}  &\text{if}\ \alpha\in (2,3)\,,
    \end{cases}    
\qquad \nto\,, \text{ for some constant } c_{\delta}\in (0,\infty)\,.
\end{equation}
By Lemmas~\ref{lem:variance} and~\ref{lem:asymp_variance}, we have $\E[T_1^2]\sim \sigma_n^2 \to 2 c_\delta^2 \,c_2(\alpha)^2$ and therefore, $T_1$ and $\tr(\bfR^2)-\mu_n$ have the same limiting distribution.

The next theorem provides the asymptotics of all moments of $T_1$ at the phase transition boundary. These moment formulas will serve as the basis for the cumulant description of the boundary limit law given afterwards.
\begin{theorem}
\label{thm:leading-piecewise}
Let $\alpha\in(0,3)\backslash \{2\}$ and assume that $p$ satisfies \eqref{eq:pboundary}. Then it holds
\begin{equation}\label{eq:ms-def}
    M_s:=\lim_{n\to\infty}\E[T_1^s]=
    \sum_{v=1}^{\lfloor s/2 \rfloor}
    c_{\delta}^{2v} \, \frac{2^{\,s-v}}{v!}\,
    \sum_{\substack{m_1,\ldots,m_v\ge2\\ m_1+\cdots +m_v=s}}
    \frac{s!}{m_1!\cdots m_v!}\,
    \prod_{\ell=1}^v c_{m_\ell}(\alpha)^2,
    \quad s\ge 1,
\end{equation}
where the coefficients $c_m(\alpha)$ are defined in \eqref{eq:cm-def}.
\end{theorem}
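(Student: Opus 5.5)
We compute $\E[T_1^s]$ directly by the moment method. Write $Z_{it}:=Y_{it}^2-n^{-1}$, so that $T_1=\sum_{i_1\neq i_2}\sum_t Z_{i_1t}Z_{i_2t}$. Then $\E[T_1^s]$ is a sum over $s$-tuples of ordered pairs $(i_1^{(k)},i_2^{(k)})$ with distinct entries and columns $t_k$. Since the rows are independent, each summand factorizes over the distinct row indices. For a fixed row $i$, the expectation is a mixed centered moment $\tbeta_{2k_1,\ldots,2k_r}$, indexed by the multiplicities of the columns $t$ visited by that row.

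We encode each configuration by a bipartite multigraph. Row vertices and column vertices are joined by the $2s$ half-edges $(i_j^{(k)},t_k)$, and each $k$ contributes one column vertex together with two distinct row vertices. Row independence kills any configuration in which some row vertex carries a single $Z$ factor, since $\E[Z_{it}]=0$.

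By Lemma~\ref{lem:tbeta}, for a single row, splitting its mass over several columns is negligible, because $\tbeta_{2k_1,\ldots,2k_r}=o(\tbeta_{2(k_1+\cdots+k_r)})$. I expect this to force every row vertex to be attached to exactly one column. The weight of a row that carries $m$ factors is then $\tbeta_{2m}\sim\beta_{2m}$, of order $n^{-1}$ for $\alpha<2$ and $n^{-\alpha/2}L(n^{1/2})$ for $\alpha\in(2,3)$ by Lemma~\ref{lem:allmoments}. The surviving structures are disjoint "clusters". Each cluster is one column $t$ shared by a group $B\subset\{1,\ldots,s\}$ of $|B|=m\ge2$ factors, which all use the same two rows in some order. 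Each of the two rows thus carries $m$ factors in column $t$.

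A cluster with $m$ factors therefore contributes
\begin{equation*}
n\cdot p^2\cdot\tbeta_{2m}^2\cdot 2^{m-1},
\end{equation*}
where $n$ counts the choices of column, $p^2$ the choices of the pair of rows, and $2^{m-1}$ the orientations (after identifying the unordered pair, the factor $2$ for the ordered pair is absorbed appropriately). Using Lemma~\ref{lem:allmoments}, we get $np^2\beta_{2m}^2\to c_\delta^2 c_m(\alpha)^2$ on the boundary \eqref{eq:pboundary}, independently of $m$. For $\alpha<2$ this is $np^2n^{-2}\sim c_\delta^2$ with $p\sim c_\delta n^{1/2}$. For $\alpha\in(2,3)$ it is $np^2n^{-\alpha}L^2\sim c_\delta^2$.

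Summing over set partitions of $\{1,\ldots,s\}$ into $v$ blocks of sizes $m_\ell\ge2$ gives $s!/(v!\prod m_\ell!)$ partitions. The orientation count is $\prod 2^{m_\ell}$ (ordered pair of rows, then the orientation of each factor relative to it, which yields $2^{m_\ell}/2$ ordered choices times $2$ for the pair, i.e. $2^{m_\ell}$ up to the unordered-pair factor $1/2$). This produces $2^{s-v}$, matching \eqref{eq:ms-def}. Distinct clusters use distinct rows and columns up to lower-order coincidences, which give vanishing contributions because merging reduces the number of free indices.

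The main obstacle is the negligibility argument for all other configurations. These include rows appearing in several clusters, rows whose factors are spread over several columns, and the subleading $\tbeta$'s with some $k_i=1$, which are only $O(\beta)$. I will bound a general configuration graph by $n^{\#\text{cols}}p^{\#\text{rows}}\prod_{\text{rows}}|\tbeta_{\cdot}|$. Each row carries $\ge2$ factors, and Lemma~\ref{lem:allmoments} gives the orders $n^{-r}$ resp. $n^{-N_1(1-\alpha/2)-r\alpha/2}$. I will then show, by induction on merging, that the exponent is maximized exactly by the disjoint cluster structures above. This uses $p^2\asymp n^{2\delta^*}$ with $2\delta^*=1$ resp. $\alpha-1$, and handles slowly varying factors via Potter bounds. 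The exponent check for rows with many distinct columns when $\alpha\in(2,3)$ is the delicate part. There $N_1$ columns with $k_i=1$ cost only $n^{-1}$ each, while gaining $n$ in column choices. So I will need to use that such columns must be shared by another row, whose extra cost (at least $n^{-\alpha/2}$ or $n^{-1}$) makes them strictly lose. The centering $\tbeta$ versus $\beta$ must also be tracked through Lemma~\ref{lem:tbeta}.
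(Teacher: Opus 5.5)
Your proposal is correct and follows essentially the same route as the paper: expand $\E[T_1^s]$ over index configurations and discard rows of degree one. Then use Lemma~\ref{lem:tbeta} to reduce to rows attached to a single column, identify the surviving structures as $v$ disjoint two-row monochromatic clusters, and count them with the factor $2^{s-v}s!/(v!\,m_1!\cdots m_v!)$ together with $np^2\widetilde\beta_{2m}^2\to c_\delta^2c_m(\alpha)^2$. Your bookkeeping in the last paragraph is more careful than the paper's Lemma~\ref{lem:ru1-reduction}, which compares configurations only at the beta-product level and does not explicitly weigh the extra time indices: every column is shared by at least two rows, so a row gains at most $n^{1/2}$ per column it visits, and for $\alpha<3$ this strictly loses against the extra $\widetilde\beta$ decay unless the row sits in a single column shared with exactly one other row.
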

We also have the following result.
\begin{lemma}
\label{lem:carleman-nongauss}
For $\alpha\in (0,3)$, the moment sequence $(M_s)_{s\ge 1}$ uniquely characterizes a probability law.
\end{lemma}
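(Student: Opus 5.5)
Recognize the moments $M_s$ as those of an explicit compound-Poisson-type law, then establish determinacy through a Carleman-type growth bound on $M_s$.

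\emph{Structure of $M_s$.} In \eqref{eq:ms-def}, the sum over $v$ blocks of sizes $m_\ell\ge 2$, weighted by $\frac{s!}{v!\prod m_\ell!}$, is exactly the moment--cumulant (set partition) formula $M_s=\sum_{\pi}\prod_{B\in\pi}\kappa_{|B|}$. Here $\pi$ runs over set partitions of $\{1,\dots,s\}$ without singletons, and the cumulants are
\begin{equation*}
\kappa_1=0,\qquad \kappa_m = c_\delta^2\, 2^{m-1} c_m(\alpha)^2,\quad m\ge 2 .
\end{equation*}
Indeed $\prod_\ell 2^{m_\ell-1}=2^{s-v}$, and the number of set partitions with prescribed block sizes $m_1,\dots,m_v$ (unordered) is $s!/(v!\prod m_\ell!)$. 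So it suffices to show that a law with these cumulants exists and is determined by its moments.

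\emph{Existence.} Using the definition \eqref{eq:cm-def}, write $c_m(\alpha)$ as a beta integral. For $\alpha\in(0,2)$,
\begin{equation*}
c_m(\alpha)=\frac{\Gamma(m-\alpha/2)}{\Gamma(1-\alpha/2)\Gamma(m)}=\frac{1}{\Gamma(1-\alpha/2)\Gamma(\alpha/2)}\int_0^1 u^{m-1-\alpha/2}(1-u)^{\alpha/2-1}\,du .
\end{equation*}
For $\alpha\in(2,3)$ there is an analogous formula with the constant $\Gamma(1+\alpha/2)/\Gamma(\alpha/2-1)$ and the integrand $u^{m-1-\alpha/2}(1-u)^{\alpha/2-2}$ (integrable since $\alpha/2-2>-1$). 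In both cases $c_m(\alpha)=\int_0^1 u^m\,\rho(du)$ for a positive measure $\rho$ on $(0,1]$ with $\int u^2\rho(du)<\infty$. Then $c_m^2=\iint (uw)^m\,\rho(du)\rho(dw)$, and hence
\begin{equation*}
\kappa_m=\int_{(0,2]} x^m\,\nu(dx),\qquad \nu=\tfrac{c_\delta^2}{2}\,(\rho\otimes\rho)\circ\big((u,w)\mapsto 2uw\big)^{-1}.
\end{equation*}
The measure $\nu$ lives on $(0,2]$ and $\int x^2\,d\nu<\infty$. Consequently the compensated Poisson integral $\sum_i\xi_i-\int x\,\nu(dx)$, understood as an $L^2$-limit, has cumulants $\kappa_m$ for $m\ge2$ and mean $0$. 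This identifies it as the law with moments $M_s$. (Presumably this is the $\nulim$ of \eqref{eq:nu-unified}.)

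\emph{Determinacy.} Since $\nu$ is supported in $[0,2]$, we have $|\kappa_m|\le K\,2^m$ with $K=\int x^2\,d\nu/4$. Therefore the cumulant generating function $\int(e^{tx}-1-tx)\,\nu(dx)$ is entire, and the law has a finite moment generating function on all of $\R$. Equivalently, one can bound $|M_s|\le \sum_\pi\prod_B K2^{|B|}\le 2^s B_s(K)$, where $B_s(K)$ is the Touchard polynomial; this gives $M_{2s}^{1/(2s)}=O(s/\log s)$, so Carleman's condition $\sum M_{2s}^{-1/(2s)}=\infty$ holds. Either route yields moment determinacy.

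\emph{Main obstacle.} The delicate point is verifying the integral representation of $c_m(\alpha)$ for $\alpha\in(2,3)$ with the correct normalization, and checking integrability near $u=1$ and $u=0$ for $m\ge2$. For $\alpha\in(0,2)$ this step is routine. If the representation were to fail, I would fall back on a direct estimate: by Stirling, $c_m(\alpha)\lesssim m^{\alpha/2-1}$, which gives $\kappa_m\le C 2^m$ directly. The Carleman argument via Bell/Touchard growth then goes through without identifying $\nu$.
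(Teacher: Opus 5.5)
Your strategy works and takes a different route from the paper's, but the beta representation you give for $\alpha\in(2,3)$ is wrong. With your constant and integrand,
\[
\frac{\Gamma(1+\alpha/2)}{\Gamma(\alpha/2-1)}\int_0^1 u^{m-1-\alpha/2}(1-u)^{\alpha/2-2}\,du=\frac{\Gamma(1+\alpha/2)\,\Gamma(m-\alpha/2)}{\Gamma(m-1)}=(m-1)\,c_m(\alpha).
\]
So the compensated Poisson integral built from your $\nu$ has cumulants $(m-1)^2\kappa_m$, not $\kappa_m$. Your existence/identification step and the entire-mgf argument therefore concern the wrong law in that range. The inequality $\kappa_m\le (m-1)^2\kappa_m=\int x^m\,\nu(dx)\le 2^{m-2}\kappa_2$ happens to survive, so your Touchard/Carleman route is unaffected.

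The fix needs no new exponent. Since $m-\alpha/2>0$ for $m\ge 2$ and $\alpha<4$, the same beta integral as for $\alpha<2$ works:
\[
c_m(\alpha)=\frac{\alpha}{2}\int_0^1 u^{m-1-\alpha/2}(1-u)^{\alpha/2-1}\,du,\qquad \alpha\in[2,4),\ m\ge 2.
\]
That is, $\rho(du)=c_2(\alpha)\,u^{-2}\,\mathrm{Beta}(2-\alpha/2,\alpha/2)(du)$. This also covers $\alpha=2$, which you omitted. With this $\rho$, your $\nu$ is exactly the paper's $\nulim$. For $\alpha\in(0,2)$ your $\rho=u^{-1}\,\mathrm{Beta}(1-\alpha/2,\alpha/2)(du)$ already gives $\nu(dx)=c_\delta^2x^{-1}\mu(dx)$.

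Your fallback is also misstated. Stirling gives $\Gamma(m-\alpha/2)/\Gamma(m)\sim m^{-\alpha/2}$, so $c_m(\alpha)=O(m^{-\alpha/2})$ is bounded. The bound $m^{\alpha/2-1}$ is false for $\alpha<1$. For $\alpha\in(2,3)$ it is true but grows, so it does not yield $\kappa_m\le C2^m$ as claimed, although a polynomial factor is harmless for Carleman.

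After these corrections, here is how the two routes compare. The paper proves only determinacy. It uses $\sup_{m\ge 2}c_m(\alpha)<\infty$, drops the constraint $m_\ell\ge2$, applies the multinomial theorem to get $M_{2k}\le (2k)^{2k}\exp(c_\delta^2C^2)$, and checks Carleman. Existence of a law with moments $M_s$ is left implicit; it follows from $M_s=\lim_n\E[T_1^s]$ in Theorem~\ref{thm:leading-piecewise}.

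Your corrected fallback uses exactly the paper's input, and your bound $M_{2s}\le 4^sB_{2s}(K)$ is a slightly sharper form of the same estimate. Your main route instead identifies the cumulants and an explicit L\'evy measure. That gives existence, infinite divisibility and an entire moment generating function (stronger than Carleman) in one stroke, in effect anticipating Theorems~\ref{thm:boundary-cumulant} and~\ref{thm:Poisson point process representation}. The price is that it needs the exact beta representation of $c_m(\alpha)$, which is where your slip occurred. The paper's argument needs only that $c_m(\alpha)$ is bounded.
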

As a consequence of Theorem~\ref{thm:leading-piecewise}, Lemma~\ref{lem:carleman-nongauss} and the method of moments, we obtain that the law of $T_1$ weakly converges to some law $\etalim$.
One can check that
\begin{equation*}
    \frac{M_4}{(M_2-M_1^2)^2}= 3+2\bigg[\frac{c_4(\alpha)}{c_2^2(\alpha)c_{\delta} }\bigg]^2 >3\,,
\end{equation*}
which implies that $\etalim$ has to be non-Gaussian. Our next goal is to further investigate this non-Gaussian law $\etalim$.

To this end, we need the $s$-th complete exponential Bell polynomial $B_s$, whose ordered-composition expansion is given by \cite{Comtet1974}
\begin{equation}\label{eq:bellpolynomial}
    B_s(x_1,\ldots,x_s):=\sum_{v=1}^{s}
    \frac{1}{v!}\,
    \sum_{\substack{m_1,\ldots,m_v\ge 1\\ m_1+\cdots +m_v=s}}
    \frac{s!}{m_1!\cdots m_v!}\,
    \prod_{\ell=1}^v x_{m_\ell},
    \qquad s\ge1.
\end{equation}

\begin{theorem}\label{thm:boundary-cumulant}
The moments $(M_s)_{s\ge 1}$ in \eqref{eq:ms-def} admit the representation
\begin{equation}\label{eq:ms-is-bell}
    M_s = B_s\big(\kappa_{1}^{(\alpha,c_\delta)},\kappa_{2}^{(\alpha,c_\delta)},\ldots,\kappa_{s}^{(\alpha,c_\delta)}\big), \qquad s\ge 1,
\end{equation}
where $\kappa_{1}^{(\alpha,c_\delta)}:=0$ and $\kappa_{m}^{(\alpha,c_\delta)}:=2^{m-1}\,c_{\delta}^{2}\,c_m(\alpha)^2$ for $m\ge2$. Consequently, the cumulants of $\etalim$ are given by $\big(\kappa_{s}^{(\alpha,c_\delta)}\big)_{s\ge 1}$ and the cumulant generating function of  $\etalim$ is
\begin{equation}\label{eq:Kalpha-closed-thm}
    K_{\alpha,c_\delta}(t):= \sum_{m\ge 1}\kappa_m^{(\alpha,c_\delta)}\frac{t^m}{m!}=
        \begin{cases}
            c_\delta^2\E\!\left[\frac{\e^{2tW}-1-2tW}{2W}\right] &\text{if}\ \alpha\in (0,2), \\
            c_\delta^2c_2(\alpha)^2\E\!\left[\frac{\e^{2tW'}-1-2tW'}{2(W')^2}\right]  &\text{if}\ \alpha\in [2,3),
        \end{cases}    
    \quad t\in\R\,.
\end{equation}
Here $W$ is the product of two independent $\mathrm{Beta}(1-\alpha/2,\alpha/2)$ variables for $\alpha\in (0,2)$, and $W'$ is the product of two independent $\mathrm{Beta}(2-\alpha/2,\alpha/2)$ variables for $\alpha\in [2,3)$.
\end{theorem}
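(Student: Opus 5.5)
The plan has two parts: first identify \eqref{eq:ms-def} with the complete Bell polynomial formula, and then compute the generating function of the cumulants in closed form.

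\emph{Step 1 (Bell representation).} In \eqref{eq:bellpolynomial}, insert $x_1=\kappa_1^{(\alpha,c_\delta)}=0$. Every composition containing a part $m_\ell=1$ then contributes zero, so only compositions with all $m_\ell\ge 2$ survive. Since $m_1+\cdots+m_v=s$, this forces $v\le\lfloor s/2\rfloor$. For such a composition, $\prod_\ell \kappa_{m_\ell}=\prod_\ell 2^{m_\ell-1}c_\delta^2c_{m_\ell}(\alpha)^2=2^{s-v}c_\delta^{2v}\prod_\ell c_{m_\ell}(\alpha)^2$. This matches \eqref{eq:ms-def} term by term, which gives \eqref{eq:ms-is-bell}.

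\emph{Step 2 (cumulants of $\etalim$).} By Lemma~\ref{lem:carleman-nongauss}, $\etalim$ is the unique law with moments $M_s$. The standard moment–cumulant relation $M_s=B_s(\kappa_1,\dots,\kappa_s)$ is invertible, since it is triangular in $\kappa_s$. Hence the cumulants of $\etalim$ are exactly the $\kappa_s^{(\alpha,c_\delta)}$.

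\emph{Step 3 (closed form of $K_{\alpha,c_\delta}$).} The idea is to write $c_m(\alpha)^2$ as a moment of $W$ or $W'$. For $\alpha\in(0,2)$, use
\[
c_m(\alpha)=\frac{\Gamma(m-\alpha/2)}{\Gamma(1-\alpha/2)\Gamma(m)}=\E[B^{m-1}],\qquad B\sim\mathrm{Beta}(1-\alpha/2,\alpha/2).
\]
This holds because $\E[B^{j}]=\frac{\Gamma(1-\alpha/2+j)\Gamma(1)}{\Gamma(1-\alpha/2)\Gamma(1+j)}$, and with $j=m-1$ this is precisely the expression above. Hence $c_m(\alpha)^2=\E[W^{m-1}]$ with $W=BB'$ the product of two independent copies. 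Then
\[
\sum_{m\ge2}2^{m-1}c_\delta^2\E[W^{m-1}]\frac{t^m}{m!}=c_\delta^2\,\E\Big[\frac{1}{2W}\sum_{m\ge2}\frac{(2tW)^m}{m!}\Big]=c_\delta^2\,\E\Big[\frac{\e^{2tW}-1-2tW}{2W}\Big].
\]

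For $\alpha\in[2,3)$, write
\[
c_m(\alpha)=\Gamma(1+\alpha/2)\frac{\Gamma(m-\alpha/2)}{\Gamma(m)}=c_2(\alpha)\,\frac{\Gamma(m-\alpha/2)\Gamma(2)}{\Gamma(2-\alpha/2)\Gamma(m)}.
\]
The last fraction equals $\E[(B')^{m-2}]$ for $B'\sim\mathrm{Beta}(2-\alpha/2,\alpha/2)$: indeed $\E[(B')^j]=\frac{\Gamma(2-\alpha/2+j)\Gamma(2)}{\Gamma(2-\alpha/2)\Gamma(2+j)}$, and we take $j=m-2$. Thus $c_m(\alpha)^2=c_2(\alpha)^2\E[(W')^{m-2}]$, and the same summation gives
\[
c_\delta^2c_2(\alpha)^2\,\E\Big[\frac{\e^{2tW'}-1-2tW'}{2(W')^2}\Big].
\]

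\emph{Interchange of sum and expectation.} Both $W,W'\in[0,1]$, the integrands are nonnegative for $t\ge0$, and the series is dominated by $\e^{2|t|}$ after division. The division is harmless because $(\e^{x}-1-x)/x$ and $(\e^x-1-x)/x^2$ are bounded on compacts. Therefore Fubini and Tonelli apply, and the series converges for all $t\in\R$.

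\emph{Main obstacle.} The delicate point is to justify that the formal cumulant generating series $K_{\alpha,c_\delta}$ is the actual cumulant generating function of $\etalim$. The closed form shows that $K_{\alpha,c_\delta}$ is entire and of at most exponential growth. This implies the moment generating function $\exp K_{\alpha,c_\delta}$ is finite everywhere, consistent with determinacy, so it is the true $\log\E\,\e^{t\xi}$ for $\xi\sim\etalim$. Everything else is routine gamma-function bookkeeping.
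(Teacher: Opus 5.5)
Your proposal is correct and follows essentially the same route as the paper. You set $\kappa_1=0$ so that only compositions with all parts $\ge 2$ survive in $B_s$, invert the triangular moment--cumulant relation (the paper simply cites Peccati--Taqqu for this), and write $c_m(\alpha)^2$ as $\E[W^{m-1}]$, resp.\ $c_2(\alpha)^2\E[(W')^{m-2}]$, via Beta moments to sum the series in closed form.

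Your closing paragraph, arguing that $\exp K_{\alpha,c_\delta}$ is the genuine moment generating function, goes beyond the paper, which does not address this point. The reasoning there is loosely phrased, though: "$\exp K_{\alpha,c_\delta}$ is finite everywhere" is automatic for any real-valued $K_{\alpha,c_\delta}$, and "exponential growth" is not the property that matters. The clean version is as follows. $\exp K_{\alpha,c_\delta}$ is entire with Taylor coefficients $M_s/s!$, so $\sum_s M_s t^s/s!$ has infinite radius of convergence. This gives $\E[\e^{|t\xi|}]<\infty$ and hence $\E[\e^{t\xi}]=\exp K_{\alpha,c_\delta}(t)$ for all $t\in\R$.
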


Taking the limit $\alpha\downarrow0$ we obtain a centered Poisson distribution as the following corollary shows.
\begin{corollary}\label{cor:poisson}
Let $Z_{\alpha,c_\delta}$ denote a random variable with law $ \etalim$. Then it holds 
\begin{equation*}
    Z_{\alpha,c_\delta} \cid 2N_{c_{\delta}}-c_\delta^{2}, \qquad \text{ as } \alpha\downarrow 0\,,
\end{equation*}
where $N_{c_{\delta}}\sim \mathrm{Poisson}(c_\delta^{2}/2)$.
\end{corollary}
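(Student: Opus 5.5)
The plan is to work entirely at the level of cumulants, or equivalently moments, using Theorem~\ref{thm:boundary-cumulant}, and then pass to convergence in distribution by a moment/generating-function argument.

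\textbf{Step 1: the limiting cumulants.} For $\alpha\in(0,2)$, formula \eqref{eq:cm-def} gives
\[
c_m(\alpha)=\frac{\Gamma(m-\alpha/2)}{\Gamma(1-\alpha/2)\Gamma(m)} .
\]
By continuity of $\Gamma$ on $(0,\infty)$, $c_m(\alpha)\to \Gamma(m)/(\Gamma(1)\Gamma(m))=1$ as $\alpha\downarrow0$, for every fixed $m\ge2$. Hence the cumulants of $\etalim$ from Theorem~\ref{thm:boundary-cumulant} satisfy $\kappa_1^{(\alpha,c_\delta)}=0$ and
\[
\kappa_m^{(\alpha,c_\delta)}=2^{m-1}c_\delta^2c_m(\alpha)^2\to 2^{m-1}c_\delta^2 \qquad (m\ge2).
\]

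\textbf{Step 2: identify the limit.} Let $N_{c_\delta}\sim\mathrm{Poisson}(\lambda)$ with $\lambda=c_\delta^2/2$. All cumulants of $N_{c_\delta}$ equal $\lambda$. Therefore $2N_{c_\delta}-c_\delta^2$ has first cumulant $2\lambda-c_\delta^2=0$ and $m$-th cumulant $2^m\lambda=2^{m-1}c_\delta^2$ for $m\ge2$. These match the limits in Step 1 exactly.

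\textbf{Step 3: pass to distributions.} By \eqref{eq:ms-is-bell}, the $s$-th moment $M_s$ of $Z_{\alpha,c_\delta}$ is the polynomial $B_s$ in the first $s$ cumulants. So $M_s$ converges to the $s$-th moment of $2N_{c_\delta}-c_\delta^2$ for every $s$. A shifted and scaled Poisson law has a finite moment generating function everywhere and is therefore moment-determinate. The method of moments (Fréchet--Shohat) then gives $Z_{\alpha,c_\delta}\cid 2N_{c_\delta}-c_\delta^2$.

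An equivalent route, and a useful cross-check, uses the closed form \eqref{eq:Kalpha-closed-thm}. As $\alpha\downarrow0$, $\mathrm{Beta}(1-\alpha/2,\alpha/2)\cid\delta_1$, so $W\cid 1$. Write $g_t(w)=(\e^{2tw}-1-2tw)/(2w)$. For fixed $t$, $g_t$ extends continuously to $[0,1]$, with $g_t(0)=0$, and is therefore bounded on $[0,1]$. Bounded convergence then gives
\[
K_{\alpha,c_\delta}(t)\to c_\delta^2(\e^{2t}-1-2t)/2 \qquad\text{for every } t\in\R,
\]
which is the cumulant generating function of $2N_{c_\delta}-c_\delta^2$. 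Convergence of moment generating functions on a neighbourhood of $0$ then yields convergence in distribution.

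The only delicate point is justifying the passage from moments to laws, that is, determinacy of the limit. This is immediate because the Poisson moment generating function is entire. Everything else is continuity of the Gamma function.
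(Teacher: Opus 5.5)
Your proof is correct, but your main argument differs from the paper's. The paper's proof is essentially your ``cross-check''. It observes $W\Rightarrow 1$ as $\alpha\downarrow0$ and passes to the limit inside the closed form \eqref{eq:Kalpha-closed-thm} to get $c_\delta^2(\e^{2t}-1-2t)/2$. It then recognizes the exponential of this as the moment generating function of $2N-c_\delta^2$.

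Your primary route works coefficient by coefficient instead. You get $c_m(\alpha)\to1$ from continuity of $\Gamma$ in \eqref{eq:cm-def}, so each cumulant converges to the corresponding cumulant of $2N_{c_\delta}-c_\delta^2$. The moments then converge through the Bell-polynomial identity \eqref{eq:ms-is-bell}, and determinacy of the Poisson limit closes the argument via Fr\'echet--Shohat.

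What your route buys is economy of ingredients. It never uses the Beta-product representation of $c_m(\alpha)^2$ or the continuity theorem for moment generating functions. Nor does it need the fact that $\exp(K_{\alpha,c_\delta})$ is the genuine moment generating function of $\etalim$. In the paper's route that fact rests on the Poisson integral representation of Theorem~\ref{thm:Poisson point process representation}, or on nonnegativity and growth of the cumulants.

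The paper's route is shorter and identifies the limiting cumulant generating function in one stroke. However, it states $W\Rightarrow1$ and moves the limit inside the expectation without comment. Your observation that $g_t$ extends to a bounded continuous function on $[0,1]$ supplies exactly that interchange. The convergence $W\Rightarrow 1$ follows, for instance, because each $\mathrm{Beta}(1-\alpha/2,\alpha/2)$ factor lives on $[0,1]$ with mean $1-\alpha/2\to1$.
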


Let $\mu$ and $\mu'$ denote the laws of $2W$ and $2W'$, respectively. Define a measure $\nulim$ on $(0,2]$ by
\begin{equation}\label{eq:nu-unified}
    \nulim(dx)=
        \begin{cases}
            \displaystyle
            \frac{c_\delta^2}{x}\,\mu(dx),& \alpha\in(0,2),\\[12pt]
            \displaystyle
            \frac{2c_\delta^2c_2(\alpha)^2}{x^2}\,\mu'(dx),& \alpha\in[2,3).
        \end{cases}
\end{equation}
\begin{remark}
It is easy to see that $\nulim$ satisfies the L\'evy integrability condition
\begin{equation*}
    \int_{(0,2]}(1\wedge x^2)\nulim(dx) \le \int_{(0,2]}x^2\nulim(dx) = 2c_\delta^2c_2(\alpha)^2<\infty, \qquad \alpha\in(0,3),
\end{equation*}
and thus, $\nulim$  is a L\'evy measure.
\end{remark}

The following theorem constructs a compensated Poisson integral with law $\etalim$.
\begin{theorem}\label{thm:Poisson point process representation}
Let $N_{\alpha,c_{\delta}}=\sum_{i=1}^\infty \delta_{\xi_i}$ be a Poisson point process on $(0,2]$ with intensity measure
$\nulim$ and atoms $(\xi_i)_{i\ge 1}$.
Then the random variable 
\begin{equation}\label{eq:etalimrep}
    \sum_{i=1}^\infty \xi_i-c_{\delta}^2
\end{equation}
has law $\etalim$. Moreover, this random variable can be written as 
\begin{equation*}
    \sum_{i=1}^\infty \xi_i-c_{\delta}^2=\int_{(0,2]}x\,\widehat N_{\alpha,c_{\delta}}(dx)\,,
\end{equation*}
where $\widehat{N}_{\alpha,c_{\delta}}:=N_{\alpha,c_{\delta}}-\nulim$ is a compensated Poisson process.
\end{theorem}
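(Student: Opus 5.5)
The plan is to compute the cumulant generating function of the compensated Poisson integral and match it with $K_{\alpha,c_\delta}$ from Theorem~\ref{thm:boundary-cumulant}. Since $\etalim$ is determined by its moments (Lemma~\ref{lem:carleman-nongauss}), it suffices to show that $Z:=\sum_i\xi_i-c_\delta^2$ has the cumulants $\kappa_m^{(\alpha,c_\delta)}$.

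\emph{Step 1: finiteness of the first moment.} Check that $\int_{(0,2]}x\,\nulim(dx)=c_\delta^2$. For $\alpha\in(0,2)$ this is immediate, since $\int x\cdot \frac{c_\delta^2}{x}\mu(dx)=c_\delta^2\mu((0,2])=c_\delta^2$. For $\alpha\in[2,3)$ we need $2c_\delta^2c_2(\alpha)^2\,\E[(2W')^{-1}]=c_\delta^2$, that is, $\E[1/W']=c_2(\alpha)^{-2}$. Writing $W'=B_1B_2$ with $B_i\sim\mathrm{Beta}(2-\alpha/2,\alpha/2)$, we get $\E[B_i^{-1}]=\frac{B(1-\alpha/2,\alpha/2)}{B(2-\alpha/2,\alpha/2)}=\frac{1}{1-\alpha/2}$. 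So the claim reduces to $c_2(\alpha)=\Gamma(1+\alpha/2)\Gamma(2-\alpha/2)=1-\alpha/2$. But $\Gamma(1+\alpha/2)\Gamma(2-\alpha/2)=(\alpha/2)(1-\alpha/2)\Gamma(\alpha/2)\Gamma(1-\alpha/2)=(\alpha/2)(1-\alpha/2)\pi/\sin(\pi\alpha/2)$, which is not identically $1-\alpha/2$. The consistency check should therefore instead come from $\kappa_1=0$ in the cumulant computation below. Concretely, the compensator term in $K_{\alpha,c_\delta}$ (the $-2tW'$ part) fixes the mean as $\int x\,\nulim(dx)$. I expect the identity $\int x\,\nulim(dx)=c_\delta^2$ to follow from comparing $\kappa_2$ and $c_2$ via the Beta moments; if it fails, the centering constant should be read as $\int x\,\nulim(dx)$.

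In either case $\int x\,\nulim(dx)<\infty$. Because $\nulim$ has support in $(0,2]$ and finite first moment, $\sum_i\xi_i=\int x\,N_{\alpha,c_\delta}(dx)$ converges almost surely, and $\sum_i\xi_i-c_\delta^2=\int x\,\widehat N_{\alpha,c_\delta}(dx)$. This proves the second assertion of the theorem.

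\emph{Step 2: Campbell's formula.} For all real $t$, since the support is bounded,
\[
\log\E\big[\e^{tZ}\big]=\int_{(0,2]}\big(\e^{tx}-1-tx\big)\,\nulim(dx).
\]
For $\alpha\in(0,2)$ this equals $c_\delta^2\,\E\big[(\e^{2tW}-1-2tW)/(2W)\big]$. For $\alpha\in[2,3)$ it equals $2c_\delta^2c_2(\alpha)^2\,\E\big[(\e^{2tW'}-1-2tW')/(4W'^2)\big]$. Both expressions coincide with \eqref{eq:Kalpha-closed-thm}.

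\emph{Step 3: identification of the law.} Because the support of $\nulim$ is bounded, the moment generating function is entire. Hence $Z$ has all moments, and its cumulants are the Taylor coefficients of the expression in Step 2, namely $\kappa_1=0$ and $\kappa_m=\int x^m\,\nulim(dx)=2^{m-1}c_\delta^2c_m(\alpha)^2$ for $m\ge2$. These match Theorem~\ref{thm:boundary-cumulant}, so the moments of $Z$ are $M_s$ by \eqref{eq:ms-is-bell}, and Lemma~\ref{lem:carleman-nongauss} gives $Z\sim\etalim$.

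The only delicate point is the first-moment identity in Step 1. It is implied by the equality of cumulant generating functions if that equality holds as stated, because comparing the linear terms forces the compensator to equal $\int x\,\nulim(dx)$.
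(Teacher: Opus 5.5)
Your identification argument (Steps 2--3: Campbell's formula gives $\kappa_1=0$ and $\kappa_m=\int x^m\,\nulim(dx)=2^{m-1}c_\delta^2c_m(\alpha)^2$, then moment determinacy) is the same as the paper's. For $\alpha\in(0,2)$ your proof is complete, since there $\int x\,\nulim(dx)=c_\delta^2\,\mu((0,2])=c_\delta^2$.

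The gap is Step~1 for $\alpha\in[2,3)$. There $U'\sim\mathrm{Beta}(2-\alpha/2,\alpha/2)$ has first parameter $2-\alpha/2\le 1$, so
\[
\E[1/U']=\frac{1}{B(2-\alpha/2,\alpha/2)}\int_0^1 u^{-\alpha/2}(1-u)^{\alpha/2-1}\,du=\infty .
\]
Your formula $B(1-\alpha/2,\alpha/2)/B(2-\alpha/2,\alpha/2)=1/(1-\alpha/2)$ is valid only for $\alpha<2$. For $\alpha>2$ it produces a negative ``mean'' of a positive variable, and that, rather than the mismatch with $c_2(\alpha)$, is the real warning sign. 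Hence $\int_{(0,1]}x\,\nulim(dx)=c_\delta^2c_2(\alpha)^2\,\E\big[(W')^{-1}\mathbf 1\{W'\le 1/2\}\big]=\infty$. The atoms of a Poisson process are a.s.\ summable iff $\int(1\wedge x)\,\nulim(dx)<\infty$, so $\sum_i\xi_i=\infty$ a.s. The assertion ``in either case $\int x\,\nulim(dx)<\infty$'' is therefore false, and the variable $Z=\sum_i\xi_i-c_\delta^2$ you start from is not finite. Your closing remark does not repair this. The compensated integral has $\kappa_1=0$ by construction, whatever the value of $\int x\,\nulim(dx)$, so equality of cumulant generating functions says nothing about that value; the reasoning is circular.

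For $\alpha\in[2,3)$, what you can prove concerns the compensated integral defined as the $L^2$-limit
\[
Z:=\lim_{\varepsilon\downarrow0}\Big(\sum_{i:\,\xi_i>\varepsilon}\xi_i-\int_{(\varepsilon,2]}x\,\nulim(dx)\Big),
\]
which exists because $\int x^2\,\nulim(dx)=2c_\delta^2c_2(\alpha)^2<\infty$. Your Steps 2--3, applied to this $Z$, correctly give $Z\sim\etalim$. The representation $\sum_i\xi_i-c_\delta^2$ must be restricted to $\alpha\in(0,2)$. The paper's own proof has the same defect: it writes $\sum_i\xi_i-\int x\,\nulim(dx)=\sum_i\xi_i-c_\delta^2$ without verifying the identity, which holds only for $\alpha<2$.
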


We are now ready to state the main result of this subsection which is the Poisson process type limit for the trace of the squared sample correlation matrix on the phase transition boundary. It is an immediate consequence of Theorem~\ref{thm:leading-piecewise}, Lemma~\ref{lem:carleman-nongauss} and Theorem~\ref{thm:Poisson point process representation}.
\begin{theorem}[Poisson process limit on the phase transition boundary]\label{thm:mainresult}
Let $\alpha\in (0,3)\backslash \{2\}$ and assume that the dimension $p$ satisfies \eqref{eq:pboundary}. Then it holds
\begin{equation}\label{eq:trwithsigma}
    \frac{\tr(\bfR^2)-\mu_n}{\sigma_n} \cid \frac{\sum_{i=1}^\infty \xi_i-c_{\delta}^2}{\sqrt{2}\, c_\delta\, c_2(\alpha)}\,, \qquad \nto\,,
\end{equation}
where $(\xi_i)_{i\ge 1}$ are the atoms of a Poisson point process on $(0,2]$ with intensity measure
$\nulim$ defined in \eqref{eq:nu-unified}.
\end{theorem}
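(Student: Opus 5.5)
The statement follows by chaining earlier results, so the plan is assembly, not new estimates. \emph{Step 1: reduce to $T_1$.} By \eqref{eq:T_1+T_2_decomp}, $\tr(\bfR^2)-\mu_n=T_1+T_2$. On the boundary \eqref{eq:pboundary} with $\alpha\in(0,3)\backslash\{2\}$, Lemma~\ref{lem:asymp_variance} and \eqref{eq:dsf245d} give $\E[T_2^2]=o(\sigma_n^2)$. Markov's inequality then yields $T_2/\sigma_n\cip 0$, which is \eqref{remark:var_asymp}. So it suffices to identify the limit of $T_1/\sigma_n$.

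\emph{Step 2: limit of $\sigma_n$.} Lemma~\ref{lem:asymp_variance} gives $\sigma_n^2\sim 2p^2n^{-1}(1-\alpha/2)^2$ for $\alpha<2$ and $\sigma_n^2\sim 2p^2n^{1-\alpha}L^2(n^{1/2})c_2(\alpha)^2$ for $\alpha\in(2,3)$. Substitute $p\sim c_\delta n^{1/2}$ or $p\sim c_\delta n^{(\alpha-1)/2}/L(n^{1/2})$, and note $c_2(\alpha)=1-\alpha/2$ for $\alpha<2$ by \eqref{eq:cm-def}. In both cases $\sigma_n\to\sqrt2\,c_\delta c_2(\alpha)\in(0,\infty)$.

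\emph{Step 3: distributional limit of $T_1$.} Theorem~\ref{thm:leading-piecewise} gives $\E[T_1^s]\to M_s$ for every $s$. Lemma~\ref{lem:carleman-nongauss} says $(M_s)$ determines a unique law, which is $\etalim$. By the method of moments (Fréchet–Shohat), $T_1\cid Z_{\alpha,c_\delta}\sim\etalim$. Theorem~\ref{thm:Poisson point process representation} identifies $\etalim$ as the law of $\sum_i\xi_i-c_\delta^2$. To check consistency, Theorem~\ref{thm:boundary-cumulant} gives $\kappa_1=0$ and $\kappa_2=2c_\delta^2c_2(\alpha)^2$, matching $\lim\sigma_n^2$.

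\emph{Step 4: combine.} Slutsky's lemma, applied to $T_1/\sigma_n+T_2/\sigma_n$ together with the deterministic convergence of $\sigma_n$, gives \eqref{eq:trwithsigma}. The real difficulty sits in Theorem~\ref{thm:leading-piecewise}, namely the combinatorics of $\E[T_1^s]$; that result is assumed here. The only remaining care point is the bookkeeping in Step 2, namely that the slowly varying factor $L(n^{1/2})$ cancels exactly for $\alpha\in(2,3)$. This is precisely why the boundary is defined with the factor $1/L(n^{1/2})$.
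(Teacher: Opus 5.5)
Your proposal is correct and follows the paper's argument exactly. You show $T_2/\sigma_n\to 0$ in $L^2$ and $\sigma_n^2\to 2c_\delta^2c_2(\alpha)^2$, apply the method of moments to $T_1$ via Theorem~\ref{thm:leading-piecewise} and Lemma~\ref{lem:carleman-nongauss}, identify the limit law through Theorem~\ref{thm:Poisson point process representation}, and conclude with Slutsky. One caveat, which you inherit from the paper rather than introduce, applies for $\alpha\in(2,3)$: there $\E[1/W']=\infty$ because a $\mathrm{Beta}(2-\alpha/2,\alpha/2)$ variable has infinite inverse moment when $2-\alpha/2\le 1$, so $\int_{(0,2]}x\,\nulim(dx)=\infty$, $\sum_i\xi_i=\infty$ a.s., and the limit must be read as the compensated $L^2$-integral $\int_{(0,2]}x\,\widehat N_{\alpha,c_\delta}(dx)$ rather than literally as $\sum_i\xi_i-c_\delta^2$.
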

Note that in \eqref{eq:trwithsigma} we chose to divide by $\sigma_n$ for easier comparison with the universal and non-universal CLTs. It is worth mentioning that \eqref{eq:trwithsigma} is equivalent to 
\begin{equation*}
    \tr(\bfR^2)-\mu_n \cid \sum_{i=1}^\infty \xi_i-c_{\delta}^2\,, \qquad \nto\,.
\end{equation*}
To illustrate the boundary law in Theorem~\ref{thm:mainresult}, we use the Pareto-based symmetric model described in Section~\ref{sec:simulations}. The figure is intended as a visual aid for the Poisson process type limit; the broader simulation study is given in Section~\ref{sec:simulation:discussion}. Figure~\ref{fig:bound_histograms} provides simulation results for $\tr(\bfR^2)-\mu_n$ and we observe a good fit of both the histogram and kernel density to the density of $\etalim$ for all values of $\alpha\in (0,3)\backslash \{2\}$. The cumulants from Theorem~\ref{thm:boundary-cumulant} were used to construct a numerical approximation of the density of $\etalim$.
\begin{figure}
\begin{subfigure}{.5\textwidth}
  \centering
  \includegraphics[scale = 0.5]{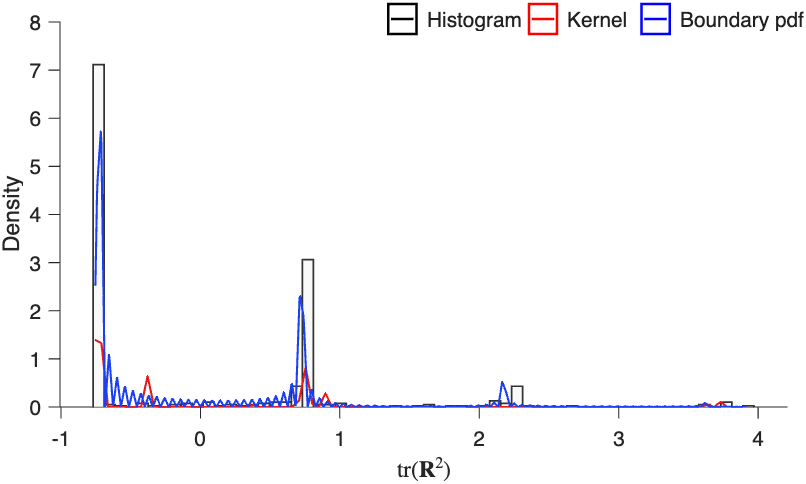}
  \caption{$\alpha = 0.05$}
\end{subfigure}%
\begin{subfigure}{.5\textwidth}
  \centering
  \includegraphics[scale = 0.5]{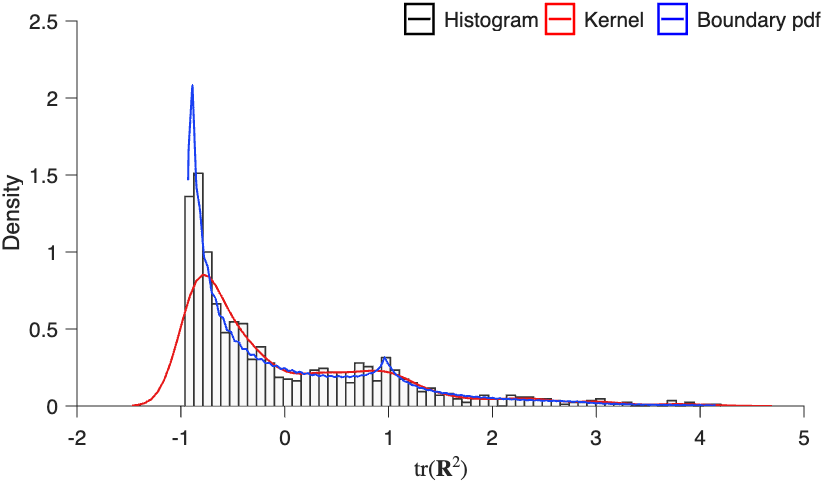}
  \caption{$\alpha = 0.5$}
\end{subfigure}
\bigskip
\begin{subfigure}{.5\textwidth}
  \centering
  \includegraphics[scale = 0.5]{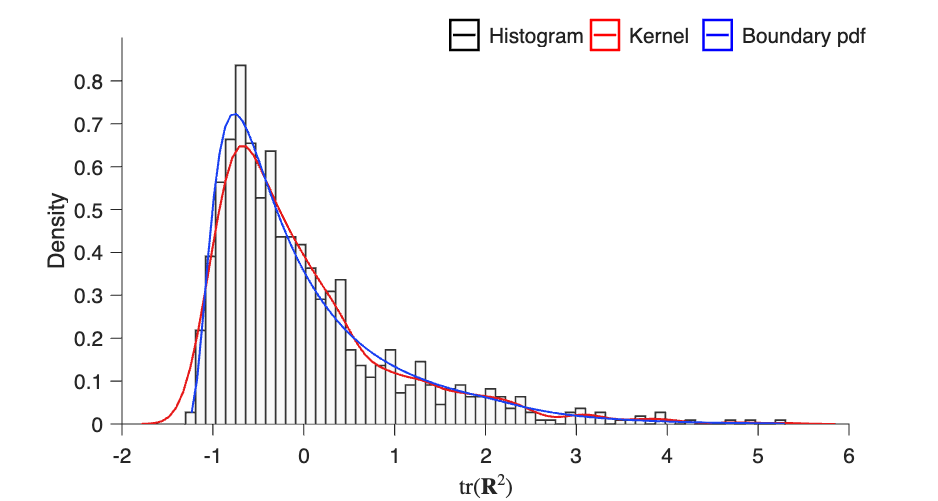}
  \caption{$\alpha = 2.2$}
\end{subfigure}%
\begin{subfigure}{.5\textwidth}
  \centering
  \includegraphics[scale = 0.5]{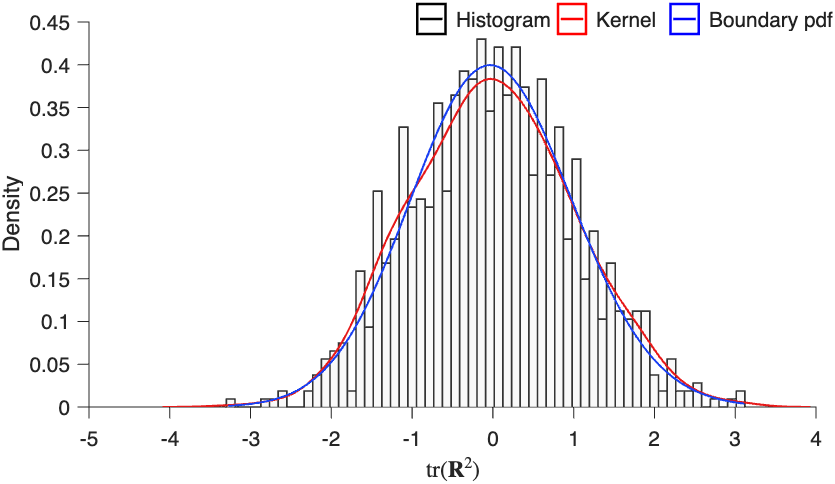}
  \caption{$\alpha = 2.8$}
\end{subfigure}
\caption{Simulations of  $\tr(\bfR^2)-\mu_n$ for different values of $\alpha$ and $p = n^{\delta^*(\alpha)}$, $n = 1000$ with $1000$ repetitions. }
\label{fig:bound_histograms}
\end{figure}

We now give a more intuitive interpretation of the Poisson point process representation in \eqref{eq:etalimrep}. In the regime $\alpha\in(0,2)$, each atom $\xi_i$ is a positive jump of size in $(0,2]$. The singular factor $x^{-1}$ means that very small positive jumps occur at high intensity, while larger jumps occur more rarely. The measure $\mu$ encodes the heavy-tail parameter $\alpha$. The random variable in \eqref{eq:etalimrep} is centered but not symmetric, which is consistent with the fact that its odd cumulants are generally nonzero.

As $\alpha\downarrow0$, the law $\mu$ of $2W$ converges weakly to $\delta_2$. In this limit, the L\'evy measure concentrates at the single positive jump size 2; more precisely,  $\nulim(dx) \Rightarrow \frac{c_\delta^2}{2}\delta_2(dx)$. Consequently, the boundary limit approaches the compensated compound Poisson variable, recovering the discrete Poisson type boundary law in the extreme heavy-tail limit $\alpha\downarrow0$ (see Corollary~\ref{cor:poisson}).

Compared to the regime $\alpha\in(0,2)$, the L\'evy density contains $x^{-2}$ instead of $x^{-1}$ in the regime $\alpha\in[2,3)$, meaning even higher activity of very small jumps. This enhances more Gaussian-like behavior in finite samples via many tiny jumps, despite the limit remaining non-Gaussian unless the higher cumulants vanish. This is also visible in Figure~\ref{fig:bound_histograms}(D), where the distribution of $\tr(\bfR^2)-\mu_n$  appears close to Gaussian in shape, despite having the non-Gaussian $\etalim$ as its limit.

\subsection*{The impact of the slowly varying function $L$}
The Poisson point process interpretation in Theorem~\ref{thm:mainresult} is formulated under the special growth regime of the dimension $p$ in \eqref{eq:pboundary}. On the one hand, we see that for $\alpha\in (0,2)$ this regime does not depend on specific properties of the slowly varying function $L$. On the other hand, for $\alpha\in (2,3)$ we have 
\begin{equation*}
    p\sim c_\delta \,n^{\delta^*(\alpha)} \frac{1}{L(n^{1/2})}\,,\qquad \nto\,,
\end{equation*}
so that the function $L$ affects the growth regime in a nontrivial way. To illustrate the effect of $L$, suppose instead that we impose the unified scaling
\begin{equation}\label{eq:unifiedscaling}
    p\sim c_{\delta}\,n^{\delta^*(\alpha)}, \qquad \nto\,, \alpha\in(2,3)\,, c_\delta>0.
\end{equation}
Under \eqref{eq:unifiedscaling}, the slowly varying factor $L(n^{1/2})$ directly affects the limiting behavior. If $L(n^{1/2})$ converges to some positive constant $q$, then the statements of Theorems~\ref{thm:leading-piecewise},~\ref{thm:boundary-cumulant} and~\ref{thm:Poisson point process representation} remain valid with $c_\delta$ replaced by $c_\delta \,q$. In contrast, if $L(n^{1/2})\to 0$ or $L(n^{1/2})\to\infty$, then the limiting behavior of $\Tone$ falls into the red or blue region, respectively, in Figure~\ref{fig:phasetransition}. The possibilities under the unified scaling \eqref{eq:unifiedscaling} are summarized in the right part of Figure~\ref{fig:phasetransition}, where boundary limit refers to the Poisson process type limit.

More precisely, for $s\ge 3$, the moment ${\E[\Tone^s]}$admits an asymptotic expansion in powers of $1/L(n^{1/2})$, with exponents of the form $s-2v$ where $v=1,\dots,\lfloor s/2\rfloor$. Hence, when $L(n^{1/2})\to0$ or $L(n^{1/2})\to\infty$, it is enough to identify the extreme powers appearing in the expansion, since all intermediate orders are negligible by comparison. In fact,
\begin{equation*}
    \E[\Tone^s]\sim
        \begin{cases}
        \displaystyle
        \frac{1}{2}\,\frac{c_s^{2}(\alpha)}{c_2^{s}(\alpha)}
        \frac{c_{\delta}^{\,s-2}}{L(n^{1/2})^{\,s-2}}
        +
        \frac{4s!}{3!\left(\frac{s-3}{2}\right)!}\,
        \frac{c_3^{2}(\alpha)}{c_2^{3}(\alpha)}
        \frac{c_{\delta}}{L(n^{1/2})},
        & \text{if } s \text{ is odd},
        \\[10pt]
        \displaystyle
        \frac{1}{2}\,\frac{c_s^{2}(\alpha)}{c_2^{s}(\alpha)}
        \frac{c_{\delta}^{\,s-2}}{L(n^{1/2})^{\,s-2}}
        +(s-1)!!,
        & \text{if } s \text{ is even}.
        \end{cases}
\end{equation*}
If $L(n^{1/2})\to 0$, then
\begin{equation}\label{eq:momTone}
    \E[\Tone^s] \sim \frac{1}{2}\,\frac{c_s^{2}(\alpha)}{c_2^{s}(\alpha)} \frac{c_{\delta}^{\,s-2}}{L(n^{1/2})^{\,s-2}}, \qquad s\ge 3,
\end{equation}
and therefore $\E[\Tone^s]\to\infty$ for $s\ge 3$.

If $L(n^{1/2})\to\infty$, then
\begin{equation*}
    \E[\Tone^s]\to
        \begin{cases}
            0, & \text{if } s \text{ is odd},\\[4pt]
            (s-1)!!, & \text{if } s \text{ is even},
        \end{cases}
\end{equation*}
which we recognize as the moments of a standard normal random variable.

Recall that the tail behavior is given by $\P(|X_{11}|>x)=x^{-\alpha}L(x)$. Heuristically, the case $L(n^{1/2})\to\infty$ is analogous to replacing $\alpha$ by $\alpha+\varepsilon$ for some $\varepsilon>0$. In this sense, the resulting behavior is similar to that at $(\alpha+\varepsilon,\delta^*(\alpha))$, which lies in the Gaussian regime (see the left part of Figure~\ref{fig:phasetransition}). On the other hand, the case $L(n^{1/2})\to0$ corresponds heuristically to replacing $\alpha$ by $\alpha-\varepsilon$, and hence leads to behavior analogous to that at $(\alpha-\varepsilon,\delta^*(\alpha))$, which lies in the non-Gaussian regime\footnote{For transparency, we clarify that in this paper we only show that the moments of $\Tone$ diverge for $s\ge 3$. Hence, we do not rigorously prove that the asymptotic distribution of $\Tone$ is non-Gaussian in the red region. Nevertheless, non-Gaussianity is strongly supported by extensive simulations.}.

Therefore, the three cases $L(n^{1/2})\to0$, $L(n^{1/2})\to q\in(0,\infty)$, $L(n^{1/2})\to\infty$ naturally describe whether the unified scaling \eqref{eq:unifiedscaling} falls to the left of, on, or to the right of the phase transition boundary \eqref{eq:pboundary}.

\section{Simulations and discussion}\label{sec:simulation:discussion}\setcounter{equation}{0}
The boundary simulations illustrating the Poisson process type limit were already presented in Figure~\ref{fig:bound_histograms}, immediately after Theorem~\ref{thm:mainresult}, where they serve as a visual interpretation of the boundary law. In the present section we give the data-generating mechanism used throughout the simulations and complement the boundary illustration with Gaussian-regime simulations, comparisons across regimes, and experiments for non-symmetric entries.

\subsection{Gaussian regimes and comparison across regimes}\label{sec:simulations}
To illustrate the role of $\alpha$ and the need of symmetry, we perform some simulations. Throughout the symmetric simulations, let $Z_1, Z_2$ be iid Pareto random variables with parameter $\alpha$, that is, $\P(Z_1 > x) = x^{-\alpha}$, $x>1$. We generate symmetric entries from the difference $Z_1-Z_2$. For $\alpha\in(0,2)$, no unit-variance normalization is applied since the second moment is infinite; this corresponds to the infinite-variance regime covered separately in Lemma~\ref{lem:asymp_variance}. For $\alpha>2$, we normalize the entries to have unit variance and set
\begin{equation*}
    X_{11} \eid \frac{Z_1 - Z_2}{K_\alpha^{1/2}},\qquad 
    K_\alpha:=\Var(Z_1 - Z_2) = \frac{2 \alpha}{(\alpha - 1)^2(\alpha - 2)}.
\end{equation*}
Using Lemma~\ref{lem:L(x)_sym_pareto}, it is straightforward to show that $Z_1-Z_2$ has a regularly varying tail with index $\alpha$. In the notation of Lemma~\ref{lem:L(x)_sym_pareto}, we let $S_2 = Z_1 + (-Z_2)$ and since, for $x>0$, $\P(Z_1 > x) = \bar F(x)$ and $\P(-Z_2 > x) = 0$, we get $c_1^+ = 1$ and $c_2^+ = 0$. We also find that $c_1^- = 0$ and $c_2^- = 1$ since $\P(Z_1 \leq -x) = 0$ and $\P(-Z_2 \leq -x) = \bar F(x)$. This satisfies \eqref{eq:cond_1_L(x)_pareto} and it is obvious that also \eqref{eq:cond_2_L(x)_pareto} holds. Now Lemma~\ref{lem:L(x)_sym_pareto} implies that $\P(Z_1 - Z_2 > x) \sim x^{-\alpha}$ and $\P(-Z_1 + Z_2 > x) \sim x^{-\alpha}$ yielding that $\P(|Z_1 - Z_2| > x) \sim 2 x^{-\alpha}$, as $x\to \infty$. Consequently, for $\alpha>2$,
\begin{equation*}
    \P(|X_{11}| > x) = \P(|Z_1 - Z_2| > x K_\alpha^{0.5}) \sim x^{-\alpha} 2 K_\alpha^{-0.5\alpha}
\end{equation*}
meaning that $L(x) \sim 2 K_\alpha^{-0.5\alpha}$. Thus, $L(n^{1/2}) \approx 2 K_\alpha^{-0.5\alpha}$ and we can approximate $\sigma_n^2$ by using Lemma~\ref{lem:asymp_variance}.

We start with simulations of the universal and non-universal CLTs in Theorem~\ref{thm:clt2}. Figure~\ref{fig:sym_histograms} displays the histograms of the normalized statistic $(\tr(\bfR^2)-\mu_n)/\sigma_n$ for different values of $\alpha$. In all four panels, the choice of $p$ places the model in the Gaussian regime. Especially, for smaller values of $\alpha$, the entries are much heavier-tailed, but the condition on $p$ still puts the statistic in the Gaussian regime covered by Theorem~\ref{thm:clt2}. We see that the histograms are close to the standard normal density. This confirms our asymptotic result that, throughout the blue regime, the centered and normalized statistic has Gaussian fluctuations across different values of the tail index.
\begin{figure}
\begin{subfigure}{.5\textwidth}
  \centering
  \includegraphics[scale = 0.4]{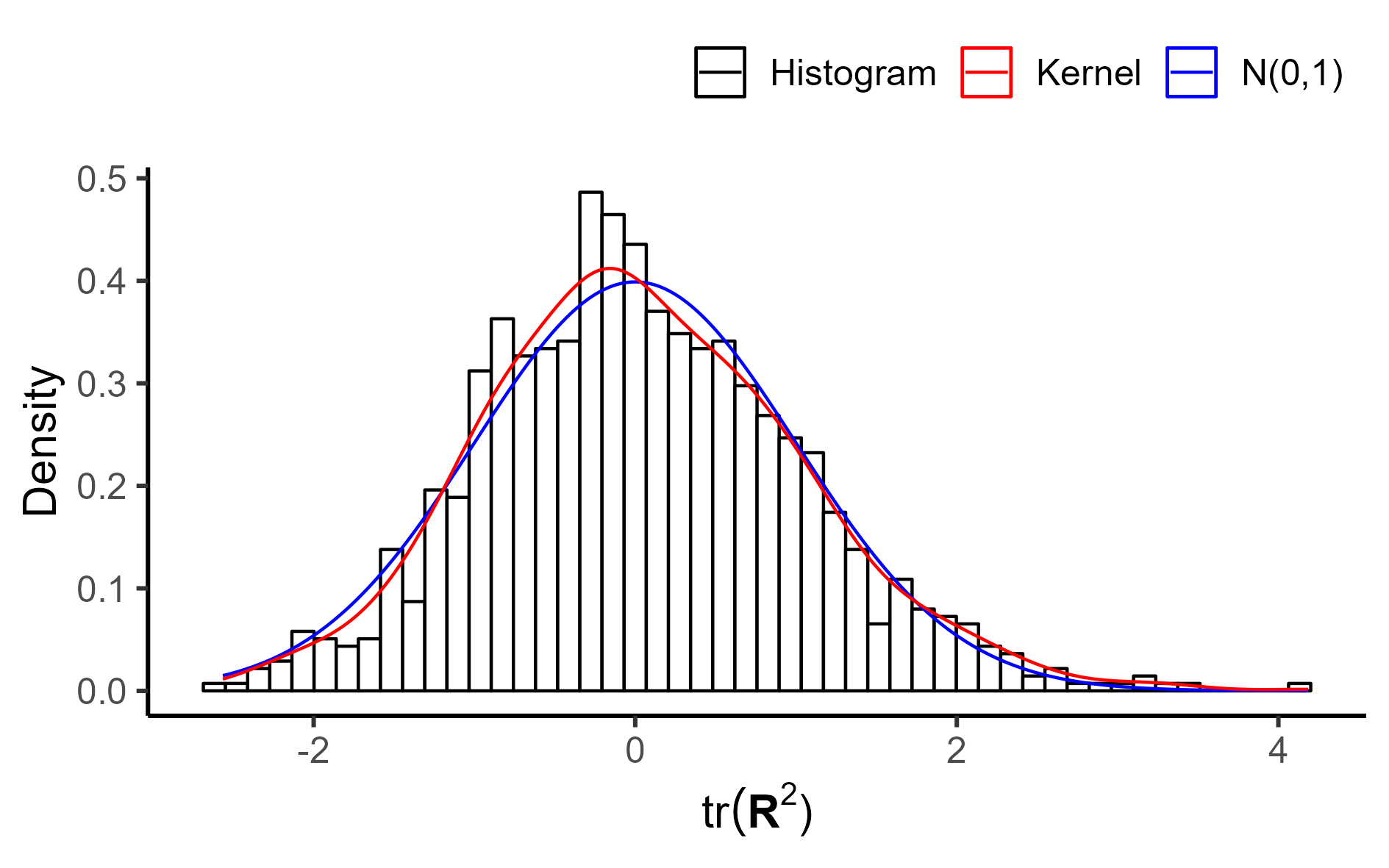}
  \caption{$\alpha = 0.5$}
\end{subfigure}%
\begin{subfigure}{.5\textwidth}
  \centering
  \includegraphics[scale = 0.4]{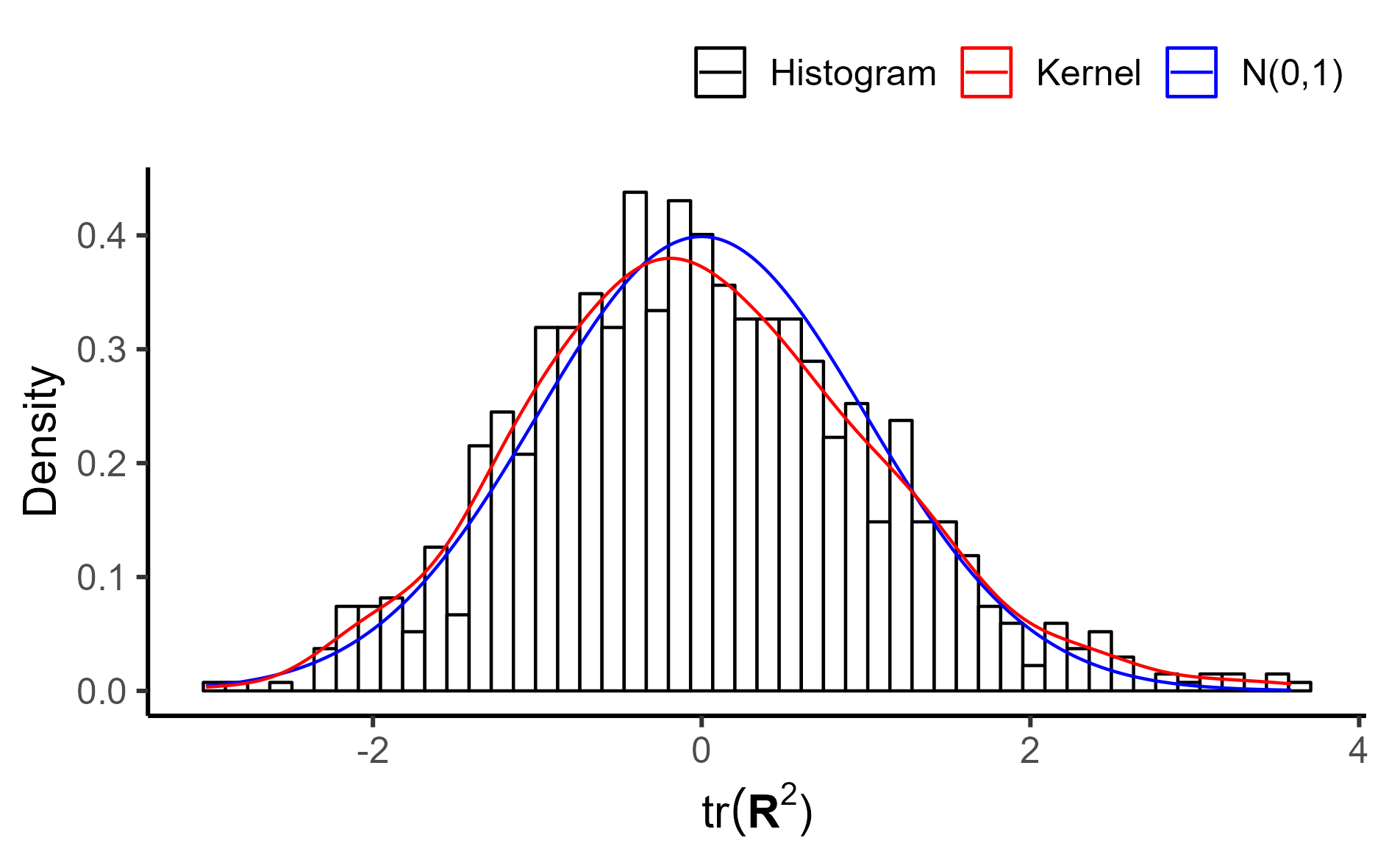}
  \caption{$\alpha = 1.1$}
\end{subfigure}
\bigskip
\begin{subfigure}{.5\textwidth}
  \centering
  \includegraphics[scale = 0.4]{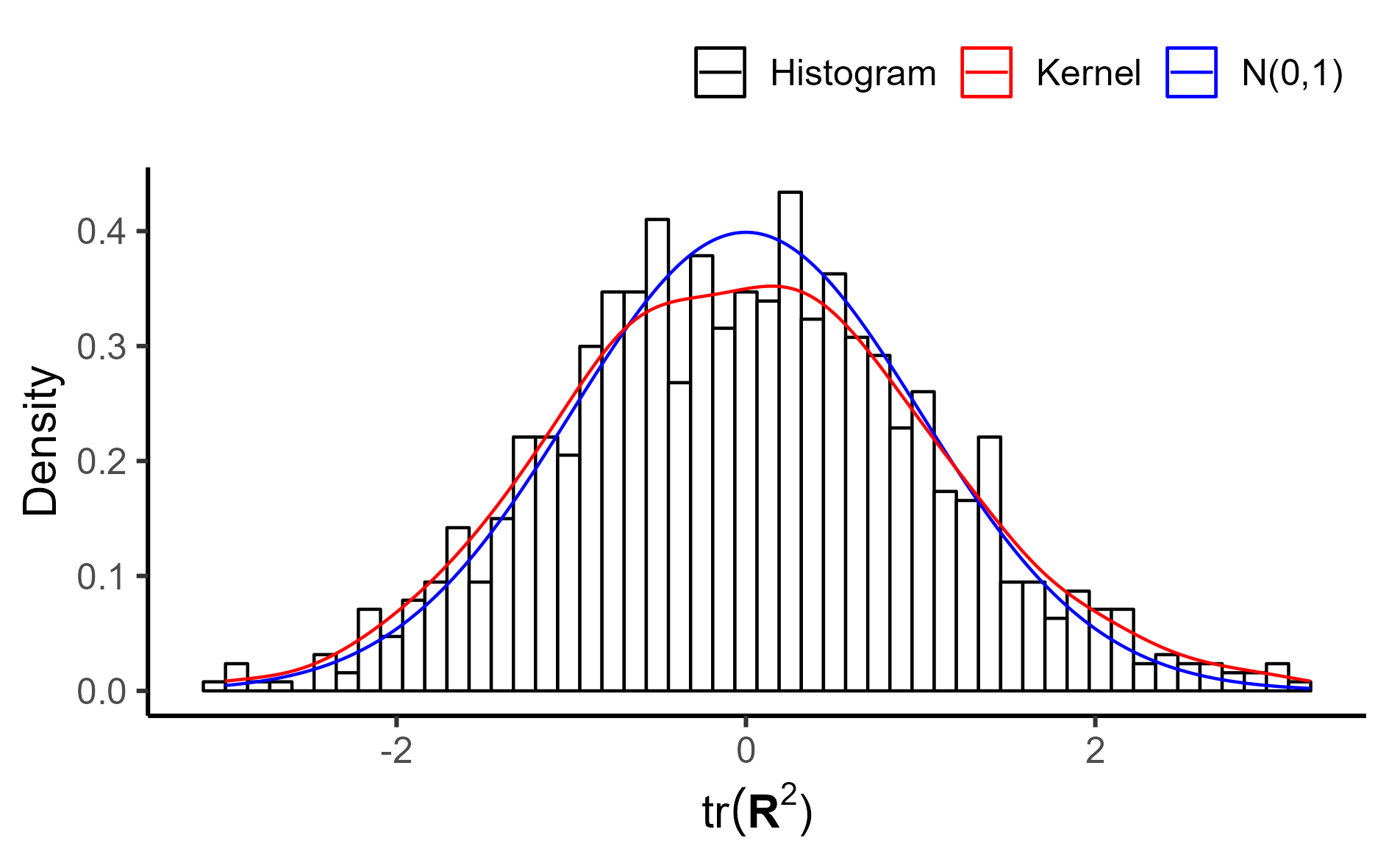}
  \caption{$\alpha = 2.8$}
\end{subfigure}%
\begin{subfigure}{.5\textwidth}
  \centering
  \includegraphics[scale = 0.4]{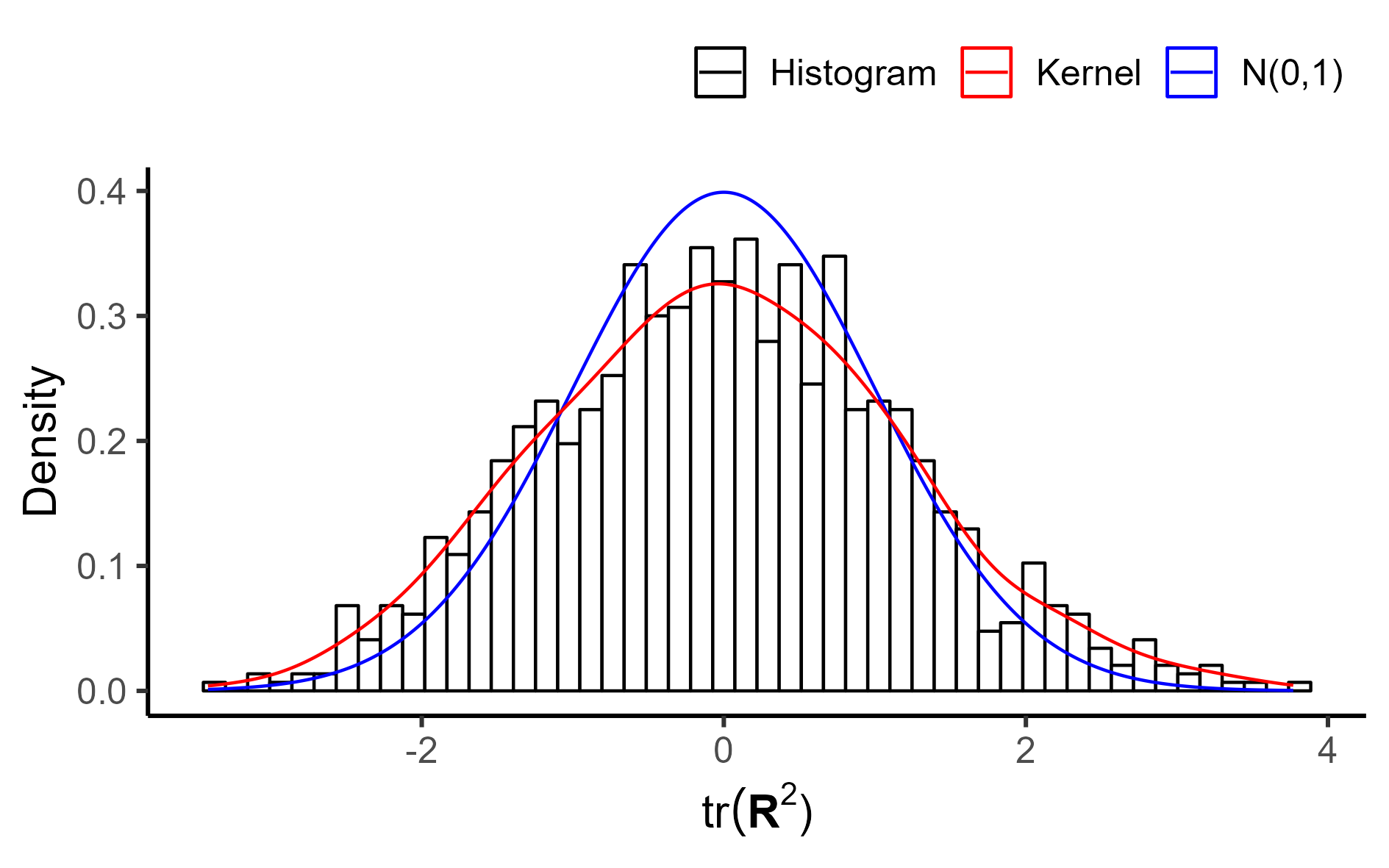}
  \caption{$\alpha = 3.9$}
\end{subfigure}
\caption{Simulations of the central limit theorem for $\tr(\bfR^2)$ for different values of $\alpha$ and $p = 400$, $n = 1000$ with 1000 repetitions.}
\label{fig:sym_histograms}
\end{figure}
To get some understanding of the behavior in the tails we present the Q-Q plots seen in Figure~\ref{fig:qqplots}. In the cases with small $\alpha$ the tails deviate the most from the straight line, which is not unexpected since the tails get heavier for smaller value of the shape parameter $\alpha$ in a Pareto distribution. The Q-Q-plots generally support the Gaussian limit in Theorem~\ref{thm:clt2}, but also indicate that the convergence rates might be slower in more heavy-tailed scenarios. 
\begin{figure}[!htbp]
\begin{subfigure}{.5\textwidth}
  \centering
  \includegraphics[scale = 0.4]{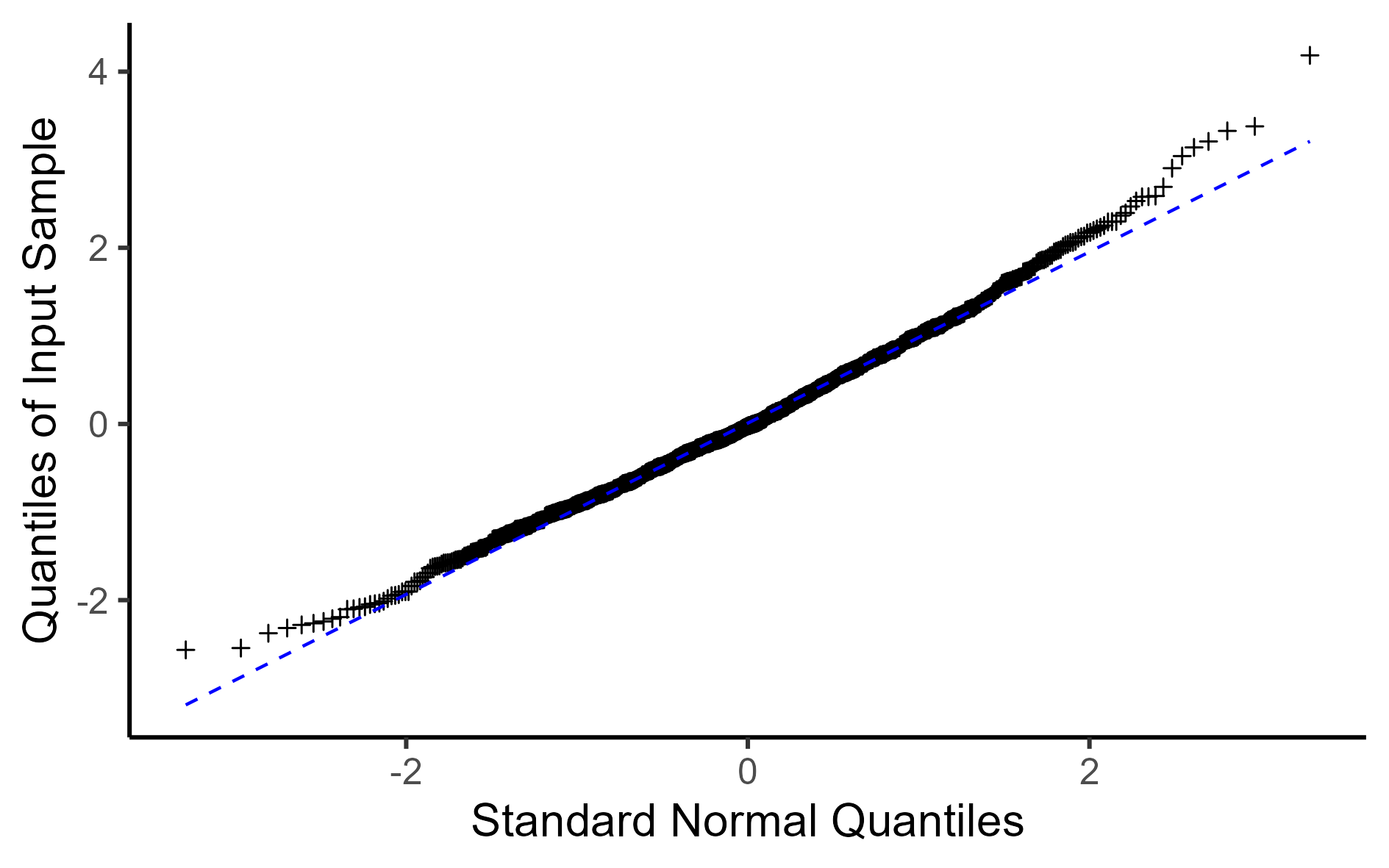}
  \caption{$\alpha = 0.5$}
\end{subfigure}%
\begin{subfigure}{.5\textwidth}
  \centering
  \includegraphics[scale = 0.4]{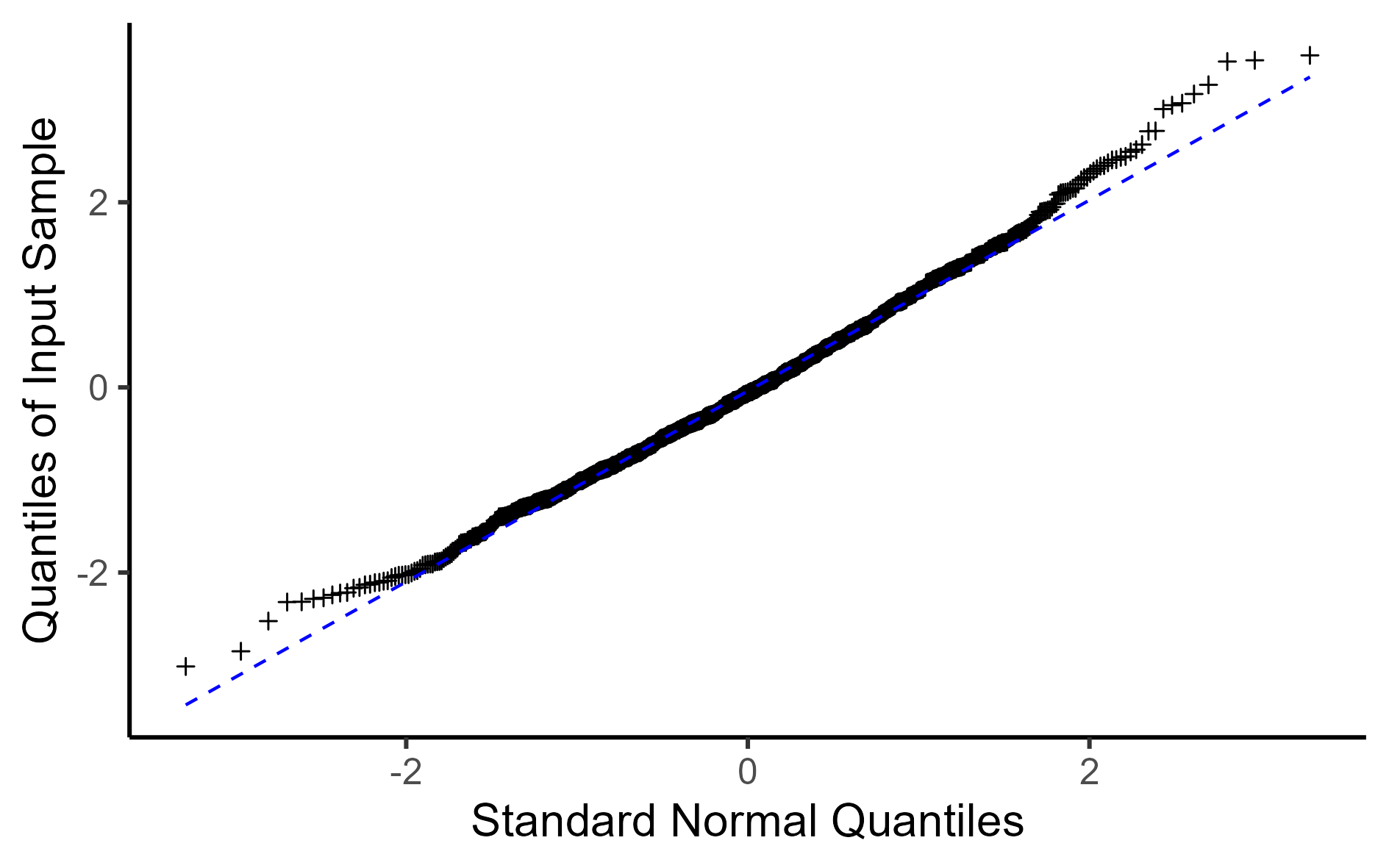}
  \caption{$\alpha = 1.1$}
\end{subfigure}
\bigskip
\begin{subfigure}{.5\textwidth}
  \centering
  \includegraphics[scale = 0.4]{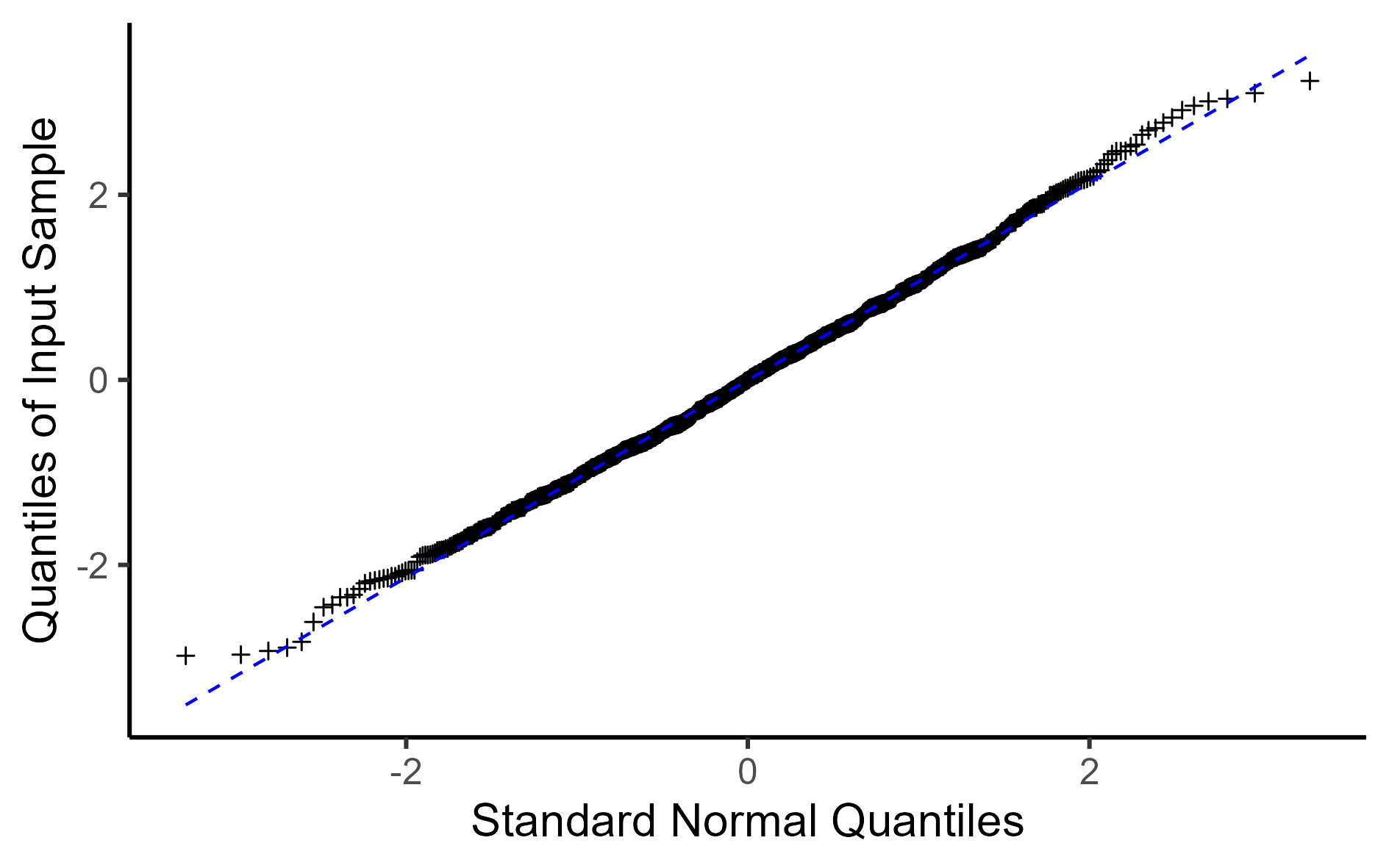}
  \caption{$\alpha = 2.8$}
\end{subfigure}%
\begin{subfigure}{.5\textwidth}
  \centering
  \includegraphics[scale = 0.4]{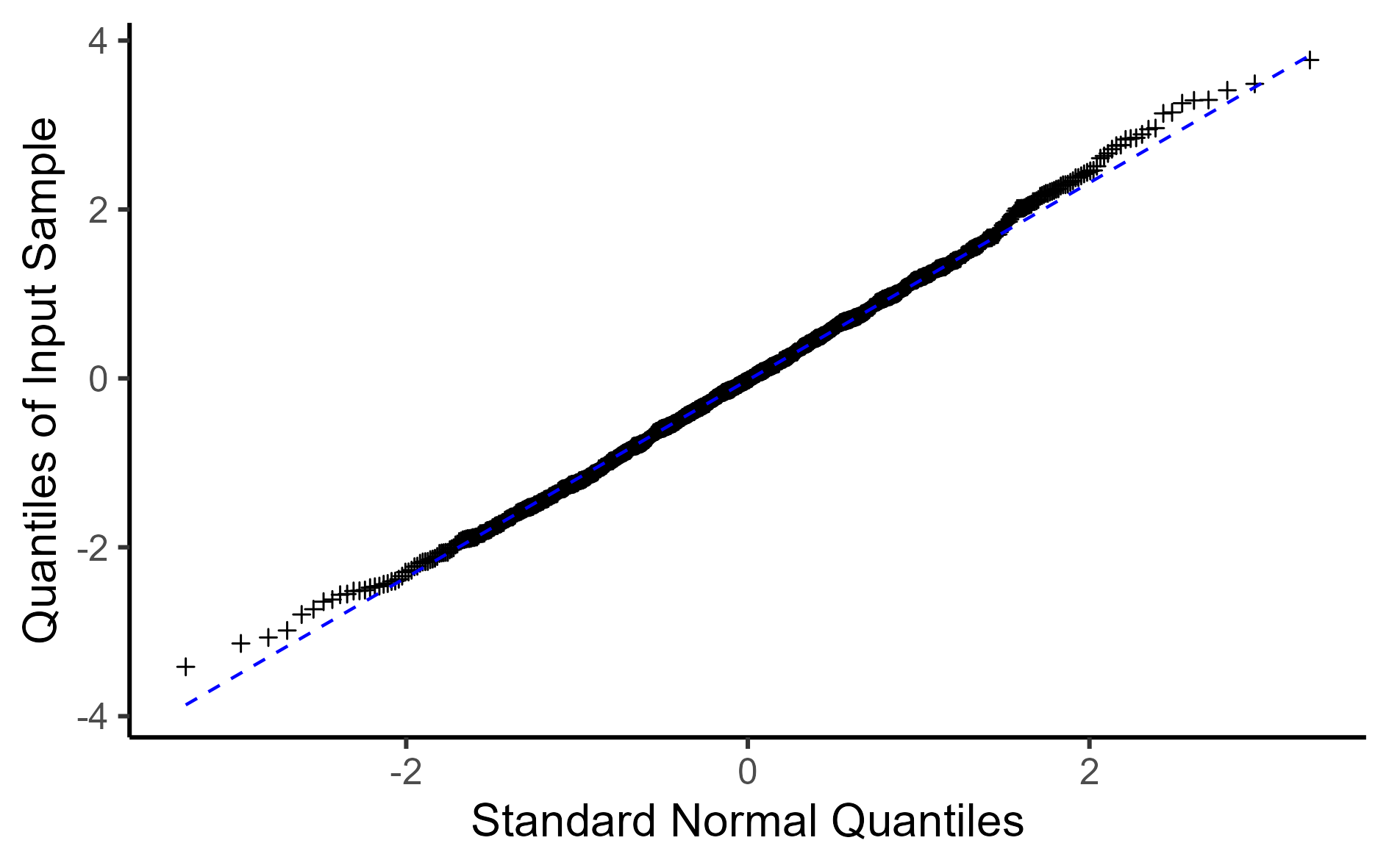}
  \caption{$\alpha = 3.9$}
\end{subfigure}
\caption{Q-Q-plots for the CLT of $\tr(\bfR^2)$ for different values of $\alpha$ and $p = 400$, $n = 1000$ with 1000 repetitions. }
\label{fig:qqplots}
\end{figure}
Recall that simulations of the Poisson process type limit (Theorem~\ref{thm:mainresult}) in the regime $p\approx n^{\delta^*(\alpha)}$ were presented in Figure~\ref{fig:bound_histograms}. Next, we compare the simulation results in the three regimes $p> n^{\delta^*(\alpha)}$, $p\approx n^{\delta^*(\alpha)}$, $p< n^{\delta^*(\alpha)}$. These regimes correspond respectively to the Gaussian regime (Theorem~\ref{thm:clt2}), the phase transition boundary, and the red region in Figure~\ref{fig:phasetransition}. Similarly to \eqref{eq:momTone}, it can be shown that, on the red region, moments of $\Tone$ of order at least three diverge to infinity. This is in contrast to the moment asymptotics on the phase transition boundary (see Theorem~\ref{thm:leading-piecewise}). The numerical results in Figure~\ref{fig:compare_histograms} are fully consistent with the theoretical predictions. When $p$ is larger than the boundary scale $n^{\delta^*(\alpha)}$, the histogram of the normalized statistic matches the standard normal distribution very well, in agreement with the CLT in Theorem~\ref{thm:clt2}. At the critical scaling $p\approx n^{\delta^*(\alpha)}$, the histogram agrees much better with the Poisson process type boundary distribution from Theorem~\ref{thm:mainresult} than with the Gaussian law. This illustrates the transition from Gaussian fluctuations to the boundary Poisson process type fluctuations. In the red region $p < n^{\delta^*(\alpha)}$, the histogram matches neither the standard normal distribution nor the boundary limit distribution, showing that the statistic has moved beyond the critical window into a different non-Gaussian regime. The characterization of the limiting distribution in this regime is a topic for future research.
\begin{figure}[!htbp]
\begin{subfigure}{.32\textwidth}
  \centering
  \includegraphics[scale = 0.3]{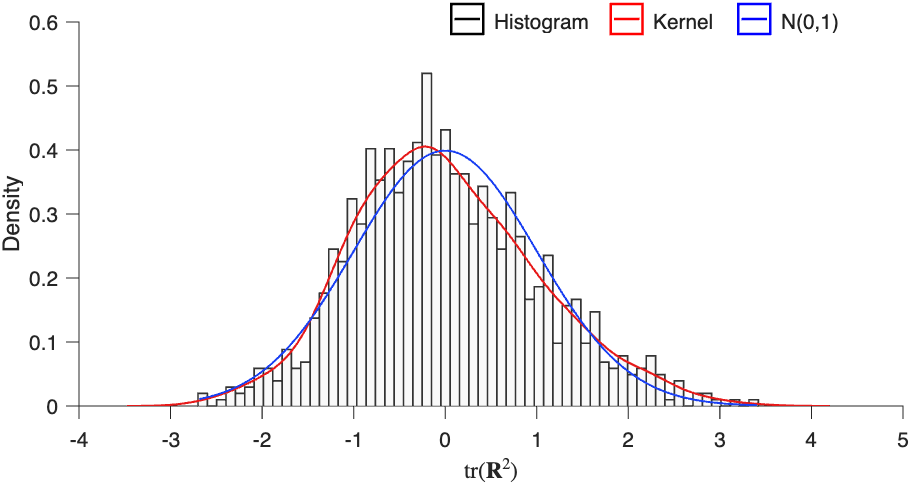}
  \caption{$p = 400 > n^{\delta^*(\alpha)}$}
\end{subfigure}%
\begin{subfigure}{.32\textwidth}
  \centering
  \includegraphics[scale = 0.3]{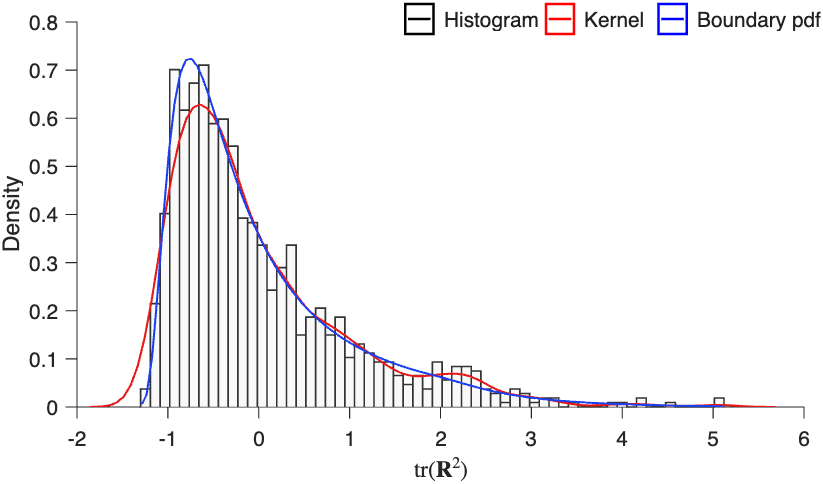}
  \caption{$p = 90\approx n^{\delta^*(\alpha)}$}
\end{subfigure}
\begin{subfigure}{.32\textwidth}
  \centering
  \includegraphics[scale = 0.3]{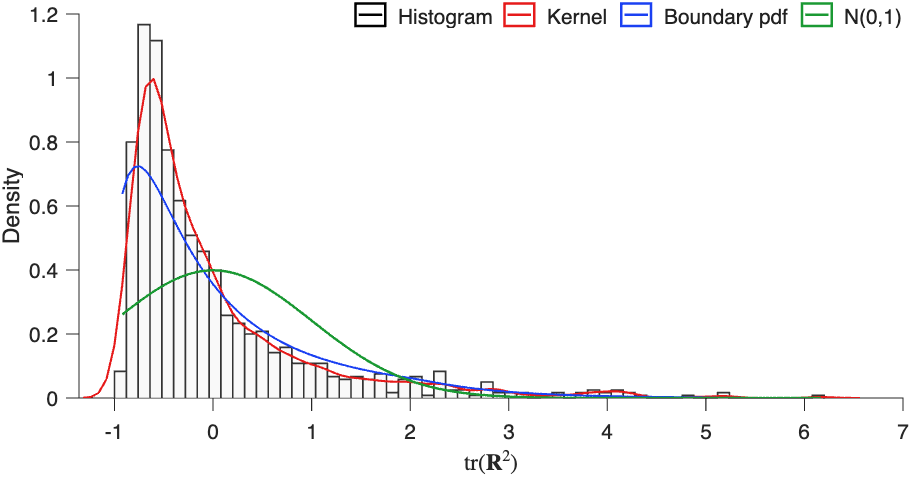}
  \caption{$p =60 < n^{\delta^*(\alpha)}$}
\end{subfigure}%
\caption{Comparison of histograms of $(\tr(\bfR^2)-\mu_n)/\sigma_n$ for different values of $p$, with $\alpha=1.1$, $n=8000$, and 1000 repetitions.}
\label{fig:compare_histograms}
\end{figure}

\subsection{Comments on the non-symmetric case}
The assumption that $X_{11}$ is symmetric is not only of significant technical  importance in our proofs, but also indispensable for the validity of the CLT presented in Theorem~\ref{thm:clt2} in general. To analyze the requirement of symmetry and the effect that specific distributions may have, we simulate from two different non-symmetric distributions. For the first case, define the entries $X_{11} \eid Z_1 - \E[Z_1]$ of the data matrix where $Z_1$ follows a Pareto distribution with  parameter $\alpha>1$. For the second case, generate instead the entries $X_{11} \eid T^2 - \E[T^2]$, where $T$ follows a t-distribution with $2\alpha$ degrees of freedom. Note that in both cases $X_{11}$ is regularly varying with index $\alpha$. The simulation results are shown in Figure~\ref{fig:nonsym_histograms}. An interesting observation is that in both cases of different non-symmetric distributions, a similar shift in the mean appears to be present for $\alpha\in\{1.1, 1.3\}$. The main consequence of symmetry violation is that the limiting distribution resembles a normal distribution but with larger variance and some considerable shift in the mean is seen in the top two rows where $\alpha$ is small. Those changes can be partially explained as follows. We shall focus on the mean.
\begin{figure}
\begin{subfigure}{.45\textwidth}
  \centering
  \includegraphics[scale = 0.4]{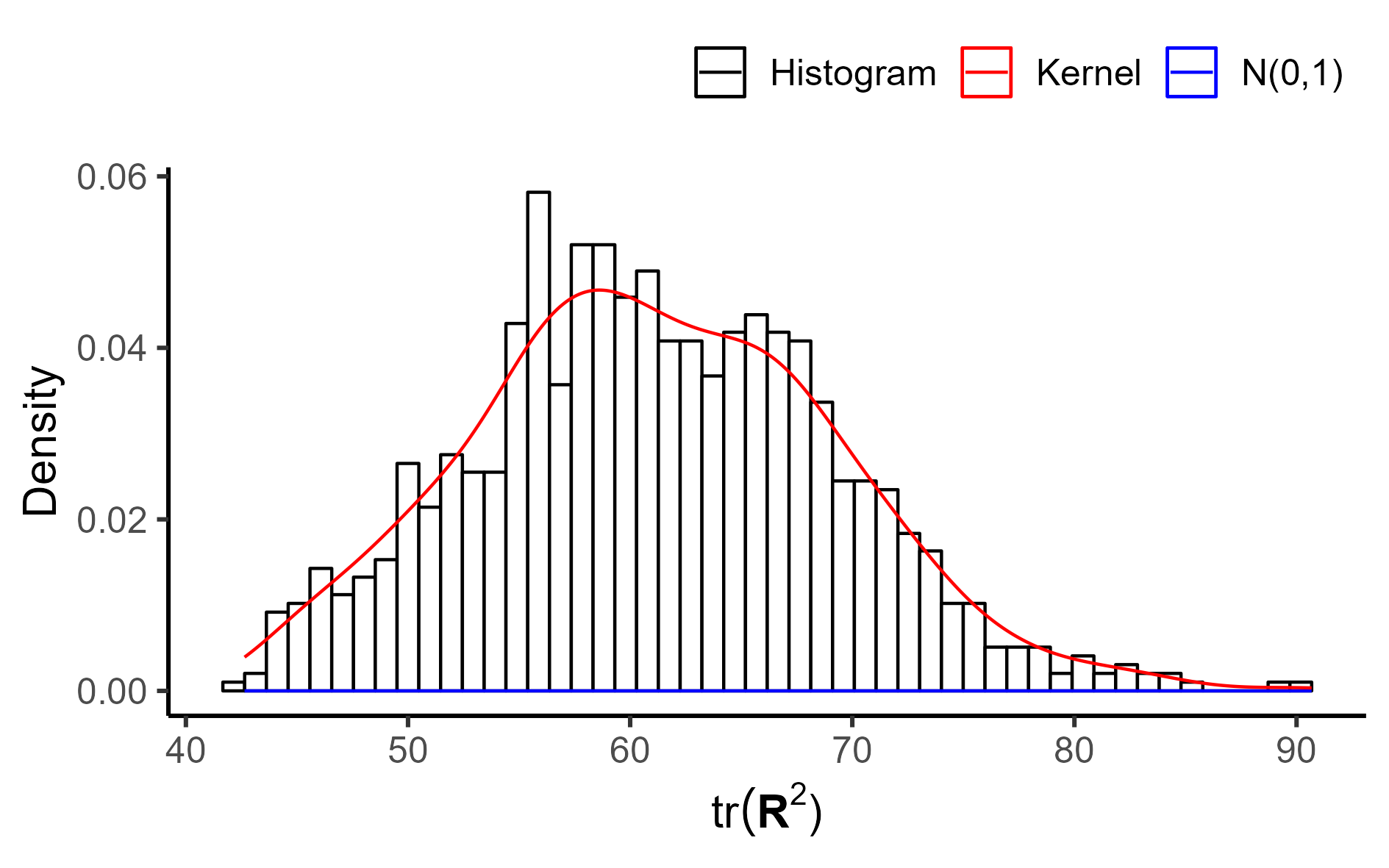}
  \caption{$\alpha = 1.1$}
\end{subfigure}%
\begin{subfigure}{.45\textwidth}
  \centering
  \includegraphics[scale = 0.4]{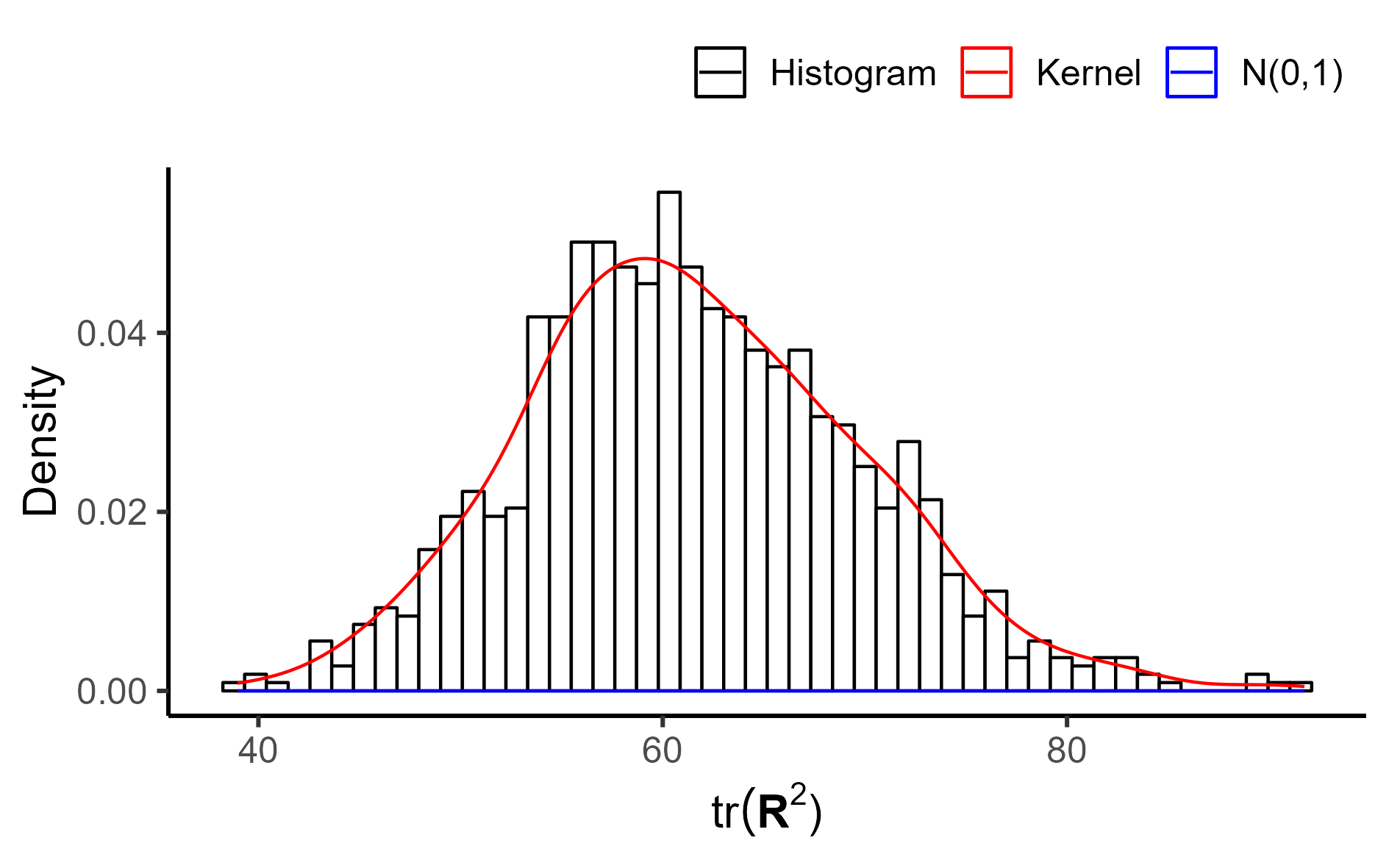}
  \caption{$\alpha = 1.1$}
\end{subfigure}
\bigskip
\begin{subfigure}{.45\textwidth}
  \centering
  \includegraphics[scale = 0.4]{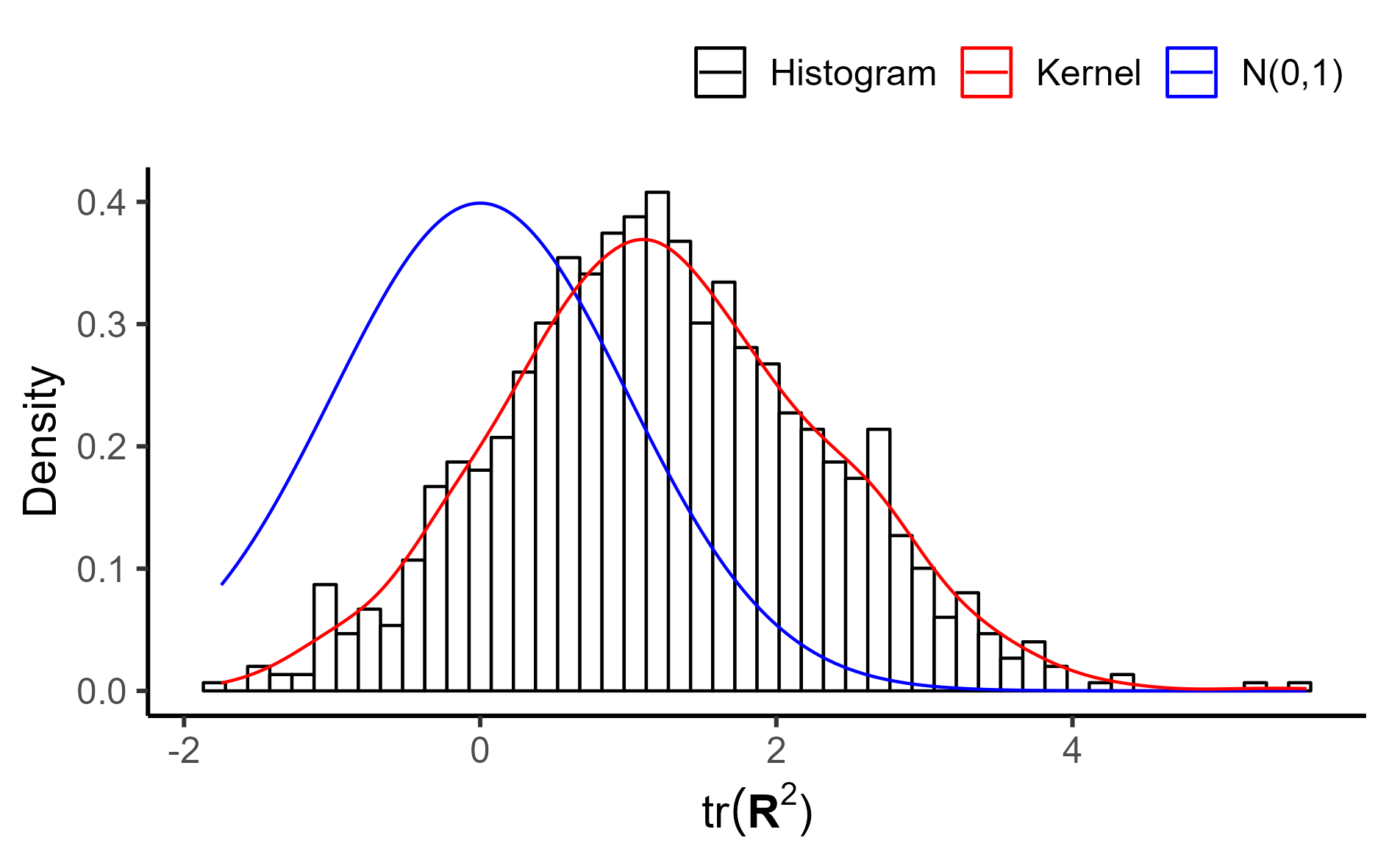}
  \caption{$\alpha = 1.3$}
\end{subfigure}%
\begin{subfigure}{.45\textwidth}
  \centering
  \includegraphics[scale = 0.4]{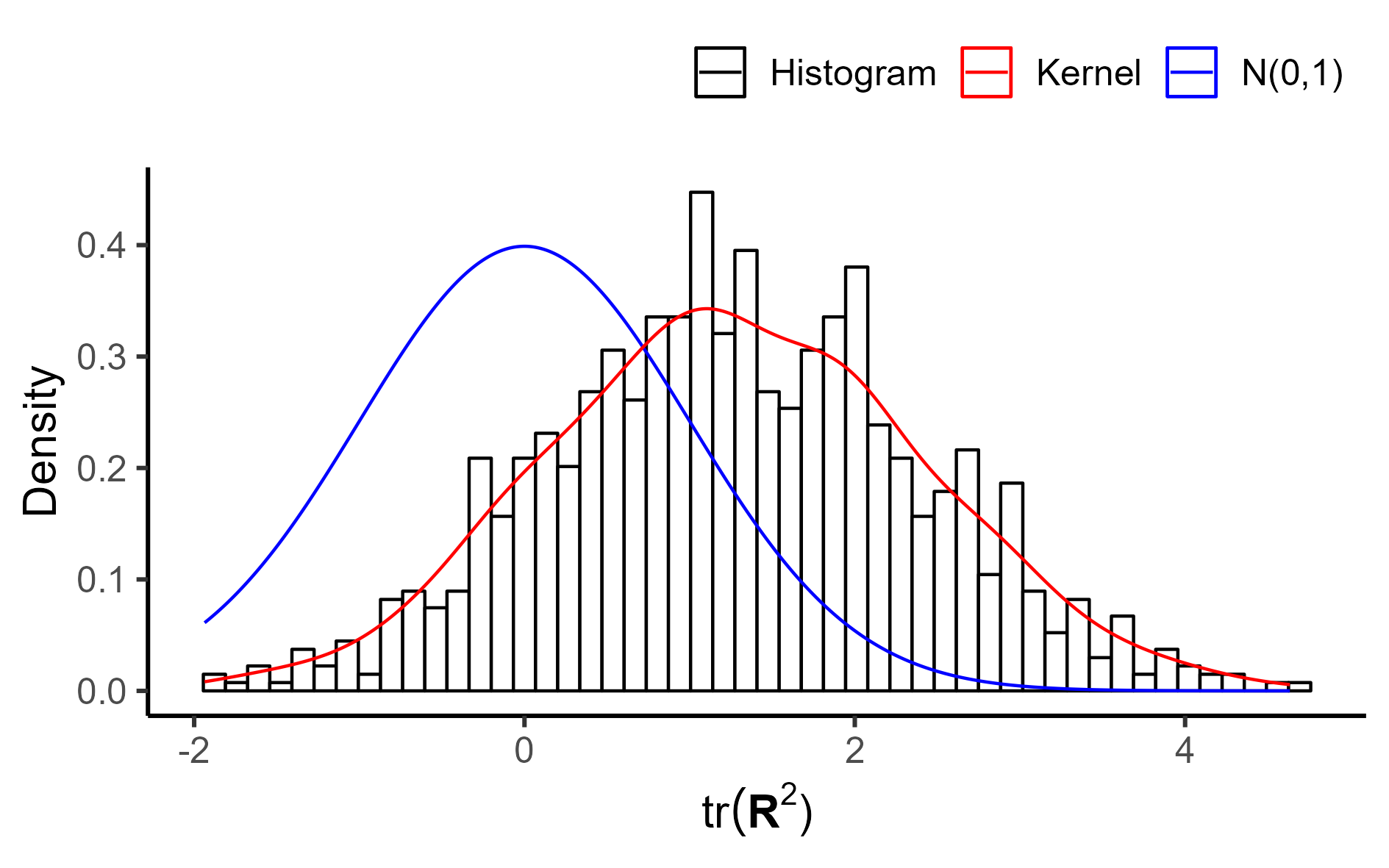}
  \caption{$\alpha = 1.3$}
\end{subfigure}
\bigskip
\begin{subfigure}{.45\textwidth}
  \centering
  \includegraphics[scale = 0.4]{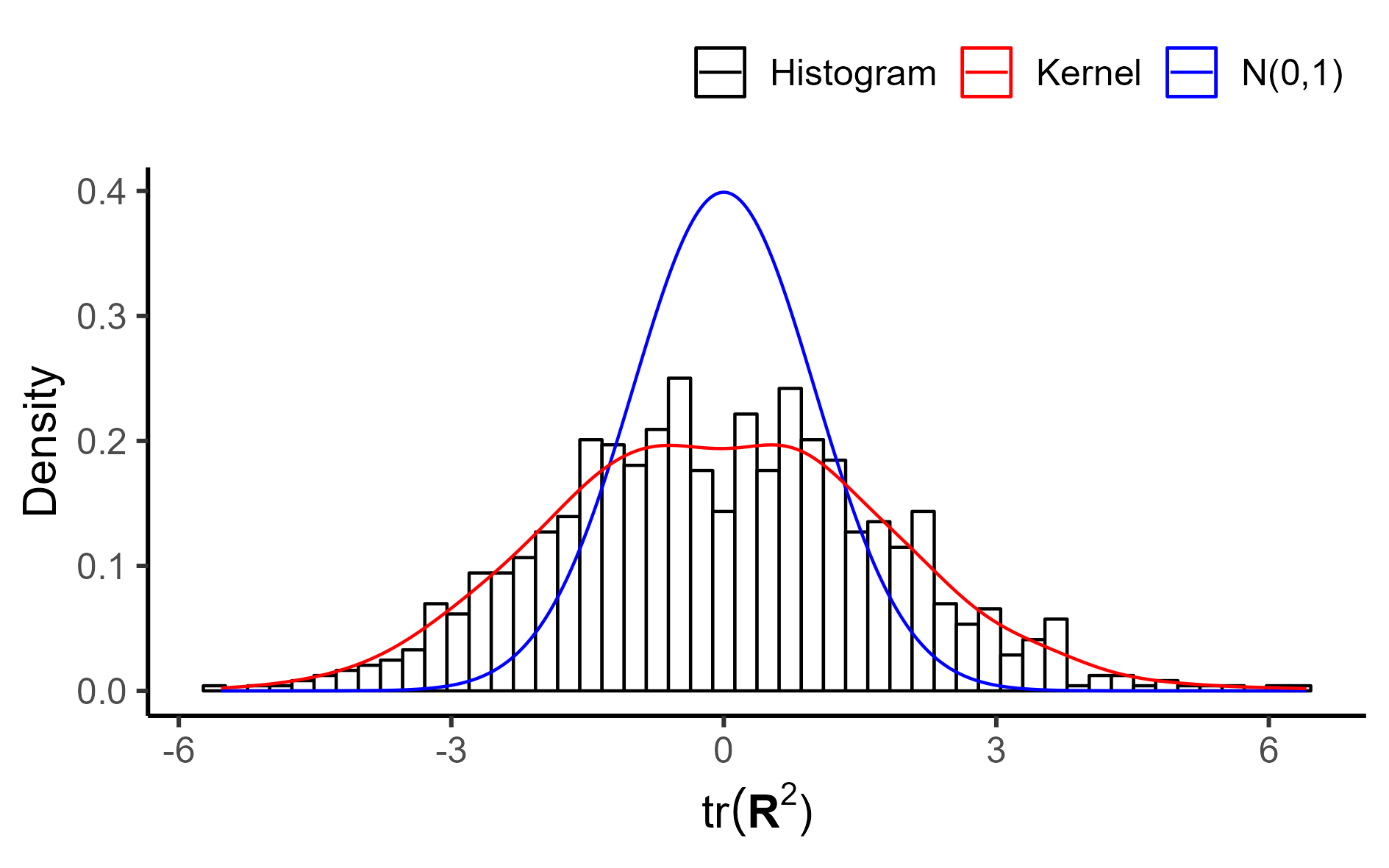}
  \caption{$\alpha = 3.2$}
\end{subfigure}%
\begin{subfigure}{.45\textwidth}
  \centering
  \includegraphics[scale = 0.4]{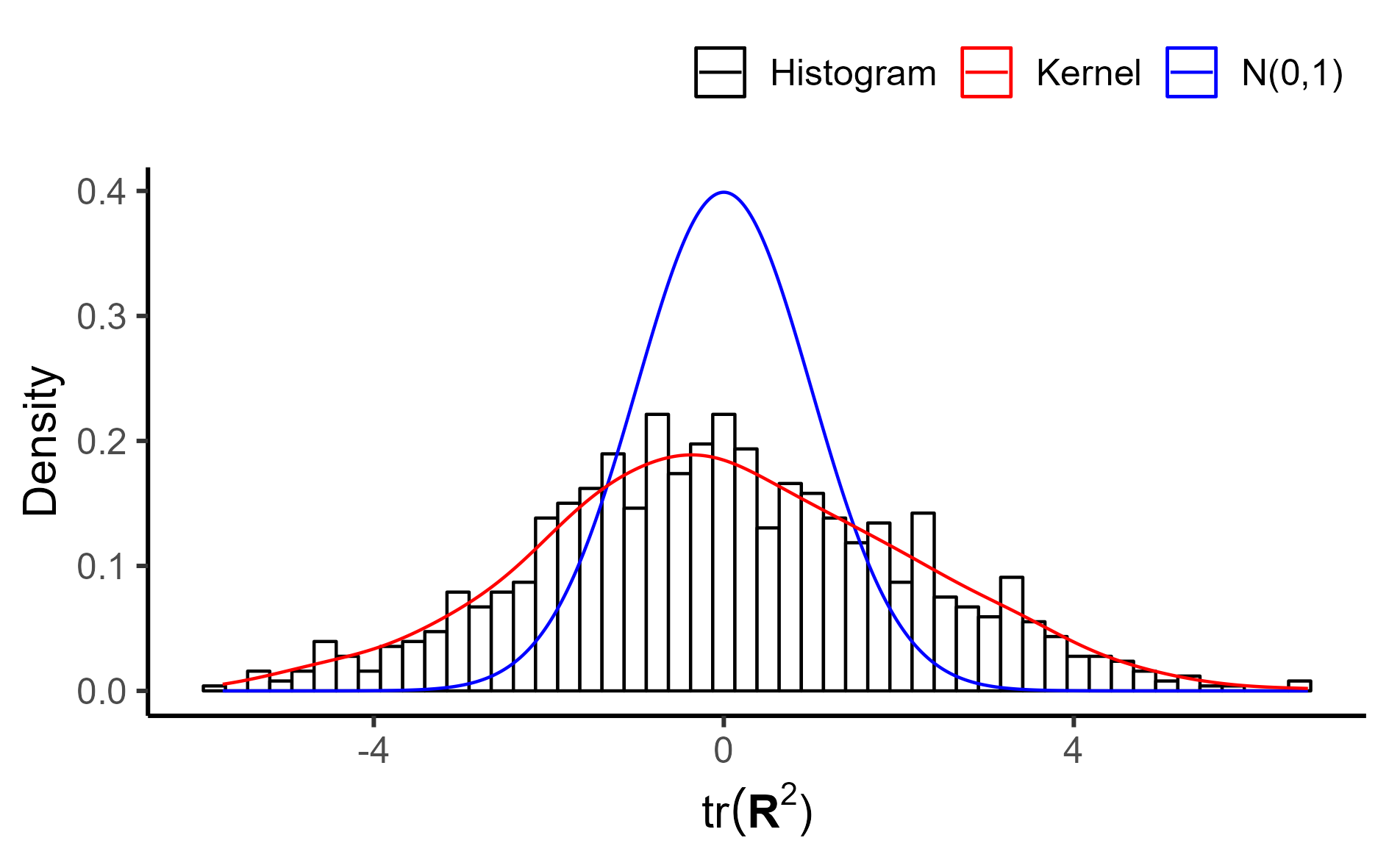}
  \caption{$\alpha = 3.2$}
\end{subfigure}
\bigskip
\begin{subfigure}{.45\textwidth}
  \centering
  \includegraphics[scale = 0.4]{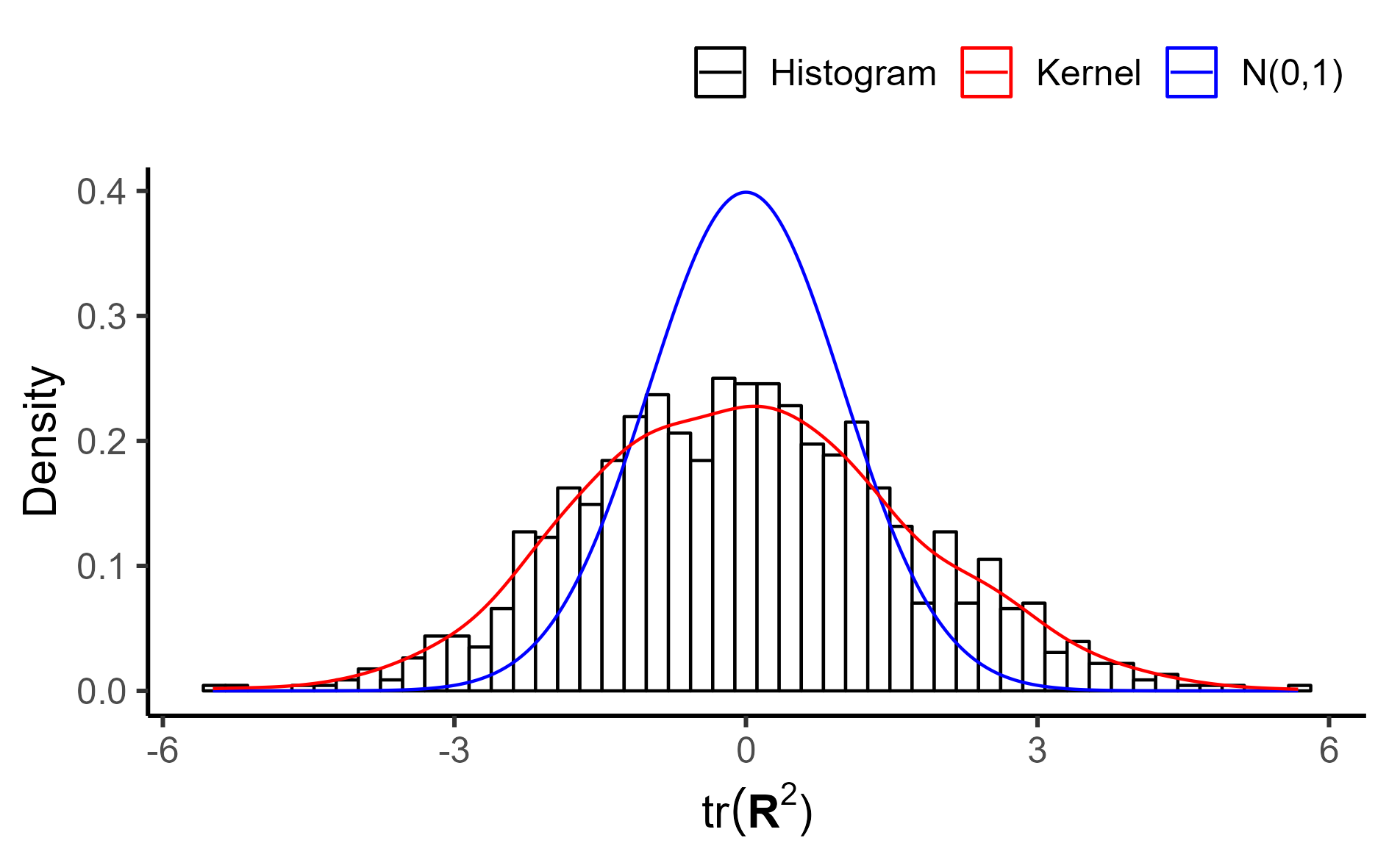}
  \caption{$\alpha = 3.5$}
\end{subfigure}%
\begin{subfigure}{.45\textwidth}
  \centering
  \includegraphics[scale = 0.4]{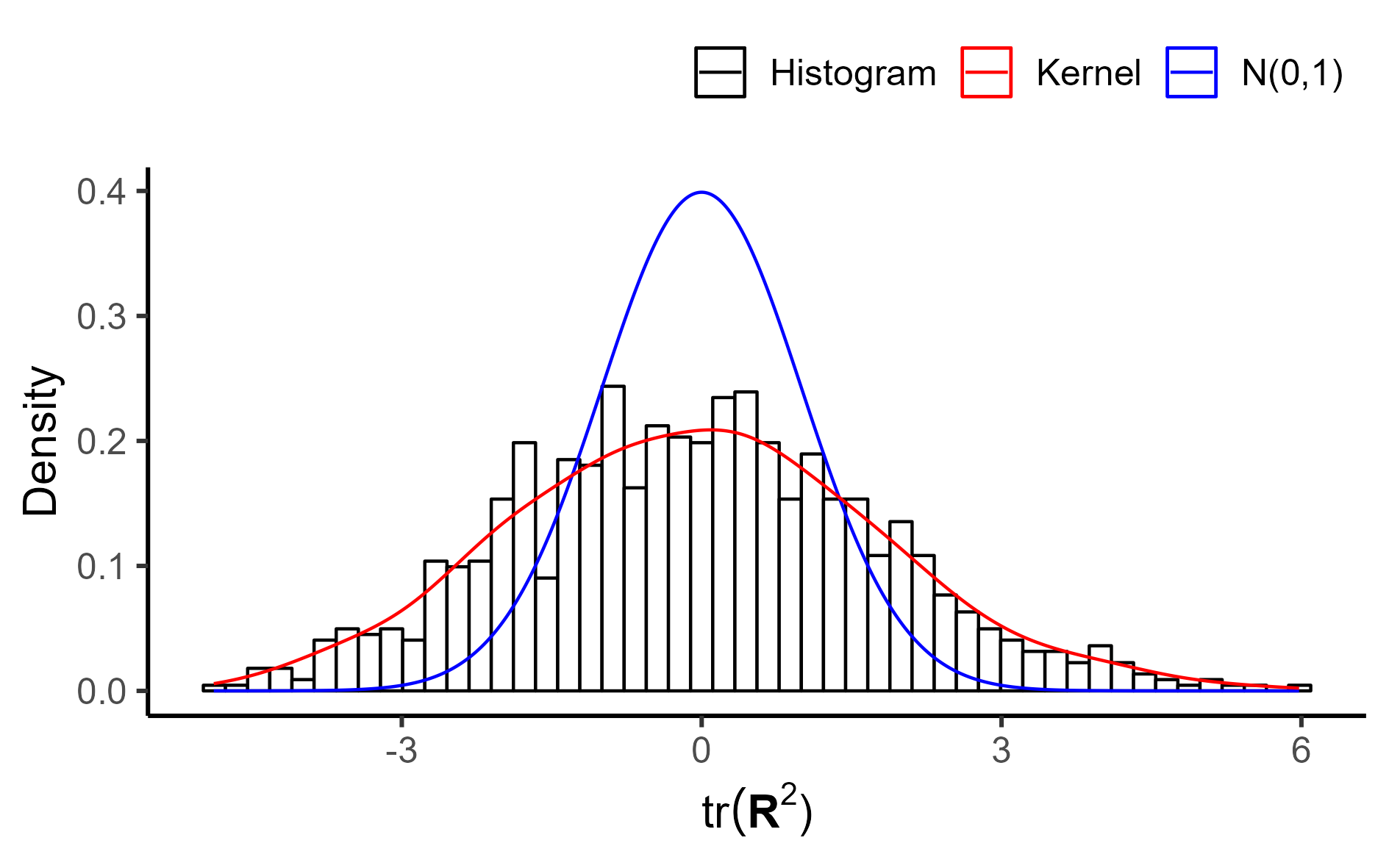}
  \caption{$\alpha = 3.5$}
\end{subfigure}
\caption{Simulations of the central limit theorem for $\tr(\bfR^2)$ for different values of $\alpha$ and $p = 400$, $n = 1000$ with 1000 repetitions. Here the plots in the left column have $X_{11} \eid Z_1 - \E[Z_1]$, where $Z_1$ is Pareto-distributed with parameter $\alpha$, while the plots in the right column have $X_{11} \eid T^2 - \E[T^2]$, where $T$ is t-distributed with $2\alpha$ degrees of freedom.}
\label{fig:nonsym_histograms}
\end{figure}
Some major differences arise when $X_{11}$ does not have a symmetric distribution. To start with, from \eqref{eq:derivmean} we get
\begin{equation*}
    \begin{aligned}
    \E[\tr(\bfR^2)]&= \mu_n + \sum_{\substack{i_1,i_2 = 1 \\ i_1 \neq i_2}}^p \sum_{\substack{t_1,t_2 = 1 \\ t_1 \neq t_2}}^n 
    \E\big[Y_{i_1 t_1} Y_{i_1 t_2} Y_{i_2 t_1} Y_{i_2 t_2}\big]\\
    &= \mu_n + p(p-1)n(n-1) (\E [ Y_{11} Y_{12} ])^2\\
    &= \mu_n + p(p-1)n(n-1) \beta^2_{1,1}\,,
    \end{aligned}
\end{equation*}
where we used the notation$\beta_{k_1,\ldots, {k_r}}:=\E[Y_{11}^{k_1}Y_{12}^{k_2} \cdots Y_{1r}^{k_r}]$ for positive integers $k_1,\ldots, k_r$. For all $y, \beta >0$, we have
\begin{equation}\label{formula_inv}
    \frac{1}{y^\beta} = \frac{1}{\Gamma(\beta)} \int\limits_0^\infty 
    \exp(-ty) t^{\beta -1} \dint t\,, 
\end{equation}
where $\Gamma$ denotes the Gamma function. Combining this representation with Fubini's theorem, we deduce
\begin{equation}\label{eq:EY1Y2}
   \beta_{1,1}  =  \int\limits_0^\infty 
    \big( \E \left[ X_{11} \exp(-sX_{11}^2) \right] \big)^2 \varphi^{n-2} (s) ds ,
\end{equation}
where $\varphi(s) = \E [ \exp(-s X_{11}^2) ], s>0$, denotes the Laplace transform of $X_{11}^2$. We conclude that $\beta_{1,1}\ge 0$ with equality if and only if the distribution of $X_{11}$ is symmetric. In general, the exact calculation of $\beta_{1,1}$ from \eqref{eq:EY1Y2} is rather involved and highly dependent on the specific non-symmetric distribution at hand.  Assuming $\E[|X_{11}|^{\eta}]<\infty$ for some $\eta>1$, Lemma~4.2 in \cite{doernemann:heiny:2025} guarantees that $\beta_{1,1} =o\big(n^{-\eta'}\big)$ for any $\eta' < \eta$. This implies that
\begin{equation*}
    \E[\tr(\bfR^2)] = \mu_n + p^2 \, n^{-2(\eta'-1)} \,o(1)\,.
\end{equation*}
In the special case $\limsup_{\nto} p/n<\infty$, we conclude that $\E[\tr(\bfR^2)]=\mu_n+o(1)$ if $\E[|X_{11}|^{\eta}]<\infty$ for some $\eta>2$. This partially explains the shift in mean in our simulations for the non-symmetric case with small values of $\alpha$.

\medskip
Finally, we would like to point out that $\Var[\tr(\bfR^2)]$ in the non-symmetric case includes many additional terms, which are determined by $\beta_{\text{odd}}$ terms. Here $\beta_{\text{odd}}$ stands for a $\beta_{k_1,\ldots, {k_r}}$ with at least one odd $k_i$. For non-symmetric distributions of $X_{11}$, we have as $\nto$ that
\begin{equation*}
    \begin{aligned}
    \Var[\tr(\bfR^2)] &\sim  \beta_{2,2,2,2} n^4 p + \beta_{2,2}^2 n^4 p^2 + 2 \beta_{1,1,1,1}^2 n^4 p^2 + 2 \beta_{1,1}^2 \beta_{2,2} n^4 p^3 + 4 \beta_{1,1}^2 \beta_{1,1,1,1} n^4 p^3 \\
    &\quad + \beta_{1,1}^4 n^4 p^4 + 6 \beta_{4,2,2} n^3 p + 4 \beta_{2,2}^2 n^3 p^2 + 12 \beta_{2,1,1}^2 n^3 p^2 + 2 \beta_4 \beta_{2,2} n^3 p^2 \\
    &\quad + 4 \beta_2 \beta_{2,2,2} n^3 p^2 + 16 \beta_{1,1} \beta_{3,2,1} n^3 p^2 + 4 \beta_{1,1} \beta_{4,1,1} n^3 p^2 + 2 \beta_4 \beta_{1,1}^2 n^3 p^3 \\
    &\quad + 2 \beta_2^2 \beta_{2,2} n^3 p^3 + 8 \beta_{1,1}^2 \beta_{2,2} n^3 p^3 + 16 \beta_{1,1}^2 \beta_{2,1,1} n^3 p^3 + 4 \beta_{1,1}^4 n^3 p^4 \\
    &\quad + 8 \beta_2 \beta_{1,1} \beta_{2,1,1} n^3 p^3 + 2 \beta_2^2 \beta_{1,1}^2 n^3 p^4 + \beta_8 n p + 3 \beta_{4,4} n^2 p + 4 \beta_{6,2} n^2 p \\
    &\quad + 3 \beta_4^2 n p^2 + \beta_4^2 n^2 p^2 + 8 \beta_{2,2}^2 n^2 p^2 + 8 \beta_{3,1}^2 n^2 p^2 + 4 \beta_2 \beta_6 n p^2 \\
    &\quad + 6 \beta_2^2 \beta_4 n p^3 + 4 \beta_4 \beta_{2,2} n^2 p^2 + 12 \beta_2 \beta_{4,2} n^2 p^2 + 8 \beta_{1,1} \beta_{3,3} n^2 p^2 \\
    &\quad + 8 \beta_{1,1} \beta_{5,1} n^2 p^2 + 2 \beta_2^2 \beta_4 n^2 p^3 + 4 \beta_4 \beta_{1,1}^2 n^2 p^3 + 8 \beta_2^2 \beta_{2,2} n^2 p^3 \\
    &\quad + 12 \beta_{1,1}^2 \beta_{2,2} n^2 p^3 + 16 \beta_2 \beta_{1,1} \beta_{3,1} n^2 p^3 + 4 \beta_2^2 \beta_{1,1}^2 n^2 p^4 + \beta_2^4 n p^4 + 2 \beta_{1,1}^4 n^2 p^4.
    \end{aligned}
\end{equation*}
In the symmetric case, these $\beta_{\text{odd}}$ terms vanish, resulting in a simpler variance structure; see \eqref{eq:defsigma} and Lemma~\ref{lem:asymp_variance} for details. However, in the non-symmetric case, they remain and can contribute in an unpredictable manner. In particular, these terms are not necessarily positive, meaning that their inclusion could lead to a reduction in the overall variance. This contrasts with the symmetric case, where all variance contributions are non-negative. Through numerical simulations and plots, we observe that in the non-symmetric setting, the variance of $\tr(\bfR^2)$ tends to decrease, highlighting the effect of these additional terms in finite samples.

\subsection{Finite-sample accuracy of the $\beta_4$ approximation}
Our asymptotic results and their application for finite-size samples hinge on $\beta_4$. For example, equations \eqref{eq:var} shows that $\Var(\tr(\bfR^2))$ only depends on $p,n$ and $\beta_4$.

In the context of the log-determinant, \cite{heiny:parolya:2024} observed in simulations that $\alpha>3$ and symmetry seem to be relevant, whereas \cite{li:pan:xie:wang:2024} argue that symmetry is not crucial and that the issue is the slow convergence of $\beta_4$ and other self-normalized moments. Both references, however, restrict themselves to the case $p/n\to \gamma \in (0,\infty)$ and do not consider a general growth of $p$ for which the task of controlling $\beta_{\text{odd}}$ terms becomes significantly more complicated. Indeed, we also observe a seemingly misspecified variance in some of our simulations. This issue could sometimes be fixed if the asymptotic version of $\beta_4$ was replaced by an empirical estimate. 

A key observation is that in the asymptotic expansion of the limiting variance, the dominant term is determined by $T_1$ when $0 < \alpha < 3$, which directly depends on the value of $\beta_4$. This suggests that the accuracy of $\beta_4$ is crucial for obtaining the correct variance. Our simulation results indicate that when $\beta_4$ is estimated empirically from the data, the standardized statistic aligns well with the normal distribution. However, when using the asymptotic expression for $\beta_4$, we observe a systematic deviation from the theoretical limiting distribution, specifically an underestimation of the variance.  

To analyze this phenomenon quantitatively, we compare the asymptotic version of $\beta_4$ based on Lemma~\ref{lem:allmoments} called $\beta_{4,\text{asy}}$ with its empirical estimate
\begin{equation*}
\hat{\beta}_4:= \frac{1}{pn} \sum_{i=1}^p \sum_{t=1}^n Y_{it}^4
\end{equation*}
for various values of $\alpha$ and $n$ to study the speed of convergence. 
More precisely, we plot the ratios $\beta_{4,\text{asy}}/\hat{\beta}_4$ for $n=10^{1.5},10^2,10^{2.5},10^3, 10^{3.5},10^4$. The distribution of $X_{11}$ is as in Section~\ref{sec:simulations} and we investigate $\alpha\in (3,4)$, $\alpha\in (2,3)$, $\alpha\in (0,2)$, respectively.
\begin{figure}
\centering
\includegraphics[scale=0.7]{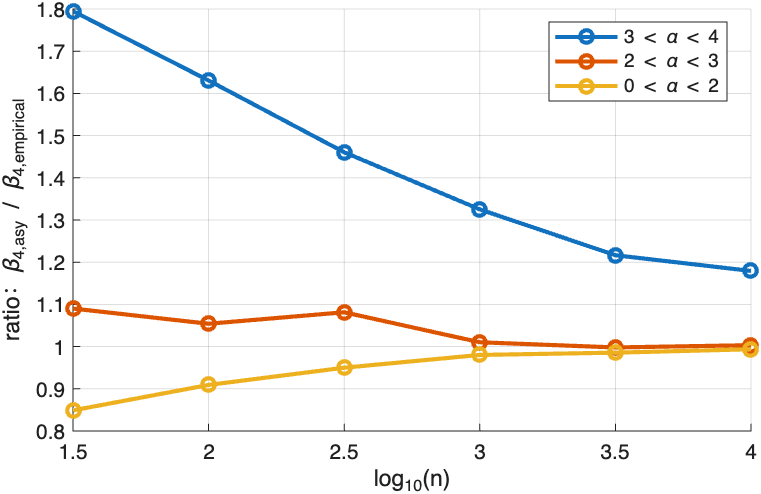}
\caption{Ratio $\beta_{4,\mathrm{asy}}/\hat{\beta}_4$ as a function of the sample size $n$ for different values of the tail index $\alpha$. The three panels correspond to the regimes $\alpha\in(3,4)$, $\alpha\in(2,3)$, and $\alpha\in(0,2)$, respectively.}
\label{fig:beta4_ratio}
\end{figure}
Figure~\ref{fig:beta4_ratio} shows that the finite-sample accuracy of the asymptotic approximation of $\beta_4$ depends strongly on the tail index $\alpha$.  For $3<\alpha<4$, the ratio $\beta_{4,\mathrm{asy}}/\hat{\beta}_4$ remains relatively far from one for moderate sample sizes and approaches one only gradually as $n$ increases, indicating slow convergence in this regime. Luckily $\beta_4$ asymptotically does not contribute to the variance $\sigma_n^2$ which alleviates the slow convergence issue. In contrast, for $2<\alpha<3$, the ratio stays close to one throughout the range of sample sizes considered, suggesting that the asymptotic approximation is already reasonably accurate.  For $0<\alpha<2$, the ratio moves steadily towards one as $n$ grows, with the discrepancy becoming small for larger sample sizes.

Overall, these results show that the convergence of $\beta_4$ to its asymptotic form can vary substantially across different tail-index regimes. They therefore highlight the importance of accounting for the finite-sample behavior of $\beta_4$ when constructing variance approximations. In regimes where the convergence is slow, replacing the asymptotic expression by the empirical estimate $\hat{\beta}_4$ can improve the agreement between the theoretical and empirical distributions. 
\FloatBarrier

\section{Proofs of the main theorems}\label{sec:mainproof}\setcounter{equation}{0}
We will need a few results about the entries of $\Y$ defined in \eqref{def:R}. For integers $k_1,\ldots,k_r$, we recall the notation  $\beta_{2k_1,\ldots, {2k_r}}:=\E [ Y_{11}^{2k_1} \cdots Y_{1r}^{2k_r} ]$ and $\tbeta_{2k_1,\ldots, {2k_r}}:=\E [ (Y_{11}^2-n^{-1})^{k_1} \cdots (Y_{1r}^2-n^{-1})^{k_r} ]$. The following lemma is a special case of \cite[Lemma~3.2]{heiny:parolya:2024}.
\begin{lemma}\label{lem:betas}
For any distribution of $X_{11}$, it holds that $\beta_2=1/n$ and
\begin{equation}
\beta_4 = \frac{1}{n}-(n-1)\beta_{2,2}.
\end{equation}
\end{lemma}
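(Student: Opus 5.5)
The plan is to use only the row constraint $\sum_{t=1}^n Y_{1t}^2=1$ (property (3) of $\Y$) together with exchangeability of the entries within a row. No moment assumptions are needed beyond the trivial boundedness $|Y_{1t}|\le 1$, which makes every expectation below finite for any distribution of $X_{11}$.

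\emph{Step 1: $\beta_2=1/n$.} Take expectations in $\sum_{t=1}^n Y_{1t}^2=1$. Since $X_{11},\ldots,X_{1n}$ are iid, the vector $(Y_{11},\ldots,Y_{1n})$ is exchangeable, so $\E[Y_{1t}^2]=\E[Y_{11}^2]=\beta_2$ for every $t$. Hence $n\beta_2=1$.

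\emph{Step 2: the fourth-moment identity.} Multiply the row constraint by $Y_{11}^2$ and take expectations:
\begin{equation*}
\beta_2=\E\Big[Y_{11}^2\sum_{t=1}^n Y_{1t}^2\Big]=\E[Y_{11}^4]+\sum_{t=2}^n \E[Y_{11}^2Y_{1t}^2]=\beta_4+(n-1)\beta_{2,2}.
\end{equation*}
Exchangeability again gives $\E[Y_{11}^2Y_{1t}^2]=\beta_{2,2}$ for every $t\neq 1$. Substituting $\beta_2=1/n$ from Step 1 yields $\beta_4=\frac1n-(n-1)\beta_{2,2}$.

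There is no genuine obstacle here. The only care needed is the convention $Y_{1t}=0$ when the whole row is zero, which can happen only if $\P(X_{11}=0)>0$. Under nondegeneracy such a row occurs with positive probability at most, and then the row constraint fails on that event. To handle this, either note that the paper's definition implicitly assumes the denominator is positive almost surely, or restrict to $n$ large and absorb the exponentially small event. I would simply adopt the convention that the denominator is almost surely positive, as in \cite[Lemma~3.2]{heiny:parolya:2024}.
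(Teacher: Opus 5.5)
Your proof is correct. Taking expectations of $\sum_{t}Y_{1t}^2=1$ and of $Y_{11}^2\sum_{t}Y_{1t}^2=Y_{11}^2$, and using exchangeability within a row, gives both identities. The paper does not prove the lemma itself; it cites it as a special case of \cite[Lemma~3.2]{heiny:parolya:2024}, so your row-constraint argument serves as a natural self-contained proof.

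Of your two ways of treating a zero row, only one keeps the identities exact. That is the convention that the denominator is almost surely positive, which the paper implicitly adopts when it states $\sum_{t}Y_{it}^2=1$ as a property. Under the alternative convention $Y_{1t}=0$ on a zero row, one gets $\beta_2=(1-q^n)/n$ with $q=\P(X_{11}=0)$. So ``absorbing the exponentially small event'' would give $\beta_2=1/n$ only approximately, not exactly.
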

It is easy to see that $\tbeta_2=0$, and we also need the exact asymptotic behavior of  $\tbeta_{2,2}$. Using Lemma ~\ref{lem:betas}, we get $\beta_{2,2} = \frac{1 - n\beta_4}{n(n-1)}$, which yields for $\alpha\in (0,4)$
\begin{equation} \label{eq:tbeta_2_2_rewritten}
    \tbeta_{2,2} = \beta_{2,2} - n^{-2} = \frac{1}{n^2(n-1)} - \frac{\beta_4}{n-1} \sim - n^{-1} \beta_4\,, \qquad \nto\,.
\end{equation}

\subsection{Proof of Theorem~\ref{thm:clt2}}
It will be convenient to write our statistics as a sum of  martingale differences. In what follows, the notation 
\begin{equation*}
    \tx_i=(X_{i1}, \ldots, X_{in})\,, \qquad  i\in\{1,\ldots,p\}
\end{equation*}
will be helpful. We consider the filtration $(\mathcal{F}_{j})_{j\ge 0}$, where $\mathcal{F}_{j}$ is the $\sigma$-algebra generated by $\{\tx_1,\ldots,\tx_j\}$. For $j=1,\ldots, p$, define 
\begin{equation*}
    M_{j,1}:=\E[T_1|\mathcal{F}_j]- \E[T_1|\mathcal{F}_{j-1}] \quad \text { and } \quad M_{j,2}:=\E[T_2|\mathcal{F}_j]- \E[T_2|\mathcal{F}_{j-1}]
\end{equation*}
and observe that $T_i=\sum_{j=1}^p M_{j,i}$ for $i=1,2$. 

Then $(M_{j,1})_{j\ge 1}$, $(M_{j,2})_{j\ge 1}$ and $(M_{j,3})_{j\ge 1}:=(M_{j,1}+M_{j,2})_{j\ge 1}$ are martingale difference sequences with respect to the filtration $(\mathcal{F}_{j})_{j\ge 0}$. The following lemma provides explicit formulas for $M_{j,1}$ and $M_{j,2}$. 
\begin{lemma}\label{lem:martingale_differences}
   For $j=1,\ldots, p$, it holds 
    \begin{equation*}
        M_{j,1} = 2\sum_{i = 1}^{j-1} \sum_{t = 1}^n (Y_{i t}^2 - n^{-1})(Y_{j t}^2 - n^{-1}) \quad \text{ and }
        \quad
        M_{j, 2} = 4\sum_{i = 1}^{j-1} \sum_{1\le t_1 < t_2\le n} Y_{i t_1}Y_{i t_2}Y_{j t_1}Y_{j t_2}.
    \end{equation*}
\end{lemma}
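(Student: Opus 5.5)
We compute $\E[T_i\mid\mathcal{F}_j]$ explicitly for $i=1,2$ and take differences. The tools are the independence of the rows $\tx_1,\ldots,\tx_p$ (property (2)), the identity $\E[Y_{it}^2]=n^{-1}$ (Lemma~\ref{lem:betas}), and symmetry (property (1)).

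\emph{Conditional expectation of $T_1$.} Write $T_1=\sum_{i_1\neq i_2}\sum_t A_{i_1t}A_{i_2t}$ with $A_{it}:=Y_{it}^2-n^{-1}$. Each $A_{it}$ is a function of row $i$ only, so it is $\mathcal{F}_j$-measurable if $i\le j$ and independent of $\mathcal{F}_j$ if $i>j$. Since $\E[A_{it}]=0$ and distinct rows are independent, the conditional expectation of a summand vanishes unless both $i_1,i_2\le j$. Hence
\begin{equation*}
\E[T_1\mid\mathcal{F}_j]=\sum_{\substack{i_1\neq i_2\\ i_1,i_2\le j}}\sum_{t=1}^n A_{i_1t}A_{i_2t}.
\end{equation*}
Subtracting the same expression with $j-1$ leaves only the ordered pairs in which one index equals $j$ and the other lies in $\{1,\ldots,j-1\}$. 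There are two orderings of each such pair, which gives the factor $2$:
\begin{equation*}
M_{j,1}=2\sum_{i=1}^{j-1}\sum_{t=1}^n A_{it}A_{jt}.
\end{equation*}

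\emph{Conditional expectation of $T_2$.} A summand of $T_2$ has the form $B_{i_1}B_{i_2}$ with $B_i:=Y_{it_1}Y_{it_2}$ and $t_1\neq t_2$. By symmetry (property (1)), $\E[B_i]=\E[Y_{i t_1}Y_{i t_2}]=0$ because both exponents are odd. The same argument as for $T_1$ then gives
\begin{equation*}
\E[T_2\mid\mathcal{F}_j]=\sum_{\substack{i_1\neq i_2\\ i_1,i_2\le j}}\,\sum_{t_1\neq t_2}Y_{i_1t_1}Y_{i_1t_2}Y_{i_2t_1}Y_{i_2t_2}.
\end{equation*}
Taking differences again keeps the pairs $\{i,j\}$ with $i<j$, each counted twice in the ordered sum over $(i_1,i_2)$. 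In addition, the summand is invariant under swapping $t_1\leftrightarrow t_2$, so the sum over ordered $t_1\neq t_2$ equals twice the sum over $t_1<t_2$. Together these give the factor $4$ in the stated formula for $M_{j,2}$.

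The only step that needs care is the conditioning argument. One must check that when exactly one of $i_1,i_2$ exceeds $j$, the independent factor has mean zero. For $T_1$ this holds by centering, and for $T_2$ it holds by symmetry. When both indices exceed $j$, the expectation factorizes into a product of two zero means. Everything else is bookkeeping of the multiplicities.
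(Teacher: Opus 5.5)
Your proposal is correct and follows essentially the same route as the paper's proof. Both compute $\E[T_i\mid\mathcal{F}_k]$ by discarding every summand that involves a row index larger than $k$ (these have mean zero by centering for $T_1$ and by symmetry for $T_2$), and then take the difference between levels $j$ and $j-1$. Your version also writes out the $T_2$ case, including the extra factor $2$ from the $t_1\leftrightarrow t_2$ symmetry, which the paper only calls ``completely analogous.''
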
 
\begin{proof}
We start by considering $M_{j,1}$.  Using the definition of $T_1$, we have for $k\in\{0,\ldots,p\}$  
\begin{equation*}
    \E[T_1| \mathcal{F}_k] = 
    2\, \sum_{1\le i_1 < i_2\le p}  \sum_{t = 1}^n \E\big[(Y_{i_1 t}^2 - n^{-1})(Y_{i_2 t}^2 - n^{-1})\big| \mathcal{F}_k\big]\,.
\end{equation*}
Since $Y_{it}^2-n^{-1}$ is centered and independent of $\mathcal{F}_k$ for $k < i$, and $\mathcal{F}_k$-measurable for $k \geq i$ we have  $\E\big[(Y_{i_1 t}^2 - n^{-1})(Y_{i_2 t}^2 - n^{-1})\big| \mathcal{F}_k\big]=0$ if $i_2>k$. Therefore, we conclude that 
\begin{equation*}
    \E[T_1| \mathcal{F}_k] = 
    2\, \sum_{1\le i_1 < i_2\le k}  \sum_{t = 1}^n (Y_{i_1 t}^2 - n^{-1})(Y_{i_2 t}^2 - n^{-1})\,,
\end{equation*}
from which we easily deduce 
\begin{equation*}
    \begin{aligned}
    M_{j, 1}
    &=
    \E[T_1|\mathcal{F}_j]- \E[T_1|\mathcal{F}_{j-1}]
    \\ 
    &=
    2\, \bigg( \sum_{1\le i_1 < i_2\le j} -\sum_{1\le i_1 < i_2\le j-1} \bigg)  \sum_{t = 1}^n (Y_{i_1 t}^2 - n^{-1})(Y_{i_2 t}^2 - n^{-1})\\
    &= 2\sum_{i = 1}^{j-1} \sum_{t = 1}^n (Y_{i t}^2 - n^{-1})(Y_{j t}^2 - n^{-1})\,.
    \end{aligned}
\end{equation*}
The proof for $M_{j,2}$ is completely analogous. 
\end{proof}
We will use the following CLT for martingale differences.
\begin{lemma}[e.g. Hall and Heyde \cite{hall:heyde:1980}]\label{lem:martingaleclt}
Let $\{S_{ni}=\sum_{j=1}^{i}Z_{nj},\mathcal{F}_{ni}, 1\le i\le k_n, n\ge 1\}$ be a zero-mean, square integrable martingale array with differences $Z_{nj}$. Suppose that $\E[\max_{j=1,\ldots, k_n} Z_{nj}^2]$ is bounded in $n$ and that
\begin{equation*}
    \max_{j=1,\ldots, k_n} |Z_{nj}|\cip 0 \quad \text{ and } \quad \sum_{j=1}^{k_n} Z_{nj}^2 \cip 1\,.
\end{equation*}
Then we have $S_{nk_n}\cid N(0,1)$ as $\nto$.
\end{lemma}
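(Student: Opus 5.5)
The plan is the classical characteristic-function argument of McLeish, which is how this martingale CLT is usually proved. Fix $t\in\R$ and use the elementary identity
\begin{equation*}
    \e^{\mathrm i x} = (1+\mathrm i x)\exp\big(-x^2/2 + r(x)\big), \qquad |r(x)|\le |x|^3 \text{ for } |x|\le 1,
\end{equation*}
with $x=tZ_{nj}$. This gives $\e^{\mathrm i t S_{nk_n}} = T_n U_n$, where
\begin{equation*}
    T_n:=\prod_{j=1}^{k_n}(1+\mathrm i tZ_{nj}), \qquad U_n:=\exp\Big(-\frac{t^2}{2}\sum_{j=1}^{k_n}Z_{nj}^2+\sum_{j=1}^{k_n} r(tZ_{nj})\Big).
\end{equation*}
The goal is to show $\E[T_nU_n]\to \e^{-t^2/2}$.

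\textbf{Step 1 (truncation).} Before anything else I will truncate. Let $J_n:=\min\{j\le k_n: \sum_{i\le j} Z_{ni}^2>2\}$, with $J_n:=k_n$ if no such $j$ exists. Replace $Z_{nj}$ by $Z_{nj}' := Z_{nj}\mathds 1\{j\le J_n\}$. Since $\{j\le J_n\}\in\mathcal F_{n,j-1}$, the $Z'_{nj}$ are still martingale differences. Because $\sum_j Z_{nj}^2\cip 1$, we have $\P(Z'\ne Z)\to 0$. Hence both hypotheses continue to hold for $Z'$, and $S'_{nk_n}-S_{nk_n}\cip 0$. So it suffices to prove the claim for $Z'$, which we rename $Z$.

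\textbf{Step 2 (the factors $T_n$ and $U_n$).}
\begin{itemize}
\item[(a)] $\E[T_n]=1$. Condition successively on $\mathcal F_{n,k_n-1},\mathcal F_{n,k_n-2},\ldots$ and use $\E[Z_{nj}\mid\mathcal F_{n,j-1}]=0$. Integrability of $T_n$ follows from the bound in (c).
\item[(b)] $U_n\cip \e^{-t^2/2}$. On the event $\{|t|\max_j|Z_{nj}|\le 1\}$, whose probability tends to one, we have $\big|\sum_j r(tZ_{nj})\big|\le |t|^3\max_j|Z_{nj}|\sum_j Z_{nj}^2 \cip 0$. Also $\sum_j Z_{nj}^2\cip 1$.
\item[(c)] $(T_n)$ is uniformly integrable. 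The truncation gives
\begin{equation*}
    |T_n|^2=\prod_j(1+t^2Z_{nj}^2)\le \exp\Big(t^2\sum_{j<J_n}Z_{nj}^2\Big)\,(1+t^2Z_{nJ_n}^2)\le \e^{2t^2}\big(1+t^2\max_j Z_{nj}^2\big).
\end{equation*}
The right-hand side is bounded in $L^1$ by the assumption on $\E[\max_j Z_{nj}^2]$. So $(T_n)$ is bounded in $L^2$, hence uniformly integrable.
\end{itemize}

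\textbf{Step 3 (conclusion).} Write
\begin{equation*}
    \E[\e^{\mathrm i tS_{nk_n}}]-\e^{-t^2/2}=\E[T_n(U_n-\e^{-t^2/2})],
\end{equation*}
using $\E[T_n]=1$. The variable $|T_n(U_n-\e^{-t^2/2})|$ tends to $0$ in probability. It is also uniformly integrable, since $|T_nU_n|=|\e^{\mathrm i tS}|=1$ and $T_n$ is uniformly integrable. Therefore the expectation tends to zero, and Lévy's continuity theorem gives $S_{nk_n}\cid N(0,1)$.

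The main obstacle is Step 2(c): without truncation, $T_n$ need not be integrable uniformly in $n$. The stopping-time truncation at level $2$ together with the hypothesis that $\E[\max_j Z_{nj}^2]$ is bounded is exactly what gives the $L^2$ bound. The remaining care is checking that the truncated array still satisfies both convergence hypotheses.
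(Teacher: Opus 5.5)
Your proof is correct and is the standard McLeish argument as carried out by Hall and Heyde (stopping-time truncation at level $2>1$, the product $T_n=\prod_j(1+\mathrm{i}tZ_{nj})$ with $\E[T_n]=1$ and $\sup_n\E|T_n|^2<\infty$, then uniform integrability); the paper gives no proof of its own and simply cites Hall and Heyde for this result. The one detail worth stating explicitly is that the successive conditioning in Step 2(a) needs each $T_{n,m-1}Z_{nm}$ to be integrable, where $T_{n,m-1}:=\prod_{j<m}(1+\mathrm{i}tZ_{nj})$; this follows from your bound in (c), since $|T_{n,m-1}|\le|T_n|$ (every factor has modulus at least one), together with $Z_{nm}\in L^2$ and Cauchy--Schwarz.
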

For convenience of notation, we set $T_3=T_1+T_2$. In view of \eqref{remark:var_asymp}, we will apply Lemma~\ref{lem:martingaleclt} to $\sigma_n^{-1} T_i = \sum_{j=1}^p \sigma_n^{-1} M_{j,i}$ with martingale differences $\sigma_n^{-1} M_{j,i}$, by considering $i = 1$ when $\alpha < 3$, $i = 2$ when $\alpha \in (3, 4)$ and $i = 3$ when $\alpha =3$. By definition, we have $\sigma_n^{-1}\E[T_i] = 0$ and \eqref{eq:dsf245d} implies that $\sigma_n^{-2} \E[T_i^2] \to 1$, fulfilling the square integrability condition. Also note that $\E[\max_{j=1,\ldots,p} \sigma_n^{-2} M_{j,i}^2]$ is bounded since 
\begin{equation*}
    \E[\max_{j=1,\ldots,p} \sigma_n^{-2} M_{j,i}^2] 
    \leq
    \sigma_n^{-2}\sum_{j=1}^p \E[M_{j,i}^2]
    =
    \frac{\E[T_i^2]}{\sigma_n^2}
    \to
    1,\qquad \nto .
\end{equation*}
Note that in the case $\alpha \in(0,3)$, we have assumed in Theorem~\ref{thm:clt2} that $p = \omega(n^\delta)$ for some $\delta > \delta^*(\alpha)$ with $\delta^*(\alpha)$ defined in \eqref{eq:delta_star}. The two remaining conditions in Lemma~\ref{lem:martingaleclt} are
\begin{equation}\label{eq:left}
    \max_{j=1,\ldots,p} |\sigma_n^{-1} M_{j,i}| \cip 0
    \quad \text{and} \quad
    \sigma_n^{-2}\sum_{j = 1}^p M_{j,i}^2 \cip 1,
    \qquad i=1,2,3\,,
\end{equation}
whose verification is carried out in Section~\ref{sec:lefttoprove}. This concludes the proof of Theorem~\ref{thm:clt2}.

\subsection{Verification of martingale CLT conditions}\label{sec:lefttoprove}
We begin with the first condition in \eqref{eq:left}. To prove $\max_j |\sigma_n^{-1} M_{j,i}| \cip 0$, we note that by the union bound and  Markov's inequality
\begin{equation*}
    \P\Big(\max_{j=1,\ldots,p} |\sigma_n^{-1} M_{j,i}| > \varepsilon\Big)
    \leq
    \sum_{j=1}^p \P(|\sigma_n^{-1} M_{j,i}| > \varepsilon)
    \leq
    \varepsilon^{-4} \sum_{j=1}^p \frac{\E[M_{j,i}^4]}{\sigma_n^4}\,, \qquad \vep>0.
\end{equation*}
The following lemma and \eqref{eq:dfdl} assert that the \rhs~tends to zero. 
\begin{lemma} \label{lem:martingale_E[M_j,k^4]_to_0}
If $\alpha \in (0, 3]$ and $p = \omega(n^\delta)$ for some $\delta > \delta^*(\alpha)$, where $\delta^*(\alpha)$ is defined in \eqref{eq:delta_star}, we have
\begin{equation}\label{eq:sigma^(-4)E[M_j,k^4]}
    \frac{1}{\sigma_n^4}\sum_{j=1}^p\E[M_{j,1}^4]\to 0\,, \qquad \nto\, .    
\end{equation}
If $\alpha \in [3, 4)$, we have
\begin{equation}\label{eq:sigma^(-4)E[M_j,k^4]2}
    \frac{1}{\sigma_n^4}\sum_{j=1}^p\E[M_{j,2}^4]\to 0\,, \qquad \nto\, .  
\end{equation}
\end{lemma}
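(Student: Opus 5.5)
The plan is to compute $\E[M_{j,1}^4]$ and $\E[M_{j,2}^4]$ explicitly from Lemma~\ref{lem:martingale_differences}, conditioning on row $j$. We then sum over $j$ and compare with $\sigma_n^4$ using Lemma~\ref{lem:asymp_variance}.

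For $M_{j,1}$, write $A_{t}:=Y_{jt}^2-n^{-1}$ and $B_t:=\sum_{i<j}(Y_{it}^2-n^{-1})$, so that $M_{j,1}=2\sum_t A_tB_t$. Since the rows are independent and identically distributed, the vector $(B_t)_t$ is a sum of $j-1$ iid centered rows and is independent of $(A_t)_t$. This gives
\begin{equation*}
\E[M_{j,1}^4]=16\sum_{t_1,\ldots,t_4}\E[A_{t_1}A_{t_2}A_{t_3}A_{t_4}]\,\E[B_{t_1}B_{t_2}B_{t_3}B_{t_4}].
\end{equation*}
We expand $\E[B_{t_1}\cdots B_{t_4}]$ over the ways of assigning the four factors to rows. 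Only the pairings (two distinct rows, each appearing twice) and the single-row case survive, because each row is centered. This yields at most
\begin{equation*}
3(j-1)^2\,(\text{second-order }\tbeta\text{ terms})^2+(j-1)\,(\text{fourth-order }\tbeta\text{ terms}).
\end{equation*}
We then group the index patterns $t_1,\ldots,t_4$ by coincidences: all equal, two pairs, or one triple plus one singleton. Other patterns are dominated through the identity $\sum_t A_t=0$, which converts sums over distinct indices into sums over coincident ones. Every mixed moment is then a $\tbeta_{2k_1,\ldots,2k_r}$.

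By Lemma~\ref{lem:tbeta}, the mixed ones are $o$ of the unmixed ones, and $\tbeta_{2,2}\sim -n^{-1}\beta_4$ by \eqref{eq:tbeta_2_2_rewritten}. The expected leading terms are $(j-1)^2 n^2\beta_4^4$ (pair/pair patterns), $(j-1)\,n\,\beta_8^2$ (all indices equal, single row), and cross terms such as $(j-1)^2 n\beta_8\beta_4^2$. The structure should mirror Lemma~\ref{lem:fourth_mom_asymp}. Summing over $j\le p$ gives
\begin{equation*}
\sum_j\E[M_{j,1}^4]\lesssim p^3n^2\beta_4^4+p^2n\beta_8^2+p^3 n\beta_4^2\beta_8.
\end{equation*}
For $\alpha<3$ we have $\sigma_n^4\asymp p^4n^2\beta_4^4$. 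Hence the first ratio is $O(1/p)\to0$. The second ratio is $\beta_8^2/(p^2 n\beta_4^4)$. Using Lemma~\ref{lem:allmoments}, this is $\asymp 1/(p^2n^{-1})$ for $\alpha<2$ and $\slv 1/(p^2 n^{1-\alpha})$ for $\alpha\in[2,3)$. It vanishes exactly when $p=\omega(n^\delta)$ with $\delta>\delta^*(\alpha)$, the Potter bounds absorbing $L$. The cross term is the geometric mean of the two, divided by $\sqrt p$, so it also vanishes. At $\alpha=3$, $\sigma_n^4\gtrsim p^4n^{-4}$ and the same bounds go through.

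For $M_{j,2}$ with $\alpha\in[3,4)$, we condition on rows $i<j$ similarly. The symmetry of the entries kills every term with an odd power, so only the pairings of $(t_1,t_2)$-pairs survive. The dominant contribution is $\lesssim j^2 n^{-4}$ (each factor $\beta_{2,2}\approx n^{-2}$), plus lower-order terms of the form $j\,n^4\beta_{2,2,2,2}$-type and $j\, n^3\beta_{4,2,2}$-type. Summing over $j$ gives $O(p^3n^{-4})+O(p\,\cdot)$, which is $o(p^4n^{-4})=o(\sigma_n^4)$ by Lemma~\ref{lem:asymp_variance}. Here Lemma~\ref{lem:allmoments} together with Lemma~\ref{lem:tbeta} bound the higher $\beta$'s; for example $n^4\beta_{4,4}\slv n^{4-\alpha}$ is beaten by $p\,n^{-4}$ only after dividing by $p^4 n^{-4}$, giving $p^{-3}n^{8-\alpha}$. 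This must be checked, and it may force careful use of the exact algebra: $\sum_t Y_{jt}^2=1$ reduces $\sum_{t_1<t_2}$ moments, as in the proof of \eqref{eq:momt2}.

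The main obstacle is the bookkeeping of index coincidences in the fourth moment of $M_{j,2}$ (and the triple-coincidence terms for $M_{j,1}$). I would first reuse the combinatorics already done for Lemma~\ref{lem:fourth_mom_asymp}, since $\sum_j\E[M_{j,i}^4]$ is a sub-sum of the diagonal-type terms in $\E[T_i^4]$ with one fixed row. In particular, for $M_{j,2}$ the single-row terms must be shown to be of smaller order than $p^4n^{-4}$ using that, conditionally on $\mathcal F_{j-1}$, the relevant quantity is a quadratic form in $(Y_{jt_1}Y_{jt_2})$ with bounded structure.
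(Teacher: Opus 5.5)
Your treatment of $M_{j,1}$ follows the paper's route. You split into single-row and two-pair row patterns, classify the time coincidences, and compare with $\sigma_n^4\ge 4p^4n^2\beta_4^4$. This lands on the right dominant terms $p^2n\tbeta_8^2$, $p^3n\tbeta_4^2\tbeta_8$ and $p^3n^2\tbeta_4^2\tbeta_{4,4}$. Two small repairs are needed there:
\begin{itemize}
\item The $o(\cdot)$ statement of Lemma~\ref{lem:tbeta} alone cannot compare patterns with different numbers of free time indices (e.g.\ $n^2\tbeta_{4,4}^2$ against $n\tbeta_8^2$). You need the rates of Lemma~\ref{lem:allmoments}, or your exact relations from $\sum_tA_t=0$ such as $(n-1)\tbeta_{2k,2}=-\tbeta_{2k+2}$.
\item The cross-term ratio is $\beta_8/(pn\beta_4^2)$. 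This is the geometric mean of $\beta_8^2/(p^2n\beta_4^4)$ and $1/n$, not what you wrote, but it still tends to zero.
\end{itemize}

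The genuine gap is the second assertion ($\alpha\in[3,4)$, with no growth condition on $p$). You do not prove it, and the estimate you write down would contradict it. A contribution $n^4\beta_{4,4}\slv n^{4-\alpha}$ per row, summed over $j$ and divided by $p^4n^{-4}$, gives $p^{-2}n^{8-\alpha}$. (The single-row sum $\sum_j(j-1)$ is of order $p^2$, not $p$.) This diverges when $p$ grows slowly. You leave it as ``must be checked'' and defer to a quadratic-form argument that is never carried out.

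The missing observation is that no such term occurs. View each factor $Y_{it_{2k-1}}Y_{it_{2k}}Y_{jt_{2k-1}}Y_{jt_{2k}}$ as an edge $\{t_{2k-1},t_{2k}\}$ on the time indices.
\begin{itemize}
\item \emph{Single-row case} $i_1=\cdots=i_4$. Rows $i$ and $j$ carry the \emph{same} monomial, so symmetry forces every time index to have even multiplicity in both rows simultaneously. The edge multigraph is then a quadruple edge, two double edges sharing a vertex, two disjoint double edges, or a $4$-cycle. The contributions are $n^2\beta_{4,4}^2$, $n^3\beta_{4,2,2}^2$, $n^4\beta_{2,2,2,2}^2$: always squares, with the power of $n$ equal to the number of distinct times. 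By Lemma~\ref{lem:allmoments} these are $\slv n^{2-2\alpha}$, $n^{-1-\alpha}$, $n^{-4}$. So the single-row part of $\sum_j\E[M_{j,2}^4]$ is $O(p^2n^{-4})$, up to a factor $L^4(n^{1/2})$ at $\alpha=3$ that is absorbed by $\sigma_n^4\sim 4p^4n^{-4}(c_2^2(\alpha)L^2(n^{1/2})+2)^2$.
\item \emph{Two-pair case.} Each row $i$ receives two edges, and evenness forces them to coincide, giving a factor $\beta_{2,2}^2\sim n^{-4}$. The remaining row-$j$ factor is $\E[(\sum_{t<t'}Y_{jt}^2Y_{jt'}^2)^2]\le 1/4$. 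Summing over $j$ gives $O(p^3n^{-4})$.
\end{itemize}
Hence $\sigma_n^{-4}\sum_j\E[M_{j,2}^4]=O(1/p)$ for every $p\to\infty$, which is what Table~\ref{table_3} records. Neither a quadratic-form argument nor the constraint $\sum_tY_{jt}^2=1$ is needed beyond this bound.
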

Note that in the case $\alpha = 3$ (corresponding to $i=3$), the convergence $ \sigma_n^{-4} \sum_{j = 1}^p \E[(M_{j, 1} + M_{j, 2})^4] \to 0$ follows directly since 
\begin{equation}\label{eq:dfdl}
    \begin{aligned}
    \frac{1}{\sigma_n^4} \sum_{j = 1}^p \E[(M_{j, 1} + M_{j, 2})^4] 
    \leq\frac{16}{\sigma_n^4} \sum_{j = 1}^p \E[M_{j, 1}^4]
    + \frac{16}{\sigma_n^4} \sum_{j = 1}^p \E[M_{j, 2}^4]
    \end{aligned}
\end{equation}
and we deduce from Lemma~\ref{lem:martingale_E[M_j,k^4]_to_0} that the above tends to zero if $\delta > \delta^*(\alpha)$ for $\alpha = 3$.
\begin{proof}[Proof of Lemma~\ref{lem:martingale_E[M_j,k^4]_to_0}]
Direct calculation yields 
\begin{equation*}
    \begin{aligned}
    \E[M_{j,1}^4]
    &=
    \E\bigg[\bigg(2\sum_{i=1}^{j-1}\sum_{t=1}^n\bar Y_{it} \bar Y_{jt}\bigg)^4\bigg]
    \\ &=
    16\!\!\! \sum_{i_1,i_2,i_3,i_4=1}^{j-1}  \sum_{t_1,t_2,t_3,t_4=1}^n 
    \E[\bar Y_{i_1t_1} \bar Y_{i_2t_2}\bar Y_{i_3t_3} \bar Y_{i_4t_4}] \E[\bar Y_{jt_1} \bar Y_{jt_2}\bar Y_{jt_3} \bar Y_{jt_4}] \,.
    \end{aligned}
\end{equation*}
To get nonzero summands we have two possibilities to pair the $i$'s. Either all are equal, $i_1 = i_2 = i_3 = i_4$, or we get two pairs, e.g., $i_1 = i_2$ and $i_3 = i_4$ but $i_1 \neq i_3$. 
\begin{table}[H]
    \caption{Possible terms of $\tbeta_{2k_1,\dots , 2k_r}$ that occur in $\E[M_{j, 1}^4]$. We include the case where all $i$'s equal ($i_1 = i_2 = i_3 = i_4$) and the case where two pairs are formed between the $i$'s (e.g. $i_1 = i_2$ and $i_3 = i_4$). Within the two pairs case, different pairings of the $t$-indices can lead to distinct factorizations. e.g. $t_1 = t_2$, $t_3 = t_4$ and $t_1 = t_3$, $t_2 = t_4$ will yield different results.}
    \label{table_2}
    \centering
    \renewcommand{\arraystretch}{1.1}
    \begin{tabular}{|c||c|c|}
    \hline
    \multicolumn{1}{|c||}{$\E[\bar{Y}_{j t_1} \bar Y_{j t_2} \bar{Y}_{j t_3} \bar{Y}_{j t_4}]$} 
    & 
    \multicolumn{1}{c|}{$\E[\Bar{Y}_{i_1 t_1} \Bar{Y}_{i_1 t_2} \bar{Y}_{i_1 t_3}\Bar{Y}_{i_1 t_4}]$}  
    &
    \multicolumn{1}{c|}{$\E[\Bar{Y}_{i_1 t_1} \Bar{Y}_{i_1 t_2}] \E[\Bar{Y}_{i_2 t_3}\Bar{Y}_{i_2 t_4}]$} \\
    \hline
    \textbf{} & \textbf{Case: All $i$'s equal} & \textbf{Case: Two pairs between $i$'s} \\
    \hline
    $\tbeta_8$ & $\tbeta_8$ & $\tbeta_4^2$  \\
    \hline
    $\tbeta_{6,2}$ & $\tbeta_{6,2}$ & $\tbeta_4 \tbeta_{2,2}$ \\
    \hline
    $\tbeta_{4,4}$ & $\tbeta_{4,4}$ & $\tbeta_{2,2}^2$ or $\tbeta_4^2$ \\
    \hline
    $\tbeta_{4,2,2}$ & $\tbeta_{4,2,2}$ & $\tbeta_4 \tbeta_{2,2}$ or $\tbeta_{2,2}^2$ \\
    \hline
    $\tbeta_{2,2,2,2}$ & $\tbeta_{2,2,2,2}$ & $\tbeta_{2,2}^2$ \\
    \hline
    \end{tabular}
\end{table}
Table~\ref{table_2} shows the possible (nonzero) terms in $\E[M_{j,1}^4]$. Each term is a product of certain $\tbeta_{2k_1,\dots ,2k_r}$  and - to get an upper bound on the order of its contribution to $\sum_{j=1}^p \E[M_{j,1}^4]$ - should be multiplied by a factor $p^{h+1} n^d$, where $d$ denotes the number of distinct $t$'s. The $p^{h+1}$ factor comes from the number $h$ of distinct $i$'s and then summing from $j = 1$ to $p$ in \eqref{eq:sigma^(-4)E[M_j,k^4]}. Thus, using Lemma~\ref{lem:allmoments} combined with Lemma~\ref{lem:tbeta} one can check that the highest order term for $\alpha \in [2, 3]$ is $\tbeta_8^2 np^2$, which (up to a slowly varying function) behaves like $n^{-\alpha + 1} p^2$.  Now using Lemma~\ref{lem:asymp_variance} for $\alpha \in [2, 3]$ we get 
\begin{equation*}
    \frac{\tbeta_8^2 np^2}{\sigma_n^4} \slv n^{\alpha - 1}p^{-2} 
\end{equation*}
which goes to zero if $p = \omega(n^\delta)$ for some $\delta > \delta^*(\alpha)$ with $\delta^*(\alpha)$ defined in \eqref{eq:delta_star}. 
Similarly, one can find for the other case with $i_1 = i_2$ and $i_3 = i_4$, but $i_1\neq i_3$, that the dominating term is $\tbeta_8 \tbeta_{4}^2 n^2 p^3$. Combining Lemmas~\ref{lem:asymp_variance} and~\ref{lem:allmoments} one can find that $\tbeta_8 \tbeta_{4}^2 n^2 p^3 / \sigma_n^4 \to 0$ if $p = \omega(n^\delta)$ for some $\delta > \alpha/2 - 1$ with $\alpha \in [2, 3]$.

Now for $\alpha \in (0, 2)$ the same terms will dominate in the case where all $i$'s are equal and here $\beta_8^2 np^2 \sim n^{-1}p^2 c_4^2(\alpha)$ and $\sigma_n^4 \sim 4p^4 n^{-2} (1 - \alpha / 2)^4$ by Lemmas~\ref{lem:allmoments} and~\ref{lem:asymp_variance}, respectively. Thus it is easy to see that $\beta_8^2 np^2/\sigma_n^4 \to 0$ if $p = \omega(n^\delta)$ for some $\delta > \delta^*(\alpha)$. In the case $i_1 = i_2$ and $i_3 = i_4$, but $i_1\neq i_3$, one of $\tbeta_8 \tbeta_4^2 n p^3$ and $\tbeta_{4,4}\tbeta_4^2 n^2 p^3$ is the dominating term but still $\sigma_n^4$ is of higher order. This completes the proof of \eqref{eq:sigma^(-4)E[M_j,k^4]}.
\medskip

Regarding \eqref{eq:sigma^(-4)E[M_j,k^4]2} we proceed similarly. By direct calculation we have
\begin{equation*}
    \begin{aligned}
    \E[M_{j,2}^4]
    &=
    \E\bigg[\bigg(4\sum_{i=1}^{j-1}\sum_{\substack{t_1,t_2=1 \\ t_1 < t_2}}^n Y_{it_1}Y_{it_2} Y_{jt_1}Y_{jt_2}\bigg)^4\bigg]
    \\ &=
    256\!\!\!\! \sum_{i_1,\ldots,i_4=1}^{j-1}\!\!   \sum_{\substack{t_1,\ldots,t_8=1 \\ t_1 < t_2, \ldots, t_7 < t_8}}^n 
    \!\!\!\!\E[Y_{i_1t_1}Y_{i_1t_2}Y_{i_2t_3}Y_{i_2t_4}
    Y_{i_3t_5}Y_{i_3t_6}Y_{i_4t_7}Y_{i_4t_8}]\\
    &\quad \times
    \E[Y_{jt_1}Y_{jt_2}Y_{jt_3}Y_{jt_4}Y_{jt_5}Y_{jt_6}Y_{jt_7}Y_{jt_8}]
    \end{aligned}
\end{equation*}
and as before in order to get nonzero expectation we want no odd powers; so either all the $i$'s are equal or we establish four pairs between the $i$'s. Table~\ref{table_3} shows the possible terms that occur in $\E[M_{j, 2}^4]$.
\begin{table}[H]
  \caption{Possible terms of $\beta_{2k_1,\dots , 2k_r}$ that occur in $\E[M_{j, 2}^4]$. Here we both include the case where all $i$'s equal ($i_1 = i_2 = i_3 = i_4$) and the case where four pairs are formed between the $i$'s (e.g. $i_1 = i_2$ and $i_3 = i_4$).}
  \label{table_3}
  \centering
  \renewcommand{\arraystretch}{1.1}
  \begin{tabular}{|c||c|c|}
    \hline
    \multicolumn{1}{|c||}{$\E[Y_{jt_1} \cdots Y_{jt_8]}$} 
    & 
    \multicolumn{1}{c|}{$\E[Y_{i_1t_1}Y_{i_1t_2} \cdots Y_{i_1t_7}Y_{i_1t_8}]$}  
    &
    \multicolumn{1}{c|}{$\E[Y_{i_1t_1}Y_{i_1t_2}Y_{i_1t_3}Y_{i_1t_4}]\E[Y_{i_2t_5}Y_{i_2t_6}Y_{i_2t_7}Y_{i_2t_8}]$} \\
    \hline
    \textbf{} & \textbf{Case: All $i$'s equal} & \textbf{Case: Four pairs between $i$'s}  \\
    \hline
    $\beta_{4,4}$ & $\beta_{4,4}$ & $\beta_{2,2}^2$  \\
    \hline
    $\beta_{4,2,2}$ & $\beta_{4,4,2}$ & $\beta_{2,2}^2$  \\
    \hline
    $\beta_{2,2,2,2}$ & $\beta_{2,2,2,2}$ & $\beta_{2,2}^2$  \\
    \hline
  \end{tabular}
\end{table}
The highest order term in the case where all $i$'s are equal is $\beta_{2,2,2,2}^2n^4 p^2$ if $\alpha \in (3, 4)$ and combining Lemmas~\ref{lem:allmoments} and~\ref{lem:asymp_variance} we get
\begin{equation*}
    \frac{\beta_{2,2,2,2}^2n^4 p^2}{\sigma_n^4}
    \sim 
    \frac{n^{-4} p^2 c_{1, 1, 1, 1}^2(\alpha)}{16p^4 n^{-4}}
    \to
    0.
\end{equation*}
In the case $\alpha = 3$ then all the terms will have the same order and the squared variance will instead become $\sigma_n^4 \sim 4p^4n^{-4}(c_2^2(\alpha) L^2(n^{1/2}) + 2)^2$, so that the same conclusion holds. Now in the case of four pairs among the $i$'s and $\alpha \in [3, 4)$, the term $\beta_{2,2,2,2}\beta_{2,2}^2 n^4 p^3 \sim n^{-4}p^3$ dominates. Using Lemma~\ref{lem:asymp_variance} we have that $\sigma_n^4  \sim 16p^4n^{-4}$ for $\alpha \in (3, 4)$ and it is obvious that $\beta_{2,2,2,2} \beta_{2,2}^2 n^4 p^3 / \sigma_n^4 \to 0$ when $\alpha \in [3, 4)$. This finishes the proof of \eqref{eq:sigma^(-4)E[M_j,k^4]2}.
\end{proof}

Now we turn to verify the second condition of \eqref{eq:left}.
\begin{lemma} \label{lem:M_jk^2_cip_1}
If $\alpha \in (0, 3)$, it holds
\begin{equation} \label{eq:M_j1^2_cip_1}
    \frac{1}{\sigma_n^2}\sum_{j=1}^p M_{j,1}^2 \cip 1\,, \qquad \nto\, , 
\end{equation}
where for $\alpha \in [2, 3)$ we additionally assume $p = \omega(n^\delta)$ for some $\delta > \alpha/2 - 1$. If $\alpha \in (3, 4)$, then it holds
\begin{equation} \label{eq:M_j2^2_cip_1} 
    \frac{1}{\sigma_n^2}\sum_{j=1}^p M_{j,2}^2 \cip 1\,, \qquad \nto .   
\end{equation}
Finally, for $\alpha = 3$ it holds
\begin{equation} \label{eq:M_j1M_j2^2_cip_1}
    \frac{1}{\sigma_n^2}\sum_{j=1}^p M_{j,3}^2 = 
    \frac{1}{\E[T_1^2]+\E[T_2^2]}\sum_{j=1}^p \big( M_{j,1}^2+M_{j,2}^2 + M_{j,1}M_{j,2} \big) \cip 1\,.    
\end{equation}
\end{lemma}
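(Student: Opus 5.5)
The plan is an $L^2$ argument. For each case I would show that $V_i:=\sigma_n^{-2}\sum_{j=1}^p M_{j,i}^2$ satisfies $\E[V_i]\to1$ and $\Var(V_i)\to0$; Chebyshev's inequality then gives convergence in probability.

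\textbf{Mean.} By orthogonality of martingale differences, $\sum_j \E[M_{j,i}^2]=\E[T_i^2]$. Lemma~\ref{lem:variance} and Lemma~\ref{lem:asymp_variance}, together with \eqref{eq:dsf245d}, then give $\E[V_1]\to1$ for $\alpha<3$ and $\E[V_2]\to1$ for $\alpha\in(3,4)$. For $\alpha=3$ the cross term satisfies $\E[M_{j,1}M_{j,2}]=0$, because every summand contains an odd power of some $Y_{jt}$. Hence $\E[\sum_j M_{j,3}^2]=\E[T_1^2]+\E[T_2^2]\sim\sigma_n^2$. It therefore suffices to prove $\sigma_n^{-4}\Var(\sum_j M_{j,i}^2)\to0$.

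\textbf{Variance.} Write $\bar Y_{it}=Y_{it}^2-n^{-1}$ and
\[
M_{j,1}^2=4\sum_{t,s}\bar Y_{jt}\bar Y_{js}A_{ts}^{(j)},\qquad A_{ts}^{(j)}:=\sum_{i,i'<j}\bar Y_{it}\bar Y_{i's}.
\]
I would split this into the diagonal part $i=i'$ and the off-diagonal part $i\ne i'$. The off-diagonal part has mean zero and contributes only through its second moment. For the diagonal part I would replace the row-$j$ factor by its conditional expectation $\E[\bar Y_{jt}\bar Y_{js}]$, which equals $\tbeta_4$ if $t=s$ and $\tbeta_{2,2}$ if $t\ne s$. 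Expanding the covariance $\Var(\sum_j M_{j,1}^2)=\sum_{j,k}\mathrm{Cov}(M_{j,1}^2,M_{k,1}^2)$ produces sums of products of $\tbeta_{2k_1,\ldots,2k_r}$ whose index patterns are exactly those of the fourth-moment computation, i.e.\ Table~\ref{table_2} extended by the extra row index $k$.

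For each admissible pattern I would count the factor $p^{h}n^{d}$ (with $h$ the number of distinct row indices and $d$ the number of distinct column indices) and bound the $\tbeta$'s via Lemma~\ref{lem:allmoments} and Lemma~\ref{lem:tbeta}. Using $\tbeta_{2,2}\sim -n^{-1}\beta_4$ from \eqref{eq:tbeta_2_2_rewritten}, together with the fact that mixed $\tbeta$'s are $o$ of the unmixed one, I would show that every pattern is $o(\sigma_n^4)$. The same patterns already appear in $\E[T_1^4]$ (Lemma~\ref{lem:fourth_mom_asymp}), up to the fully paired term $12\beta_4^4n^2p^4\sim3\sigma_n^4$. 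That term corresponds to $(\E\sum_j M_{j,1}^2)^2$ and cancels in the variance. The expected dominant survivors are:
\begin{itemize}
\item $\tbeta_8^2np^2$ type terms, of order $n^{\max(\alpha,2)-1}p^{-2}\sigma_n^4$ up to slowly varying factors. Since these are in fact absent from the variance of $\sum_j M_{j,1}^2$ (they arise only from the diagonal $j=k$ fourth moment, already treated in Lemma~\ref{lem:martingale_E[M_j,k^4]_to_0}), I expect them to be harmless.
\item Terms like $\tbeta_8\tbeta_4^2n^2p^3$, whose ratio to $\sigma_n^4$ is of order $n^{\alpha/2-1}/p$ up to slowly varying factors. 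This is exactly where the hypothesis $p=\omega(n^\delta)$ with $\delta>\alpha/2-1$ enters for $\alpha\in[2,3)$. For $\alpha<2$ the analogous ratio is $O(1/p)\to0$.
\end{itemize}

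\textbf{Case $\alpha\in(3,4)$.} Here $M_{j,2}$ involves only products $Y_{it_1}Y_{it_2}$ with $t_1\ne t_2$. The same scheme with $\beta_{2,2}\sim n^{-2}$, $\beta_{2,2,2,2}\sim n^{-4}$ and $\sigma_n^2\sim4p^2n^{-2}$ gives variance terms of order $p^3n^{-4}$, which are $o(p^4n^{-4})$, with no restriction on $p$.

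\textbf{Case $\alpha=3$.} I would combine the two previous analyses and additionally show that $\sigma_n^{-2}\sum_j M_{j,1}M_{j,2}\to0$ in $L^2$. Its mean is zero, and its second moment is bounded by Cauchy--Schwarz via the fourth-moment bounds of Lemma~\ref{lem:martingale_E[M_j,k^4]_to_0}. I would refine this using parity: terms with odd powers vanish, leaving only patterns carrying an extra factor $1/p$.

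\textbf{Main obstacle.} The hard step is the combinatorial bookkeeping in $\Var(\sum_j M_{j,1}^2)$ for $\alpha\in[2,3)$. One must verify that no pattern with two distinct row indices $j\ne k$ and shared earlier rows produces a term of the same order as $\sigma_n^4$ beyond those canceled by the squared mean. I would first organize the patterns by their partition of the row indices $\{i,i',j,\ell,\ell',k\}$, then use $\tbeta_{2}=0$ to kill every pattern containing a singleton row. This should reduce the work to finitely many graphs, each checked against Lemma~\ref{lem:allmoments}.
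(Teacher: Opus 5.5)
Your plan is essentially the paper's proof. The paper likewise shows $\sigma_n^{-4}\E[(\sum_j M_{j,k}^2)^2]\to 1$, which is equivalent to your $\Var(V_i)\to 0$ once $\E[V_i]\to 1$. Like you, it removes the diagonal $j_1=j_2$ with Lemma~\ref{lem:martingale_E[M_j,k^4]_to_0} and shows that among $j_1\neq j_2$ only the pairing $i_1=i_2\neq i_3=i_4$ with $t_1=t_2$, $t_3=t_4$ survives at level $p^4$, where it reproduces $\E[T_1^2]^2$. It then controls the $p^3$ patterns such as $\tbeta_8\tbeta_4^2np^3$ and $\tbeta_6^2\tbeta_4np^3$ via $\delta>\alpha/2-1$. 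For $\alpha=3$ it kills the $j\neq k$ part of the cross term by parity and bounds the rest by $\sum_j(\E[M_{j,1}^4]+\E[M_{j,2}^4])$. A small precision: the $p^4$ level of $\E[(\sum_j M_{j,1}^2)^2]$ is $\sim\sigma_n^4$, not the $3\sigma_n^4$ of $\E[T_1^4]$. The cross pairings $i_1=i_3$, $i_2=i_4$ force all four column indices to coincide here and lose a factor $n$.

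One correction is needed. The $\tbeta_8^2np^2$ terms are not absent from $\Var(\sum_j M_{j,1}^2)$. They are exactly the $j=k$ summands $\sum_j\E[M_{j,1}^4]$ of your covariance expansion. Lemma~\ref{lem:martingale_E[M_j,k^4]_to_0} makes them $o(\sigma_n^4)$ only if $p=\omega(n^\delta)$ with $\delta>\delta^*(\alpha)$. That is strictly stronger than $\delta>\alpha/2-1$ for $\alpha\in[2,3)$, and no growth condition is assumed at all for $\alpha\in(0,2)$.

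Your own order count shows the $L^2$ route cannot avoid this. For $p\asymp n^{\delta^*(\alpha)}$, e.g.\ on the boundary \eqref{eq:pboundary}, the ratio $\sigma_n^{-4}\sum_j\E[M_{j,1}^4]$ stays bounded away from $0$ while all other contributions vanish. Hence $\Var(V_1)$ has a positive limit there.

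The paper's proof invokes Lemma~\ref{lem:martingale_E[M_j,k^4]_to_0} in exactly the same way. Both arguments therefore establish the lemma under the hypothesis $\delta>\delta^*(\alpha)$ of Theorem~\ref{thm:clt2}, which is the setting where it is used. You should state that hypothesis explicitly rather than attribute the needed growth condition solely to the $p^3$ patterns.
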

\begin{proof}
We start by first proving \eqref{eq:M_j1^2_cip_1} and \eqref{eq:M_j2^2_cip_1} for $\alpha \in (0, 3)$ and $\alpha \in (3, 4)$ respectively. Since $\sigma_n^{-2}\sum_{j=1}^p \E[M_{j,k}^2] \to 1$ for $k \in \{1,2\}$, it is sufficient to show 
\begin{equation*}
    \frac{1}{\sigma_n^4} \E\bigg[\bigg(\sum_{j = 1}^p M_{j,k}^2\bigg)^2 \bigg] \to 1, \qquad \nto\,.
\end{equation*}
Using Lemma~\ref{lem:martingale_E[M_j,k^4]_to_0} and Lemma~\ref{lem:martingale_differences}, we have
\begin{equation} \label{eq:E[(sum M_j,1^2)^2]}
    \begin{split}
    &\E\bigg[\bigg(\sum_{j = 1}^p M_{j,1}^2\bigg)^2 \bigg]
     = o(\sigma_n^4)+
  \sum_{\substack{j_1,j_2 = 1 \\ j_1 \neq j_2}}^p \E\Big[M_{j_1,1}^2 M_{j_2,1}^2 \Big]
     = o(\sigma_n^4) +\\
  &+  16\sum_{\substack{j_1,j_2 = 1 \\ j_1 \neq j_2}}^p \sum_{i_1,i_2 = 1}^{j_1 - 1} \sum_{i_3,i_4 = 1}^{j_2 - 1} \sum_{t_1,t_2 = 1}^n \sum_{t_3,t_4 = 1}^n
    \E[\bar Y_{i_1t_1} \bar Y_{i_2t_2}\bar Y_{i_3t_3}\bar Y_{i_4t_4}\bar Y_{j_1t_1}\bar Y_{j_1t_2}\bar Y_{j_2t_3}\bar Y_{j_2t_4}].
\end{split}
\end{equation}
For the above expectation to be nonzero three cases are possible (up to a permutation of the $i$ indices). Either all the $i$'s are the same (which implies that the $j$ indices take distinct values than the $i$'s), or exactly 2 pairs are formed, e.g., $i_1 = i_2\neq i_3 = i_4$ which amounts to two cases depending on if some $i$'s may coincide with a $j$ or not. The latter two cases we will call the second and third case respectively. 

In the first case where all $i$'s are equal, the possible terms are summarised in Table~\ref{table_4} using Lemma~\ref{lem:allmoments} and Lemma~\ref{lem:tbeta} when $\alpha \in [2, 3)$. Each term should also be multiplied by a factor $n^d$ where $d$ stands for the number of distinct $t$ indices in $\E[\bar Y_{i_1 t_1} \bar Y_{i_1 t_2} \bar Y_{i_1 t_3} \bar Y_{i_1 t_4}]$ in Table~\ref{table_4}. Note also that in the first case the order of \eqref{eq:E[(sum M_j,1^2)^2]} with respect to $p$ is $p^3$.
\begin{table}[H]
  \caption{Terms of $\tbeta_{2k_1,\dots , 2k_r}$ that occur in \eqref{eq:E[(sum M_j,1^2)^2]} when $i_1=i_2=i_3=i_4$ and their order with respect to $n$.}
  \label{table_4}
  \centering
  \renewcommand{\arraystretch}{1.1}
  \begin{tabular}{|c||c|c|}
    \hline
    \multicolumn{1}{|c||}{$\E[\bar Y_{i_1 t_1} \bar Y_{i_1 t_2} \bar Y_{i_1 t_3} \bar Y_{i_1 t_4}]\E[\bar Y_{j_1 t_1} \bar Y_{j_1 t_2}]\E[\bar Y_{j_2 t_3} \bar Y_{j_2 t_4}]$}  
    &
    \multicolumn{2}{c|}{\textbf{Order}} \\
    \hline
    \textbf{Term} & $\alpha \in (0, 2)$ & $\alpha \in [2, 3)$  \\
    \hline
    $\tbeta_8 \tbeta_4^2 n$ & $n^{-2}$ & $n^{-3\alpha/2 + 1}$ \\
    \hline
    $\tbeta_{4,4}\tbeta_4^2 n^2$ & $n^{-2}$ & $n^{-2\alpha + 2}$  \\
    \hline
    $\tbeta_{6,2}\tbeta_4 \tbeta_{2,2} n^2$ & $O(n^{-3})$ & $O(n^{-\alpha-1})$  \\
    \hline
    $\tbeta_{4,2,2}\tbeta_4 \tbeta_{2,2} n^3$ & $O(n^{-3})$ & $O(n^{-\alpha-1})$ \\
    \hline
    $\tbeta_{4,4}\tbeta_{2,2}^2 n^2$ & $O(n^{-4})$ & $O(n^{-\alpha-2})$ \\
    \hline
    $\tbeta_{4,2,2}\tbeta_{2,2}^2 n^3$ & $O(n^{-4})$ & $O(n^{-\alpha/2 - 3})$ \\
    \hline
    $\tbeta_{2,2,2,2}\tbeta_{2,2}^2 n^4$ & $O(n^{-4})$ & $O(n^{-4})$ \\
    \hline
  \end{tabular}
\end{table}
The highest order term when $\alpha \in [2, 3)$ is $\tbeta_8\tbeta_4^2 n p^3 \sim n^{-3\alpha/2 + 1} p^3 c_4(\alpha) c_2^2(\alpha) L^3(n^{1/2})$ and also $\tbeta_{4,4}\tbeta_4^2 n^2$ which obtains the same order when $\alpha = 2$. By Lemma~\ref{lem:asymp_variance}, $\sigma_n^4 \sim 4p^4n^{2-2\alpha} L^4(n^{1/2}) c_2^4(\alpha)$ which yields $\tbeta_8\tbeta_4^2 n p^3 / \sigma_n^4(\alpha) \to 0$ assuming $p = \omega(n^\delta)$ with $\delta > \alpha/2 - 1$. If $\alpha \in (0, 2)$, then Lemma~\ref{lem:asymp_variance} yields $\sigma_n^4 \sim 4p^4 n^{-2} (1 - \alpha / 2)^4$ which has greater order than all the terms in Table~\ref{table_4}. Thus, we have shown that $\sigma_n^{-4}$ times the contribution of the first case to \eqref{eq:E[(sum M_j,1^2)^2]} tends to zero.

Next, we turn to the second case. In this case, since we assume that the sets of $j$'s and $i$'s are disjoint, we can have either $i_1 = i_2 \neq i_3 = i_4$, or $i_1 = i_3 \neq i_2 = i_4$ that yield different terms (the case $i_1=i_4$ and $i_2 = i_3$ yields the same terms as in $i_1=i_3$ and $i_2 = i_4$ because of symmetry of the $t$'s). This means that the terms in the sum of \eqref{eq:E[(sum M_j,1^2)^2]} can take two forms: 
\begin{equation*}
    \begin{aligned}
    &\E[\bar Y_{i_1 t_1} \bar Y_{i_1 t_2}] \E[\bar Y_{i_3 t_3} \bar Y_{i_3 t_4}]
    \E[\bar Y_{j_1 t_1} \bar Y_{j_1 t_2}] \E[\bar Y_{j_2 t_3} \bar Y_{j_2 t_4}],
    \\
    &\E[\bar Y_{i_1 t_1} \bar Y_{i_1 t_3}] \E[\bar Y_{i_2 t_2} \bar Y_{i_2 t_4}]
    \E[\bar Y_{j_1 t_1} \bar Y_{j_1 t_2}] \E[\bar Y_{j_2 t_3} \bar Y_{j_2 t_4}].
    \end{aligned}
\end{equation*}
The general form of either of these products of expectations is $\tbeta_4^{k_1} \tbeta_{2,2}^{k_2}$ with $k_1 + k_2 = 4$. Since now we allow two pairs of the $i$'s we get that in this case the contribution of such terms to \eqref{eq:E[(sum M_j,1^2)^2]} will be $\tbeta_4^{k_1} \tbeta_{2,2}^{k_2} n^{k_3}p^4$ where $k_3$ stands for how many of the $t$'s that may vary freely, e.g. if $t_1 = t_2 = t_3 = t_4$ then $k_3 = 1$. Using \eqref{eq:tbeta_2_2_rewritten} we find that $\tbeta_{2,2} \sim -n^{-\alpha/2 - 1}L(n^{1/2})c_2(\alpha)$ when $\alpha \in [2, 3)$ which in conjunction with Lemma~\ref{lem:asymp_variance} yields 
\begin{equation*}
    \begin{aligned}
    \frac{\tbeta_4^{k_1} \tbeta_{2,2}^{k_2} n^{k_3}p^4}{\sigma_n^4}
    &\sim
    \frac{(-1)^{k_2} n^{-k_1 \alpha/2} n^{-k_2(\alpha/2 + 1)} n^{k_3}
    p^4 c_2(\alpha)^{k_1 + k_2} L^{k_1 + k_2}(n^{1/2})}
    {4p^4n^{2-2\alpha} L^4(n^{1/2}) c_2^4(\alpha)}\\
    &=
    \frac{(-1)^{k_2} n^{-k_2 - 2 + k_3}}{4}
    \end{aligned}
\end{equation*}
where the last equality follows by $k_1 + k_2 = 4$. If $k_3 = 4$ this means that all the $t$'s are pairwise different and hence $k_2 = 4$. If $k_3 = 3$ then two $t$'s are equal but then $k_2 \geq 2$ and if $k_3 = 1$ then it is obvious that the exponent of $n$ above is negative. Only when $k_3 = 2$ and $k_1 = 4$ with $i_1 = i_2$ and $i_3 = i_4$ is when we may get nonzero results. This results in the term $\tbeta_4^4$. In view of \eqref{eq:E[(sum M_j,1^2)^2]} the asymptotic behavior of $p$ in the second case is given by 
\begin{equation}\label{eq:Mj,1_order_p}
    \sum_{j_1,j_2=1 }^p (j_1 - 1)(j_2 - 1) - \sum_{j = 1}^p (j - 1)^2  
    =
    \frac{1}{4} (p - 1)^2 p^2 - \frac{1}{6} p (2 p^2 - 3p + 1)
    \sim
    \frac{1}{4} p^4
\end{equation}
and now we get using \eqref{eq:E[(sum M_j,1^2)^2]} and Lemma~\ref{lem:tbeta} that
\begin{equation}\label{eq:tbeta_4^4/var^2_to_1}
    16\frac{\frac{1}{4} \tbeta_4^4 n^2 p^4}{\sigma_n^4(\alpha)}
    \sim
    \frac{n^{-2\alpha + 2} p^4 L^4(n^{1/2}) c_2^4(\alpha)}{n^{-2\alpha + 2} p^4 L^4(n^{1/2}) c_2^4(\alpha)} =   1, \qquad \nto.
\end{equation}
When $\alpha \in (0, 2)$ then $\tbeta_4^{k_1} \tbeta_{2,2}^{k_2} n^{k_3}p^4/\sigma_n^4(\alpha) \lesssim n^{-2 - k_2 +k_3}$ which is the same expression analyzed before. The dominating term is again $\tbeta_4^4$ and similarly we get $\tbeta_4^4 n^{2}p^4/\sigma_n^4(\alpha) \to 1$ if $\alpha \in (0, 2)$ by Lemmas~\ref{lem:asymp_variance} and~\ref{lem:allmoments}. This concludes our analysis of the second case.

Now we turn to the third case, where we form two pairs among the $i$'s and have some $i$'s coincide with $j_1$ or $j_2$. If $i_1 = i_2$ and $i_3 = i_4$, then we must have either $j_1 = i_3$ or $j_2 = i_1$. If $j_2 = i_1$, then we get in the sum of \eqref{eq:E[(sum M_j,1^2)^2]}
\begin{equation*}
    E[\bar Y_{i_1 t_1} \bar Y_{i_2 t_2} \bar Y_{j_2 t_3} \bar Y_{j_2 t_4}]
    E[\bar Y_{i_3 t_3} \bar Y_{i_3 t_4}] E[\bar Y_{j_1 t_1} \bar Y_{j_1 t_2}] 
\end{equation*}
which essentially yields the same cases as summarized in Table~\ref{table_4} but with the same or less order in $p$. Now if $i_1 \neq i_2$ then we get nonzero only if $i_2 = i_3 = i_4$ and the cases can be summarized in Table~\ref{table_5} by use of Lemma~\ref{lem:allmoments} and Lemma~\ref{lem:tbeta}.
\begin{table}[H]
  \caption{Terms of $\tbeta_{2k_1,\dots , 2k_r}$ that occur in \eqref{eq:E[(sum M_j,1^2)^2]} when $i_1 = j_2 \neq i_2$ but $i_2=i_3=i_4$ and their order with respect to $n$.}
  \label{table_5}
  \centering
  \renewcommand{\arraystretch}{1.1}
  \begin{tabular}{|c||c|c|}
    \hline
    \multicolumn{1}{|c||}{$\E[\bar Y_{i_1 t_1} \bar Y_{i_1 t_3} \bar Y_{i_1 t_4}]\E[\bar Y_{i_2 t_2} \bar Y_{i_2 t_3} \bar Y_{i_2 t_4}] \E[\bar Y_{j_1 t_1} \bar Y_{j_1 t_2}]$}  
    &
    \multicolumn{2}{c|}{Order} \\
    \hline
    \textbf{Term} & $\alpha \in (0,2)$ & $\alpha \in [2, 3)$ \\
    \hline
    $\tbeta_6^2\tbeta_4 n$ & $n^{-2}$ & $n^{-3\alpha/2 + 1}$ \\
    \hline
    $\tbeta_6 \tbeta_{4,2} \tbeta_{2,2} n^2$ & $O(n^{-3})$ & $O(n^{-\alpha-1})$ \\
    \hline
    $\tbeta_{4,2}^2 \tbeta_{2,2} n^3$ & $O(n^{-3})$ & $O(n^{-\alpha-1})$ \\
    \hline
    $\tbeta_{4,2}^2 \tbeta_4 n^2$ & $O(n^{-3})$ & $O(n^{-3\alpha/2})$ \\
    \hline
    $\tbeta_{2,2,2}^2 \tbeta_4 n^3$ & $O(n^{-4})$ & $O(n^{-\alpha/2 - 3})$ \\
    \hline
    $\tbeta_{4,2} \tbeta_{2,2,2} \tbeta_{2,2} n^3$ & $O(n^{-4}$ & $O(n^{-\alpha/2 - 3})$ \\
    \hline
    $\tbeta_{2,2,2}^2 \tbeta_{2,2} n^4$ & $O(n^{-4}$ & $O(n^{-4})$ \\
    \hline
  \end{tabular}
\end{table}
Since now $j_2 = i_1 \neq i_2$ and $i_2 = i_3 = i_4$ the order of \eqref{eq:E[(sum M_j,1^2)^2]} in $p$ is $p^3$. Hence it is clear that when $\alpha \in (0, 2)$ every term in Table~\ref{table_5} has lower order than $\sigma_n^4 \sim 4p^4n^{-2} (1 - \alpha/2)^4$. When $\alpha \in [2, 3)$ then $\tbeta_6^2\tbeta_4n p^3$ is of highest order but $\tbeta_6^2\tbeta_4n p^3 / \sigma_n^4(\alpha) \to 0$ if $p = \omega(n^\delta)$ for some $\delta > \alpha/2 - 1$. This concludes the third and last case and hence we have completed the proof of \eqref{eq:M_j1^2_cip_1}. 

Finally, we start proving \eqref{eq:M_j2^2_cip_1} and we obtain using Lemma~\ref{lem:martingale_differences} and Lemma~\ref{lem:martingale_E[M_j,k^4]_to_0} that
\begin{equation}\label{eq:E[(sum M_j,2^2)^2]}
   \begin{aligned} 
     &\E\bigg[\bigg(\sum_{j = 1}^p M_{j,2}^2\bigg)^2 \bigg] 
     = o(\sigma_n^4) \\ 
     &+ 16\sum_{\substack{j_1,j_2 = 1 \\ j_1 \neq j_2}}^p \sum_{i_1,i_2=1}^{j_1-1} \sum_{i_3,i_4=1}^{j_2-1} \sum_{\substack{t_1,\ldots, t_8=1 \\ t_1 < t_2, \ldots, t_7 < t_8 }}^n
     \mathbb{E}[Y_{i_1t_1} Y_{i_1t_2} Y_{i_2t_3} Y_{i_2t_4} \cdots Y_{j_2t_5} Y_{j_2t_6} Y_{j_2t_7} Y_{j_2t_8}]\,.
    \end{aligned} 
\end{equation}

We have nonzero expectation in \eqref{eq:E[(sum M_j,2^2)^2]} if either all $i$'s  are equal (which implies that the $j$'s are distinct from the $i$'s) or if we form two pairs among the $i$'s (but here some $i$'s may coincide with $j$'s). Note that when the sets of $i$'s and $j$'s are disjoint, then we must have that $t_1=t_3$, $t_2 = t_4$, $t_5 = t_7$ and $t_6 = t_8$ to get nonzero. Starting with $i_1=i_2=i_3=i_4$ the terms that occur in \eqref{eq:E[(sum M_j,2^2)^2]} can be summarized in Table~\ref{table_6}.
\begin{table}[H]
  \caption{Terms of $\beta_{2k_1,\dots , 2k_r}$ that occur in \eqref{eq:E[(sum M_j,2^2)^2]} when $i_1 = i_2 = i_3 = i_4$ and their order with respect to $n$.}
  \label{table_6}
  \centering
  \renewcommand{\arraystretch}{1.1}
  \begin{tabular}{|c||c|c|}
    \hline
    \multicolumn{1}{|c||}{$\E[Y_{i_1t_1}^2 Y_{i_1t_2}^2 Y_{i_2t_3}^2 Y_{i_2t_4}^2] \E[Y_{j_1t_1}^2 Y_{j_1t_2}^2] \E[Y_{j_2t_5}^2 Y_{j_2t_6}^2]$}  
    &
    \multicolumn{1}{c|}{Order} \\
    \hline
    \textbf{Term} & $\alpha \in (3,4)$  \\
    \hline
    $\beta_{4,4} \beta_{2,2}^2 n^2$ & $n^{-\alpha - 2}$ \\
    \hline
    $\beta_{4,2,2} \beta_{2,2}^2 n^3$ & $n^{-\alpha/2 - 3}$ \\
    \hline
    $\beta_{2,2,2,2} \beta_{2,2}^2 n^4$ & $n^{-4}$ \\
    \hline
  \end{tabular}
\end{table}
Since the order in $p$ is $p^3$ for \eqref{eq:E[(sum M_j,2^2)^2]} we easily see that the terms in Table~\ref{table_6} multiplied by $p^3\sigma_n^{-4}$ go to zero in view of Lemmas~\ref{lem:allmoments} and~\ref{lem:asymp_variance} for $\alpha \in (3, 4)$. Investigating the cases where two pairs are formed, we have three cases with respect to the $i$'s. If either $i_1 = i_3$ or $i_1 = i_4$ then we would need to impose the extra conditions ($t_1 = t_5$ and $t_2 = t_6$) or ($t_1 = t_5$ and $t_2 = t_6$) respectively. This leads to a lower order in $n$ than in the pairing $i_1 = i_2$ and $i_3 = i_4$ and in this case we only get the term $\beta_{2,2}^4 n^4$. The order in $p$ is the same as \eqref{eq:Mj,1_order_p} and we get for $\alpha \in (3, 4)$ that
\begin{equation*}
    16\frac{\frac{1}{4} \beta_{2,2}^4 n^4 p^4}{\sigma_n^4}
    \sim
    \frac{4n^{-4} p^4}{4n^{-4} p^4}=    1, \qquad\ n\to \infty.
\end{equation*}
It is left to consider the overlap between some $i$'s and $j$'s in the case $i_1 = i_2$ and $i_3 = i_4$. If $i_1 = i_2 = j_2$ then we have that the expectation in \eqref{eq:E[(sum M_j,2^2)^2]} is
\begin{equation*}
    \begin{aligned}
    &\E[Y_{i_1t_1} Y_{i_1t_2} Y_{i_1t_3} Y_{i_1t_4} Y_{i_1t_5} Y_{i_1t_6} Y_{i_1t_7} Y_{i_1t_8}]
    \E[Y_{i_3t_5} Y_{i_3t_6} Y_{i_3t_7} Y_{i_3t_8}]
    \E[Y_{j_1t_1} Y_{j_1t_2} Y_{j_1t_3} Y_{j_1t_4}]\\  
    &=\E[Y_{i_1t_1}^2 Y_{i_1t_2}^2 Y_{i_1t_5}^2 Y_{i_1t_6}^2]
    \E[Y_{i_3t_5}^2 Y_{i_3t_6}^2]
    \E[Y_{j_1t_1}^2 Y_{j_1t_2}^2]
    =
    \E[Y_{i_1t_1}^2 Y_{i_1t_2}^2 Y_{i_1t_5}^2 Y_{i_1t_6}^2] \beta_{2,2}^2.
    \end{aligned}
\end{equation*}
The second equality is easiest to see by looking at the last two factors and consider the possible pairing of the $t$'s that yield nonzero results. Then we get the pairing in the first factor for free as a consequence. These are the same terms as in Table~\ref{table_6} so we get that this case goes to zero as well. Lastly, we look at $i_1 = j_2$ but $i_1 \neq i_2$. This case yields nonzero if $i_2 = i_3$ which then yields in \eqref{eq:E[(sum M_j,2^2)^2]} 
\begin{equation*}
    \E[Y_{i_1t_1} Y_{i_1t_2} Y_{i_1t_5} Y_{i_1t_6} Y_{i_1t_7} Y_{i_1t_8}]
    \E[Y_{i_2t_3} Y_{i_2t_4} Y_{i_2t_5} Y_{i_2t_6} Y_{i_2t_7} Y_{i_2t_8}]
    \E[Y_{j_1t_1} Y_{j_1t_2} Y_{j_1t_3} Y_{j_1t_4}].    
\end{equation*}
Note that we must have in the last factor above $t_1 = t_3$ and $t_2 = t_4$. For the first two factors, they can only yield the possible term $\beta_{2,2,2}$ to be nonzero each. But for this to happen we need to set $t_1$ and $t_2$ to be equal to some other $t$'s than only $t_3$ and $t_4$ respectively. This implies that an upper bound for the order would be $\beta_{2,2,2}^2 n^3 p^4 \sim n^{-5}p^4$ by Lemma~\ref{lem:allmoments}. Now Lemma~\ref{lem:asymp_variance} yields that $n^{-5}p^4 \sigma_n^{-4} \to 0$. By symmetry of indices we get the same results for $i_3 = j_1$ and we have thus shown \eqref{eq:M_j2^2_cip_1} for $\alpha \in (3, 4)$.

At this point only the case $\alpha = 3$ is left. A careful inspection of the above arguments in the case $\alpha\in (0,3)\cup(3,4)$ shows that \eqref{eq:M_j1^2_cip_1} respectively \eqref{eq:M_j2^2_cip_1} still hold for $\alpha = 3$ if $\sigma_n^2$ is replaced by $\E[T_1^2]$ and $\E[T_2^2]$, respectively.  That is, if $\alpha=3$ and $p = \omega(n^\delta)$ for some $\delta > \alpha/2 - 1$, it holds 
\begin{equation}\label{eq:trick}
    \begin{aligned}
    \frac{1}{\E[T_1^2]}\sum_{j=1}^p M_{j,1}^2 &\cip 1,\\
    \frac{1}{\E[T_2^2]}\sum_{j=1}^p M_{j,2}^2 &\cip 1.
    \end{aligned}
\end{equation}
Now we turn to the proof of \eqref{eq:M_j1M_j2^2_cip_1}. By virtue of \eqref{eq:trick}, we get
\begin{equation*}
    \begin{aligned}
    &\frac{1}{\E[T_1^2]+\E[T_2^2]}
    \sum_{j=1}^p \big( M_{j,1}^2+M_{j,2}^2 \big)\\
    &\quad =
    \frac{\E[T_1^2]}{\E[T_1^2]+\E[T_2^2]}
    \frac{1}{\E[T_1^2]} \sum_{j=1}^p  M_{j,1}^2
    +\frac{\E[T_2^2]}{\E[T_1^2]+\E[T_2^2]}
    \frac{1}{\E[T_2^2]} \sum_{j=1}^p  M_{j,2}^2\\
    &\quad =
    \frac{\E[T_1^2]}{\E[T_1^2]+\E[T_2^2]} (1+o_{\P}(1))
    +\frac{\E[T_2^2]}{\E[T_1^2]+\E[T_2^2]} (1+o_{\P}(1))\\
   &\quad = 1+o_{\P}(1)\,,
    \end{aligned}
\end{equation*}
where $o_{\P}(1)$ is a generic notation for a term that tends to zero in probability. In order to establish \eqref{eq:M_j1M_j2^2_cip_1}, it remains to show that
\begin{equation}\label{eq:ssdgg}  
    \frac{1}{\sigma_n^2}\sum_{j=1}^p M_{j,1}M_{j,2} \cip 0\,.
\end{equation}
To this end, an application of Markov's inequality yields for $\vep>0$,
\begin{equation*}
    \P\Big(\sigma_n^{-2} \sum_{j=1}^p M_{j,1} M_{j,2} > \varepsilon\Big)
    \lesssim
    \sigma_n^{-4} \sum_{j,k = 1}^p \E[M_{k,1}M_{k,2}M_{j,1}M_{j,2}].
\end{equation*}
By Lemma~\ref{lem:martingale_differences} and letting $k < j$ we have that $\E[M_{k,1}M_{k,2}M_{j,1}M_{j,2}]$ is equal to
\begin{equation*}
    \begin{aligned}
    &64 \sum_{i_1,i_2 = 1}^{k-1}\sum_{i_3,i_4 = 1}^{j-1}
    \sum_{t_1,t_4 = 1}^n \sum_{1\le t_2 < t_3\le n}
     \sum_{1\le t_5 < t_6\le n}\\
    &\quad \times
    \E[\bar Y_{i_1t_1}\bar Y_{kt_1} Y_{i_2t_2}Y_{i_2t_3}Y_{k t_2}Y_{k t_3}
    \bar Y_{i_3t_4}\bar Y_{jt_4} Y_{i_4t_5}Y_{i_4t_6}Y_{j t_5}Y_{j t_6}].
    \end{aligned}
\end{equation*}
Since $k < j$ the inner expectation can be broken into two factors whereof one is $\E[\bar Y_{jt_4}Y_{j t_5}Y_{j t_6}]$. The aforementioned factor is nonzero only if $t_5 = t_6$ which cannot happen. We conclude that
\begin{equation*}
    \begin{aligned}
    \sigma_n^{-4} \sum_{j,k = 1}^p \E[M_{k,1}M_{k,2}M_{j,1}M_{j,2}]
    &= \sigma_n^{-4} \sum_{j = 1}^p \E[M_{j,1}^2 M_{j,2}^2]\\
    &\leq \sigma_n^{-4} \sum_{j = 1}^p \bigg(\E[M_{j,1}^4] +  \E[M_{j,2}^4] \bigg)\\
    &\to 0,
    \end{aligned}
\end{equation*}
as $\nto$, by Lemma~\ref{lem:martingale_E[M_j,k^4]_to_0} if $p = \omega(n^\delta)$ for some $\delta > \delta^*(3)=1$. This establishes \eqref{eq:ssdgg}   and completes the proof of the lemma.
\end{proof}

\subsection{Proof of Theorem~\ref{thm:leading-piecewise}} 
To obtain all limiting moments of $T_1$ at the boundary \eqref{eq:pboundary}, we first express them through a graph representation.
Fix an integer $s\ge1$. Expanding $\E[T_1^s]$ yields
\begin{equation}\label{eq:T1s-expand}
\E[T_1^s] = \sum_{\substack{i_1\ne j_1,\ldots,i_s\ne j_s\\ i_\ell,j_\ell\in[p]}}\ \sum_{t_1,\ldots,t_s\in[n]} \E\!\Bigg[\prod_{\ell=1}^s \bar Y_{i_\ell t_\ell}\,\bar Y_{j_\ell t_\ell}\Bigg],
\end{equation}
where we used the notation $[p]=\{1,\ldots,p\}$ and $[n]=\{1,\ldots,n\}$. A \emph{configuration} $\omega$ is a triple sequence 
\begin{equation*}
\omega=\big((i_1,j_1,t_1),\ldots,(i_s,j_s,t_s)\big)
\end{equation*} 
with $i_\ell\ne j_\ell$. We associate each $\omega$ to a colored undirected multigraph $G(\omega)=(V(\omega),E(\omega),c_\omega)$ with vertices $V(\omega)=\{i_1,j_1,\ldots,i_s,j_s\}$, edges $E(\omega)=\{e_\ell:=\{i_\ell,j_\ell\}\}_{\ell=1}^s$ and colors $c_\omega(e_\ell)=t_\ell$. We write $a(\omega)=|V(\omega)|$ and $b(\omega)=|c_\omega(E(\omega))|$ for the number of vertices and used colors, respectively. Let $y(\omega)$ denote the number of edge-containing connected components. Thus, $\omega$ determines a multigraph with $a(\omega)$ vertices, $s$ edges, and $b(\omega)$ colors.

For each vertex $u\in V(\omega)$ and $t\in[n]$, define
\begin{itemize}
\item the degree $\deg_\omega(u):=\#\{\ell: i_\ell=u\}+\#\{\ell: j_\ell=u\}$, 
\item the multiplicity $k_{u,t}(\omega):=\#\{\ell:(i_\ell,t_\ell)=(u,t)\}+\#\{\ell:(j_\ell,t_\ell)=(u,t)\}$,
\item the set of incident colors $\mathcal C_\omega(u)=\{t:k_{u,t}(\omega)\ge1\}$ and $r_u(\omega):=|\mathcal C_\omega(u)|$.
\end{itemize}
For the remainder of this proof, we proceed as follows. Section~\ref{sec:lemmaaux} collects important auxiliary results to identify the leading order terms in \eqref{eq:T1s-expand}. Section~\ref{sec:provingtheorem} uses those results to compute the limit of \eqref{eq:T1s-expand}.

\subsubsection{Auxiliary lemmas for the proof of Theorem ~\ref{thm:leading-piecewise}}\label{sec:lemmaaux}
\begin{lemma}[Nonzero constraint]\label{lem:nonzero-constraint}
Let $\omega$ be a configuration in \eqref{eq:T1s-expand}. If $\deg_\omega(u)=1$ for some $u\in V(\omega)$, then
\begin{equation*}
    \E\!\Bigg[\prod_{\ell=1}^s \bar Y_{i_\ell t_\ell}\bar Y_{j_\ell t_\ell}\Bigg]=0.
\end{equation*}
As a consequence, only configurations with $\deg_\omega(u)\ge2$ for all $u\in V(\omega)$ can contribute to \eqref{eq:T1s-expand}.
\end{lemma}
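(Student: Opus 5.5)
The plan is to factor the expectation over rows and then show that the row carrying the degree-one vertex contributes a factor with zero mean. Fix a configuration $\omega$ and a vertex $u\in V(\omega)$ with $\deg_\omega(u)=1$. Then $u$ occurs exactly once among the indices $i_1,j_1,\ldots,i_s,j_s$, say in the edge $e_{\ell_0}$ with color $t_{\ell_0}=:t$.

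First, by property (2), the rows of $\Y$ are independent. Grouping the product $\prod_{\ell=1}^s \bar Y_{i_\ell t_\ell}\bar Y_{j_\ell t_\ell}$ by row index gives
\begin{equation*}
\E\!\Bigg[\prod_{\ell=1}^s \bar Y_{i_\ell t_\ell}\bar Y_{j_\ell t_\ell}\Bigg]=\prod_{v\in V(\omega)}\E\!\Bigg[\prod_{t'\in\mathcal C_\omega(v)} \bar Y_{v t'}^{\,k_{v,t'}(\omega)}\Bigg].
\end{equation*}
The factor for the vertex $u$ is $\E[\bar Y_{ut}]$, because $u$ appears only once, with multiplicity $k_{u,t}(\omega)=1$ and $\mathcal C_\omega(u)=\{t\}$.

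Second, this factor is $\E[\bar Y_{ut}]=\E[Y_{ut}^2]-n^{-1}=\beta_2-n^{-1}=0$, since $\beta_2=1/n$ by property (3) (equivalently Lemma~\ref{lem:betas}). Since one factor of the product vanishes, the whole expectation is zero. This proves the first claim.

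The consequence then follows at once: every configuration containing a vertex of degree one contributes zero to \eqref{eq:T1s-expand}. Hence only configurations in which all vertices have degree at least two can contribute.

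There is no real obstacle here. The only point requiring care is that the factorization uses independence across rows only, and never across columns within a row, since the entries of a single row are dependent through the normalization. This is why the argument groups the factors by vertex and not by color.
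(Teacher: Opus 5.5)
Your proof is correct and follows essentially the same argument as the paper. By independence of the rows, the row of the degree-one vertex $u$ contributes the single factor $\E[\bar Y_{ut}]=\beta_2-n^{-1}=0$, so the whole expectation vanishes. You spell out the row-wise factorization more explicitly than the paper's short proof does; the paper states that factorization separately in Lemma~\ref{lem:ru1-reduction}, but the route is the same.
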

\begin{proof}
If $\deg_\omega(u)=1$, then there exists a unique time index $t_0$ such that $k_{u,t_0}(\omega)=1$. Hence the contribution from row $u$ contains only the linear factor $\bar Y_{u,t_0}$. Since $\E[\bar Y_{u,t_0}]=0$, the desired result follows.
\end{proof}
\begin{lemma}[Row-wise factorization and beta-product reduction]
\label{lem:ru1-reduction}
Let $\omega$ be a configuration in \eqref{eq:T1s-expand}. Then
\begin{equation}\label{eq:row-factorization-compact}
    \E\!\Bigg[\prod_{\ell=1}^s \bar Y_{i_\ell t_\ell}\bar Y_{j_\ell t_\ell}\Bigg]
    =
    \prod_{u\in V(\omega)}
    \E\!\Bigg[\prod_{t\in\mathcal C_\omega(u)}\bar Y_{u,t}^{\,k_{u,t}(\omega)}\Bigg]
    =
    \prod_{u\in V(\omega)}
    \widetilde\beta_{2k_{u,t_1},\ldots,2k_{u,t_{r_u}}}.
\end{equation}
Now assume the regular variation condition with index $\alpha\in(0,4)$. If $r_u(\omega)\ge2$ for some vertex $u\in V(\omega)$, then
\begin{equation*}               \widetilde\beta_{2k_{u,t_1},\ldots,2k_{u,t_{r_u}}}=o\big(\widetilde\beta_{2\deg_\omega(u)}\big),
\end{equation*}
uniformly over all admissible multi-indices with total degree $\deg_\omega(u)\le2s$. Consequently, any configuration that is maximal at the beta-product level must satisfy
\begin{equation}\label{eq:ru1}
    r_u(\omega)=1\qquad\text{for all }u\in V(\omega).
\end{equation}
\end{lemma}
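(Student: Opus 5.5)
The plan has three parts: the factorization identity \eqref{eq:row-factorization-compact}, the comparison $\widetilde\beta_{2k_1,\ldots,2k_r}=o(\widetilde\beta_{2(k_1+\cdots+k_r)})$, and the consequence \eqref{eq:ru1}.

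\textbf{The factorization.} This uses only property (2), that $\Y$ has independent rows. Inside the product $\prod_{\ell=1}^s \bar Y_{i_\ell t_\ell}\bar Y_{j_\ell t_\ell}$, group the factors by their row index $u\in V(\omega)$. For a fixed row $u$, the factor $\bar Y_{u,t}$ appears exactly $k_{u,t}(\omega)$ times. By independence of the rows, the expectation splits into a product over $u$ of $\E[\prod_{t\in\mathcal C_\omega(u)}\bar Y_{u,t}^{k_{u,t}}]$.

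Each such factor is a centered mixed moment of $r_u$ distinct entries in a single row. Since the entries $(Y_{u1},\ldots,Y_{un})$ of a row are exchangeable, this factor depends only on the multiset $\{k_{u,t}\}$. Hence it equals $\widetilde\beta_{2k_{u,t_1},\ldots,2k_{u,t_{r_u}}}$, with $\bar Y=Y^2-n^{-1}$ accounting for the doubled indices. Note also that $\sum_t k_{u,t}(\omega)=\deg_\omega(u)$.

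\textbf{The comparison.} Under regular variation with $\alpha\in(0,4)$, this is exactly the final assertion of Lemma~\ref{lem:tbeta}: for $r\ge2$, $\widetilde\beta_{2k_1,\ldots,2k_r}=o(\widetilde\beta_{2k})$ with $k=\deg_\omega(u)$. Uniformity is automatic, for two reasons.
\begin{itemize}
\item The total degree is bounded by $2s$, so only finitely many multi-indices $(k_1,\ldots,k_r)$ with $k\le 2s$ occur, and a finite family of $o(\cdot)$ statements is uniform.
\item Since $\widetilde\beta_{2k_1,\ldots,2k_r}$ depends only on the multi-index and not on $n$-dependent labels, no issue arises from the colors $t$ ranging over $[n]$.
\end{itemize}
If one wants the rates explicitly, one can cross-check via Lemma~\ref{lem:allmoments}. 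In case (a) the order $n^{-r}$ is compared with $n^{-1}$. In case (b) the exponent $N_1(1-\alpha/2)+r\alpha/2$ is compared with $\alpha/2$, and it is strictly larger whenever $r\ge2$, because $1-\alpha/2>-1$ and $\alpha/2>0$. Lemma~\ref{lem:tbeta} transfers these statements from $\beta$ to $\widetilde\beta$.

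\textbf{The consequence \eqref{eq:ru1}.} Fix the vertex degrees. Changing the coloring of the edges at a vertex $u$ changes only the factor belonging to $u$ in the product. Replacing a configuration in which $r_u\ge2$ by one with the same degree at $u$ but a single color (so $r_u=1$) multiplies that vertex factor by something tending to infinity. The remaining factors stay of the same order, since $\widetilde\beta_{2d}\ne0$ is the largest possible order at each vertex by the previous step.

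Thus, among all beta-products with a prescribed degree sequence, the maximal ones have $r_u=1$ at every vertex. This is the meaning of "maximal at the beta-product level", and it gives \eqref{eq:ru1}.

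\textbf{Main obstacle.} The only delicate point is the sign and possible vanishing of $\widetilde\beta$ when some $k_i=1$; Lemma~\ref{lem:tbeta} gives only an $O$-bound there. This is harmless here, because we only need an upper bound $o(\widetilde\beta_{2\deg})$, and $\widetilde\beta_{2\deg}\sim\beta_{2\deg}>0$ for $\deg\ge2$. Lemma~\ref{lem:nonzero-constraint} guarantees that $\deg\ge2$ at every vertex of a contributing configuration.
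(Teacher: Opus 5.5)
Your factorization (grouping by rows, independence of rows, exchangeability within a row) and your comparison step (the last assertion of Lemma~\ref{lem:tbeta}, made uniform because only finitely many multi-indices of total degree at most $2s$ occur) are exactly the paper's argument.

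One sentence in your final step is wrong and should be replaced. Recoloring edges at $u$ does not change only the factor of $u$. Each edge $\{u,w\}$ carries one color, and that color enters the factors of both endpoints. So a neighbour with $r_w=1$ may end up with $r_w\ge 2$, and the remaining factors need not stay of the same order.

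The clean version is the paper's global comparison, which your closing clause already contains:
\begin{itemize}
\item every vertex factor is $O(\widetilde\beta_{2\deg_\omega(u)})$, with equality when $r_u=1$;
\item it is $o(\widetilde\beta_{2\deg_\omega(u)})$ at any vertex with $r_u\ge2$;
\item hence the whole beta-product is $o\big(\prod_{u}\widetilde\beta_{2\deg_\omega(u)}\big)$.
\end{itemize}
This benchmark is attained by any coloring with $r_u=1$ at every vertex, for example one color per connected component. No local surgery is needed.
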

\begin{proof}
The factorization \eqref{eq:row-factorization-compact} is obtained by grouping the factors according to the row index and using the independence of the rows of $\Y$.

Now assume the regular variation condition with index $\alpha\in(0,4)$ and suppose that $r_u(\omega)\ge2$ for some vertex $u$. Applying Lemma~\ref{lem:tbeta} to the $u$-th factor in \eqref{eq:row-factorization-compact} yields
\begin{equation*}
    \E\Bigg[\prod_{t\in\mathcal C_\omega(u)}\bar Y_{u,t}^{\,k_{u,t}(\omega)}\Bigg]=\widetilde\beta_{2k_{u,t_1},\ldots,2k_{u,t_{r_u}}} = o\big(\widetilde\beta_{2\deg_\omega(u)}\big),
\end{equation*}
uniformly over all admissible multi-indices with total degree at most $2s$. Hence any configuration with $r_u(\omega)\ge2$ for some vertex $u\in V(\omega)$ is of strictly smaller order than one with the same total vertex degrees but satisfying $r_u(\omega)=1$ for all $u$. This proves \eqref{eq:ru1}.
\end{proof}
\begin{lemma}[Monochromatic components and maximal time-counting]
\label{lem:by-bound}
Assume that $r_u(\omega)=1$ for all $u\in V(\omega)$. Then every edge-containing connected component of the underlying uncolored multigraph $(V(\omega),E(\omega))$ is monochromatic. In particular, $b(\omega)\le y(\omega)$.

Moreover, the time-label summation contributes a factor $(n)_{b(\omega)}\asymp n^{b(\omega)}$, so the maximal time-counting order is attained when
\begin{equation*}
    b(\omega)=y(\omega).
\end{equation*}
\end{lemma}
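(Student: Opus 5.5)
The first claim follows from propagating colors along edges; the second from counting time labels once the coloring is fixed.

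\emph{Step 1: edges at a common vertex share their color.} Since $r_u(\omega)=1$ for every $u\in V(\omega)$, each vertex $u$ has a single incident color, call it $\tau(u)$. Take an edge $e_\ell=\{i_\ell,j_\ell\}$ with color $t_\ell$. The definition of $k_{u,t}(\omega)$ gives $k_{i_\ell,t_\ell}(\omega)\ge1$ and $k_{j_\ell,t_\ell}(\omega)\ge1$. Hence $t_\ell\in\mathcal C_\omega(i_\ell)\cap\mathcal C_\omega(j_\ell)$, so $t_\ell=\tau(i_\ell)=\tau(j_\ell)$.

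\emph{Step 2: components are monochromatic.} Let $C$ be an edge-containing connected component and let $u,v\in C$. There is a path $u=w_0,w_1,\ldots,w_m=v$ whose consecutive vertices are joined by edges. Applying Step 1 to each of these edges gives $\tau(w_0)=\tau(w_1)=\cdots=\tau(w_m)$. So $\tau$ is constant on $C$, and by Step 1 every edge of $C$ carries this common color. The map from edge-containing components to colors is therefore well defined. It is surjective onto $c_\omega(E(\omega))$, because every edge lies in some edge-containing component. This gives $b(\omega)\le y(\omega)$.

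\emph{Step 3: counting time labels.} Fix the vertex labels, the graph structure, and the partition of the edges into color classes. The admissible time labelings then correspond to injective assignments of distinct elements of $[n]$ to the $b(\omega)$ color classes, and there are $(n)_{b(\omega)}=n(n-1)\cdots(n-b(\omega)+1)\asymp n^{b(\omega)}$ of them, since $b(\omega)\le s$ is fixed.

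\emph{Step 4: the summand does not depend on which distinct colors are used.} By exchangeability of the columns of $\Y$ within each row, and by the factorization \eqref{eq:row-factorization-compact}, the summand depends only on the multiplicities $k_{u,t}(\omega)$. Under $r_u(\omega)=1$ this means it depends only on $\widetilde\beta_{2\deg_\omega(u)}$, which does not depend on the particular color values.

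\emph{Step 5: maximal order.} With the vertex structure held fixed, $b(\omega)$ can be at most $y(\omega)$ by Step 2. Equality is attained by giving distinct colors to distinct components. Hence the maximal time-counting order is $n^{y(\omega)}$, achieved exactly when $b(\omega)=y(\omega)$.

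There is no real obstacle; the only care needed is that an edge's color is forced to be incident to \emph{both} of its endpoints, which is exactly where the definition of $k_{u,t}$ enters in Step 1.
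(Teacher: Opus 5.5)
Your proof is correct and follows the paper's argument. Because each vertex has only one incident color, colors cannot change along paths, so every edge-containing component is monochromatic and $b(\omega)\le y(\omega)$. The injective assignment of time labels to color classes then gives $(n)_{b(\omega)}\asymp n^{b(\omega)}$, which is maximized at $b(\omega)=y(\omega)$.

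Your Steps 1 and 4 make explicit two points the paper leaves implicit. First, an edge's color lies in $\mathcal C_\omega$ of both of its endpoints. Second, by exchangeability of the columns within each row, the summand does not depend on which distinct color values are used.
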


\begin{proof}
If $r_u(\omega)=1$ for every vertex $u$, then each vertex is incident only to edges of a single color. Hence, along any path inside an edge-containing connected component, the color cannot change from one edge to the next. Therefore, each edge-containing connected component is monochromatic.

It follows immediately that the number of used colors cannot exceed the number of edge-containing connected components, that is,
\begin{equation*}
    b(\omega)\le y(\omega).
\end{equation*}
Since summing over the distinct time labels contributes the falling factorial
\begin{equation*}
    (n)_{b(\omega)}=n(n-1)\cdots(n-b(\omega)+1)\asymp n^{b(\omega)},
\end{equation*}
the maximal time-counting order is obtained by maximizing $b(\omega)$ subject to the constraint $b(\omega)\le y(\omega)$, namely by imposing $b(\omega)=y(\omega)$.
\end{proof}
\begin{lemma}[Vertex and component bounds for non-vanishing configurations]
\label{lem:vertex-component-bounds}
Assume that $\omega$ gives a non-vanishing contribution to \eqref{eq:T1s-expand}. Then $a(\omega)\le s$. Moreover, the number $y(\omega)$ of edge-containing connected components satisfies
\begin{equation*}
    y(\omega)\le \Big\lfloor\frac{a(\omega)}{2}\Big\rfloor,
\end{equation*}
and this bound is sharp. In particular, among configurations satisfying $r_u(\omega)=1$, the maximal time-exponent is attained when
\begin{equation}\label{eq:y-max-compact}
    y(\omega)=b(\omega)=\Big\lfloor\frac{a(\omega)}{2}\Big\rfloor.
\end{equation}
\end{lemma}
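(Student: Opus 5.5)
The argument is purely combinatorial and uses only Lemma~\ref{lem:nonzero-constraint} and the handshake identity. For a non-vanishing configuration $\omega$, Lemma~\ref{lem:nonzero-constraint} gives $\deg_\omega(u)\ge 2$ for every $u\in V(\omega)$. The multigraph has exactly $s$ edges, so
\begin{equation*}
    2a(\omega)\le \sum_{u\in V(\omega)}\deg_\omega(u)=2s ,
\end{equation*}
which yields $a(\omega)\le s$.

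For the component bound, every vertex of $V(\omega)$ is an endpoint of some edge. Hence every connected component of $(V(\omega),E(\omega))$ is edge-containing. Each edge $e_\ell=\{i_\ell,j_\ell\}$ has $i_\ell\ne j_\ell$, so there are no loops, and a component containing an edge has at least two distinct vertices. The components partition $V(\omega)$, so $2y(\omega)\le a(\omega)$, that is $y(\omega)\le\lfloor a(\omega)/2\rfloor$. (One could sharpen this using the degree constraint, but that is not needed.)

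For sharpness I will give an explicit configuration. For even $a=2v$ with $v\le s/2$, take $v$ disjoint vertex pairs $\{u_k,w_k\}$. Join each pair by $m_k\ge 2$ parallel edges, with $m_1+\cdots+m_v=s$, and give all edges of the $k$-th pair a common color $t_k$, using distinct colors for distinct pairs. Then every degree is at least $2$ and $r_u=1$ for all $u$. Moreover $y=b=v=a/2$, and the expectation factorizes by Lemma~\ref{lem:ru1-reduction} into $\prod_k\tbeta_{2m_k}^2\ne 0$. For odd $a$, replace one pair by a triangle carrying a single color. The triangle needs at least $3$ edges, which is fine provided $s$ is large enough. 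This gives $y=\lfloor a/2\rfloor$.

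The final claim follows by combining this with Lemma~\ref{lem:by-bound}. Under $r_u(\omega)=1$ we have $b(\omega)\le y(\omega)\le\lfloor a(\omega)/2\rfloor$, and the time factor is $(n)_{b(\omega)}$. For fixed $a(\omega)$, the exponent of $n$ is therefore maximized exactly when both inequalities are equalities, which is \eqref{eq:y-max-compact}. Equality is attainable by the construction above, where each component is monochromatic with its own color.

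The main point of care is the sharpness statement in the odd-$a$ case: I must check that the triangle component does not violate the edge budget $s$. If $s$ is too small for the odd construction, I would state sharpness only for the realizable pairs $(a,s)$. Everything else is routine counting.
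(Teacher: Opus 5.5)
Your proposal is correct and follows the paper's proof essentially step for step: the handshake identity gives $a(\omega)\le s$, looplessness forces at least two vertices per component and hence $y(\omega)\le\lfloor a(\omega)/2\rfloor$, an explicit construction with parallel-edge pairs gives sharpness, and Lemma~\ref{lem:by-bound} gives the final claim. Your edge-budget worry is unnecessary: for odd $a$, taking $\lfloor a/2\rfloor-1$ double-edge pairs plus one triangle uses exactly $a\le s$ edges, and any surplus edges can be added in parallel, so sharpness holds for every admissible $(a,s)$. This is also the correct count, whereas the paper's parenthetical ``$(a(\omega)-1)/2$ two-vertex components together with one three-vertex component'' has $a+2$ vertices.
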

\begin{proof}
By Lemma~\ref{lem:nonzero-constraint}, any configuration with $\deg_\omega(u)=1$ for some $u$ gives zero contribution. Hence every non-vanishing configuration must satisfy
\begin{equation*}
    \deg_\omega(u) \ge 2 \qquad\text{for all }u\in V(\omega).
\end{equation*}
Using the degree-sum identity for multigraphs,
\begin{equation*}
    \sum_{u\in V(\omega)}\deg_\omega(u)=2|E(\omega)|=2s,
\end{equation*}
we obtain $2a(\omega)\le 2s$, or equivalently
\begin{equation*}
    a(\omega)\le s.
\end{equation*}

Now consider the number $y(\omega)$ of edge-containing connected components. Among loopless undirected multigraphs on $a(\omega)$ vertices with minimum degree at least $2$, the number of such components is maximized by splitting the graph into components of the smallest possible size. Hence
\begin{equation*}
    y(\omega)\le \Big\lfloor\frac{a(\omega)}{2}\Big\rfloor.
\end{equation*}
This bound is sharp: if $a(\omega)$ is even, it is attained by a disjoint union of $a(\omega)/2$ two-vertex components, each consisting of two parallel edges; if $a(\omega)$ is odd, it is attained by $(a(\omega)-1)/2$ such two-vertex components together with one three-vertex component of minimum degree at least $2$.

Finally, combining this with Lemma~\ref{lem:by-bound}, we see that the maximal time-exponent is attained precisely when
\begin{equation*}
    y(\omega)=b(\omega)=\Big\lfloor\frac{a(\omega)}{2}\Big\rfloor.
\end{equation*}
\end{proof}

\subsubsection{Proving Theorem~\ref{thm:leading-piecewise}}\label{sec:provingtheorem}
We now present the main structure of the proof of Theorem~\ref{thm:leading-piecewise}, which relies on the auxiliary lemmas from Section~\ref{sec:lemmaaux}.

\medskip
\textbf{Step 1: Finding dominant configurations.} Fix $s\ge2$. The first step is to identify the configurations contributing at the leading order in the expansion of $\E[T_1^s]$. By Lemma~\ref{lem:nonzero-constraint}, only configurations with $\deg_\omega(u)\ge2$ for all $u\in V(\omega)$ contribute to \eqref{eq:T1s-expand}.

Next, by Lemma~\ref{lem:ru1-reduction}, any configuration with $r_u(\omega)\ge2$ for some vertex $u$ is of strictly smaller order, so the dominant contribution must satisfy $r_u(\omega)=1$ for all $u\in V(\omega)$.

Under this condition, each edge-containing connected component is monochromatic, hence by Lemma~\ref{lem:by-bound} one has $b(\omega)\le y(\omega)$ and the time summation is maximized when $b(\omega)=y(\omega)$. Moreover, by Lemma~\ref{lem:vertex-component-bounds}, we know $y(\omega)\le \lfloor{a(\omega)}/{2}\rfloor$ and the bound is sharp. Therefore the dominant contribution is attained when $b(\omega)=y(\omega)=\lfloor{a(\omega)}/{2}\rfloor$.

\medskip
For each fixed $a\in\{2,\ldots,s\}$, let $\Omega_{a,s}$ denote the class of configurations $\omega$ in \eqref{eq:T1s-expand} satisfying
\begin{equation}\label{eq:Omega-constraints}
    \begin{aligned}
    |V(\omega)|=a,\quad |E(\omega)|=s, \quad b(\omega)=y(\omega)=\Big\lfloor\frac{a}{2}\Big\rfloor,\\
    \deg_\omega(u)\ge2\ \text{and}\ r_u(\omega)=1
    \quad \forall u\in V(\omega),
    \end{aligned}
\end{equation}
with distinct colors assigned to distinct edge-containing components. Let $\mathcal{H}(\omega)$ be the underlying uncolored multigraph obtained by forgetting the colors, and let $\mathcal H_{a,s}$ be the set of isomorphism types $H$ that arise from some $\omega_0\in\Omega_{a,s}$, that is, $H=\mathcal{H}(\omega_0)$. We then define $\deg_{H}(u):=\deg_{\omega_0}(u)$.

For each isomorphism type $H\in\mathcal H_{a,s}$, summing over row labels and time labels produces a factor of order $p^a n^{\lfloor a/2\rfloor}$, while the row-wise factorization contributes $\prod_{u\in V(H)}\widetilde\beta_{2\deg_H(u)}$. All remaining configurations (violating at least one constraint in \eqref{eq:Omega-constraints}) are of strictly smaller order and therefore are contained in the $o(\cdot)$ remainder in \eqref{eq:moment-dom-sum-clean}. Thus,
\begin{equation}\label{eq:moment-dom-sum-clean}
    \begin{aligned}
    \E[T_1^s]
    &= \sum_{a=2}^{s}\sum_{H\in\mathcal H_{a,s}}C_{H}\,
    \Bigg(\prod_{u\in V(H)} \widetilde\beta_{2\deg_H(u)}\Bigg)\,
    p^a\,n^{\lfloor a/2\rfloor} \\
    &+o\!\left(
    \max_{2\le a\le s}
    \sup_{H\in\mathcal H_{a,s}}
    \Bigg(\prod_{u\in V(H)} \widetilde\beta_{2\deg_H(u)}\Bigg)
    p^a\,n^{\lfloor a/2\rfloor}
    \right).
    \end{aligned}
\end{equation}
Here $C_{H}$ is a purely combinatorial constant depending only on the graph type $H$, counting the extremal configurations with this fixed uncolored structure. (Its explicit value will be determined in the second step of the proof.) By Lemmas~\ref{lem:tbeta} and~\ref{lem:allmoments}, we get $\widetilde\beta_{2k}\asymp n^{-c_\alpha}$, where $c_\alpha=\max\{1,\alpha/2\}$, and consequently the contribution of the $a$-th layer is of order
\begin{equation*}
    p^a\,n^{-ac_\alpha+\lfloor a/2\rfloor}.
\end{equation*}
On the boundary scale, using $\delta^*(\alpha)=c_\alpha-\tfrac12$, we obtain
\begin{equation*}
    p^a\,n^{-ac_\alpha+\lfloor a/2\rfloor}\asymp n^{-a/2+\lfloor a/2\rfloor}.
\end{equation*}
Hence only the even layers are of leading order, whereas the odd layers are negligible. 

It follows that the dominant configurations must satisfy
\begin{equation*}
    a(\omega)=2v,\qquad \text{ for some } 1\le v\le \lfloor s/2\rfloor.
\end{equation*}
The extremal condition $y(\omega)=a(\omega)/2$ then forces each edge-containing connected component to have the minimal possible number of row-vertices, namely two. Hence the dominant configurations are precisely those consisting of $v$ disjoint row-pairs, each assigned a distinct time label.

\medskip
\textbf{Step 2: Counting dominant configurations.}
The second step is to count such configurations. Fix a $1\le v\le \lfloor s/2\rfloor$. Let $[s]:=\{1,\ldots,s\}$ index the $s$ factors in $T_1^s$. We group indices according to the (triple) type $(\{i,j\},t)$ they realize: equivalently, we choose a set partition $\pi=\{B_1,\ldots,B_v\}$ of $[s]$ into $v$ blocks with sizes
\begin{equation*}
m_w := |B_w|\ge2, \qquad w=1,\ldots,v, \qquad m_1+\cdots+m_v = s.
\end{equation*}

For a fixed ordered composition $(m_1,\ldots,m_v)$, the number of (unordered) set partitions of $[s]$ with these block sizes is
\begin{equation*}
    \frac{s!}{m_1!\cdots m_v!\,v!}.
\end{equation*}
Next, for each block $B_w$ we select one unordered row pair $\{i_w,j_w\}$ and one time index $t_w$, with the restriction that pairs and times are distinct across different blocks. This gives a multiplicity factor
\begin{equation*}
    (p)_{2v}\,(n)_v \;\sim\; p^{2v}n^v .
\end{equation*}
Finally, inside each block $B_w$ there are $2^{m_w}$ possible orientation assignments $(i_\ell,j_\ell)=(i_w,j_w)$ or $(j_w,i_w)$ for $\ell\in B_w$, but the global flip on the entire block does not change the underlying unordered pair. Hence the number of distinct orientation patterns in block $B_w$ equals $2^{m_w-1}$, and multiplying over blocks yields
\begin{equation*}
    \prod_{w=1}^v 2^{m_w-1} \;=\; 2^{\,s-v}.
\end{equation*}

For a fixed block $B_w$ of size $m_w$, row-independence and the one-color-per-vertex structure imply that the corresponding expectation contributes the one-index moment product 
\begin{equation*}
    \E\big[\bar Y_{i_w,t_w}^{\,m_w}\bar Y_{j_w,t_w}^{\,m_w}\big]=\widetilde\beta_{2m_w}^{\,2}.
\end{equation*}
Since different blocks use disjoint row pairs and distinct time indices, the block contributions factorize, giving $\prod_{w=1}^v \widetilde\beta_{2m_w}^{\,2}$.

Putting the counting and moment factors together, the total contribution associated with a fixed $v$ and a fixed composition $(m_1,\ldots,m_v)$ is asymptotically
\begin{equation*}
    \frac{s!}{m_1!\cdots m_v!\,v!}\; 2^{\,s-v}\; (p)_{2v}(n)_v\;
    \prod_{w=1}^v \widetilde\beta_{2m_w}^{\,2}
    \;\sim\;
    \frac{2^{\,s-v}}{v!}\,\frac{s!}{m_1!\cdots m_v!}\,
    \widetilde\beta_{2m_1}^{\,2}\cdots \widetilde\beta_{2m_v}^{\,2}\, n^{v}\, p^{2v}.
\end{equation*}
Summing over all $v=1,\ldots,\lfloor s/2\rfloor$ and all compositions $m_1+\cdots+m_v=s$ with $m_w\ge2$, we have
\begin{equation}
    \E[T_1^{s}] \sim
    \sum_{v=1}^{\lfloor s/2 \rfloor} 
    \frac{2^{\,s-v}}{v!}
    \sum_{\substack{m_1,\ldots,m_v\ge2\\ m_1+\cdots+m_v=s}}
    \frac{s!}{m_1!\cdots m_v!}\,
    \widetilde\beta_{2m_1}^{\,2}\cdots 
    \widetilde\beta_{2m_v}^{\,2}\,
    n^{v}\, p^{2v}\,,\qquad \nto\,.
\end{equation}
Finally, Lemmas~\ref{lem:allmoments} and~\ref{lem:tbeta} allow us to replace $\widetilde{\beta}$ by its asymptotic equivalent, which yields the conclusion of the theorem
\begin{equation}
    \E[T_1^s] \to
    \sum_{v=1}^{\lfloor s/2 \rfloor}
    c_{\delta}^{2v} \, \frac{2^{\,s-v}}{v!}\,
    \sum_{\substack{m_1,\ldots,m_v\ge2\\ m_1+\cdots +m_v=s}}
    \frac{s!}{m_1!\cdots m_v!}\,
    \prod_{\ell=1}^v c_{m_\ell}(\alpha)^2.
    \qquad \nto\,, s\ge 2. 
\end{equation}

\subsection{Proof of Lemma~\ref{lem:carleman-nongauss}}
\begin{proof}
It is sufficient to verify the Carleman condition 
\begin{equation}\label{eq:carleman}
    \sum_{k=1}^{\infty} M_{2k}^{-\frac{1}{2k}}=\infty \,,
\end{equation}
since this condition implies that a distribution with moment sequence $(M_s)_{s\ge 1}$ is moment-determinate. Let  $k\ge 2$. From \eqref{eq:ms-def} with $s=2k$, we get
\begin{equation}\label{eq:m2k_exp_clean}
    M_{2k}=
    \sum_{v=1}^{k}
    c_\delta^{2v}\,\frac{2^{\,2k-v}}{v!}
    \sum_{\substack{m_1,\ldots,m_v\ge2\\ m_1+\cdots+m_v=2k}}
    \frac{(2k)!}{m_1!\cdots m_v!}\,
    \prod_{\ell=1}^v c_{m_\ell}(\alpha)^2.
\end{equation}
By the asymptotic expansion of the Gamma function, for fixed $\alpha$ it holds
\begin{equation*}
    \frac{\Gamma(m-\alpha/2)}{\Gamma(m)} = m^{-\alpha/2}\bigl(1+O(m^{-1})\bigr),\qquad m\to\infty.
\end{equation*}
In particular, since $\alpha>0$, the ratio $\Gamma(m-\alpha/2)/\Gamma(m)$  tends to $0$ as $m\to\infty$, and therefore
\begin{equation*}
    \sup_{m\ge2} \frac{\Gamma(m-\alpha/2)}{\Gamma(m)} <\infty.
\end{equation*}
Hence, there exists a constant $C<\infty$ such that $0\le c_m(\alpha)\le C$ for all $  m\ge2$.
In conjunction with \eqref{eq:m2k_exp_clean}, this  yields
\begin{equation}\label{eq:m2k_bd1_clean}
    M_{2k} \le
    \sum_{v=1}^{k}
    \frac{2^{\,2k}}{v!}\,
    \big(c_\delta^2 C^2\big)^v
    \sum_{\substack{m_1,\ldots,m_v\ge2\\ m_1+\cdots+m_v=2k}}
    \frac{(2k)!}{m_1!\cdots m_v!}.
\end{equation}

Dropping the constraint $m_\ell\ge2$ and summing over all $m_\ell\ge0$ gives
\begin{equation*}
    \sum_{\substack{m_1,\ldots,m_v\ge2\\ m_1+\cdots+m_v=2k}} \frac{(2k)!}{m_1!\cdots m_v!}
    \le \sum_{\substack{m_1,\ldots,m_v\ge0\\ m_1+\cdots+m_v=2k}}\frac{(2k)!}{m_1!\cdots m_v!}
    = v^{2k},
\end{equation*}
by the multinomial theorem. Substituting into \eqref{eq:m2k_bd1_clean} we obtain
\begin{equation}\label{eq:m2k_bd2_clean}
    M_{2k} \le 2^{2k} \sum_{v=1}^{k} \frac{\big(c_\delta^2 C^2\big)^v}{v!}\,v^{2k}
    \le 2^{2k}k^{2k} \sum_{v=1}^{\infty}\frac{\big(c_\delta^2 C^2\big)^v}{v!}
    = 2^{2k}k^{2k}\,\exp\!\big(c_\delta^2 C^2\big).
\end{equation}

Taking $(2k)$-th roots in \eqref{eq:m2k_bd2_clean} gives
\begin{equation*}
    M_{2k}^{1/(2k)} \le 2k\,\exp\!\Big(\frac{c_\delta^2 C^2}{2k}\Big) \le C^*\,k \qquad(k\ge1)
\end{equation*}
for some constant $C^*<\infty$. Hence $M_{2k}^{-1/(2k)}\ge 1/(C^*k)$ and therefore
\begin{equation*}
    \sum_{k=2}^{\infty} M_{2k}^{-1/(2k)} \;\ge\; \frac{1}{C}\sum_{k=2}^{\infty}\frac{1}{k} =\infty,
\end{equation*}
which establishes the Carleman condition \eqref{eq:carleman}.
\end{proof}

\subsection{Proof of Theorem~\ref{thm:boundary-cumulant}}
\begin{proof}
Starting from \eqref{eq:ms-def}, use the identity $2^{s-v}=\prod_{\ell=1}^v 2^{m_\ell-1}$ whenever $\sum_{\ell=1}^v m_\ell=s$ and absorb the factors $2^{m_\ell-1}c_\delta^2c_{m_\ell}(\alpha)^2$ into $\kappa_{m_\ell}^{(\alpha,c_\delta)}$. Since $\prod_{\ell=1}^v \kappa_{m_\ell}^{(\alpha,c_\delta)}=0$ whenever  $\min _\ell m_{\ell}=1$, this yields 
\begin{equation*}
    \begin{aligned}
    M_s
    &=
    \sum_{v=1}^{s} \frac{1}{v!}\, \sum_{\substack{m_1,\ldots,m_v\ge1\\ m_1+\cdots +m_v=s}} \frac{s!}{m_1!\cdots m_v!}\, \prod_{\ell=1}^v \kappa_{m_\ell}^{(\alpha,c_\delta)}\\
    &= B_s\big(\kappa_{1}^{(\alpha,c_\delta)},\kappa_{2}^{(\alpha,c_\delta)},\ldots,\kappa_{s}^{(\alpha,c_\delta)}\big), \qquad s\ge 1,
    \end{aligned}
\end{equation*}
by definition of the $s$-th complete exponential Bell polynomial in \eqref{eq:bellpolynomial}. Moreover, by \cite[Proposition~3.3.1]{peccati:taqqu:2011}, the cumulants of the corresponding distribution are given by $\big(\kappa_{s}^{(\alpha,c_\delta)}\big)_{s\ge 1}$.

It remains to prove \eqref{eq:Kalpha-closed-thm}.  For $\alpha\in (0,2)$ let $U_1,U_2$ be two independent $\mathrm{Beta}(1-\alpha/2,\alpha/2)$ variables.  For $\alpha\in [2,3)$ let $U_1',U_2'$ be two independent $\mathrm{Beta}(2-\alpha/2,\alpha/2)$ variables.  We know
\begin{equation*}
    \E[U_1^{m-1}]=
    \frac{\Gamma(m-\alpha/2)}{\Gamma(1-\alpha/2)\Gamma(m)},\quad
    \E[(U'_1)^{m-2}]=
    \frac{\Gamma(m-\alpha/2)}{\Gamma(2-\alpha/2)\Gamma(m)}.
\end{equation*}
Then for $m\ge2$,
\begin{equation*}
    c_m^2(\alpha)=
    \begin{cases}
        \E[U_1^{m-1}]\,\E[U_2^{m-1}] =
        \E[(U_1U_2)^{m-1}] =
        \E[W^{m-1}] &\text{if}\ \alpha\in (0,2)\,, \\
        c_2(\alpha)^2 \,\E[(U'_1 U'_2)^{m-2}] = 
        c_2(\alpha)^2 \,\E[(W')^{m-2}]    &\text{if}\ \alpha\in [2,3)\,,
    \end{cases}
\end{equation*}
and thus
\begin{equation}\label{eq:kappa-as-W}
    \kappa_m^{(\alpha,c_\delta)}=
    \begin{cases}
        2^{m-1}\,c_{\delta}^{2}\,\E[W^{m-1}] &\text{if}\ \alpha\in (0,2)\,, \\
        2^{m-1}\,c_{\delta}^{2}c_2(\alpha)^2\,\E[(W')^{m-2}] &\text{if}\ \alpha\in [2,3)\,,
    \end{cases}
    \qquad m\ge 2\,.
\end{equation}
Substituting \eqref{eq:kappa-as-W} into the definition of the cumulant generating function gives
\begin{equation}\label{eq:K-expectation-sum}
    K_{\alpha,c_\delta}(t)=
    \begin{cases}
        c_{\delta}^{2}\,\E\Bigg[\sum_{m\ge 2}2^{m-1}W^{m-1}\frac{t^m}{m!}\Bigg],\\
        c_{\delta}^{2}c_2(\alpha)^2 \,\E\Bigg[\sum_{m\ge 2}2^{m-1}(W')^{m-2}\frac{t^m}{m!}\Bigg].
    \end{cases}
\end{equation}
For fixed $w\in(0,1)$, compute the inner series
\begin{equation}\label{eq:inner-series}
    \begin{cases}
    \sum_{m\ge 2}2^{m-1}w^{m-1}\frac{t^m}{m!}
    &= \frac{1}{2w}\sum_{m\ge2}\frac{(2tw)^m}{m!}
    = \frac{e^{2tw}-1-2tw}{2w},\\
    \sum_{m\ge 2}2^{m-1}w^{m-2}\frac{t^m}{m!}
    &= \frac{1}{2w^2}\sum_{m\ge2}\frac{(2tw)^m}{m!}
    = \frac{e^{2tw}-1-2tw}{2w^2}.
    \end{cases}
\end{equation}
Combining \eqref{eq:K-expectation-sum} and \eqref{eq:inner-series} yields the closed form \eqref{eq:Kalpha-closed-thm}.
\end{proof}

\subsection{Proof of Corollary~\ref{cor:poisson}}
\begin{proof}
Fix $c_\delta>0$. As $\alpha\downarrow0$, one has $U_1, U_2\Rightarrow 1$ and hence $W\Rightarrow 1$, so
\begin{equation*}
    K_{\alpha,c_\delta}(t)
    \longrightarrow 
    K_{0}(t):= c_{\delta}^{2}\,\frac{e^{2t}-1-2t}{2}\,, \qquad t\in\R.
\end{equation*}
Therefore, the corresponding moment generating function is
\begin{equation*}
    \begin{aligned}
    M_0(t)&:=
    \exp\!\left(
    c_\delta^{2}\frac{e^{2t}-1-2t}{2}
    \right)
    =
    \exp\!\left({-c_\delta^{2} t}\right)\cdot
    \exp\!\left(\frac{c_\delta^{2}}{2}(\e^{2t}-1)\right)\\
    &= \exp\!\left({-c_\delta^{2} t}\right)\cdot \E\big[\e^{2tN}\big]=\E[\e^{t(2N-c_\delta^{2})}]
    \end{aligned}
\end{equation*}
where $N\sim\mathrm{Poisson}(c_\delta^{2}/2)$. This establishes the desired convergence to the centered Poisson distribution.
\end{proof}

\subsection{Proof of Theorem~\ref{thm:Poisson point process representation}}
\begin{proof}
For brevity, we write $\nu$ instead of $\nulim$.
For a Poisson point process 
\begin{equation*}
    N=\sum_{i=1}^\infty \delta_{\xi_i}
\end{equation*}
with intensity $\nu$ and atoms $(\xi_i)_{i\ge 1}$, 
we define the compensated Poisson integral
\begin{equation*}
    Z:=\int_{(0,2]}x\,\widehat N(dx)=\sum_{i=1}^\infty \xi_i-\int_{(0,2]}x\,\nu(dx),
\end{equation*}
where $\widehat{N}:=N-\nu$. By construction, we have $\E[Z]=0$.

Let $\kappa_m(Z)$ denote the $m$-th cumulant of $Z$. By the Laplace functional of a Poisson process ~\cite[Theorem~3.9, p.~23]{LastPenrose2017}, together with the definition of compensated Poisson integrals \cite[Eq.~(12.4), p.~112]{LastPenrose2017}, we get
\begin{equation*}
    \kappa_1(Z)=0, \qquad \kappa_m(Z)=\int_{(0,2]}x^m\,\nu(dx), \qquad m\ge2.
\end{equation*}
For $\alpha\in(0,2)$, applying \eqref{eq:nu-unified} gives
\begin{equation*}
    \int_{(0,2]}x^m\nu(dx) = c_\delta^2\int_{(0,2]}x^{m-1}\mu(dx) = c_\delta^2\E[(2W)^{m-1}] = 2^{m-1}c_\delta^2\E[W^{m-1}].
\end{equation*}
Since $W$ is the product of two independent $\mathrm{Beta}(1-\alpha/2,\alpha/2)$ random variables, we have
\begin{equation*}
    \E[W^{m-1}] = c_m(\alpha)^2.
\end{equation*}
Thus
\begin{equation*}
    \kappa_m(Z) = 2^{m-1}c_\delta^2c_m(\alpha)^2, \qquad m\ge2.
\end{equation*}

For $\alpha\in[2,3)$, again by \eqref{eq:nu-unified},
\begin{equation*}
    \int_{(0,2]}x^m\nu(dx) = 2c_\delta^2c_2(\alpha)^2 \int_{(0,2]}x^{m-2}\mu'(dx)
    = 2^{m-1}c_\delta^2c_2(\alpha)^2\E[(W')^{m-2}].
\end{equation*}
Since $W'$ is the product of two independent $\mathrm{Beta}(2-\alpha/2,\alpha/2)$ random variables,
\begin{equation*}
    c_m(\alpha)^2 = c_2(\alpha)^2\E[(W')^{m-2}],
\end{equation*}
and therefore
\begin{equation*}
    \kappa_m(Z) = 2^{m-1}c_\delta^2c_m(\alpha)^2, \qquad m\ge2.
\end{equation*}

Consequently, we have shown that, for all $\alpha\in (0,3)$,
\begin{equation*}
    \kappa_1(Z)=0, \qquad
    \kappa_m(Z) = 2^{m-1}c_\delta^2c_m(\alpha)^2 = \kappa_m^{(\alpha,c_\delta)}, \qquad m\ge2.
\end{equation*}
By Theorem~\ref{thm:boundary-cumulant}, these are precisely the cumulants of the law $\etalim$. Hence
\begin{equation*}
    \int_{(0,2]}x\,\widehat N(dx) = \sum_{i=1}^\infty \xi_i-\int_{(0,2]}x\,\nu(dx) = \sum_{i=1}^\infty \xi_i-c_{\delta}^2 \sim \etalim,
\end{equation*}
establishing the desired result.
\end{proof}

\section{Proofs for Section~\ref{sec:preliminaries}} \label{sec:proofsmoment} \setcounter{equation}{0}
\subsection{Proof of Lemma~\ref{lem:allmoments}}
\begin{proof}
For a proof of part (a), see \cite[p.~4]{albrecher:teugels:2007}. 
Regarding part (b), we remark that  \eqref{moment24} was proved in \cite[Lemma~4.1]{heiny:parolya:2024} for $\alpha\in(2,4)$. For our case let $\beta=\alpha/2$ and $X\eid X_{11}$. From \cite[p.~7]{albrecher:teugels:2007}, we have
\begin{equation}\label{eq:formulagine}
    \E[ Y_{11}^{2k_1}\cdots Y_{1r}^{2k_r}]= \frac{(-1)^k}{n \Gamma(k)} \int_0^{\infty} \Big( \tfrac{t}{n}\Big)^{k-1} \varphi^{n-r}\Big( \tfrac{t}{n}\Big) \prod_{i=1}^r \varphi^{(k_i)}\Big( \tfrac{t}{n}\Big) \dint t\,,
\end{equation}
where $\varphi(s)=\E[\e^{-sX^2}]$, $s>0$, and $\varphi^{(m)}(s)=\frac{\dint^m}{\dint s^m} \varphi(s)$. By \cite{albrecher:teugels:2007}, we have 
\begin{equation}\label{lim1}
    \lim_{\nto} \varphi^{n-r}\Big( \tfrac{t}{n}\Big)=\e^{-t}\,, \qquad t>0\,,
\end{equation}
provided that $\E[X^2]=1$. For regularly varying $|X|$ with index $\beta$, \cite[Lemma~2]{ladoucette:2007} asserts that the asymptotic behavior of $\varphi^{(m)}(s)$, $m\in \N$, at the origin is given by
\begin{equation}\label{eq:asyphi}
    (-1)^m \varphi^{(m)}(s) \sim
    \left\{\begin{array}{ll}
    \beta \Gamma(m-\beta) s^{\beta -m} L(s^{-\frac12}) \,, & \mbox{if } m>\beta, \\
    \beta \ell(s^{-1}) \,, & \mbox{if }  m= \beta \text{ and } \E[X^{2m}]=\infty,\\
    \E[X^{2m}] \,, & \mbox{if }  m\le \beta \text{ and } \E[X^{2m}]<\infty,
    \end{array}\right. \quad s \downarrow 0\,,
\end{equation}
where $\ell(x)=\int_0^x L(u^{1/2})/u \dint u$ is a slowly varying function (at infinity).

By \eqref{eq:formulagine}, Potter's theorem and the dominated convergence theorem (for more details see \cite{albrecher:teugels:2007} or \cite{fuchs:joffe:teugels:2001}), we obtain in view of \eqref{lim1} and \eqref{eq:asyphi} that, as $\nto$,
\begin{equation*}
    \begin{split}
    \E[ Y_{11}^{2k_1}\cdots Y_{1r}^{2k_r}]&= \frac{(-1)^k}{n \Gamma(k)} \int_0^{\infty} \Big( \tfrac{t}{n}\Big)^{k-1} \varphi^{n-r}\Big( \tfrac{t}{n}\Big) \Big(\varphi^{(1)}\big( \tfrac{t}{n}\big)\Big)^{N_1} \prod_{i:k_i\ge 2} \varphi^{(k_i)}\Big( \tfrac{t}{n}\Big) \dint t\\
    &\sim \frac{1}{n \Gamma(k)} \int_0^{\infty} \Big( \tfrac{t}{n}\Big)^{k-1} \e^{-t} \Big(\E[X^{2}]\Big)^{N_1} \prod_{i:k_i\ge 2} \beta \Gamma(k_i-\beta) \big( \tfrac{t}{n}\big)^{\beta -k_i} \underbrace{L\Big(\big( \tfrac{t}{n}\big)^{-1/2}\Big)}_{\sim L(n^{1/2})} \dint t\\
    &\sim \Big(\prod_{i:k_i\ge 2} \Gamma(k_i-\beta) \Big) \frac{\beta^{r-N_1}L^{r-N_1}(n^{1/2})}{n^{N_1(1-\beta)+\beta r} \Gamma(k)} \int_0^{\infty} \e^{-t} t^{N_1(1-\beta)+\beta r-1} \dint t\\
    &= \frac{L^{r-N_1}(n^{1/2})}{n^{N_1(1-\beta)+\beta r}} \frac{\beta^{r-N_1}\Gamma(N_1(1-\beta)+\beta r) \, \prod_{i:k_i\ge 2} \Gamma(k_i-\beta)}{\Gamma(k)}\,.
    \end{split}
\end{equation*}
Rearranging yields \eqref{moment24} and completes the proof of part (b). 

The proof of part (c) is very similar. To this end, note that the three lines in \eqref{eq:asyphi} only differ by a slowly varying function. In the case $\{\alpha=2$ and $\E[X_{11}^2]=\infty\}$, one has to use the middle line in \eqref{eq:asyphi} for $\varphi^{(1)}$ instead of the first one, combined with equation (14) in \cite{albrecher:teugels:2007} instead of \eqref{lim1}. In the case $\{\alpha=4, \E[X_{11}^2]=1$ and $\E[X_{11}^4]=\infty\}$, one needs to use the middle line in \eqref{eq:asyphi} for $\varphi^{(2)}$ instead of the last one. For brevity we omit details.

Regarding part (d), we analogously get, as $\nto$,
\begin{equation*}
    \begin{split}
    \E[ Y_{11}^{2k_1}\cdots Y_{1r}^{2k_r}]&= \frac{(-1)^k}{n \Gamma(k)} \int_0^{\infty} \Big( \tfrac{t}{n}\Big)^{k-1} \varphi^{n-r}\Big( \tfrac{t}{n}\Big)  \prod_{i=1}^r \varphi^{(k_i)}\Big( \tfrac{t}{n}\Big) \dint t\\
    &\sim \frac{1}{n \Gamma(k)} \int_0^{\infty} \Big( \tfrac{t}{n}\Big)^{k-1} \e^{-t} \prod_{i=1}^r \E[X^{2 k_i}] \dint t = \frac{1}{n^k} \prod_{i=1}^r \E[X^{2 k_i}]\,.
    \end{split}
\end{equation*}
\end{proof}

\subsection{Proof of Lemma~\ref{lem:tbeta}}
\begin{proof}
For any real numbers $b_1, \ldots, b_k$ we have the identity
\begin{equation*}
    \prod_{i=1}^k (b_i-\tfrac{1}{n}) = \sum_{m=0}^k \sum_{\substack{S\subseteq \{1,\ldots, k\} \\ |S|=m}} (-1)^m n^{-m} \prod_{i\in S^c} b_i\,,
\end{equation*}
where $S^c=\{1,\ldots,k\} \backslash S$ denotes the complement of $S$. Applying this identity for $k=k_1+\cdots +k_r$ with positive integers $k_1,\ldots,k_r$ and 
\begin{equation*}
    b_i =
    \begin{cases}
        Y_{11}^2 \,, & \mbox{if } i=1,\ldots, k_1 \\
        Y_{12}^2 \,, & \mbox{if } i=k_1+1,\ldots, k_1+k_2 \\
        \phantom{Y} \vdots  &  \\
        Y_{1r}^2 \,, & \mbox{if } i=k_1+\cdots+ k_{r-1}+1,\ldots, k_1+\cdots +k_r\,,
    \end{cases}
\end{equation*}
we obtain
\begin{equation*}
    \begin{aligned}
    \tbeta_{2k_1,\ldots, {2k_r}}&= \E \prod_{j=1}^r \prod_{\ell=1}^{k_j} (Y_{1j}^2-\tfrac{1}{n}) = \E \prod_{i=1}^k (b_i-\tfrac{1}{n})\\
    &= \sum_{m=0}^k \sum_{\substack{S\subseteq \{1,\ldots, k\} \\ |S|=m}} (-1)^m n^{-m}\,  \E \prod_{i\in S^c} b_i\,.
    \end{aligned}
\end{equation*}
Using the shorthand notation $b(S^c):= \E \prod_{i\in S^c} b_i$ and observing that $b(S)=\beta_{2k_1,\ldots, {2k_r}}$, we deduce that 
\begin{equation*}
    \tbeta_{2k_1,\ldots, {2k_r}} =\beta_{2k_1,\ldots, {2k_r}} +\sum_{m=1}^k (-1)^m \sum_{\substack{S\subseteq \{1,\ldots, k\} \\ |S|=m}}  n^{-m}\,b(S^c)\,.
\end{equation*}
Without loss of generality we assume that the $k_i$'s are ordered, that is, $k_1\ge \cdots \ge k_r$. If $k_r\ge 2$, one can see from Lemma~\ref{lem:allmoments} that $n^{-m}\,b(S^c) =o(\beta_{2k_1,\ldots, {2k_r}})$ for any $S\subseteq \{1,\ldots, k\}$ with cardinality $1\le m \le k$. Therefore, we conclude that 
\begin{equation*}
    \tbeta_{2k_1,\ldots, {2k_r}} \sim \beta_{2k_1,\ldots, {2k_r}}\,, \qquad \nto\,, k_r \ge 2.
\end{equation*}
If $k_r= 1$, it one can analogously get from Lemma~\ref{lem:allmoments}  that $n^{-m}\,b(S^c) =O(\beta_{2k_1,\ldots, {2k_r}})$ for any $S\subseteq \{1,\ldots, k\}$ with cardinality $1\le m \le k$. Note that, for example, for $S=\{k\}$ we have $n^{-1}\,b(S^c)= n^{-1}\,b(\{1,\ldots, k-1\})=n^{-1} \beta_{2k_1,\ldots, {2k_{r-1}}}$, which is of the same order as $\beta_{2k_1,\ldots, {2k_{r-1}},2}$. We conclude that 
\begin{equation*}
    \tbeta_{2k_1,\ldots, {2k_r}} =O(\beta_{2k_1,\ldots, {2k_r}})\,, \qquad \nto\,, k_r =1,
\end{equation*}
which completes the proof of the first assertion of Lemma~\ref{lem:tbeta}.

To prove the second assertion, let $k=k_1+\cdots+k_r$ and note that by Lemma~\ref{lem:allmoments},
\begin{equation}
    \begin{aligned}
    \beta_{2k_1,\ldots,2k_r}
    &\slv  
    \begin{cases}
    n^{-r}, & \alpha\in(0,2),\\
    n^{-N_1-(\alpha/2)(r-N_1)}, & \alpha\in[2,4),
    \end{cases}
    \\
    \beta_{2k}
    &\slv
    \begin{cases}
    n^{-1}, & \alpha\in(0,2),\\[2pt]
    n^{-\alpha/2}, & \alpha\in[2,4).
    \end{cases}
    \end{aligned}
\end{equation}

In conjunction with the first assertion of Lemma~\ref{lem:tbeta}, this yields
\begin{equation*}
    \left|\frac{\widetilde\beta_{2k_1,\ldots,2k_r}}{\widetilde\beta_{2k}}\right|
    \lesssim \frac{\beta_{2k_1,\ldots,2k_r}}{\beta_{2k}}
    \slv n^{\Delta_\alpha(r,N_1)},
\end{equation*}
where
\begin{equation*}
    \Delta_\alpha(r,N_1)=
    \begin{cases}
        -(r-1), & \alpha\in(0,2),\\[4pt]
        -\dfrac{\alpha}{2}(r-N_1-1)-N_1, & \alpha\in[2,4).
    \end{cases}
\end{equation*}
Since $N_1\le r$, $r\ge2$ and $\alpha<4$, we see that in all cases $\Delta_\alpha(r,N_1)<0$ which finishes the proof of the lemma.
\end{proof}

\subsection{Proof of Lemma~\ref{lem:variance}}
\begin{proof}
Let $T_1$ and $T_2$ be defined by \eqref{eq:def_T1_T2}. We get 
\begin{equation}\label{eq_E(T1_squared))}
    \begin{aligned}
    \E{[T_1^2]} 
    &= \E\left[\left(2\sum_{i_1 < i_2}^p \sum_{t = 1}^n \bar{Y}_{i_1, t} \bar{Y}_{i_2, t}\right)^2\right]
    \\ &= 4 \sum_{i_1 < i_2}^p \sum_{t = 1}^n \E[\bar{Y}_{i_1, t}^2]\E[\bar{Y}_{i_2, t}^2] + 8 \sum_{i_1 < i_2}^p \sum_{t_1 < t_2}^n \E[\bar{Y}_{i_1, t_1} \bar{Y}_{i_1, t_2}] \E[ \bar{Y}_{i_2, t_1} \bar{Y}_{i_2, t_2}]
    \\ &= 2p(p-1)n \left(\beta_4 - \frac{1}{n^2}\right)^2 +2p(p-1)n(n-1)\left(\beta_{2,2} - \frac{1}{n^2}\right)^2
    \\ &= 2p(p-1)\frac{n^2}{n - 1} \left(\beta_4 - \frac{1}{n^2}\right)^2.
    \end{aligned}
\end{equation}

For the last equality we used Lemma~\ref{lem:betas} to express $\beta_{2,2}$ in terms of $\beta_4$. Similarly we get for $T_2$,
\begin{equation}\label{eq_E(T2_squared))}
    \begin{aligned} 
    \E[T_2^2]
    &= \E\left[\left(\sum_{\substack{i_1,i_2 = 1 \\ i_1 \neq i_2}}^p\ \sum_{\substack{t_1,t_2 = 1 \\ t_1 \neq t_2}}^n Y_{i_1 t_1} Y_{i_1 t_2} Y_{i_2 t_1} Y_{i_2 t_2}\right)^2\right]\\
    &= 16\, \E\left[\sum_{i_1 < i_2}^p \sum_{t_1 < t_2}^n Y_{i_1 t_1}^2 Y_{i_1 t_2}^2 Y_{i_2 t_1}^2 Y_{i_2 t_2}^2 \right] 
    \\ &= 4p(p-1)n(n-1)\beta_{2,2}^2
    = 4p(p-1)\frac{n}{n-1}\left(\frac{1}{n}-\beta_4\right)^2.
    \end{aligned}
\end{equation}
\end{proof}

\subsection{Proof of Lemma~\ref{lem:asymp_variance}}
\begin{proof}
From \eqref{eq:var} we obtain, as $\nto$,  
\begin{equation*}
   \Var(\tr(\bfR^2)) \sim 2np^2\bigg(\bigg(\beta_4 - \frac{1}{n^2}\bigg)^2 + \frac{2}{n}\bigg(\beta_4 - \frac{1}{n}\bigg)^2\bigg).
\end{equation*}
We start with the case $\E[X^4] < \infty$, where Lemma~\ref{lem:allmoments} yields $\beta_4 \sim n^{-2} \E[X^4]$. It is easy to see that 
\begin{equation*}
    \bigg(\beta_4 - \frac{1}{n^2}\bigg)^2 = O(n^{-4})  ,\quad \frac{2}{n}\bigg(\beta_4 - \frac{1}{n}\bigg)^2 \sim 2 n^{-3},    
\end{equation*}
so the asymptotic behavior of $\Var(\tr(\bfR^2))$ is the same as of $\sigma_n^2$ and  
\begin{equation*}
    \sigma_n^2 \sim 2np^2\cdot 2n^{-3} \sim 4\frac{p^2}{n^2}.
\end{equation*}
For the case $\alpha \in [2, 4)$ with $\E[X_{11}^2] = 1$, Lemma~\ref{lem:allmoments} yields
\begin{equation*}
    \beta_4 \sim n^{-\alpha / 2} L(n^{1/2}) \frac{\alpha \Gamma(\alpha/2)\Gamma(2 - \alpha / 2)}{2 \Gamma(2)} = n^{-\alpha / 2} L(n^{1/2}) c_2(\alpha).
\end{equation*}
In view of the Potter bounds for the slowly varying function $L$, we have $L(n) = O(n^\varepsilon)$ for any $\varepsilon > 0$ so that $\beta_4 = O(n^{-\alpha / 2 + \varepsilon})$. If $\E[X_{11}^2] = \infty$ then we get another slowly varying function by Lemma~\ref{lem:allmoments}, which we bound analogously. Hence, we do not need to distinguish between $\E[X_{11}^2] = \infty$ and $\E[X_{11}^2] = 1$ in the case $\alpha \in [2, 4)$. Use of Lemma~\ref{lem:allmoments} now yields
\begin{equation*}
    \begin{aligned}
    \bigg(\beta_4 - \frac{1}{n^2}\bigg)^2 & \sim \beta_4^2 = O(n^{-\alpha + 2\varepsilon}),\\
    \frac{2}{n}\bigg(\beta_4 - \frac{1}{n}\bigg)^2 &= \frac{2}{n}\bigg(O(n^{-\alpha / 2 + \varepsilon}) - \frac{1}{n}\bigg)^2 \sim 2n^{-3}.   
    \end{aligned}
\end{equation*}
For $\alpha \in (3, 4)$, the $2n^{-3}$ term dominates and therefore, $\sigma_n^2 \sim 2np^2 \cdot 2n^{-3} = 4p^2n^{-2}$. While for $\alpha \in (2, 3)$ the $\beta_4^2$ term dominates and therefore,
\begin{equation*}
    \sigma_n^2 \sim 2np^2 \cdot \beta_4^2 \sim 2np^2 \cdot (n^{-\alpha / 2} L(n^{1/2}) c_2(\alpha))^2 = 2p^2n^{1-\alpha} L^2(n^{1/2}) c_2^2(\alpha).     
\end{equation*}
In the case $\alpha = 3$ both terms in \eqref{eq:var} might dominate (depending on $L$) and applying Lemma~\ref{lem:allmoments} one gets $\sigma_n^2 \sim 2p^2n^{-2}(c_2^2(\alpha) L^2(n^{1/2}) + 2)$. Finally, when $\alpha \in (0, 2)$, Lemma~\ref{lem:allmoments} yields
\begin{equation*}
    \beta_4 \sim n^{-1} \frac{\Gamma(2 - \alpha / 2)}{\Gamma(1 - \alpha / 2) \Gamma(2)} = n^{-1} \bigg(1 - \frac{\alpha}{2}\bigg),
\end{equation*}
from which we conclude
\begin{equation*}
    \begin{aligned}
    \bigg(\beta_4 - \frac{1}{n^2}\bigg)^2 &\sim \bigg(1 - \frac{\alpha}{2}\bigg)^2 n^{-2}, \\
    \frac{2}{n} \bigg(\beta_4 - \frac{1}{n}\bigg)^2 &\sim \frac{2}{n} \bigg(\frac{1 - \alpha /2}{n} - \frac{1}{n}\bigg)^2 = \frac{\alpha^2}{2} n^{-3},
    \end{aligned}
\end{equation*}
which implies $\sigma_n^2 \sim 2p^2 n^{-1} (1 - \alpha / 2)^2$. For completeness, we note that the fact that $\sigma_n^2\sim \Var(\tr(\bfR^2))$ is easily deduced from the above considerations in all cases. 
\end{proof}

\subsection{Proof of Lemma~\ref{lem:fourth_mom_asymp}}
\begin{proof}
We need to find the types of $\beta$'s that occur in $\E[T_i^4]$ for $i=1,2$. Let us start with $\E[T_1^4]$ which is given by 
\begin{equation*}
    \E{[T_1^4]} =
    16\,
    \sum_{\substack{i_1,i_2 = 1 \\ i_1 < i_2}}^p 
     \cdots
    \sum_{\substack{i_7,i_8 = 1 \\ i_7 < i_8}}^p
    \sum_{t_1, \ldots, t_4 = 1}^n
    \E \left[\bar{Y}_{i_1, t_1} \bar{Y}_{i_2, t_1} \bar{Y}_{i_3, t_2} \bar{Y}_{i_4, t_2}\bar{Y}_{i_5, t_3} \bar{Y}_{i_6, t_3} \bar{Y}_{i_7, t_4} \bar{Y}_{i_8, t_4}
    \right].
\end{equation*}
We proceed to the tedious task of finding all the combinations of indices that lead to non-zero contributions in the form of products of certain $\tbeta_{2k_1,\ldots, {2k_r}}$, whose order can be determined using Lemma~\ref{lem:tbeta} yielding that any $\tbeta_{2k_1,\ldots, {2k_r}}$ with no $k_i$ equal to $1$, will behave as $\beta_{2k_1,\ldots, {2k_r}}$ asymptotically. It is important to note that any term including $\widetilde{\beta}_2$ is zero. In a methodical manner one can check from big indices (of $\tbeta$) to small, terms with 2 up to 4 factors of $\beta$, which combinations of $\beta$'s are possible. For example, the term with largest possible index is $\beta_8^2$ and this happens only if $i_1 = i_3 = i_5 = i_7$ and $i_2 = i_4 = i_6 = i_8$ with all the $t$ indices equal. Accounting for multiplicities, the contribution of such terms to $\E[T_1^4]$ is equal to 
\begin{equation*}
    \frac{p(p-1)}{2} n \beta_8^2 \sim \frac{1}{2} p^2 n \beta_8^2.
\end{equation*}
\begin{table}[htb]
  \caption{Terms in $\E[T_1^4]$ and their orders for $\alpha \in (0,4)$.}
  \label{table6}
  \centering
  \renewcommand{\arraystretch}{1.1}
  \begin{tabular}{|c||c|c|c|}
    \hline
    \multicolumn{1}{|c||}{\textbf{Term in $\E[T_1^4]$}}&
    \multicolumn{1}{c|}{Order of $p$} &
    \multicolumn{1}{c|}{Order of $n,\,\alpha \in (0,2)$} &
    \multicolumn{1}{c|}{Order of $n,\,\alpha \in [2,4)$} \\
    \hline
    $12\,\beta_{4}^{4}\,n^{2}p^{4}$ & $p^{4}$ & $n^{-2}$ & $n^{2-2\alpha}$ \\
    \hline
    $60\,\beta_{4}^{4}\,n\,p^{4}$ & $p^{4}$ & $n^{-3}$ & $n^{1-2\alpha}$ \\
    \hline
    $24\,\beta_{4}^{2}\tbeta_{2,2}^{2}\,n^{3}p^{4}$ & $p^{4}$ & $O(n^{-3})$ & $O(n^{-\alpha-1})$ \\
    \hline
    $60\,\tbeta_{2,2}^{4}\,n^{4}p^{4}$ & $p^{4}$ & $O(n^{-4})$ & $O(n^{-4})$ \\
    \hline
    $192\,\beta_{4}\tbeta_{2,2}^{3}\,n^{3}p^{4}$ & $p^{4}$ & $O(n^{-4})$ & $O(n^{-\alpha/2-3})$ \\
    \hline
    $336\,\beta_{4}^{2}\tbeta_{2,2}^{2}\,n^{2}p^{4}$ & $p^{4}$ & $O(n^{-4})$ & $O(n^{-\alpha-2})$ \\
    \hline
    $144\,\tbeta_{2,2}^{4}\,n^{3}p^{4}$ & $p^{4}$ & $O(n^{-5})$ & $O(n^{-5})$ \\
    \hline
    $72\,\tbeta_{2,2}^{4}\,n^{2}p^{4}$ & $p^{4}$ & $O(n^{-6})$ & $O(n^{-6})$ \\
    \hline
    $48\,\beta_{4}^{2}\beta_{4,4}\,n^{2}p^{3}$ & $p^{3}$ & $n^{-2}$ & $n^{2-2\alpha}$ \\
    \hline
    $96\,\beta_{4}\beta_{6}^{2}\,n\,p^{3}$ & $p^{3}$ & $n^{-2}$ & $n^{1-3\alpha/2}$ \\
    \hline
    $48\,\beta_{4}^{2}\beta_{8}\,n\,p^{3}$ & $p^{3}$ & $n^{-2}$ & $n^{1-3\alpha/2}$ \\
    \hline
    $288\,\beta_{4}\tbeta_{4,2}^{2}\,n^{2}p^{3}$ & $p^{3}$ & $O(n^{-3})$ & $O(n^{-3\alpha/2})$ \\
    \hline
    $96\,\tbeta_{2,2}\tbeta_{4,2}^{2}\,n^{3}p^{3}$ & $p^{3}$ & $O(n^{-3})$ & $O(n^{-\alpha-1})$ \\
    \hline
    $192\,\beta_{6}\tbeta_{2,2}\tbeta_{4,2}\,n^{2}p^{3}$ & $p^{3}$ & $O(n^{-3})$ & $O(n^{-\alpha-1})$ \\
    \hline
    $192\,\beta_{4}\tbeta_{2,2}\tbeta_{6,2}\,n^{2}p^{3}$ & $p^{3}$ & $O(n^{-3})$ & $O(n^{-\alpha-1})$ \\
    \hline
    $96\,\beta_{4}\tbeta_{2,2}\tbeta_{4,2,2}\,n^{3}p^{3}$ & $p^{3}$ & $O(n^{-3})$ & $O(n^{-\alpha-1})$ \\
    \hline
    $192\,\tbeta_{2,2}\tbeta_{4,2}^{2}\,n^{2}p^{3}$ & $p^{3}$ & $O(n^{-4})$ & $O(n^{-\alpha-2})$ \\
    \hline
    $96\,\tbeta_{2,2}^{2}\beta_{4,4}\,n^{2}p^{3}$ & $p^{3}$ & $O(n^{-4})$ & $O(n^{-\alpha-2})$ \\
    \hline
    $96\,\beta_{4}\tbeta_{2,2,2}^{2}\,n^{3}p^{3}$ & $p^{3}$ & $O(n^{-4})$ & $O(n^{-\alpha/2-3})$ \\
    \hline
    $192\,\tbeta_{2,2}^{2}\tbeta_{4,2,2}\,n^{3}p^{3}$ & $p^{3}$ & $O(n^{-4})$ & $O(n^{-\alpha/2-3})$ \\
    \hline
    $384\,\tbeta_{2,2}\tbeta_{4,2}\tbeta_{2,2,2}\,n^{3}p^{3}$ & $p^{3}$ & $O(n^{-4})$ & $O(n^{-\alpha/2-3})$ \\
    \hline
    $96\,\tbeta_{2,2}\tbeta_{2,2,2}^{2}\,n^{4}p^{3}$ & $p^{3}$ & $O(n^{-4})$ & $O(n^{-4})$ \\
    \hline
    $48\,\tbeta_{2,2}^{2}\tbeta_{2,2,2,2}\,n^{4}p^{3}$ & $p^{3}$ & $O(n^{-4})$ & $O(n^{-4})$ \\
    \hline
    $8\,\beta_{8}^{2}\,n\,p^{2}$ & $p^{2}$ & $n^{-1}$ & $n^{1-\alpha}$ \\
    \hline
    $24\,\beta_{4,4}^{2}\,n^{2}p^{2}$ & $p^{2}$ & $n^{-2}$ & $n^{2-2\alpha}$ \\
    \hline
    $32\,\tbeta_{6,2}^{2}\,n^{2}p^{2}$ & $p^{2}$ & $O(n^{-2})$ & $O(n^{-\alpha})$ \\
    \hline
    $48\,\tbeta_{4,2,2}^{2}\,n^{3}p^{2}$ & $p^{2}$ & $O(n^{-3})$ & $O(n^{-\alpha-1})$ \\
    \hline
    $8\,\tbeta_{2,2,2,2}^{2}\,n^{4}p^{2}$ & $p^{2}$ & $O(n^{-4})$ & $O(n^{-4})$ \\
    \hline
  \end{tabular}
\end{table}
Since we are only interested in the asymptotic behavior of $\E[T_1^4]$, the exact multiplicities are not required. Hence, it is not necessary to know the exact number of occurring $\beta_8^2$ and similarly for all other products of $\tbeta_{2k_1,\ldots, {2k_r}}$. Then we look at how many of the $t$'s and $i$'s must not be equal respectively and hence get the order of the term by setting the corresponding amount in the exponent of $n$ and $p$. Finally, we focus on the terms with highest order.
Using the growth rate $p\asymp n^{\delta}$ (where $\delta>0$), we can replace $p$ so that each term is expressed entirely as a power of $n$. Then, we only keep the terms with the highest order $n$.

The fourth moment of $T_2$ 
\begin{equation*}
    \E[T_2^4]= 4^4 \,\E\left[\left(\sum_{i_1 < i_2}^p \sum_{t_1 < t_2}^n Y_{i_1 t_1} Y_{i_1 t_2} Y_{i_2 t_1} Y_{i_2 t_2}\right)^4\right]
\end{equation*}
is analyzed in a similar way. Tables~\ref{table6} and~\ref{table7} include all the products of $\tbeta_{2k_1,\ldots, {2k_r}}$ or $\beta_{2k_1,\ldots, {2k_r}}$ that occur in either $\E[T_1^4]$ or $\E[T_2^4]$ and give their order respective orders (utilizing Lemmas~\ref{lem:allmoments} and~\ref{lem:tbeta}) up to some slowly varying function for $\alpha \in (0, 4]$. Strictly speaking, in the case $\alpha = 2$ we need to distinguish between finite or infinite second moment of $X_{11}$; luckily both cases yield the same asymptotic results apart from a slowly varying function.
\begin{table}[htb]
  \caption{Terms in $\E[T_2^4]$ and their orders for $\alpha \in (0,4)$.}
  \label{table7}
  \centering
  \renewcommand{\arraystretch}{1.1}
  \begin{tabular}{|c||c|c|c|}
    \hline
    \multicolumn{1}{|c||}{\textbf{Term in $\E[T_2^4]$}}&
    \multicolumn{1}{c|}{Order of $p$} &
    \multicolumn{1}{c|}{Order of $n,\,\alpha \in (0,2)$} &
    \multicolumn{1}{c|}{Order of $n,\,\alpha \in [2,4)$} \\
    \hline
    $48\,\beta_{2,2}^{4}\,n^{4}p^{4}$                  & $p^{4}$ & $n^{-4}$ & $n^{-4}$ \\
    \hline
    $192\,\beta_{2,2}^{4}\,n^{3}p^{4}$                 & $p^{4}$ & $n^{-5}$ & $n^{-5}$ \\
    \hline
    $480\,\beta_{2,2}^{4}\,n^{2}p^{4}$                 & $p^{4}$ & $n^{-6}$ & $n^{-6}$ \\
    \hline
    $384\,\beta_{2,2}^{2}\beta_{4,4}\,n^{2}p^{3}$      & $p^{3}$ & $n^{-4}$ & $n^{-\alpha-2}$ \\
    \hline
    $768\,\beta_{2,2}^{2}\beta_{4,2,2}\,n^{3}p^{3}$    & $p^{3}$ & $n^{-4}$ & $n^{-\alpha/2-3}$ \\
    \hline
    $192\,\beta_{2,2}^{2}\beta_{2,2,2,2}\,n^{4}p^{3}$  & $p^{3}$ & $n^{-4}$ & $n^{-4}$ \\
    \hline
    $1536\,\beta_{2,2}\beta_{2,2,2}^{2}\,n^{3}p^{3}$   & $p^{3}$ & $n^{-5}$ & $n^{-5}$ \\
    \hline
    $64\,\beta_{4,4}^{2}\,n^{2}p^{2}$                  & $p^{2}$ & $n^{-2}$ & $n^{2-2\alpha}$ \\
    \hline
    $384\,\beta_{4,2,2}^{2}\,n^{3}p^{2}$               & $p^{2}$ & $n^{-3}$ & $n^{-\alpha-1}$ \\
    \hline
    $480\,\beta_{2,2,2,2}^{2}\,n^{4}p^{2}$             & $p^{2}$ & $n^{-4}$ & $n^{-4}$ \\
    \hline
  \end{tabular}
\end{table}
For $\E[T_1^4]$, several terms can only be controlled through upper bounds on their order based on Lemma~\ref{lem:tbeta}. We therefore proceed in two steps to find dominant terms. We first assume that these upper bounds are attained and use them to identify a list of five candidate dominant terms $\beta_{4}^4 n^2 p^4$, $\tbeta_{2,2}^{4}\,n^{4}p^{4}$, $\beta_{4}\tbeta_{2,2}^{3}\,n^{3}p^{4}$, $\beta_{4}^{2}\tbeta_{2,2}^{2}\,n^{2}p^{4}$ and $\beta_8^2 n p^2$. Since we have the actual order of $\tbeta_{2,2}$ in \eqref{eq:tbeta_2_2_rewritten}, we then compare the actual orders of all candidates and verify that only the two terms $\beta_4^4\,n^2p^4$ and $\beta_8^2\,np^2$ can be dominant, while the other candidates are of strictly smaller order. Note that the growth of $p$ relative to $n$ is an important detail that needs to be taken into consideration. We recall that our results are valid for general growth rates of $p$. 

Finding the exact multiplicities of $\beta_{4}^4$ and $\beta_{8}^2$  in $\E[T_1^4]$ is not too difficult and will yield 
\begin{equation*}
    \begin{aligned}
    \frac{1}{16}\,\E[T_1^4] 
    &\sim
    3 \bigg(\frac{p(p-1)}{2} n\bigg)^2 \beta_{4}^4  
    +
    \frac{p(p-1)}{2} pn \beta_8^2 \\ 
    &\sim 
    \frac{3}{4} n^2 p^4 \beta_{4}^4   
    +
    \frac{1}{2}\beta_8^2 n p^2.
    \end{aligned}
\end{equation*}

Now we turn to $\E[T_2^4]$. We find that the highest order term for $\alpha \in [3, 4)$ is $\beta_{2,2}^4n^4p^4$ by looking in Table~\ref{table7}. The multiplicity of $\beta_{2,2}^4n^4p^4$ is 3 which then for $\alpha \in [3, 4)$ yields 
\begin{equation*}
    \frac{1}{256}\, \E[T_2^4] \sim \frac{3}{16}\, \beta_{2,2}^4 n^4 p^4 \sim \frac{3}{16}\,  n^{-4} p^4,
\end{equation*}
completing the proof of the lemma.
\end{proof}

\subsection{Proof of Theorem~\ref{lem:completefourth}}
\begin{proof}
First, note that by the binomial theorem we have
\begin{equation*}
    \begin{aligned}
    \E\big[(\tr(\bfR^2)-\E[\tr(\bfR^2)])^4\big]
    &=
    \E\big[(T_1 + T_2)^4\big]
    \\
    &=
    \E[T_1^4] + 4\,\E[T_1^3 T_2] + 6\,\E[T_1^2 T_2^2] + 4\,\E[T_1 T_2^3] + \E[T_2^4].
    \end{aligned}
\end{equation*}
Recalling the definitions of $T_1$ and $T_2$ in \eqref{eq:def_T1_T2}, we observe that $T_2$ contains only odd powers of $Y_{it}$'s and $T_1$ only even powers and thus, we immediately see that $\E[T_1^3 T_2] = 0$. By Lemma~\ref{lem:E(T_i^4)/var^2_0}, we only need to study $\E[T_1^4]$ for $\alpha < 3$ and $\E[T_2^4]$ for $\alpha > 3$. Using Cauchy-Schwarz inequality we have 
\begin{equation}\label{eq:cauchy_scwarz}
    \frac{\E[T_1^2 T_2^2]}{\Var(\tr(\bfR^2))^2}
    \leq
    \bigg(\frac{\E[T_1^4]}{\Var(\tr(\bfR^2))^2} \bigg)^{1/2}
    \bigg(\frac{\E[T_2^4]}{\Var(\tr(\bfR^2))^2} \bigg)^{1/2},
\end{equation}
which goes to zero as $n \to \infty$ by Proposition~\ref{lem:E[T_i^4]/E[T_i^2]^2} and Lemma~\ref{lem:E(T_i^4)/var^2_0} assuming $p = \omega(n^\delta)$ for some $\delta > \delta^*(\alpha)$ with $\alpha \in (0, 3) \cup (3, 4)$. Similarly, using Hölder's inequality, we get 
\begin{equation}\label{eq:hölders_E[T_1T_2^3]}
    \frac{\big|\E[T_1 T_2^3] \big|}{\Var(\tr(\bfR^2))^2}
    \leq
    \frac{\E[T_1^4]^{1/4} \E[T_2^4]^{3/4}}{\Var(\tr(\bfR^2))^2}
    =
    \bigg(\frac{\E[T_1^4]}{\Var(\tr(\bfR^2))^2}\bigg)^{1/4}
    \bigg(\frac{\E[T_2^4]}{\Var(\tr(\bfR^2))^2}\bigg)^{3/4},
\end{equation}
which goes to 0 as $n \to \infty$ by Lemma~\ref{lem:E(T_i^4)/var^2_0} and Proposition~\ref{lem:E[T_i^4]/E[T_i^2]^2} if $p = \omega(n^\delta)$ for some $\delta > \delta^*(\alpha)$ when $\alpha \in (0, 3) \cup (3, 4)$. Another application of Proposition~\ref{lem:E[T_i^4]/E[T_i^2]^2} for $\E[T_1^4]$ and $\E[T_2^4]$ directly yields the desired result for $\alpha \in (0, 3) \cup (3, 4)$ and $\delta > \delta^*(\alpha)$. 

Finally, if $p = o(n^\delta)$ for some $\delta < \delta^*(\alpha)$ one can check that the leading order term $\E[T_1^4]/\Var(\tr(\bfR^2))^2$ tends to infinity by Proposition~\ref{lem:E[T_i^4]/E[T_i^2]^2} and Lemma~\ref{lem:E(T_i^4)/var^2_0}, which establishes the desired claim for $\delta < \delta^*(\alpha)$. 
\end{proof}

\subsection{Proof of Proposition~\ref{lem:E[T_i^4]/E[T_i^2]^2}}
\begin{proof}
From Lemmas~\ref{lem:fourth_mom_asymp} and~\ref{lem:variance} we have for $\alpha \in (0, 4)$
\begin{equation*}
    \E[T_1^4] 
    \sim
    12\beta_4^4 n^2 p^4 + 8\beta_8^2 n p^2, 
    \quad
    \E[T_1^2]^2
    \sim
    4\beta_4^4 n^2 p^4 
\end{equation*}
which yields 
\begin{equation}
    \frac{\E[T_1^4]}{\E[T_1^2]^2} \sim 3 + \frac{2 \beta_8^2}{\beta_4^4 p^2 n}.
\end{equation}
By Lemma~\ref{lem:allmoments}, we see that $\beta_4, \beta_6, \beta_8$ are of the same order up to some slowly varying function. Thus, if $\beta_4^2 p^2 n \ell(n) \to \infty$  as $\nto$ for any slowly varying function $\ell$, then $\E[T_1^4] /\E[T_1^2]^2 \sim 3$. This clearly holds if $\alpha\in(0,3]$ and $p = \omega(n^\delta)$ for some $\delta > \delta^*(\alpha)$ since by Lemma~\ref{lem:allmoments} $\beta_4$ behaves like $n^{-\max(1,\alpha/2)}$ (up to a slowly varying function). If instead $p = o(n^\delta)$ for some $\delta < \delta^*(\alpha)$, then we have
\begin{equation*}
    \limn \frac{\E[T_1^4]}{\E[T_1^2]^2} = \infty.
\end{equation*}
Finally, if $\alpha \in [3, 4)$ then by \eqref{eq_E(T2_squared))} and Lemma~\ref{lem:allmoments} we have 
\begin{equation*}
    \E[T_2^2]^2
    = \bigg(4p(p-1)\frac{n}{n-1}\left(\beta_4 - \frac{1}{n}\right)^2\bigg)^2
    \sim 16p^4n^{-4}
\end{equation*}
and we find by combining with Lemma~\ref{lem:fourth_mom_asymp} that $\E[T_2^4] / \E[T_2^2]^2 \to 3$ for $\alpha \in [3, 4)$.
\end{proof}

\subsection{Proof of Lemma~\ref{lem:E(T_i^4)/var^2_0}}
\begin{proof}
Combining Lemmas~\ref{lem:asymp_variance} and~\ref{lem:fourth_mom_asymp} we have for $\alpha \in (3, 4)$
\begin{equation*}
    \frac{\E[T_1^4] }{\Var(\tr(\bfR^2))^2} 
    \sim \frac{12\, \beta_4^4 n^2 p^4 +8\beta_8^2 n p^2}{16n^{-4}p^4}, \qquad n \to \infty.   
\end{equation*}
By Lemma~\ref{lem:allmoments}, we get 
\begin{equation*}
    \frac{12\, \beta_4^4 n^2 p^4 }{16n^{-4}p^4}
    \sim \frac{3}{4}n^{-2(\alpha - 3)}\, c_2^4(\alpha) L(n^{1/2})^4
    \to 0, \qquad n \to \infty.
\end{equation*}
Moreover, assuming $p = \omega(n^\delta)$ for some $\delta > (5-\alpha)/2$, we have that 
\begin{equation*}
    \frac{8\beta_8^2 n p^2}{16n^{-4}p^4} \sim \frac{1}{2}n^{-\alpha + 5} p^{-2} L(n^{1/2})^2 c_{4}^2(\alpha) \to 0.    
\end{equation*}
We conclude that
\begin{equation}\label{eq:dfghe}
    \frac{\E[T_1^4] }{\Var(\tr(\bfR^2))^2} \to 0, \qquad n \to \infty   
\end{equation}
for $\alpha \in (3, 4)$ assuming $p = \omega(n^\delta)$ for some $\delta > (5-\alpha)/2$. 

For $\alpha \in [2, 3)$ and $\E[X_{11}^2] = 1$, the dominant terms in $\E[T_2^4]$ are $\beta_{4,4}^2 n^2 p^2$ and $\beta_{2,2}^4 n^4 p^4$ (see Table~\ref{table7}). Using the variance in Lemma~\ref{lem:asymp_variance} combined with Lemma~\ref{lem:allmoments} we get 
\begin{equation*}
    \frac{\E[T_2^4] }{\Var(\tr(\bfR^2))^2}
    \lesssim 
    \frac{ n^{-2\alpha + 2} p^2 c_{2,2}^2(\alpha) L^4(n^{1/2}) + n^{-4} p^4 c_{1,1}^4(\alpha)}
    {4p^4n^{2-2\alpha} L^4(n^{1/2}) c_2^4(\alpha)}
    \to 0.
\end{equation*}
If for $\alpha = 2$ we instead have $\E[X_{11}^2] = \infty$, then the above still holds but with a different slowly varying function in view of Lemma~\ref{lem:allmoments}. Now for $\alpha \in (0, 2)$ using Lemmas~\ref{lem:asymp_variance},~\ref{lem:fourth_mom_asymp} and~\ref{lem:allmoments} we get 
\begin{equation*}
    \frac{\E[T_2^4] }{\Var(\tr(\bfR^2))^2}
    \sim \frac{48n^{-4}p^4}{4p^4 n^{-2} (1 - \alpha / 2)^4} \bigg(\frac{\alpha}{2}\bigg)^4
    \to 0, \qquad n \to \infty,
\end{equation*}
which means that $\frac{\E[T_2^4] }{\Var(\tr(\bfR^2))^2} \to 0$, as $\nto$, for all $\alpha \in (0, 3)$.
\end{proof}

\subsection{Proof of Lemma~\ref{lem:boundary-scaling-equivalence}}
\begin{proof}[Proof of Lemma~\ref{lem:boundary-scaling-equivalence}]
Since
\begin{equation*}
    \E[T_1^4]\sim 12\beta_4^4n^2p^4+8\beta_8^2np^2, \qquad\E[T_1^2]^2\sim 4\beta_4^4n^2p^4,
\end{equation*}
it follows that, as $\nto$,
\begin{equation}\label{eq:extra}
    \frac{\E[T_1^4]}{\E[T_1^2]^2} \sim 3+\frac{2\beta_8^2}{\beta_4^4p^2n}.
\end{equation}
Hence the boundary condition \eqref{eq:boundary-def} is equivalent to requiring that the extra term in \eqref{eq:extra} converges to a finite positive constant. For $\alpha\in(0,2)$, using \eqref{eq:extra} together with the asymptotics of $\beta_4$ and $\beta_8$, we see that
\begin{equation*}
    \begin{aligned}
    \frac{2\beta_8^2}{\beta_4^4p^2n}
    &=\frac{2n}{p^2} \,\frac{(n\beta_8)^2}{(n\beta_4)^4}\\
    &\sim \frac{2n}{p^2}\,\frac{c_4(\alpha)^2}{c_2(\alpha)^4}
    =\frac{2n}{p^2}\, \bigg[\frac{\bigl(3-\frac{\alpha}{2}\bigr)\bigl(2-\frac{\alpha}{2}\bigr)}
    {6\bigl(1-\frac{\alpha}{2}\bigr)}\bigg]^2\,.
    \end{aligned}
\end{equation*}
The \rhs~ converges to $C_\alpha\in(0,\infty)$
if and only if
\begin{equation*}
    \frac{p}{n^{1/2}}\to c_\delta\in(0,\infty).
\end{equation*}
Since $\delta^*(\alpha)=1/2$ for $\alpha\in(0,2)$, this is exactly the asserted boundary scaling. The displayed formula for $C_\alpha$ follows immediately.

For $\alpha\in(2,3)$, using \eqref{highestmoment24}, we obtain
\begin{equation*}
    \frac{2\beta_8^2}{\beta_4^4p^2n}
    \sim \frac{2 n^{\alpha-1}}{p^2L(n^{1/2})^2}\,\frac{c_4(\alpha)^2}{c_2(\alpha)^4}
    = \frac{2 n^{\alpha-1}}{p^2L(n^{1/2})^2}\, \bigg[
    \frac{\bigl(3-\frac{\alpha}{2}\bigr)\bigl(2-\frac{\alpha}{2}\bigr)}{6 \Gamma(1+\frac{\alpha}{2})\Gamma(2-\frac{\alpha}{2})}\bigg]^2.
\end{equation*}
Hence \eqref{eq:boundary-def} holds if and only if
\begin{equation*}
\frac{p\,L(n^{1/2})}{n^{(\alpha-1)/2}}
\to c_\delta\in(0,\infty).
\end{equation*}
Since $\delta^*(\alpha)=(\alpha-1)/2$ for $\alpha\in(2,3)$, this is exactly the required scaling, and the expression for $C_\alpha$ follows.
\end{proof}
\FloatBarrier

\begin{appendix}
\section*{Sums of regularly varying random variables}
\begin{lemma}[Jessen and Mikosch \cite{jessen:mikosch:2006}] \label{lem:L(x)_sym_pareto}
Assume $|X_1|$ is regularly varying with index $\alpha \geq 0$ and distribution function $F = 1 - \bar F$. Assume $X_1,...,X_n$ are random variables satisfying 
\begin{equation}\label{eq:cond_1_L(x)_pareto}
    \lim_{x \to \infty} \frac{\P(X_i > x)}{\bar F(x)} = c_i^+ \quad \text{and} \quad
    \lim_{x \to \infty} \frac{\P(X_i \leq -x)}{\bar F(x)} = c_i^- , \quad i = 1,\dots,n,
\end{equation}
for some non-negative numbers $c_i^\pm$ and
\begin{equation}
\begin{aligned}
    \lim_{x \to \infty} \frac{\P(X_i > x, X_j > x)}{\bar F(x)} 
    &=
    \lim_{x \to \infty} \frac{\P(X_i \leq -x, X_j > x)}{\bar F(x)} \\
    &=
    \lim_{x \to \infty} \frac{\P(X_i \leq -x, X_j \leq -x)}{\bar F(x)}
    =
    0, \quad i \neq j. \label{eq:cond_2_L(x)_pareto}
\end{aligned}
\end{equation}
Then 
\begin{equation*}
    \lim_{x \to \infty} \frac{\P(S_n > x)}{\bar F(x)} = c_1^+ + \dots + c_n^+
    \quad \text{and} \quad
    \lim_{x \to \infty} \frac{\P(S_n \leq x)}{\bar F(x)} = c_1^- + \dots + c_n^- , 
\end{equation*}
with $S_n = X_1 + \dots + X_n$, $n \geq 1$.  
\end{lemma}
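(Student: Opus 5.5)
The plan is a standard ``one big jump'' argument, with a small perturbation parameter $\delta\in(0,1)$ that we let tend to $0$ at the end. The only property of $\bar F$ we use is regular variation: $\bar F(cx)/\bar F(x)\to c^{-\alpha}$ for every fixed $c>0$, which for $\alpha=0$ simply means the ratio tends to $1$.

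Two preliminary observations come first. By \eqref{eq:cond_1_L(x)_pareto}, each $\P(|X_i|>y)$ is $O(\bar F(y))$. The joint conditions \eqref{eq:cond_2_L(x)_pareto}, taken over all ordered pairs $i\neq j$ and hence all sign combinations, give $\P(|X_i|>y,|X_j|>y)=o(\bar F(y))$ for $i\ne j$. Since $y=cx$ with $c$ fixed, regular variation turns every $o(\bar F(cx))$ into $o(\bar F(x))$, and likewise every $O(\bar F(cx))$ into $O(\bar F(x))$.

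For the upper bound, put $y=\delta x/(n-1)$ and use the inclusion
\begin{equation*}
\{S_n>x\}\subseteq \bigcup_{i=1}^n\{X_i>(1-\delta)x\}\ \cup \bigcup_{i\ne j}\{|X_i|>y,\,|X_j|>y\}.
\end{equation*}
To see it, suppose at most one index $j$ has $|X_j|>y$. Then $S_n\le \max_i X_i^+ + (n-1)y=\max_i X_i^+ +\delta x$, so $S_n>x$ forces some $X_i>(1-\delta)x$. The union bound together with the observations above gives
\begin{equation*}
\limsup_{x\to\infty}\frac{\P(S_n>x)}{\bar F(x)}\le (1-\delta)^{-\alpha}\sum_{i=1}^n c_i^+ .
\end{equation*}

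For the lower bound, set $A_i=\{X_i>(1+\delta)x,\ |X_j|\le y\ \forall j\ne i\}$. On $A_i$ we have $S_n>(1+\delta)x-\delta x=x$, so $\P(S_n>x)\ge \P(\bigcup_i A_i)$. Bonferroni then gives $\P(\bigcup_i A_i)\ge\sum_i\P(A_i)-\sum_{i\ne k}\P(A_i\cap A_k)$. Each intersection $A_i\cap A_k$ lies inside a joint exceedance and is therefore $o(\bar F(x))$. For the main terms,
\begin{equation*}
\P(A_i)\ge \P(X_i>(1+\delta)x)-\sum_{j\ne i}\P(|X_i|>y,\,|X_j|>y),
\end{equation*}
where the sum is again $o(\bar F(x))$. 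Hence
\begin{equation*}
\liminf_{x\to\infty}\frac{\P(S_n>x)}{\bar F(x)}\ge(1+\delta)^{-\alpha}\sum_{i=1}^n c_i^+ .
\end{equation*}
Letting $\delta\downarrow0$ proves the first limit.

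The left tail follows by applying the same argument to $-X_1,\dots,-X_n$: the roles of $c_i^+$ and $c_i^-$ swap, and the hypotheses are symmetric under this sign change. (The second display is clearly meant as $\P(S_n\le -x)$.) The only step needing real care is the reduction of joint exceedances at the lower level $y=\delta x/(n-1)$ to $o(\bar F(x))$. This requires the hypotheses for every ordered pair and every sign pattern, combined with regular variation to rescale the level; everything else is routine bookkeeping.
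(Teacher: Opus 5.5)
Your proof is correct, and you are right that the second conclusion should read $\P(S_n\le -x)$. The paper gives no argument of its own here, only the citation to Jessen and Mikosch. Your perturbed one-big-jump scheme is the standard proof of that result, so it serves as a self-contained substitute: for the upper bound, either some summand exceeds $(1-\delta)x$ or two summands jointly exceed $\delta x/(n-1)$; for the lower bound, one summand exceeds $(1+\delta)x$ while the others are small; regular variation rescales the levels.

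The remaining loose ends are cosmetic. The case $n=1$ must be set aside, since it is the hypothesis itself and $y=\delta x/(n-1)$ is undefined there. The events $A_i$ are pairwise disjoint, so Bonferroni is not needed. For the left tail, it is cleaner to rerun your argument with the events $\{X_i\le -(1\pm\delta)x\}$ than to flip signs: the flip turns the hypotheses' non-strict inequalities into strict ones, which is harmless but costs an extra regular-variation sandwich.
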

\end{appendix}


\begin{funding}
This research was partially supported by the Swedish Research Council through VR-2023-03577 ``High-dimensional extremes and random matrix structures'' and by the Verg-Foundation.
\end{funding}
\bibliographystyle{imsart-number}
\bibliography{library}
\end{document}